\numberwithin{equation}{section}
\numberwithin{figure}{section}
\theoremstyle{plain}
\newtheorem{main theorem}{Main Theorem}
\newtheorem{theorem}{Theorem}[section]
\newtheorem{lemma}[theorem]{Lemma}
\newtheorem{corollary}[theorem]{Corollary}
\newtheorem{proposition}[theorem]{Proposition}
\newtheorem{claim}[theorem]{Claim}
\theoremstyle{definition}
\newtheorem{definition}[theorem]{Definition}
\newtheorem{remark}[theorem]{Remark}
\newtheorem{example}[theorem]{Example}
\newtheorem{problem}[theorem]{Problem}
\newtheorem{condition}[theorem]{Condition}
\newtheorem{toy model}[theorem]{Toy Model}
\newtheorem{data}[theorem]{Data}
\newcommand{\mdim}{\mathrm{mdim}}
\newcommand{\diam}{\mathrm{diam}}
\newcommand{\widim}{\mathrm{Widim}}
\newcommand{\dist}{\mathrm{dist}}
\newcommand{\supp}{\mathrm{supp}}
\newcommand{\norm}[1]{\left|\!\left|#1\right|\!\right|}
\newcommand{\rdim}{\mathrm{rdim}}
\newcommand{\ver}{\mathrm{Ver}}
\begin{document}

\title[Double variational principle for mean dimension]{Double variational principle for mean dimension}

\author{Elon Lindenstrauss, Masaki Tsukamoto}

\subjclass[2010]{37A05, 37B99, 94A34}

\keywords{dynamical system, mean dimension, rate distortion dimension, variational principle, invariant measure, geometric measure theory}

\date{\today}

\thanks{E.L was partially supported by ISF grant 891/15. 
M.T. was partially supported by JSPS KAKENHI 18K03275. }

\maketitle

\begin{abstract}
We develop a variational principle between mean dimension theory and rate distortion theory.
We consider a minimax problem about the rate distortion dimension with respect to two variables 
(metrics and measures).
We prove that the minimax value is 
equal to the mean dimension for a dynamical system with the marker property.
The proof exhibits a new combination of ergodic theory, rate distortion theory and geometric measure theory.
Along the way of the proof, we also show that if a dynamical system has the marker property then it has a metric for which
the upper metric mean dimension is equal to the mean dimension.
\end{abstract}

\section{Introduction}  \label{section: introduction}

\subsection{Statement of the main result}  \label{subsection: statement of the main result}

The purpose of this paper is to develop a new variational principle in dynamical systems theory.
We first quickly prepare the terminologies and state the main result.
Backgrounds will be explained in \S \ref{subsection: backgrounds}.

A pair $(\mathcal{X},T)$ is called a \textbf{dynamical system} if $\mathcal{X}$ is a compact metrizable space and 
$T:\mathcal{X}\to \mathcal{X}$ is a homeomorphism.
We denote by $\mathscr{M}^T(\mathcal{X})$ the set of $T$-invariant Borel probability measures on $\mathcal{X}$.
The standard \textit{variational principle} (\cite{Goodwyn, Dinaburg, Goodman}) 
states that the topological entropy $h_{\mathrm{top}}(T)$ is equal to the 
supremum of the ergodic-theoretic entropy $h_\mu(T)$ over $\mu \in \mathscr{M}^T(\mathcal{X})$:
\begin{equation}  \label{eq: variational principle}
   h_{\mathrm{top}}(T) = \sup_{\mu\in \mathscr{M}^T(\mathcal{X})} h_\mu(T).
\end{equation}
Our main result below is an analogous formula in \textit{mean dimension theory}.

Mean dimension (denoted by $\mdim(\mathcal{X},T)$) is a topological invariant of dynamical systems 
introduced by Gromov \cite{Gromov}.
It counts how many parameters per iterate we need to describe an orbit in $(\mathcal{X},T)$.
We review its definition in \S \ref{subsection: topological and metric mean dimensions}.
We would like to connect mean dimension to some information-theoretic quantity as in 
(\ref{eq: variational principle}).
An appropriate notion turns out to be \textit{rate distortion dimension}, which was first introduced by 
Kawabata--Dembo \cite{Kawabata--Dembo}.

Let $\mathscr{D}(\mathcal{X})$ be the set of metrics (i.e. distance functions) on $\mathcal{X}$ compatible with the topology.
Take $d\in \mathscr{D}(\mathcal{X})$ and $\mu\in \mathscr{M}^T(\mathcal{X})$.
Consider a stochastic process $\{T^n x\}_{n\in \mathbb{Z}}$ where $x\in \mathcal{X}$ is chosen randomly according to $\mu$.
We denote by $R(d,\mu,\varepsilon)$, $\varepsilon >0$, the \textit{rate distortion function} of this process
with respect to the distortion measure $d$. 
This evaluates how many bits per iterate we need to describe the process within the distortion (w.r.t. $d$)
bound by $\varepsilon$.
We review its definition in \S \ref{subsection: rate distortion theory}.
We define the \textbf{upper/lower rate distortion dimensions} by\footnote{Throughout the paper 
we assume that the base of the logarithm is two. 
The natural logarithm (i.e. the logarithm of base $e$) is written as $\ln(\cdot)$.} 
\begin{equation}  \label{eq: definition of rate distortion dimension}
   \begin{split}
     & \overline{\rdim}(\mathcal{X},T,d,\mu) =  \limsup_{\varepsilon \to 0} \frac{R(d,\mu,\varepsilon)}{\log(1/\varepsilon)},  \\
     & \underline{\rdim}(\mathcal{X},T,d,\mu)  = \liminf_{\varepsilon\to 0} \frac{R(d,\mu,\varepsilon)}{\log (1/\varepsilon)}.
   \end{split}
\end{equation}  
When the upper and lower limits coincide, we denote their common value by $\rdim(\mathcal{X},T,d,\mu)$.

A dynamical system $(\mathcal{X},T)$ is said to have the \textbf{marker property} if for any $N>0$ there exists 
an open set $U\subset \mathcal{X}$ satisfying 
\[   \mathcal{X} = \bigcup_{n\in \mathbb{Z}} T^{-n} U, \quad U\cap T^{-n} U = \emptyset \quad (\forall 1\leq n\leq N). \]
This property implies that $(\mathcal{X},T)$ is free (i.e. it has no periodic points).
Free minimal systems and their extensions have the marker property.
The marker property has been intensively used in the context of the \textit{embedding problem}
(see \S \ref{subsection: backgrounds}) and related issues
\cite{Lindenstrauss, Gutman--Lindenstrauss--Tsukamoto, Gutman--Tsukamoto minimal, Gutman--Qiao--Tsukamoto}.

Now we can state our main result.

\begin{theorem}[\textbf{Double Variational Principle}]  \label{main theorem}
  If a dynamical system $(\mathcal{X},T)$ has the marker property, then
     \begin{equation}  \label{eq: double variational principle}
        \begin{split}
         \mdim(\mathcal{X},T)  & =  \min_{d\in \mathscr{D}(\mathcal{X})} \sup_{\mu\in \mathscr{M}^T(\mathcal{X})} 
         \overline{\rdim}(\mathcal{X},T,d,\mu) \\
       & =  \min_{d\in \mathscr{D}(\mathcal{X})} \sup_{\mu\in \mathscr{M}^T(\mathcal{X})} 
       \underline{\rdim}(\mathcal{X},T,d,\mu).   
       \end{split}
     \end{equation}
   Here ``$\min$'' indicates that the minimum is attained by some $d$.  
\end{theorem}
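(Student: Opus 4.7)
The plan is to reduce the double equality to two inequalities: an upper bound
$\min_{d}\sup_{\mu}\overline{\rdim}(\mathcal{X},T,d,\mu)\leq \mdim(\mathcal{X},T)$
and a pointwise-in-$d$ lower bound
$\sup_{\mu}\underline{\rdim}(\mathcal{X},T,d,\mu)\geq \mdim(\mathcal{X},T)$ for every
$d\in\mathscr{D}(\mathcal{X})$. Since $\underline{\rdim}\leq\overline{\rdim}$ is automatic, these two inequalities squeeze both minimax values to $\mdim(\mathcal{X},T)$ simultaneously and show that the infimum in $d$ is attained.

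For the upper bound I would first record the general comparison
$\overline{\rdim}(\mathcal{X},T,d,\mu)\leq \overline{\mdim}_{M}(\mathcal{X},T,d)$, obtained by dominating the rate distortion function $R(d,\mu,\varepsilon)$ by the logarithm of the $(d_{N},\varepsilon)$-Bowen covering number of $\mathcal{X}$ (a source-coding estimate applied to the $N$-block process). It then suffices to exhibit one metric $d^{*}\in\mathscr{D}(\mathcal{X})$ with $\overline{\mdim}_{M}(\mathcal{X},T,d^{*})=\mdim(\mathcal{X},T)$ — the auxiliary result flagged in the abstract. Using the marker property, at each scale $\varepsilon_{k}\downarrow 0$ I would choose a marker set $U_{k}$ yielding a long dynamical tower, push orbits through an $\varepsilon_{k}$-map to a polyhedron realizing $\widim_{\varepsilon_{k}}$, and assemble $d^{*}$ as a weighted combination of pullback metrics across scales. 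A careful choice of weights makes the scale-$\varepsilon_{k}$ Bowen covering numbers track $\widim_{\varepsilon_{k}}$ up to lower-order terms, giving the desired equality; substituting $d^{*}$ into the general inequality finishes this half.

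For the lower bound, given $d$, I need to manufacture an invariant $\mu$ with $R(d,\mu,\varepsilon)\geq \mdim(\mathcal{X},T)\cdot \log(1/\varepsilon)$ up to $o(\log(1/\varepsilon))$ along some sequence $\varepsilon\to 0$. My plan: extract from the definition of mean dimension a sequence of $\varepsilon$-maps $\pi_{N}\colon (\mathcal{X},d_{N})\to P_{N}$ onto polyhedra of dimension $\geq (\mdim(\mathcal{X},T)-o(1))N$; equip each $P_{N}$ with a Frostman-type probability measure of dimension equal to $\dim P_{N}$; pull back to a measure $\nu_{N}$ on $\mathcal{X}$ and time-average it along an orbit of length $N$ to obtain an almost $T$-invariant empirical measure; finally extract a weak-$\ast$ limit via Krylov--Bogolyubov to get $\mu\in\mathscr{M}^{T}(\mathcal{X})$. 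A lower-semicontinuity estimate for the rate distortion function -- which is clean only at the $\underline{\rdim}$ level -- transfers the dimensional bound into the limit, which is exactly why both halves of the theorem output the same value.

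The main obstacle is the geometric step in this last argument: converting $\widim_{\varepsilon}$ (a purely topological polyhedral-approximation invariant) into a measure-theoretic mutual-information lower bound. Compacta of large upper Minkowski dimension need not carry measures of comparable rate distortion dimension, so the proof cannot be purely metric; it must exploit the polyhedral structure encoded in $\widim_{\varepsilon}$ via a Frostman / energy-integral estimate on the target polyhedra. A secondary difficulty is preserving the dimensional lower bound through the time-averaging and the weak-$\ast$ passage to the invariant limit: here the marker property is invoked again, to prevent the empirical measures from concentrating on short periodic orbits, and the use of $\underline{\rdim}$ (aligned with the direction of lower semicontinuity) is essential.
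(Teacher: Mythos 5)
Your outer architecture — an upper bound via $\overline{\rdim}\leq\overline{\mdim}_{\mathrm M}$ together with a metric attaining $\overline{\mdim}_{\mathrm M}=\mdim$, and a lower bound $\sup_\mu\underline{\rdim}(\mathcal X,T,d,\mu)\geq\mdim(\mathcal X,T)$ valid for every $d$ — is exactly the paper's. But both of your internal mechanisms have real gaps.

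For the lower bound, the step ``equip $P_N$ with a Frostman measure and pull back to $\mathcal X$'' does not make sense: $\pi_N$ goes \emph{from} $\mathcal X$ \emph{to} $P_N$ and is neither injective nor surjective, so there is no canonical pullback measure, and even a choice of measurable section gives no control on the scaling of the resulting measure in the Bowen metric $d_N$, since an $\varepsilon$-embedding only bounds fiber diameters, not any Lipschitz data. The paper circumvents this by introducing \emph{mean Hausdorff dimension} as an intermediary invariant, showing $\mdim\leq\mdim_{\mathrm H}$ by a combinatorial contraction-to-skeleton argument (Proposition \ref{prop: mean Hausdorff dimension dominates mean dimension}), then applying Howroyd's generalization of Frostman's lemma \emph{directly on $(\mathcal X,\bar d_{n_k})$} to produce measures $\nu_k$ with the scaling law, averaging these \`a la Misiurewicz, and propagating the rate-distortion lower bound to the weak-$*$ limit via convexity/concavity and subadditivity of mutual information (Lemmas \ref{lemma: subadditivity of mutual information}, \ref{lemma: concavity/convexity of mutual information}, \ref{lemma: duality in rate distortion theory}). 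You also need the \emph{tame growth of covering numbers} hypothesis on $d$ to reconcile the $L^1$- and $L^\infty$-Bowen metrics (Lemma \ref{lemma: mean Hausdorff dimension and L^1 mean Hausdorff dimension}), and then a reduction lemma (Lemma \ref{lemma: tame growth condition is a mild condition}) to drop that hypothesis; your sketch skips this entirely. Finally, your claim that the marker property is used again in Step 2 to control the empirical measures is wrong — the paper explicitly confines the marker property to Step 3; Krylov--Bogolyubov requires no such hypothesis.

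For the upper bound, the plan to take a weighted combination of pullback metrics across scales and tune the weights so the scale-$\varepsilon_k$ covering numbers track $\widim_{\varepsilon_k}$ is exactly the naive approach that works for $\underline{\mdim}_{\mathrm M}$ (and for the classical lower Minkowski dimension) but fails for the upper limit: controlling $\overline{\mdim}_{\mathrm M}$ requires \emph{uniform} covering-number estimates at every small $\varepsilon$, not merely along a discrete scale sequence, and the set of metrics achieving the equality is very thin rather than residual. The paper's §\ref{section: proof of existence of nice metrics} gives a delicate inductive construction of maps $f_n:\mathcal X\to V$ into an infinite-dimensional Banach space, using dynamical tilings, joins of simplicial complexes, and explicit covering-number bookkeeping (Condition \ref{condition: dynamical PS theorem}) valid on every interval $[\varepsilon_{n+1},\varepsilon_n)$ — this is the ``about 20 years open'' step and cannot be replaced by the weighted-sum trick.
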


A fundamental difference between the standard variational principle (\ref{eq: variational principle})
and our new one (\ref{eq: double variational principle}) is that 
(\ref{eq: variational principle}) is a maximazation problem with respect to the single variable $\mu$ 
wheres (\ref{eq: double variational principle}) is a \textit{minimax} problem with respect to the two variables 
$d$ and $\mu$.
We have used the word ``double'' in order to emphasize that there exist two variables playing different roles.

\begin{remark}
    \begin{enumerate}
         \item We have adopted the minimax approach in (\ref{eq: double variational principle}).
      It might also look interesting to consider a \textit{maximin} approach:
       \begin{equation}  \label{eq: maximin approach}  
         \begin{split}
               \sup_{\mu \in \mathscr{M}^T(\mathcal{X})} \inf_{d\in \mathscr{D}(\mathcal{X})} \overline{\rdim}(\mathcal{X},T,d,\mu), \\
               \sup_{\mu \in \mathscr{M}^T(\mathcal{X})} \inf_{d\in \mathscr{D}(\mathcal{X})} \underline{\rdim}(\mathcal{X},T,d,\mu).
        \end{split}
       \end{equation}
     However this turns out to be fruitless.
     Indeed both the quantities in (\ref{eq: maximin approach}) are always zero.
     More strongly, we can prove that for any dynamical system $(\mathcal{X},T)$ and $\mu\in \mathscr{M}^T(\mathcal{X})$
     there exists $d\in \mathscr{D}(\mathcal{X})$ satisfying $\overline{\rdim}(\mathcal{X},T,d,\mu) = 0$.
     \item The rate distortion dimension depends on both metrics $d$ and measures $\mu$.
             It might look more satisfactory to define a certain ``ergodic-theoretic mean dimension'' in a purely measure-theoretic way
             and prove a corresponding ``variational principle'' for mean dimension.
             But this naive approach is impossible: 
             Let us consider an arbitrary ergodic measure-preserving system.
             By the Jewett--Krieger theorem \cite{Jewett, Krieger} we can find a dynamical system $(\mathcal{X},T)$ which has only one 
             invariant probability measure (say, $\mu$) and that $(\mathcal{X},\mu,T)$ is measure-theoretically isomorphic to the given 
             system.
             It is known that uniquely ergodic systems have zero mean dimension \cite[Theorem 5.4]{Lindenstrauss--Weiss}.
             So, if we have a ``variational principle'', the given measure-preserving system must have zero ``ergodic-theoretic 
             mean dimension''.
    \item We conjecture that the marker property assumption in Theorem \ref{main theorem} is, in fact, unnecessary.         
            The proof of Theorem \ref{main theorem} shows that the inequality
            \[  \mdim(\mathcal{X},T) \leq \inf_{d\in \mathscr{D}(\mathcal{X})} \sup_{\mu\in \mathscr{M}^T(\mathcal{X})}
                 \underline{\rdim}(\mathcal{X},T,d,\mu)  \]
            holds true for any dynamical system $(\mathcal{X},T)$.
            So the remaining problem is how to prove the reverse inequality. 
            See \S \ref{subsection: open problems and future directions} 
            for further discussions. 
    \end{enumerate}
\end{remark}

\subsection{Backgrounds}  \label{subsection: backgrounds}

Mean dimension provides a nontrivial information for infinite dimensional dynamical systems of 
infinite topological entropy.
It has several applications which cannot be touched within the framework of topological entropy
\cite{Lindenstrauss--Weiss, Lindenstrauss, Gutman--Lindenstrauss--Tsukamoto, Gutman--Tsukamoto minimal, 
Gutman--Qiao--Tsukamoto, Meyerovitch--Tsukamoto}.
As an illustration, we explain an application to the problem of \textit{embedding dynamical systems into shift actions}.

Consider the $N$-dimensional cube $C_N := [0,1]^N$ and let 
$\sigma: (C_N)^\mathbb{Z} \to (C_N)^\mathbb{Z}$ be the shift on the alphabet $C_N$ where $(C_N)^\mathbb{Z}$ is endowed with the 
standard product topology.
The mean dimension of $\left((C_N)^\mathbb{Z},\sigma\right)$ is $N$.
Given a dynamical system $(\mathcal{X},T)$, we are interested in whether we can 
embed\footnote{$f:\mathcal{X}\to (C_N)^\mathbb{Z}$ is called an \textbf{embedding of a dynamical system}
if it is a topological embedding and satisfies $f\circ T = \sigma\circ f$.} 
it into $\left((C_N)^\mathbb{Z},\sigma\right)$ or not.

Periodic points are an obvious obstruction: If $(\mathcal{X},T)$ has too many periodic points (e.g. if 
the set of fixed points has dimension greater than $N$) then it cannot be embedded into $(C_N)^\mathbb{Z}$.
Mean dimension provides another obstruction: If we can embed $(\mathcal{X},T)$ into $(C_N)^\mathbb{Z}$ then 
$\mdim(\mathcal{X},T) \leq \mdim\left((C_N)^\mathbb{Z},\sigma\right) = N$.
We can construct free (and, moreover, minimal) dynamical systems of arbitrary mean dimension
\cite[Proposition 3.5]{Lindenstrauss--Weiss}.
So there exist plenty of examples which are free but cannot be embedded into $(C_N)^\mathbb{Z}$.
(This observation by \cite{Lindenstrauss--Weiss} solved a question posed by Auslander in 1970s.)

Somehow surprisingly, a partial converse also holds.
Based on the work \cite{Lindenstrauss}, the papers 
\cite{Gutman--Tsukamoto minimal, Gutman--Qiao--Tsukamoto}\footnote{The papers
\cite{Gutman--Tsukamoto minimal, Gutman--Qiao--Tsukamoto} used the ideas of communication theory and signal processing. 
This is another manifestation of the intimate connections between mean dimension and information theory.}
proved that if $(\mathcal{X},T)$ has the marker property and satisfies $\mdim(\mathcal{X},T) < N/2$ then we 
can embed it into $(C_N)^\mathbb{Z}$.
The example in \cite{Lindenstrauss--Tsukamoto example}
shows that the condition $\mdim(\mathcal{X},T) < N/2$ is optimal.
These results demonstrate that mean dimension is certainly a reasonable measure of the ``size''
of dynamical systems.

It is classically known that the concepts of entropy and dimension are closely connected
(R\'{e}nyi \cite{Renyi}, Kolmogorov--Tihomirov \cite{Kolmogorov--Tihomirov} and 
Kawabata--Dembo \cite{Kawabata--Dembo})\footnote{It seems that these attract new interests of information theory 
researchers in the context of compressed sensing; see, e.g. \cite{Wu--Verdu} and \cite{Rezagah--Jalali--Erkip--Poor}.}.
So it is natural to expect that we can approach to mean dimension from the entropy theory viewpoint.
The first attempt of such an approach was made by Weiss and the first named author \cite{Lindenstrauss--Weiss}
by introducing the notion of \textit{metric mean dimension}.
This is a dynamical analogue of Minkowski dimension defined as follows.  
Let $(\mathcal{X},T)$ be a dynamical system with a metric $d$.
Let $S(\mathcal{X},T,d,\varepsilon)$ be its entropy detected at the resolution $\varepsilon>0$.
(See \S \ref{subsection: topological and metric mean dimensions} for the precise definition.)     
The topological entropy is given by $h_{\mathrm{top}}(T) = \lim_{\varepsilon \to 0} S(\mathcal{X},T,d,\varepsilon)$.       
We define the \textbf{upper/lower metric mean dimensions} by 
\begin{equation}  \label{eq: definition of metric mean dimension}
 \begin{split}
   & \overline{\mdim}_{\mathrm{M}}(\mathcal{X},T,d) 
    = \limsup_{\varepsilon \to 0} \frac{S(\mathcal{X},T,d,\varepsilon)}{\log(1/\varepsilon)}, \\
   &  \underline{\mdim}_\mathrm{M} (\mathcal{X}, T, d) 
    = \liminf_{\varepsilon \to 0} \frac{S(\mathcal{X}, T, d, \varepsilon)}{\log(1/\varepsilon)}.
 \end{split} 
\end{equation}            
When the upper and lower limits coincide, we denote their common value by $\mdim_{\mathrm{M}}(\mathcal{X},T,d)$.
In analogy with the well-known fact that Minkowski dimension bounds topological dimension, we have
\cite[Theorem 4.2]{Lindenstrauss--Weiss}
\begin{equation}  \label{eq: metric mean dimension bounds mean dimension}
    \mdim(\mathcal{X}, T) \leq  \underline{\mdim}_\mathrm{M} (\mathcal{X}, T, d)   \leq  \overline{\mdim}_{\mathrm{M}}(\mathcal{X},T,d).
\end{equation}    
It was also proved in \cite[Theorem 4.3]{Lindenstrauss} that
if $(\mathcal{X},T)$ has the marker property then
there exists a metric $d$ on $\mathcal{X}$ satisfying $\mdim(\mathcal{X}, T) = \underline{\mdim}_\mathrm{M}(\mathcal{X},T,d)$.
The same statement for upper metric mean dimension remained open since \cite{Lindenstrauss}.
We will establish it as a part of the proof of Theorem \ref{main theorem}. See Theorem \ref{theorem: existence of nice metrics} below.

Metric mean dimension seems to be quite useful. In particular,
it provides a powerful method to obtain upper bounds on mean dimension via (\ref{eq: metric mean dimension bounds mean dimension}).
This was used for example in \cite{Tsukamoto Brody} for solving a problem of Gromov \cite{Gromov} to estimate the mean dimension of 
a dynamical system in holomorphic curve theory.
It also has an application to the study of expansive group actions \cite{Meyerovitch--Tsukamoto}.

It seems desirable to inject ergodic theory and in particular invariant measures 
into mean dimension theory in order to broaden the scope of applications.
This motivated the authors to begin the study of our previous paper \cite{Lindenstrauss--Tsukamoto rate distortion}.
In \cite{Lindenstrauss--Tsukamoto rate distortion} we proved the following variational principle between 
metric mean dimension and rate distortion function
under a mild condition on $d$ (called
\textit{tame growth of covering numbers}; see Definition \ref{definition: tame growth of covering numbers}):
\begin{equation}   \label{eq: rate distortion variational principle}
   \begin{split}
    & \overline{\mdim}_{\mathrm{M}}(\mathcal{X}, T,d)
      = \limsup_{\varepsilon \to 0} \frac{\sup_{\mu\in \mathscr{M}^T(\mathcal{X})} R(d,\mu,\varepsilon)}{\log(1/\varepsilon)},  \\
    &  \underline{\mdim}_{\mathrm{M}}(\mathcal{X}, T, d) 
      =  \liminf_{\varepsilon \to 0} \frac{\sup_{\mu\in \mathscr{M}^T(\mathcal{X})} R(d,\mu,\varepsilon)}{\log(1/\varepsilon)}.   
   \end{split}
\end{equation}
We proved this by 
developing a rate distortion theory version of Misiurewicz's proof \cite{Misiurewicz}
of the standard variational principle (\ref{eq: variational principle}).
This is an initial step 
of our program to inject measure into mean dimension theory.
However it is still not completely satisfactory.
The equation (\ref{eq: rate distortion variational principle}) implies that we can construct $\mu\in \mathcal{M}^T(\mathcal{X})$
capturing (most of) dynamical complexity of $(\mathcal{X},T)$ at each \textit{fixed} resolution $\varepsilon >0$.
It would be nicer if we could find $\mu$ capturing the dynamical complexity over \textit{all} resolutions.
In other words, we would like to exchange the order of the limit and supremum in (\ref{eq: rate distortion variational principle}).
This naturally leads us to the following question
(this was also posed by Velozo--Velozo \cite[Section 6]{Velozo--Velozo}):

\begin{problem}  \label{problem: metric mean dimension is equal to rate distortion dimension}
  When do the following equalities hold?
  \begin{equation}  \label{eq: metric mean dimension is equal to rate distortion dimension}
      \begin{split}
       & \overline{\mdim}_{\mathrm{M}}(\mathcal{X},T,d) = \sup_{\mu\in \mathscr{M}^T(\mathcal{X})} \overline{\rdim}(\mathcal{X},T,d,\mu), \\
       &  \underline{\mdim}_{\mathrm{M}}(\mathcal{X},T,d) = \sup_{\mu \in \mathscr{M}^T(\mathcal{X})} \underline{\rdim}(\mathcal{X},T,d,\mu).
      \end{split}
  \end{equation}    
\end{problem}        

Of course, (\ref{eq:  metric mean dimension is equal to rate distortion dimension}) does not hold in general.
The following example clarifies the situation:

\begin{example}
Let $A = \{1,1/2,1/3, \dots\}\cup \{0\} \subset [0,1]$ and $\mathcal{X} = A^\mathbb{Z}$ with the shift $\sigma$.        
Define a metric $d$ on $\mathcal{X}$ by $d(x,y) = \sum_{n\in \mathbb{Z}} 2^{-|n|} |x_n-y_n|$.         
 Then it is straightforward to check that $\mdim_{\mathrm{M}}(\mathcal{X},\sigma,d)  = 1/2$ 
     and that $\rdim(\mathcal{X},\sigma,d,\mu)  = 0$ for any $\sigma$-invariant probability measure $\mu$
     (cf. \cite[Lemma 3.1]{Kawabata--Dembo}).
So (\ref{eq: metric mean dimension is equal to rate distortion dimension}) does not hold even for this simple example.
However we can push our consideration further.
Let $B=\{1,2^{-1},2^{-2},\dots\}\cup \{0\}$ and consider a homeomorphism $f:A\to B$ defined by 
$f(1/n) = 2^{-n}$ and $f(0)=0$.
Define a new metric $d'$ on $\mathcal{X}=A^\mathbb{Z}$ by 
$d'(x,y) = \sum_{n\in \mathbb{Z}} 2^{-n} |f(x_n)-f(y_n)|$.
Then we can check that
\[  \mdim_{\mathrm{M}}(\mathcal{X},\sigma,d') =
    \sup_{\mu\in\mathscr{M}^T(\mathcal{X})} \rdim(\mathcal{X},\sigma,d', \mu)
  = 0. \]
In particular (\ref{eq: metric mean dimension is equal to rate distortion dimension}) holds true for $d'$.
\end{example}

The above example shows that we can expect the equality (\ref{eq: metric mean dimension is equal to rate distortion dimension})
only for well-chosen metrics $d$.
This suggests a new viewpoint: 
\textit{We cannot stick to a fixed metric $d$.
We should regard $d$ as a variable and move both $d$ and $\mu$.}
The double variational principle (Theorem \ref{main theorem}) is a crystallization of this idea.

The proof of Theorem \ref{main theorem} also provides a partial answer to Problem 
\ref{problem: metric mean dimension is equal to rate distortion dimension} (see Corollary \ref{cor: all the dimensions coincide}):
If $(\mathcal{X},T)$ has the marker property, then there exists a metric $d$ on $\mathcal{X}$
  such that all the following quantities are equal to each other:
  \begin{equation*}
     \begin{split} 
       &\mdim(\mathcal{X},T), \quad
        \overline{\mdim}_{\mathrm{M}}(\mathcal{X},T,d) , \quad \underline{\mdim}_{\mathrm{M}}(\mathcal{X},T,d), \\
       & \sup_{\mu\in \mathscr{M}^T(\mathcal{X})} \overline{\rdim}(\mathcal{X},T,d,\mu), \quad 
        \sup_{\mu\in \mathscr{M}^T(\mathcal{X})}   \underline{\rdim}(\mathcal{X},T,d,\mu).      
     \end{split}
  \end{equation*}

\subsection{Outline of the proof of Theorem \ref{main theorem}} \label{subsection: outline of the proof of the main theorem}

Let $(\mathcal{X},T)$ be a dynamical system.
The proof of Theorem \ref{main theorem} consists of the following three steps:

\vspace{0.2cm}

\textbf{Step 1 (Metric mean dimension bounds rate distortion dimension):}
\textit{For all $d\in \mathscr{D}(\mathcal{X})$ and $\mu\in \mathscr{M}^T(\mathcal{X})$}
\begin{equation*}
   \begin{split}
     \overline{\rdim}(\mathcal{X},T,d,\mu) \leq \overline{\mdim}_{\mathrm{M}}(\mathcal{X},T,d), \\
     \underline{\rdim}(\mathcal{X},T,d,\mu) \leq \underline{\mdim}_{\mathrm{M}}(\mathcal{X},T,d). 
   \end{split}
\end{equation*}      

\textbf{Step 2 (Constructing invariant measures encoding dynamical complexity):}
\textit{For all $d\in \mathscr{D}(\mathcal{X})$}
\[  \mdim(\mathcal{X},T) \leq \sup_{\mu\in \mathscr{M}^T(\mathcal{X})} \underline{\rdim}(\mathcal{X},T,d,\mu). \]

\textbf{Step 3 (Constructing nice metrics):}
\textit{Under the marker property assumption 
\[ \exists d\in \mathscr{D}(\mathcal{X}): \quad 
   \mdim(\mathcal{X},T) = \overline{\mdim}_{\mathrm{M}}(\mathcal{X},T,d). \]
We emphasize that the marker property is used only in this step.}

\vspace{0.2cm}

Combining the above three steps, we get Theorem \ref{main theorem}.
Step 1 is easy to prove. (See \S \ref{subsection: comparison between various dynamical dimensions}.)
So the main issues are Steps 2 and 3.

\vspace{0.2cm}

\textbf{About Step 2:}
Let $d$ be a metric on $\mathcal{X}$.
In the proof of Step 2, we introduce a new notion called \textit{mean Hausdorff dimension} (denoted by 
$\mathrm{\mdim}_{\mathrm{H}}(\mathcal{X},T,d)$).
This is a dynamical version of Hausdorff dimension.
As is well known in geometric measure theory, Hausdorff dimension is more closely related to measure theory than 
Minkowski dimension. So it is natural to expect that its dynamical analogue is helpful to connect measure theory to
mean dimension\footnote{The idea of introducing mean Hausdorff dimension was partly motivated by the study of 
Kawabata--Dembo \cite[Proposition 3.2]{Kawabata--Dembo}. 
Roughly speaking, their result \cite[Proposition 3.2]{Kawabata--Dembo} corresponds to Step 2.2 for 
$(\mathcal{X}, T) = (A^\mathbb{Z},\mathrm{shift})$ with $A\subset \mathbb{R}^n$.
In other words, Step 2.2 is a generalization of their result to arbitrary dynamical systems.}.
We decompose Step 2 into two smaller steps\footnote{There also exists a small issue about the tame growth of covering numbers condition.
But we ignore it here}:

\vspace{0.2cm}

\textbf{Step 2.1 (Mean Hausdorff dimension bounds mean dimension):}
\[  \mdim(\mathcal{X},T) \leq \mdim_{\mathrm{H}}(\mathcal{X},T,d). \]

\textbf{Step 2.2 (Dynamical analogue of Frostman's lemma):}
\textit{Under a mild condition on $d$ (the tame growth of covering numbers; see Definition \ref{definition: tame growth of covering numbers})}
\[  \mdim_{\mathrm{H}}(\mathcal{X},T,d) \leq \sup_{\mu\in \mathscr{M}^T(\mathcal{X})} \underline{\rdim}(\mathcal{X},T,d,\mu). \]

\vspace{0.2cm}

Step 2.1 is a dynamical analogue of the fact that Hausdorff dimension bounds topological dimension.
Its proof is given in \S \ref{subsection: comparison between various dynamical dimensions}.
Step 2.2 is the main part of Step 2.
Frostman's lemma is a classical result in geometric measure theory.
Roughly speaking, it claims that we can construct a probability measure which obeys the \textit{scaling law}
corresponding to the Hausdorff dimension.
We establish Step 2.2 by combining Frostman's lemma with the techniques of our previous variational 
principle (\ref{eq: rate distortion variational principle}).
It roughly goes as follows.
For $n\geq 1$ we set $d_n(x,y) = \max_{0\leq k< n} d(T^k x, T^k y)$.
By using the geometric measure theory around Frostman's lemma, for each $n\geq 1$, 
we construct a (non-invariant) probability measure $\nu_n$
on $\mathcal{X}$ which captures the \textit{geometric} complexity of $(\mathcal{X},d_n)$ over all resolutions.
Consider 
\[  \mu_n  = \frac{1}{n}  \sum_{k=0}^{n-1} T^k_* \nu_n. \]
From the compactness we can choose a subsequence $\mu_{n_k}$
which converges to some invariant probability measure (say, $\mu$).
We apply to $\mu$ the rate distortion theory version of Misiurewicz's technique \cite{Misiurewicz} (developed in 
\cite{Lindenstrauss--Tsukamoto rate distortion}) and prove that 
$\mu$ captures most of the \textit{dynamical} complexity of $(\mathcal{X},T)$ over all resolutions.

\vspace{0.2cm}

\textbf{About Step 3:}
Step 3 is technically hard.
As we briefly noted in \S \ref{subsection: backgrounds}, it was already proved in \cite[Theorem 4.3]{Lindenstrauss}
that if $(\mathcal{X},T)$ has the marker property then 
\begin{equation} \label{eq: case of lower metric mean dimension: outline of the proof}
  \exists d\in \mathscr{D}(\mathcal{X}): \quad 
  \mdim(\mathcal{X},T) = \underline{\mdim}_{\mathrm{M}}(\mathcal{X},T,d).
\end{equation}
The claim of Step 3 looks very similar.
But, in fact, it is much subtler and remained to be an open problem for about 20 years since \cite{Lindenstrauss}.
It is difficult to briefly explain the ideas of the proof. (See \S \ref{subsection: background: PS theorem} for more background.)
Here we just remark that the above (\ref{eq: case of lower metric mean dimension: outline of the proof})
(with Steps 1 and 2)
are already enough for proving the equality for the lower rate distortion dimension:
\begin{equation} \label{eq: lower case of double variational principle}
   \mdim(\mathcal{X},T) = \min_{d\in \mathscr{D}(\mathcal{X})} \sup_{\mu\in \mathscr{M}^T(\mathcal{X})} 
                                  \underline{\rdim}(\mathcal{X},T,d,\mu).
\end{equation}

\subsection{Open problems and future directions} \label{subsection: open problems and future directions}

The most important open problem is to remove the marker property assumption in Theorem \ref{main theorem}:

\begin{problem} \label{problem: removing the marker property in double variational principle}
Prove the double variational principle (\ref{eq: double variational principle}) for 
all dynamical systems.
\end{problem}

As we explained in \S \ref{subsection: outline of the proof of the main theorem}, the marker property 
is used only in Step 3 of the proof of Theorem \ref{main theorem}.
So Problem \ref{problem: removing the marker property in double variational principle} reduces to 

\begin{problem}  \label{problem: removing the marker property in Step 3}
Prove that for any dynamical system $(\mathcal{X},T)$
\[ \exists d\in \mathscr{D}(\mathcal{X}): \quad 
  \mdim(\mathcal{X},T) = \overline{\mdim}_{\mathrm{M}}(\mathcal{X},T,d). \]
  We emphasize that the same problem for lower metric mean dimension is also open.
\end{problem}

Problems \ref{problem: removing the marker property in double variational principle} and 
\ref{problem: removing the marker property in Step 3} are certainly the central open problems.
But there also exists a different interesting direction.
Step 2 of the proof of Theorem \ref{main theorem} does not use the marker property assumption.
So we always have the inequality 
\[  \mdim(\mathcal{X},T) \leq \inf_{d\in \mathscr{D}(\mathcal{X})} \sup_{\mu\in \mathscr{M}^T(\mathcal{X})}
     \underline{\rdim}(\mathcal{X},T,d,\mu), \]
although we don't know whether the equality holds or not.
This implies that we can always find a ``sufficiently rich'' invariant measure $\mu$.
Study of these measures for concrete examples seems very interesting.
We begin such study in \S \ref{section: example: algebraic actions}.
Although our investigation in this direction has just started,
the result in \S \ref{section: example: algebraic actions}
seems to suggest a high potential of this research direction.
It is desirable to study geometric examples in \cite{Gromov, Tsukamoto Brody, Tsukamoto YG}
from the viewpoint of the double variational principle.

\subsection{Organization of the paper and how to read it}

\S \ref{section: preliminaries}
is a preparation of basics of mean dimension and rate distortion function.
In \S \ref{section: mean Hausdorff dimension} we introduce mean Hausdorff dimension and establish 
Step 1 and Step 2.1 of the proof of Theorem \ref{main theorem}.
In \S \ref{section: proof of existence of nice measures}
we prepare some basics of geometric measure theory and establish Step 2.2.
We establish Step 3 and complete the proof of Theorem \ref{main theorem}
in \S \ref{section: proof of existence of nice metrics}.
We study a concrete example in \S \ref{section: example: algebraic actions}.
Although the result in \S \ref{section: example: algebraic actions} is not used in the proof of Theorem \ref{main theorem}, 
hopefully it will help readers to understand various concepts in the paper.

This paper is rather lengthy. We would like to suggest readers how to read it.
\S \ref{section: proof of existence of nice metrics} is technically hard.
So it may be reasonable to concentrate on \S \ref{section: mean Hausdorff dimension} and \S \ref{section: proof of existence of nice measures}
at the first reading.
\S \ref{section: preliminaries} is a preparation for these two sections.
So, after reading only the main definitions in \S \ref{section: preliminaries} 
(topological/metric mean dimensions, mutual information and rate distortion function),
readers may skip to \S \ref{section: mean Hausdorff dimension}
and return to \S \ref{section: preliminaries} when they need the results there.
\S \ref{section: example: algebraic actions}
might help readers to improve the understanding.
So it may be nice to briefly look at it in the midst of reading \S \ref{section: mean Hausdorff dimension} and 
\S \ref{section: proof of existence of nice measures}.

\subsection*{Acknowledgment} This project was initiated at the Banff International Research Station meeting ``Mean Dimension and Sofic Entropy Meet Dynamical Systems, Geometric Analysis and Information Theory'' in 2017. We thank BIRS for hosting this workshop, and for providing ideal conditions for collaborations.

\section{Preliminaries}  \label{section: preliminaries}

\subsection{Topological and metric mean dimensions} \label{subsection: topological and metric mean dimensions}

We review basics of topological and metric mean dimensions in this subsection 
\cite{Gromov, Lindenstrauss--Weiss}.
Throughout this paper we assume that all simplicial complexes are finite (i.e. they have only 
finitely many faces).

Let $(\mathcal{X},d)$ be a compact metric space.
We introduce some metric invariants of $(\mathcal{X},d)$.
Take a positive number $\varepsilon$.  Let $f:\mathcal{X}\to \mathcal{Y}$ be a continuous map 
from $\mathcal{X}$ to some topological space $\mathcal{Y}$.
The map $f$ is said to be an \textbf{$\varepsilon$-embedding}
if $\diam f^{-1}(y) < \varepsilon$ for every $y\in \mathcal{Y}$.
We define the \textbf{$\varepsilon$-width dimension} $\widim_\varepsilon (\mathcal{X},d)$ 
as the minimum $n\geq 0$ such that there exists an $\varepsilon$-embedding 
$f:\mathcal{X}\to P$ from $\mathcal{X}$ to some $n$-dimensional simplicial complex $P$.
The \textbf{topological dimension} of $\mathcal{X}$ is given by 
$\dim \mathcal{X} = \lim_{\varepsilon\to 0} \widim_\varepsilon(\mathcal{X},d)$. 

We define the \textbf{$\varepsilon$-covering number} $\#(\mathcal{X},d,\varepsilon)$
as the minimum $n\geq 1$ such that there exists an open cover $\{U_1,\dots, U_n\}$ of $\mathcal{X}$
satisfying $\diam \, U_i < \varepsilon$ for all $1\leq i\leq n$.
We also define the \textbf{$\varepsilon$-separating number} $\#_{\mathrm{sep}}(\mathcal{X},d,\varepsilon)$
as the maximum $n\geq 1$ such that there exist $x_1,\dots, x_n\in \mathcal{X}$ satisfying 
$d(x_i,x_j) \geq \varepsilon$ for all $i\neq j$.
For $0<\delta<\varepsilon/2$
\begin{equation}   \label{eq: covering and separating numbers}
   \#_{\mathrm{sep}}(\mathcal{X},d,\varepsilon)  \leq \#(\mathcal{X},d,\varepsilon)
    \leq \#_{\mathrm{sep}}(\mathcal{X},d,\delta).
\end{equation}
The \textbf{upper and lower Minkowski dimensions (or box dimensions)} of $(\mathcal{X},d)$ are given by 
\begin{equation*}
    \begin{split}
     \overline{\dim}_{\mathrm{M}} (\mathcal{X},d) &= \limsup_{\varepsilon \to 0} \frac{\log \#(\mathcal{X},d,\varepsilon)}{\log (1/\varepsilon)},\\
     \underline{\dim}_{\mathrm{M}}(\mathcal{X},d) &= \liminf_{\varepsilon\to 0} \frac{\log \#(\mathcal{X},d,\varepsilon)}{\log (1/\varepsilon)}. 
    \end{split}
\end{equation*}

\begin{example} \label{example: widim and covering number of ball}
   Let $(V,\norm{\cdot})$ be a finite dimensional Banach space and $B_r(V)$ the closed $r$-ball around the origin ($r>0$).
   Then for $0<\varepsilon <r$
   \begin{equation} \label{eq: widim of ball}
      \widim_\varepsilon (B_r(V), \norm{\cdot}) = \dim V,   
   \end{equation}
   \begin{equation}  \label{eq: covering number of ball}
      \#(B_r(V),\norm{\cdot},\varepsilon) \geq (r/\varepsilon)^{\dim V}.
   \end{equation}
  (\ref{eq: widim of ball}) is due to Gromov \cite[\S 1.1.2]{Gromov}.
  See \cite[Appendix]{Tsukamoto deformation} for a simple proof.
  The proof of (\ref{eq: covering number of ball}) is easy: Take the Lebesgue measure $\mu$ on $V$ normalized by $\mu(B_r(V))=1$.
  Let $B_r(V) = U_1\cup \dots\cup U_n$ with $\diam\, U_i < \varepsilon$.
  Pick $x_i\in U_i$. Then $B_r(V) \subset B_\varepsilon (x_1)\cup\dots \cup B_\varepsilon (x_n)$ 
  ($B_\varepsilon(x_i)$ is the closed $\varepsilon$-ball 
  centered at $x_i$). It follows that 
  \[ 1 = \mu(B_r(V)) \leq \sum_{i=1}^n \mu(B_\varepsilon(x_i)) = n (\varepsilon/r)^{\dim V}. \]
   This shows $n\geq (r/\varepsilon)^{\dim V}$.
\end{example}

Let $(\mathcal{X},T)$ be a dynamical system with a metric $d$.
For $N\geq 1$ we define a new metric on $\mathcal{X}$ by 
\[  d_N(x,y) = \max_{0\leq n < N} d(T^n x, T^n y). \]
We define the (topological) \textbf{mean dimension} by 
\[ \mdim(\mathcal{X},T) = \lim_{\varepsilon \to 0} \left(\lim_{N\to \infty} \frac{\widim_\varepsilon(\mathcal{X},d_N)}{N}\right). \]
The limit always exists because $\widim_\varepsilon (\mathcal{X}, d_N)$ is subadditive in $N$. 
The value of $\mdim(\mathcal{X},T)$ is independent of the choice of $d$, namely it becomes a topological invariant of 
$(\mathcal{X},T)$.
We define the \textbf{entropy at the resolution $\varepsilon>0$} by 
\[ S(\mathcal{X}, T, d, \varepsilon) = \lim_{N\to \infty} \frac{\log \#(\mathcal{X},d_N,\varepsilon)}{N}, \]
where the limit exists because $\log \#(\mathcal{X},d_N,\varepsilon)$ is subadditive in $N$.
We define the upper and lower metric mean dimensions by (\ref{eq: definition of metric mean dimension})
in \S \ref{subsection: backgrounds}.

The following two theorems were proved in \cite[Theorem 4.2]{Lindenstrauss--Weiss} and \cite[Theorem 4.3]{Lindenstrauss}
respectively.

\begin{theorem} \label{theorem: metric mean dimension dominates mean dimension}
\[  \mdim(\mathcal{X},T) \leq  \underline{\mdim}_\mathrm{M} (\mathcal{X}, T, d) \leq  \overline{\mdim}_{\mathrm{M}}(\mathcal{X},T,d).\]
\end{theorem}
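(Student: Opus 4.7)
The inequality $\underline{\mdim}_\mathrm{M}(\mathcal{X},T,d) \leq \overline{\mdim}_\mathrm{M}(\mathcal{X},T,d)$ is immediate from $\liminf \leq \limsup$, so I focus on the substantive inequality $\mdim(\mathcal{X},T) \leq \underline{\mdim}_\mathrm{M}(\mathcal{X},T,d)$.

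The plan is to first prove a purely metric-geometric comparison valid for any compact metric space $(Y,\rho)$ and any $0 < \delta < \varepsilon/2$:
\[ \widim_\varepsilon(Y,\rho) \cdot \log\!\left(\tfrac{\varepsilon}{2\delta}\right) \leq \log \#(Y,\rho,\delta). \]
The prototype is Example~\ref{example: widim and covering number of ball}: for a $k$-dimensional Banach ball $B_r(V)$ with $\varepsilon < r$, the identity $\widim_\varepsilon(B_r,\norm{\cdot}) = k$ combined with $\#(B_r,\norm{\cdot},\delta) \geq (r/\delta)^k$ gives the comparison directly. In general, given an efficient $\delta$-cover of $Y$, the partition-of-unity map to its nerve provides a $2\delta$-embedding (hence an $\varepsilon$-embedding) whose codomain has dimension bounded by the order of the cover; a counting argument combined with the ball lemma (\ref{eq: covering number of ball}) controls this order in terms of $\#(Y,\rho,\delta)$.

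With the geometric inequality in hand, I apply it to $Y = \mathcal{X}$ and $\rho = d_N$, then divide by $N\log(\varepsilon/(2\delta))$ to obtain
\[ \frac{\widim_\varepsilon(\mathcal{X},d_N)}{N} \leq \frac{\log\#(\mathcal{X},d_N,\delta)/N}{\log(\varepsilon/(2\delta))}. \]
Letting $N\to\infty$ (both limits exist by subadditivity) yields
\[ \lim_{N\to\infty} \frac{\widim_\varepsilon(\mathcal{X},d_N)}{N} \leq \frac{S(\mathcal{X},T,d,\delta)}{\log(\varepsilon/(2\delta))}. \]
The left-hand side is independent of $\delta$, so I may take $\liminf$ as $\delta \to 0$ on the right. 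Since $\log(\varepsilon/(2\delta)) = \log(1/\delta) - \log(2/\varepsilon)$ is asymptotic to $\log(1/\delta)$ as $\delta \to 0$ with $\varepsilon$ fixed, the resulting bound is exactly $\underline{\mdim}_\mathrm{M}(\mathcal{X},T,d)$. Finally, letting $\varepsilon\to 0$ on the left recovers $\mdim(\mathcal{X},T)$ (the limit being monotone), completing the chain.

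The main obstacle is the metric-geometric comparison in the first step: one must show that high $\varepsilon$-width dimension forces proportionally many $\delta$-balls for $\delta \ll \varepsilon$. The mechanism --- partition-of-unity into the nerve of a minimal cover, plus Gromov's ball formula --- is transparent for Banach balls, but in the general compact metric setting one has to control the \emph{order} of the cover simultaneously with its cardinality. This can be done either by iterating a Besicovitch-style refinement or by a direct combinatorial argument matching the intended dimensional growth against the covering growth; this is where the substantive work of the proof lies.
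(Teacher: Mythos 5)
Your overall plan---reduce to a scale-by-scale geometric inequality relating $\widim_\varepsilon$ to covering numbers at a finer scale $\delta$, apply it to $(\mathcal{X},d_N)$, divide by $N$, and pass to limits---is sound and is in the spirit of Lindenstrauss--Weiss. But the paper's own route, via Proposition~\ref{prop: mean Hausdorff dimension dominates mean dimension}, is genuinely different: it interposes mean Hausdorff dimension and proves $\mdim \leq \mdim_\mathrm{H} \leq \underline{\mdim}_\mathrm{M}$. The second inequality is trivial, and the substance is in $\mdim \leq \mdim_\mathrm{H}$, proved by the ``contraction to skeleton'' machinery of Claims~\ref{claim: contraction to skeleton}--\ref{claim: construction of Lipschitz map}: one Lipschitz-embeds $(\mathcal{X},d)$ in a fixed cube $[0,1]^M$, observes that small Hausdorff content forces the image to retract (away from a generic point) into a low-dimensional skeleton, and crucially exploits the dynamical structure --- $\varphi_N = (\varphi, \varphi\circ T, \dots)$ has the same Lipschitz constant $L$ as $\varphi$ and target cube of dimension $MN$, so the constraint $4^{MN} L^{N(s+\tau)}\delta^{N\tau} < 1$ factors as a per-iterate condition $4^M L^{s+\tau}\delta^\tau < 1$.

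Two concrete gaps. First, the precise lemma you state, $\widim_\varepsilon(Y,\rho)\cdot\log(\varepsilon/(2\delta)) \leq \log\#(Y,\rho,\delta)$ for all $\delta < \varepsilon/2$, is almost certainly too strong for a general compact metric space. What the skeleta argument actually yields for a general $(Y,\rho)$ is $\widim_\varepsilon(Y) \leq \dim_\mathrm{H}(Y,\rho,\delta) + \tau(\delta)$ where $\tau(\delta) \sim \bigl(\#(Y,\rho,\varepsilon/4)\log 4 + s\log L(\varepsilon)\bigr)/\log(1/\delta)$; since $\#(Y,\varepsilon/4)$ can dwarf $\widim_\varepsilon(Y)\log(1/\varepsilon)$, the error term need not be absorbed into $\log(\varepsilon/(2\delta))$ vs. $\log(1/\delta)$. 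What rescues the dynamical statement is precisely that, for $(\mathcal{X},d_N)$, the error is $O(1)$ per iterate and disappears after dividing by $N$. Second, and more fundamentally, the mechanism you sketch for the geometric step---nerve of a $\delta$-cover plus ``Besicovitch-style'' control of its order---does not work in a general compact metric space: the order of a $\delta$-cover is not bounded by any function of the covering number, Besicovitch-type theorems are special to Euclidean/doubling spaces, and the nerve map's codomain dimension is therefore useless. The bound on the target complex dimension in the correct argument does not come from the order of the cover at all but from the smallness of the $s$-Hausdorff content of the Lipschitz image in the cube, which permits pushing off a generic point and retracting into the $\lfloor s\rfloor$-skeleton (Claim~\ref{claim: contraction to skeleton}~(3)). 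You have correctly located where the hard work lives, but the two candidate mechanisms you propose for it are dead ends; study Claims~\ref{claim: contraction to skeleton}--\ref{claim: construction of Lipschitz map} and the way the induction in $N$ neutralizes the $4^{MN}$ factor.
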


\begin{theorem} \label{theorem: lower metric mean dimension coincides with mean dimension}
If $(\mathcal{X},T)$ has the marker property then 
there exists a metric $d$ on $\mathcal{X}$ compatible with the topology satisfying 
\[    \underline{\mdim}_{\mathrm{M}} (\mathcal{X},T, d) = \mdim(\mathcal{X},T). \]
\end{theorem}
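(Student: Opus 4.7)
The task is to construct a compatible metric $d$ achieving $\underline{\mdim}_M(\mathcal{X},T,d) \le \mdim(\mathcal{X},T)$; the reverse inequality is automatic from Theorem \ref{theorem: metric mean dimension dominates mean dimension}. My plan is to build $d$ as a weighted sum of pullbacks of simplicial-complex metrics along a carefully chosen sequence of approximate embeddings, using the marker property to keep these embeddings essentially equivariant.

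First, fix an auxiliary compatible metric $\rho$ on $\mathcal{X}$. By definition of mean dimension, I choose sequences $\varepsilon_k \downarrow 0$ (say $\varepsilon_k = 2^{-k}$) and $N_k \uparrow \infty$ so fast that
\[
\widim_{\varepsilon_k}(\mathcal{X}, \rho_{N_k}) \leq \bigl(\mdim(\mathcal{X},T) + 1/k\bigr)\, N_k.
\]
Hence for each $k$ there is an $\varepsilon_k$-embedding $\varphi_k : (\mathcal{X}, \rho_{N_k}) \to P_k$ into a simplicial complex $P_k$ of dimension at most $(\mdim(\mathcal{X},T)+1/k)N_k$. At this point $\varphi_k$ has nothing to do with the dynamics; the marker property is what lets me fix this.

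Next, I invoke the marker property to choose an open set $U_k$ with $\mathcal{X} = \bigcup_n T^{-n} U_k$ and $U_k \cap T^{-n} U_k = \emptyset$ for $1 \le n \le N_k$. Choose a continuous return-time-like function $\tau_k : \mathcal{X} \to \{0,1,\dots,N_k-1\}$ adapted to this Rokhlin-type structure (for instance built from a partition of unity subordinate to $\{T^{-n}U_k\}$) so that along orbits $\tau_k$ increments by $1$ except when an orbit enters $U_k$. Using $\tau_k$, I define a continuous map $\Phi_k : \mathcal{X} \to P_k^{\mathbb Z}$ by sending $x$ to the bi-infinite sequence whose $n$-th coordinate is $\varphi_k(T^{n-\tau_k(T^n x)} x)$, i.e.\ the image under $\varphi_k$ of the ``anchor point'' of the orbit segment containing $T^n x$. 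By construction $\Phi_k$ is almost $\sigma$-equivariant; more importantly, pulling back the standard product metric on $P_k^{\mathbb Z}$ (with summable weights) along $\Phi_k$ gives a continuous pseudo-metric $d_k$ on $\mathcal{X}$ such that for each $N$ one has the key estimate
\[
\#(\mathcal{X}, (d_k)_N, \varepsilon_k) \leq C_k \cdot \varepsilon_k^{-(\mdim(\mathcal{X},T)+1/k)(N+N_k)}
\]
(a worst case cover of the image of $\Phi_k$ inside a product of cells in $P_k^{\mathbb Z}$). Finally, I set
\[
d(x,y) = \rho(x,y) + \sum_{k=1}^{\infty} c_k\, d_k(x,y),
\]
with positive weights $c_k$ chosen small enough that the series converges uniformly, that $d$ generates the topology (the $\rho$-term handles this), and that at scale $\varepsilon_k$ the contribution of $d_j$ for $j \ne k$ is negligible; concretely, $c_k$ will be chosen much smaller than $\varepsilon_k$, and the tail $\sum_{j>k} c_j$ much smaller than the intended resolution. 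A computation along the subsequence $\varepsilon = c_k \varepsilon_k$ then yields $\log \#(\mathcal{X},d_N, \varepsilon_k)/\bigl(N\log(1/\varepsilon)\bigr) \to \mdim(\mathcal{X},T)$, so $\underline{\mdim}_M(\mathcal{X},T,d) \le \mdim(\mathcal{X},T)$.

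The main obstacle is the second paragraph's construction: extracting from the pointwise $\varepsilon_k$-embedding $\varphi_k$ a genuinely \emph{dynamical} cover that works on long $T$-orbit segments without dimension blow-up. The marker property is exactly the right tool — it provides a uniform scale $N_k$ on which orbits can be broken into independent pieces — but one must delicately handle the boundary behaviour of $\tau_k$ (where $\Phi_k$ can fail to be equivariant or even continuous) so that the estimate on $\#(\mathcal{X},(d_k)_N,\varepsilon_k)$ really holds. Calibrating the three interlocking parameter sequences $\varepsilon_k$, $N_k$ and $c_k$ to match the scales at which the $d_k$'s are active, while ensuring the sum still generates the topology, is the most delicate part of the bookkeeping.
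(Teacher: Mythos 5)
Your high-level architecture---extract $\varepsilon_k$-embeddings into low-dimensional simplicial complexes, use the marker property to ``dynamize'' them into orbit-long maps, pull back the polyhedral metric, and sum with calibrated weights so that each resolution scale $\varepsilon_k$ is dominated by a single summand $d_k$---is the right kind of strategy and matches the spirit of the paper's Section~\ref{section: proof of existence of nice metrics}. But the critical step is broken, and the fix you defer to ``delicate handling of boundary behaviour'' is in fact a genuine new idea, not bookkeeping. You propose a \emph{continuous} return-time function $\tau_k:\mathcal{X}\to\{0,1,\dots,N_k-1\}$ that increments by $1$ along orbits except at returns to $U_k$. A continuous map into a finite discrete set is locally constant, hence constant on connected components; if $\mathcal{X}$ is connected (as for free minimal systems on connected spaces, the basic examples covered by the marker property) this forces $\tau_k$ to be constant, contradicting the required increment relation (which needs $\tau_k$ to take at least two values, since $U_k\cap T^{-1}U_k=\emptyset$). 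So no partition-of-unity trick will make $\tau_k$, and hence $\Phi_k$, continuous: the jump of the anchor point $T^{n-\tau_k(T^n x)}x$ across block boundaries is an unavoidable discontinuity of the map as you have set it up, and without continuity of $\Phi_k$ the pullback $d_k$ is not even a continuous pseudometric, so $d=\rho+\sum c_k d_k$ need not be compatible.

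The paper resolves precisely this obstruction with two mechanisms your proposal lacks. First, the dynamical tiling construction (\S\ref{subsection: dynamical tiling construction}, Lemma~\ref{lemma: dynamical tiling}) replaces your integer-valued $\tau_k$ with a continuous $\varphi:\mathcal{X}\to[0,1]$ and a Voronoi diagram in $\mathbb{R}^2$, producing intervals $I_\varphi(x,a)$ that depend \emph{continuously} on $x$ in the Hausdorff topology (Lemma~\ref{lemma: dynamical tiling}(3)), rather than a piecewise-constant return time. Second, even once the block decomposition is continuous, one must blend the ``block'' map into a static map near block boundaries so that the resulting $f_n$ is globally continuous; this is the role of the bump function $\alpha$ and the simplicial join $P_n * Q_n$ in Condition~\ref{condition: dynamical PS theorem}(4). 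Without an analogue of this blending, your covering estimate $\#(\mathcal{X},(d_k)_N,\varepsilon_k)\le C_k\,\varepsilon_k^{-(\mdim+1/k)(N+N_k)}$ also does not follow: coordinates near a block boundary lie in the image of the blended map rather than a single copy of $P_k$, and one must separately control that contribution via a bound on the \emph{density} of boundaries per unit length (Lemma~\ref{lemma: dynamical tiling}(2) in the paper); your outline has no such control. In short, the proposal correctly identifies what is to be done but is missing the two central constructions (continuous Voronoi tiling, simplicial-join blending) that make the program work.
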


\begin{example}  \label{example: mean dimension of Hilbert cube}
 Let $\sigma: [0,1]^\mathbb{Z}\to [0,1]^\mathbb{Z}$ be the shift on the alphabet $[0,1]$ (the unit interval).
 We define a metric $d$ on it by $d(x,y) = \sum_{n\in \mathbb{Z}} 2^{-|n|} |x_n-y_n|$.
 Then 
 \[ \mdim\left([0,1]^\mathbb{Z},\sigma\right) = \mdim_{\mathrm{M}}\left([0,1]^\mathbb{Z},\sigma, d\right) = 1.\]
 The only nontrivial point is the lower bound $\mdim\left([0,1]^\mathbb{Z},\sigma\right) \geq 1$, 
 which follows from \eqref{eq: widim of ball}; cf.\ also \cite[Proposition 3.3]{Lindenstrauss--Weiss}. 
\end{example}

\subsection{Mutual information}  \label{subsection: mutual information}

Here we prepare some basics of mutual information 
\cite[Chapter 2]{Cover--Thomas}.
Throughout this subsection we fix a probability space $(\Omega, \mathbb{P})$ and assume that 
all random variables are defined on it.

Let $X$ and $Y$ be two random variables taking values in some measurable spaces $\mathcal{X}$ and $\mathcal{Y}$
respectively.
We want to define their \textbf{mutual information} $I(X;Y)$, which measures the amount of information shared by 
both $X$ and $Y$.
If $\mathcal{X}$ and $\mathcal{Y}$ are finite sets\footnote{We always assume that the $\sigma$-algebra of 
a finite set is the largest one (the set of all subsets).}, then we set 
\begin{equation} \label{eq: mutual information}
    I(X;Y) = H(X)+H(Y) -H(X,Y) = H(X) - H(X|Y),
\end{equation}    
where $H(X|Y)$ is the conditional entropy of $X$ given $Y$.
With the convention that $0\log(0/a)= 0$ for all $a\geq 0$, we can also write this as 
\begin{equation}  \label{eq: mutual information 2}
   I(X;Y)    = \sum_{x\in \mathcal{X}, y\in \mathcal{Y}} \mathbb{P}(X=x,Y=y) 
           \log \frac{\mathbb{P}(X=x, Y=y)}{\mathbb{P}(X=x) \mathbb{P}(Y=y)}.
\end{equation}
In general we proceed as follows.
Take finite measurable partitions $\mathcal{P} = \{P_1,\dots, P_M\}$ and 
$\mathcal{Q} = \{Q_1,\dots, Q_N\}$ of $\mathcal{X}$ and $\mathcal{Y}$ respectively.
For $x\in \mathcal{X}$ and $y\in \mathcal{Y}$ we set 
$\mathcal{P}(x) = P_m$ and $\mathcal{Q}(y) = Q_n$ where $x\in P_m$ and $y\in Q_n$
Then we can consider the mutual information $I(\mathcal{P}\circ X; \mathcal{Q}\circ Y)$
defined by (\ref{eq: mutual information}) because $\mathcal{P}\circ X$ and $\mathcal{Q}\circ Y$
take only finitely many values.
We define $I(X;Y)$ as the supremum of $I(\mathcal{P}\circ X; \mathcal{Q}\circ Y)$
over all finite measurable partitions $\mathcal{P}$ and $\mathcal{Q}$ of $\mathcal{X}$ and $\mathcal{Y}$.
This definition is compatible with (\ref{eq: mutual information}) when $\mathcal{X}$ and $\mathcal{Y}$ are finite sets
\footnote{We can show this by proving the data-processing inequality (Lemma \ref{lemma: data-processing inequality})
for the quantity defined by (\ref{eq: mutual information}) in the case that $\mathcal{X}$ and $\mathcal{Y}$ are finite sets.
See \cite[Section 2.8]{Cover--Thomas}.}.

We gather properties of mutual information required in the proof of the double variational principle
(Theorem \ref{main theorem}) below.
They are not used in \S \ref{section: mean Hausdorff dimension}.
So readers may postpone to read the rest of this subsection until they come to \S \ref{section: proof of existence of nice measures}.

\begin{lemma}[Data-Processing inequality]  \label{lemma: data-processing inequality}
Let $X$ and $Y$ be random variables taking values in measurable spaces $\mathcal{X}$ and $\mathcal{Y}$.
If $f:\mathcal{Y}\to \mathcal{Z}$ is a measurable map, then $I(X; f(Y)) \leq I(X;Y)$.
\end{lemma}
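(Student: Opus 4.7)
The plan is to reduce directly to the definition of $I(X;Y)$ as a supremum over finite measurable partitions, exploiting the fact that measurable partitions of $\mathcal{Z}$ pull back through $f$ to measurable partitions of $\mathcal{Y}$. The intuition is that any finite partition of $\mathcal{Z}$ we might use to ``probe'' $f(Y)$ is no finer than the corresponding pulled-back partition used to probe $Y$, so the supremum defining $I(X;f(Y))$ ranges over a smaller family of test partitions than the one defining $I(X;Y)$.

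More precisely, first I would fix finite measurable partitions $\mathcal{P} = \{P_1,\dots,P_M\}$ of $\mathcal{X}$ and $\mathcal{Q} = \{Q_1,\dots,Q_N\}$ of $\mathcal{Z}$. Since $f:\mathcal{Y}\to \mathcal{Z}$ is measurable, the collection $f^{-1}\mathcal{Q} := \{f^{-1}(Q_1),\dots,f^{-1}(Q_N)\}$ is a finite measurable partition of $\mathcal{Y}$. The key observation is the pointwise identity
\[
  \mathcal{Q}(f(y)) = (f^{-1}\mathcal{Q})(y) \qquad (y\in \mathcal{Y}),
\]
which means the two random variables $\mathcal{Q}\circ f(Y)$ and $(f^{-1}\mathcal{Q})\circ Y$ are essentially the same (modulo the bijection $Q_i \leftrightarrow f^{-1}(Q_i)$ on their value sets). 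Using the finite formula \eqref{eq: mutual information 2} applied to the pairs $(\mathcal{P}\circ X,\, \mathcal{Q}\circ f(Y))$ and $(\mathcal{P}\circ X,\, (f^{-1}\mathcal{Q})\circ Y)$, we obtain
\[
  I\!\left(\mathcal{P}\circ X;\, \mathcal{Q}\circ f(Y)\right) = I\!\left(\mathcal{P}\circ X;\, (f^{-1}\mathcal{Q})\circ Y\right).
\]

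Taking the supremum over $\mathcal{P}$ and $\mathcal{Q}$, the right-hand side is bounded above by $\sup_{\mathcal{P},\mathcal{R}} I(\mathcal{P}\circ X;\, \mathcal{R}\circ Y) = I(X;Y)$, where now $\mathcal{R}$ ranges over \emph{all} finite measurable partitions of $\mathcal{Y}$, not merely those of the form $f^{-1}\mathcal{Q}$. The left-hand side, after taking the supremum, equals $I(X;f(Y))$ by definition. Combining these two yields the desired inequality $I(X;f(Y)) \leq I(X;Y)$.

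There is essentially no hard step here; the only thing to verify carefully is that $f^{-1}\mathcal{Q}$ is a genuine finite measurable partition of $\mathcal{Y}$ (which is immediate from measurability of $f$ and the fact that $\mathcal{Q}$ is a partition) and that the discrete-case definition \eqref{eq: mutual information} is invariant under relabelling the value set of a random variable by a bijection. The only conceptual subtlety is that one must define mutual information through the supremum over finite partitions in order to make this pullback argument work for arbitrary measurable spaces $\mathcal{Y}$ and $\mathcal{Z}$; had we tried to argue via some explicit Radon--Nikodym density $d\mathbb{P}_{X,f(Y)}/d(\mathbb{P}_X\otimes \mathbb{P}_{f(Y)})$, we would have had to push forward measures through $f$, which is strictly more technical and offers no advantage here.
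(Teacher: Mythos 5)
Your proof is correct and is essentially the argument the paper has in mind when it writes that the lemma ``immediately follows from the definition''; you have simply made the pullback-of-partitions step explicit, which the paper leaves implicit.
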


\begin{proof}
This immediately follows from the definition.
A nontrivial point is that the above definition is compatible with (\ref{eq: mutual information}) for discrete random 
variables.
\end{proof}

\begin{lemma}  \label{lemma: law convergence implies the convergence of mutual information}
Let $\mathcal{X}$ and $\mathcal{Y}$ be finite sets and $(X_n,Y_n)$ a sequence of random variables taking values in 
$\mathcal{X}\times \mathcal{Y}$. 
If $(X_n,Y_n)$ converges to some $(X,Y)$ in law, then $I(X_n;Y_n)$ converges to $I(X;Y)$.
\end{lemma}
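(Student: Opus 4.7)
The plan is to reduce the statement to the continuity of Shannon entropy on a finite alphabet via the identity $I(X;Y) = H(X) + H(Y) - H(X,Y)$ recorded in \eqref{eq: mutual information}. Since $\mathcal{X}$ and $\mathcal{Y}$ are finite, convergence in law of $(X_n, Y_n)$ is equivalent to pointwise convergence of the joint probability mass function, i.e.\ $\mathbb{P}(X_n = x, Y_n = y) \to \mathbb{P}(X = x, Y = y)$ for every $(x,y) \in \mathcal{X} \times \mathcal{Y}$. Summing this over $y$ (respectively $x$) yields the convergence of the marginal distributions of $X_n$ to that of $X$ and of $Y_n$ to that of $Y$.

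Next, I would invoke the elementary fact that the function $\varphi(p) = -p \log p$, extended by $\varphi(0) = 0$, is continuous on $[0,1]$. Consequently the Shannon entropy $H(Z) = \sum_z \varphi(\mathbb{P}(Z = z))$ of a random variable $Z$ taking finitely many values is a continuous function of its distribution (as a finite sum of continuous functions of the masses). Applying this separately on the alphabets $\mathcal{X}$, $\mathcal{Y}$, and $\mathcal{X} \times \mathcal{Y}$, the convergences of the marginal and joint distributions give
\[
  H(X_n) \to H(X), \qquad H(Y_n) \to H(Y), \qquad H(X_n, Y_n) \to H(X, Y).
\]

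Combining these three limits with $I(X_n; Y_n) = H(X_n) + H(Y_n) - H(X_n, Y_n)$ and the analogous identity for $(X,Y)$ yields the desired conclusion $I(X_n; Y_n) \to I(X; Y)$. There is no real obstacle here: the only subtlety is ensuring that probabilities which tend to zero do not cause the entropy terms to blow up, but this is handled automatically by the continuous extension $\varphi(0) = 0$, which is exactly the convention $0 \log(0/a) = 0$ already adopted in the paper.
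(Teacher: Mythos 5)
Your proof is correct and takes essentially the same route as the paper, which simply remarks that the lemma ``follows from \eqref{eq: mutual information}'' (the explicit finite-alphabet formula for $I(X;Y)$). Your decomposition $I(X;Y)=H(X)+H(Y)-H(X,Y)$ is the equivalent identity \eqref{eq: mutual information}, and reducing to continuity of $-p\log p$ is a clean way to dispatch the zero-mass edge cases that would otherwise require a small argument when applying the direct $\sum p\log(p/qr)$ formula.
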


\begin{proof}
This follows from (\ref{eq: mutual information}).
\end{proof}

\begin{lemma}[Subadditivity of mutual information] \label{lemma: subadditivity of mutual information}
Let $X,Y,Z$ be random variables taking values in finite sets $\mathcal{X},\mathcal{Y},\mathcal{Z}$ respectively.
Suppose $X$ and $Y$ are conditionally independent given $Z$, namely for every $x\in \mathcal{X}$, $y\in \mathcal{Y}$ and
$z\in \mathcal{Z}$ with $\mathbb{P}(Z=z)\neq 0$ we have 
\[  \mathbb{P}(X=x, Y=y|Z=z) = \mathbb{P}(X=x|Z=z) \mathbb{P}(Y=y|Z=z). \]
Then $I(X, Y; Z) \leq I(X;Z) + I(Y;Z)$.
\end{lemma}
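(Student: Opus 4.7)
Since $X$, $Y$, $Z$ take values in finite sets, the plan is to argue purely via the discrete entropy formulation given by \eqref{eq: mutual information}, writing every mutual information as a combination of entropies and manipulating them directly. The first step will be to translate the conditional independence hypothesis into an entropy identity. Starting from
\[ \mathbb{P}(X=x, Y=y \mid Z=z) = \mathbb{P}(X=x \mid Z=z)\, \mathbb{P}(Y=y \mid Z=z) \]
for every $z$ with $\mathbb{P}(Z=z) \neq 0$, a direct substitution into the definition of $H(X, Y \mid Z)$ and splitting the logarithm yields $H(X, Y \mid Z) = H(X \mid Z) + H(Y \mid Z)$, or equivalently
\[ H(X, Y, Z) = H(X, Z) + H(Y, Z) - H(Z). \]
The convention $0 \log (0/a) = 0$ already adopted in the paper takes care of terms where a conditional probability is undefined.

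Once this identity is in hand, the second step is a routine algebraic manipulation. Expanding each mutual information as $I(U;V) = H(U) + H(V) - H(U,V)$, one computes
\[ I(X;Z) + I(Y;Z) - I(X,Y;Z) = H(X) + H(Y) - H(X,Y) + \bigl(H(X,Y,Z) - H(X,Z) - H(Y,Z) + H(Z)\bigr). \]
The parenthesized expression vanishes by the identity from Step 1, leaving $H(X) + H(Y) - H(X,Y) = I(X;Y)$. Since mutual information is always nonnegative (a consequence of Jensen's inequality or the log-sum inequality applied to \eqref{eq: mutual information 2}), this gives $I(X,Y;Z) \leq I(X;Z) + I(Y;Z)$, with equality slack equal to $I(X;Y)$.

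I do not expect any real obstacle; the conceptual content sits entirely in the entropy identity $H(X,Y,Z) = H(X,Z) + H(Y,Z) - H(Z)$, after which everything is bookkeeping with $H(\cdot)$ on finite sets. An equally clean alternative, which I would mention only if brevity is preferred, is to use the chain rule $I(X,Y;Z) = I(X;Z) + I(Y;Z \mid X)$ together with the identity $I(Y;Z) + I(Y;X \mid Z) = I(Y;X) + I(Y;Z \mid X)$; conditional independence forces $I(Y;X \mid Z) = 0$, so $I(Y;Z \mid X) = I(Y;Z) - I(Y;X) \leq I(Y;Z)$, and the inequality follows.
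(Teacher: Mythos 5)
Your proof is correct and follows essentially the same route as the paper: translate conditional independence into the entropy identity $H(X,Y\mid Z) = H(X\mid Z) + H(Y\mid Z)$ and then invoke subadditivity of entropy $H(X,Y) \le H(X) + H(Y)$, phrased in your case as nonnegativity of $I(X;Y)$. The only cosmetic difference is that you identify the exact slack $I(X;Y)$ in the inequality, whereas the paper applies the subadditivity bound directly without naming it.
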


\begin{proof}
$I(X,Y;Z) = H(X,Y)-H(X,Y|Z)$. From the conditional independence 
$H(X,Y|Z) = H(X|Z) + H(Y|Z)$. Hence 
\begin{equation*}
   \begin{split}
     I(X,Y;Z) & = H(X,Y)-H(X|Z) - H(Y|Z) \\
               &\leq H(X)+H(Y) - H(X|Z)-H(Y|Z) \\
               &=   I(X;Z) + I(Y;Z).   
   \end{split}
\end{equation*}   
Here we have used $H(X,Y) \leq H(X)+H(Y)$.
\end{proof}

Let $X$ and $Y$ be random variables taking values in finite sets $\mathcal{X}$ and $\mathcal{Y}$.
We set $\mu(x) = \mathbb{P}(X=x)$ and $\nu(y|x) = \mathbb{P}(Y=y|X=x)$ for 
$x\in \mathcal{X}$ and $y\in \mathcal{Y}$.
(The conditional probability mass function $\nu(y|x)$ is defined only for $x\in \mathcal{X}$ with $\mathbb{P}(X=x)\neq 0$.)
The mutual information $I(X;Y)$ is determined by the distribution of $(X,Y)$, which is given by $\mu(x)\nu(y|x)$.
It will be convenient for us to write $I(X;Y)$ sometimes as $I(\mu, \nu)$.

\begin{lemma}[Concavity/convexity of mutual information] \label{lemma: concavity/convexity of mutual information}
In the above setting
$I(\mu,\nu)$ is a concave function of $\mu(x)$ for fixed $\nu(y|x)$ and a convex function of $\nu(y|x)$ for fixed $\mu(x)$.
Namely for $0\leq t\leq 1$
\begin{equation*}
    \begin{split}
      I\left((1-t)\mu_1+t\mu_2, \nu\right) &\geq (1-t) I(\mu_1,\nu) + t I(\mu_2,\nu), \\
     I\left(\mu, (1-t)\nu_1+t\nu_2\right)   &\leq (1-t) I(\mu,\nu_1) + t I(\mu,\nu_2). 
    \end{split}
\end{equation*}    
\end{lemma}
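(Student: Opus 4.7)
The plan is to handle the two parts with two different representations of mutual information. For the concavity in $\mu$ with $\nu$ fixed, I would write $I(X;Y) = H(Y) - H(Y|X)$, where $Y$ has distribution $p^\mu_\nu(y) = \sum_{x} \mu(x)\nu(y|x)$. This distribution depends affinely on $\mu$, so $\mu \mapsto H(Y)$ is the composition of the concave Shannon entropy with an affine map, hence concave. The conditional entropy term $H(Y|X) = \sum_x \mu(x) H(\nu(\cdot|x))$ is linear in $\mu$ once $\nu$ is held fixed, so subtracting it preserves concavity and yields the first inequality. The concavity of Shannon entropy on a finite alphabet is itself a one-line consequence of the convexity of $t \mapsto t\log t$.

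For the convexity in $\nu$ with $\mu$ fixed, my plan is to use the Kullback--Leibler representation
\[
   I(\mu,\nu) = \sum_{x \in \mathcal{X}} \mu(x)\, D\!\left(\nu(\cdot|x)\,\middle\|\, p^\mu_\nu\right), \qquad D(p\|q) := \sum_y p(y)\log\frac{p(y)}{q(y)},
\]
which follows directly from \eqref{eq: mutual information 2}. Given $\nu_1,\nu_2$ and $t \in [0,1]$, set $\nu := (1-t)\nu_1 + t\nu_2$; the associated marginal is $p^\mu_\nu = (1-t)p^\mu_{\nu_1} + t p^\mu_{\nu_2}$. By the log-sum inequality (equivalently, the joint convexity of $(p,q) \mapsto D(p\|q)$), for each $x$,
\[
   D\!\left(\nu(\cdot|x)\,\middle\|\, p^\mu_\nu\right) \leq (1-t)\,D\!\left(\nu_1(\cdot|x)\,\middle\|\, p^\mu_{\nu_1}\right) + t\,D\!\left(\nu_2(\cdot|x)\,\middle\|\, p^\mu_{\nu_2}\right).
\]
Multiplying by $\mu(x)$ and summing over $x$ delivers the convexity statement.

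Honestly I do not expect any substantive obstacle here; both assertions are classical (Cover--Thomas \cite{Cover--Thomas}, Thm.~2.7.4). The only non-formal input is the log-sum inequality, which is itself an immediate consequence of the convexity of $t\mapsto t\log t$ via Jensen. If one preferred to bypass even that, an alternative route is available: introduce an auxiliary Bernoulli$(t)$ random variable $Z$ independent of $X$ and apply the data-processing inequality (Lemma~\ref{lemma: data-processing inequality}) together with the chain rule for mutual information to a suitable mixture, but the KL-divergence argument above is more direct and avoids introducing extra randomness.
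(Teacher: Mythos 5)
Your proof is correct and follows essentially the same route as the paper's: for the concavity part both use the decomposition $I = H(Y) - H(Y|X)$ with concavity of Shannon entropy and linearity of $H(Y|X)$ in $\mu$, and for the convexity part both reduce to the log-sum inequality. The only cosmetic difference is that you package the convexity argument via the identity $I(\mu,\nu) = \sum_x \mu(x)\, D(\nu(\cdot|x)\,\|\,p^\mu_\nu)$ and invoke joint convexity of KL divergence, whereas the paper expands the double sum and applies the log-sum inequality to each summand directly; these are the same computation organized with different bookkeeping.
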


\begin{proof}
See \cite[Theorem 2.7.4]{Cover--Thomas} for the detailed proof.
First we prove the concavity.
\[ I(\mu,\nu) = I(X;Y) = H(Y) -H(Y|X). \]
$H(Y)$ is a concave function of $\mu(x)$ for fixed $\nu(y|x)$
(since the Shannon entropy is a concave function of distribution) and $H(Y|X)$ is a linear function of $\mu(x)$.
So $I(\mu,\nu)$ is a concave function of $\mu(x)$.

Next we prove the convexity.
From the convexity of $\phi(t) := t\log t$ 
\[ \phi\left(\frac{a+a'}{b+b'}\right) \leq \frac{b}{b+b'}\phi\left(\frac{a}{b}\right) + \frac{b'}{b+b'}\phi\left(\frac{a'}{b'}\right) \] 
for positive $a,a',b,b'$. This leads to the \textit{log sum inequality}:
\begin{equation}  \label{eq: log sum inequality}
   (a+a') \log \frac{a+a'}{b+b'} \leq a\log \frac{a}{b}  + a' \log \frac{a'}{b'}. 
\end{equation} 
Set $\sigma_i (y) = \sum_{x\in \mathcal{X}}\mu(x)\nu_i(y|x)$ for $i=1,2$.
\begin{equation*}
   \begin{split}
  I\left(\mu, (1-t)\nu_1+ t\nu_2\right) =   \sum_{x,y} &\left\{(1-t)\mu(x)\nu_1(y|x) + t\mu(x) \nu_2(y|x)\right\}  \\
     &\times \log \frac{(1-t)\mu(x)\nu_1(y|x) + t\mu(x) \nu_2(y|x)}{(1-t)\mu(x)\sigma_1(y) + t\mu(x)\sigma_2(y)}. 
   \end{split}
\end{equation*}   
Apply (\ref{eq: log sum inequality}) to each summand: This is bounded by
\[ \sum_{x,y} (1-t)\mu(x)\nu_1(y|x) \log \frac{\mu(x)\nu_1(y|x)}{\mu(x)\sigma_1(y)}
   + \sum_{x,y} t\mu(x)\nu_2(y|x) \log\frac{\mu(x)\nu_2(y|x)}{\mu(x)\sigma_2(y)},   \]
  which is equal to $(1-t) I(\mu,\nu_1) + t I(\mu,\nu_2)$.
\end{proof}

We borrow the next lemma from \cite[Lemma A.1]{Kawabata--Dembo}.
This is a duality of convex programming.
(See Section 2.5 of \cite{Berger}, specifically \cite[Theorem 2.5.3]{Berger}, for further information.)
Recall that the base of the logarithm is two and the natural logarithm is written as $\ln(\cdot)$.

\begin{lemma}   \label{lemma: duality in rate distortion theory}
Let $\mathcal{X}$ and $\mathcal{Y}$ be compact metric spaces and 
$\rho:\mathcal{X}\times \mathcal{Y}\to [0,\infty)$ 
a continuous function.
Let $\mu$ be a Borel probability measure on $\mathcal{X}$, $\varepsilon >0$ and $a\geq 0$ real numbers.
Suppose a continuous function\footnote{The continuity of $\rho$ and $\lambda$ is inessential. 
But we assume it for simplicity.
Indeed in our applications, $\mathcal{X}= \mathcal{Y}$, 
$\rho$ is a distance function and $\lambda$ is a constant. } $\lambda:\mathcal{X}\to [0,\infty)$ satisfies
\begin{equation} \label{eq: assumption in duality in rate distortion theory}
    \forall y\in \mathcal{Y}:  \quad \int_{\mathcal{X}} \lambda(x)\, 2^{-a\rho(x,y)} d\mu(x) \leq 1. 
\end{equation}    
If $X$ and $Y$ are random variables taking values in $\mathcal{X}$ and $\mathcal{Y}$ respectively and satisfying 
$\mathrm{Law}(X) = \mu$ and $\mathbb{E} \rho(X,Y) < \varepsilon$ then 
\begin{equation} \label{eq: duality in rate distortion theory}
      I(X;Y) \geq -a \varepsilon + \int_{\mathcal{X}} \log \lambda(x) d\mu(x). 
\end{equation}      
\end{lemma}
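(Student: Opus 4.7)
The plan is to lower-bound $I(X;Y)$ by applying Gibbs' inequality against a carefully chosen auxiliary measure on $\mathcal{X}\times\mathcal{Y}$ whose density against the product $\mu\times\nu$ is exactly the function appearing in the hypothesis. Let $\pi$ denote the joint law of $(X,Y)$ and $\nu$ the law of $Y$. The conclusion is vacuous if $I(X;Y) = \infty$ or if $\int \log\lambda\,d\mu = -\infty$ (the latter is the only way this integral can fail to be finite, since $\lambda$ is bounded on the compact space $\mathcal{X}$), so I may assume both quantities are finite; in that case $\pi \ll \mu\times\nu$ and the partition-based definition of $I(X;Y)$ recorded in \S\ref{subsection: mutual information} coincides with the Radon--Nikodym expression
\[
  I(X;Y) \;=\; \int_{\mathcal{X}\times\mathcal{Y}} \log\frac{d\pi}{d(\mu\times\nu)}\,d\pi.
\]

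The key construction is the (sub-probability) measure
\[
   d\tilde\pi(x,y) \;:=\; \lambda(x)\,2^{-a\rho(x,y)}\,d\mu(x)\,d\nu(y),
\]
whose total mass is at most $1$ by Fubini and the hypothesis \eqref{eq: assumption in duality in rate distortion theory}. Normalizing $\tilde\pi$ to a probability measure and applying standard Gibbs' inequality, or equivalently applying $\ln t \leq t-1$ directly to $t = d\pi/d\tilde\pi$, yields
\[
   \int \log\frac{d\pi}{d\tilde\pi}\,d\pi \;\geq\; -\log \tilde\pi(\mathcal{X}\times\mathcal{Y}) \;\geq\; 0.
\]
Decomposing the integrand as $\log(d\pi/d\tilde\pi) = \log(d\pi/d(\mu\times\nu)) - \log\lambda(x) + a\,\rho(x,y)$ and integrating against $\pi$, I obtain
\[
   0 \;\leq\; I(X;Y) \;-\; \int_{\mathcal{X}} \log\lambda(x)\,d\mu(x) \;+\; a\,\mathbb{E}\rho(X,Y),
\]
and the desired inequality \eqref{eq: duality in rate distortion theory} follows from $\mathbb{E}\rho(X,Y) < \varepsilon$.

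The only genuinely subtle point I foresee is the identification of the partition-based $I(X;Y)$ with the Radon--Nikodym integral used above. If one prefers not to invoke this classical equivalence, one can instead argue entirely at the level of finite measurable partitions $\mathcal{P}$ of $\mathcal{X}$ and $\mathcal{Q}$ of $\mathcal{Y}$: replace $\lambda$ and $\rho$ by suitable cell-wise averages, carry out the identical Gibbs-inequality argument in the elementary discrete setting where it reduces to a finite-sum computation, and pass to the limit by refining partitions, using continuity of $\lambda$ and $\rho$ together with dominated convergence to recover both sides of the inequality. This approximation step is what I expect to be the main (though ultimately routine) technical hurdle; the core inequality itself is a one-line computation once the auxiliary measure $\tilde\pi$ is in hand.
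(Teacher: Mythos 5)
Your proof is correct and rests on the same core idea as the paper's: apply Gibbs' inequality (equivalently $\ln t\geq 1-1/t$) to the joint law $\pi$ against the auxiliary sub-probability measure $d\tilde\pi=\lambda(x)\,2^{-a\rho(x,y)}\,d\mu\,d\nu$, whose total mass is $\leq 1$ by Fubini and the hypothesis, then decompose $\log(d\pi/d\tilde\pi)$ to isolate $I(X;Y)$, $\int\log\lambda\,d\mu$, and $a\,\mathbb{E}\rho(X,Y)$. The one real difference is how the two arguments handle the abstract definition of $I(X;Y)$: the paper proves the inequality first for \emph{finite} $\mathcal{X},\mathcal{Y}$, where $I(X;Y)$ is an explicit finite sum and the Gibbs step is a one-line log-sum computation, and then passes to the general case by approximating with finite partitions (choosing the sample points $x_m$ so that $\lambda(x_m)\geq(1+\delta)^{-1}\sup_{P_m}\lambda$, and letting $\delta\to 0$). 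Your primary route works directly in the continuous setting and leans on the identification of the partition-supremum $I(X;Y)$ with the relative entropy $D(\pi\,\|\,\mu\times\nu)$ (the Gelfand--Yaglom--Perez theorem), a genuine if classical piece of machinery that the paper deliberately avoids importing. Your fallback partition-level argument is precisely what the paper does, so either version of your writeup is a valid alternative to the paper's.
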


\begin{proof}
We divide the proof into two steps.

\textbf{Step 1: Assume $\mathcal{X}$ and $\mathcal{Y}$ are finite sets.}
Let $\nu= \mathrm{Law}(Y)$ be the distribution of $Y$.
We define a function $f(x,y)$ by $\mathbb{P}(X=x,Y=y) = f(x,y) \mathbb{P}(X=x)\mathbb{P}(Y=y)$.
(We do not need to define the value $f(x,y)$ if $\mathbb{P}(X=x)\mathbb{P}(Y=y)=0$.)
It follows from (\ref{eq: mutual information 2}) that
\begin{equation*}
   \begin{split}
    I(X;Y) &=   \sum_{x\in \mathcal{X}, y\in \mathcal{Y}}  \mathbb{P}(X=x) \mathbb{P}(Y=y) f(x,y) \log f(x,y)  \\
    & = \int_{\mathcal{X}\times \mathcal{Y}} f(x,y) \log f(x,y) d\mu(x) d\nu(y). 
   \end{split}
\end{equation*}    
Set $g(x,y) = \lambda(x)\, 2^{-a \rho(x,y)}$.
The right-hand side of (\ref{eq: duality in rate distortion theory}) is equal to 
\[  -a\varepsilon + \int_{\mathcal{X}\times \mathcal{Y}} \log\lambda\, d \mathrm{Law}(X,Y)
     =  -a\varepsilon + \int_{\mathcal{X}\times \mathcal{Y}} f(x,y) \log\lambda(x)  d\mu(x) d\nu(y). \] 
Since $-\varepsilon < -\mathbb{E}\rho(X,Y) = -\int_{\mathcal{X}\times \mathcal{Y}}\rho(x,y) f(x,y) d\mu(x)d\nu(y)$,
this is less than 
\[  \int_{\mathcal{X}\times \mathcal{Y}} f(x,y) \log g(x,y) d\mu(x) d\nu(y). \]
\[  I(X;Y) -  \int_{\mathcal{X}\times \mathcal{Y}} f(x,y) \log g(x,y) d\mu(x) d\nu(y)
     = \int_{\mathcal{X}\times \mathcal{Y}} f \log (f/g) d\mu d\nu. \]
As $\ln 2 \cdot \log(1/u) = \ln (1/u) \geq 1-u$, we have $\ln 2 \cdot f\log (f/g) \geq  f(1-g/f) = f-g$ and hence 
\begin{equation*}
   \begin{split}
        \ln 2 \cdot \int_{\mathcal{X}\times \mathcal{Y}} f \log (f/g) d\mu d\nu & 
         \geq \int_{\mathcal{X}\times \mathcal{Y}} (f-g)d\mu d\nu \\
        & = 1 - \int_{\mathcal{Y}}\left(\int_{\mathcal{X}} g(x,y) d\mu(x)\right) d\nu(y) \geq 0.
   \end{split}
\end{equation*}      
Here we have used the assumption (\ref{eq: assumption in duality in rate distortion theory}) in the last inequality.

\textbf{Step 2: General case.}
Let $\delta>0$.
Take finite partitions $\mathcal{P} = \{P_1,\dots, P_M\}$ and $\mathcal{Q} = \{Q_1,\dots,Q_N\}$ 
of $\mathcal{X}$ and $\mathcal{Y}$ respectively.
For each $P_m$ we take a point $x_m\in P_m$ satisfying $\lambda(x_m) \geq (1+\delta)^{-1} \sup_{P_m} \lambda$.
We pick arbitrary $y_n\in Q_n$ for each $Q_n$.
We set $\mathcal{X}' = \{x_1,\dots,x_M\}$ and $\mathcal{Y}' = \{y_1,\dots,y_N\}$ and define maps 
$\mathcal{P}:\mathcal{X}\to \mathcal{X}'$ and $\mathcal{Q}:\mathcal{Y}\to \mathcal{Y}'$ by 
$\mathcal{P}(P_m) = \{x_m\}$ and $\mathcal{Q}(Q_n)= \{y_n\}$.
Set $X' = \mathcal{P}\circ X$,$Y'= \mathcal{Q}\circ Y$ and 
$\mu' = \mathcal{P}_*\mu = \mathrm{Law}(X')$.

From the continuity of $\rho$ and $\lambda$, by taking $\mathcal{P}$ and $\mathcal{Q}$ sufficiently fine, we can assume 
$\mathbb{E}\rho(X',Y') < \varepsilon$ and
\[ \forall y\in \mathcal{Y'}, \quad \int_{\mathcal{X}'} \lambda(x) \, 2^{-a\rho(x,y)} d\mu'(x) \leq 1+\delta. \]
Then we can apply Step 1 to $X', Y'$ and the function $\lambda'(x_m) := (1+\delta)^{-1} \lambda(x_m)$. 
This yields
\[  I(X';Y') \geq -a\varepsilon + \int_{\mathcal{X'}} \log \lambda' d\mu'
     = -a\varepsilon -\log (1+\delta) + \int_{\mathcal{X}'} \log\lambda d\mu'. \]
It follows from the choice of $x_m$ that 
\[   \int_{\mathcal{X}'} \log \lambda d\mu' \geq - \log (1+\delta) + \int_{\mathcal{X}} \log \lambda d\mu. \]
As $I(X;Y) \geq I(X';Y')$ by the definition of mutual information,
\[ I(X;Y) \geq -a\varepsilon -2\log(1+\delta) + \int_{\mathcal{X}} \log \lambda d\mu. \]
Let $\delta\to 0$. This shows the statement.
\end{proof}

The next lemma is essentially due to \cite[Proposition 3.2]{Kawabata--Dembo}.
This is a key to connect geometric measure theory to rate distortion theory.

\begin{lemma}\label{lemma: from geometric measure theory to rate distortion theory}
Let $\varepsilon$ and $\delta$ be positive numbers with $2\varepsilon \log(1/\varepsilon) \leq \delta$.
Let $0\leq \tau \leq \min(\varepsilon/3, \delta/2)$ and $s\geq 0$ be real numbers.
Let $(\mathcal{X}, d)$ be a compact metric space with a Borel probability measure $\mu$ satisfying 
\begin{equation} \label{eq: scaling law in preliminary lemma}
    \mu(E) \leq (\tau + \diam E)^s, \quad \forall E\subset \mathcal{X} \text{ with } \diam E < \delta. 
\end{equation}    
Let $X$ and $Y$ be random variables taking values in $\mathcal{X}$ 
with $\mathrm{Law}(X) = \mu$ and $\mathbb{E} d(X,Y) < \varepsilon$.
Then 
\[  I(X;Y) \geq s\log(1/\varepsilon) - C(s+1), \]
where $C$ is a universal positive constant independent of $\varepsilon, \delta, \tau, s, (\mathcal{X},d), \mu$.
\end{lemma}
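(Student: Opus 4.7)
The plan is to invoke the duality Lemma \ref{lemma: duality in rate distortion theory} with $\rho = d$, a constant function $\lambda(x) \equiv c_s\,\varepsilon^{-s}$ and the constant $a = s/\varepsilon$ (when $s = 0$ one may take $a = 0$, $\lambda \equiv 1$ and the inequality reduces to $I(X;Y) \geq 0$, which is immediate). Granting that these choices satisfy the hypothesis \eqref{eq: assumption in duality in rate distortion theory}, the conclusion \eqref{eq: duality in rate distortion theory} reads
\[
   I(X;Y) \;\geq\; -a\varepsilon + \int_\mathcal{X}\log\lambda\, d\mu \;=\; -s + s\log(1/\varepsilon) + \log c_s,
\]
so once we show that $c_s$ may be taken as large as $C^{-(s+1)}$ for a universal constant $C$, the lemma follows with the displayed constant.

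The work is therefore to prove
\[
   \sup_{y\in\mathcal{X}} \int_{\mathcal{X}} 2^{-a d(x,y)}\, d\mu(x) \;\leq\; C^{s+1}\,\varepsilon^{s}.
\]
First, by the layer-cake formula (Fubini),
\[
   \int 2^{-a d(x,y)}\,d\mu(x) \;=\; a\ln 2 \int_0^\infty 2^{-ar}\,\mu(B_r(y))\, dr,
\]
where $B_r(y)$ is the open $r$-ball. Split the integral at $r=\delta/2$. For $r\geq \delta/2$ use the trivial bound $\mu(B_r(y))\leq 1$; the resulting tail equals $2^{-a\delta/2}$, which is at most $\varepsilon^{s}$ by the standing hypothesis $2\varepsilon\log(1/\varepsilon)\leq \delta$ combined with $a=s/\varepsilon$. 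For $r<\delta/2$ the ball has diameter $<\delta$, so the scaling hypothesis \eqref{eq: scaling law in preliminary lemma} gives $\mu(B_r(y))\leq (\tau+2r)^{s}$.

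Substituting $u=\tau+2r$ in the main piece and extending to $[0,\infty)$,
\[
   \frac{a\ln 2}{2}\, 2^{a\tau/2}\int_\tau^{\tau+\delta} u^{s} e^{-bu}\,du \;\leq\; 2^{a\tau/2}\,\frac{\Gamma(s+1)}{b^{s}}, \qquad b:=\tfrac{a\ln 2}{2}.
\]
With $a = s/\varepsilon$ we have $b^{-s}=(2\varepsilon/(s\ln 2))^{s}$, and Stirling gives $\Gamma(s+1)/s^{s}\leq C_1^{s}$; moreover the hypothesis $\tau\leq \varepsilon/3$ forces $a\tau/2 \leq s/6$, so $2^{a\tau/2}\leq 2^{s/6}$. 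Combining these, the main piece is bounded by $C_2^{s}\,\varepsilon^{s}$ for a universal $C_2$. Adding the two contributions yields $\sup_y\int 2^{-ad(x,y)}d\mu(x)\leq C^{s+1}\varepsilon^{s}$ and the conclusion follows with $c_s = C^{-(s+1)}$.

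The main obstacle is not any deep ingredient but the careful bookkeeping needed to make every factor in the estimate grow at worst exponentially in $s$, with all constants independent of $\varepsilon,\delta,\tau,s,(\mathcal{X},d),\mu$. In particular, the role of the hypotheses $\tau\leq \varepsilon/3$ and $2\varepsilon\log(1/\varepsilon)\leq \delta$ is precisely to control the two places where the scaling law would otherwise produce unbounded factors: the shift $\tau$ appears through $2^{a\tau/2}$ and must be tamed linearly in $s$, while $\delta$ controls the tail integral and must be large enough (relative to $\varepsilon$) that $2^{-a\delta/2}\leq \varepsilon^{s}$. Once these two arithmetic constraints are built into the choice $a=s/\varepsilon$, the computation is a routine Gamma-function estimate.
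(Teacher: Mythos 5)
Your proposal is correct and follows essentially the same route as the paper's proof: apply the duality lemma with $a=s/\varepsilon$ and a constant $\lambda$, rewrite $\int 2^{-ad(x,y)}\,d\mu(x)$ via the layer-cake formula, cut the resulting integral at $\delta/2$, use the scaling law on the inner piece and the trivial bound plus $2\varepsilon\log(1/\varepsilon)\leq\delta$ on the tail, and finish with a Gamma/Stirling estimate. The only cosmetic difference is that the paper splits the layer-cake integral into three ranges $[0,\tau]$, $[\tau,\delta/2]$, $[\delta/2,\infty)$ and uses $\tau+2v\leq 3v$ on the middle range, whereas you absorb the $[0,\tau]$ piece into the main integral by the shift $u=\tau+2r$, which produces the factor $2^{a\tau/2}\leq 2^{s/6}$ that the hypothesis $\tau\leq\varepsilon/3$ then tames; both bookkeeping schemes lead to the same $C^{s+1}\varepsilon^s$ bound.
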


\begin{proof}
We apply Lemma \ref{lemma: duality in rate distortion theory} with $a = s/\varepsilon$.
Set $b=a\ln 2$ and 
estimate $\int_{\mathcal{X}} 2^{-ad(x,y)} d\mu(x) = \int_{\mathcal{X}} e^{-b d(x,y)} d\mu(x)$ for each $y\in \mathcal{X}$:
\begin{equation*}
   \begin{split}
     \int_{\mathcal{X}} e^{-b d(x,y)} d\mu(x) & = \int_0^1 \mu\{x|\, e^{-b d(x,y)} \geq u\} du \\
     & = \int_0^\infty \mu\{x|\, d(x,y) \leq v\} b e^{-b v} dv \quad (\text{set $u = e^{-b v}$}) \\
     & = \left(\int_0^\tau + \int_\tau^{\delta/2} + \int_{\delta/2}^\infty\right) \mu\{x|\, d(x,y) \leq v\} b e^{-bv} dv. 
   \end{split}
\end{equation*}
In the last line we have used $\tau \leq \delta/2$.
From (\ref{eq: scaling law in preliminary lemma})
\[    \int_0^\tau \mu\{x|\, d(x,y) \leq v\} b e^{-bv} dv \leq (3\tau)^s \int_0^\infty b e^{-bv} dv = (3\tau)^s
      \leq \varepsilon^s,  \]
where we have used $\tau \leq \varepsilon/3$.      
\begin{equation*}
   \begin{split}
    \int_\tau^{\delta/2} \mu\{x|\, d(x,y) \leq v\} b e^{-bv} dv & \leq \int_\tau^{\delta/2} (\tau+2v)^s b e^{-bv}dv \\
    & \leq 3^s \int_{\tau}^{\delta/2} v^s b e^{-bv} dv       \\
    & \leq (3/b)^s \int_0^\infty t^s e^{-t} dt \quad (\text{set $t=bv$}) \\
    &= \varepsilon^s (3\log e)^s s^{-s} \Gamma(s+1).
   \end{split}
\end{equation*}            
In the last step we have used $b = s\ln 2/\varepsilon = s/(\varepsilon \log e)$.
\begin{equation*}
   \begin{split}
     \int_{\delta/2}^\infty \mu\{x|\, d(x,y) \leq v\} b e^{-bv} dv & \leq \int_{\delta/2}^\infty b e^{-bv} dv \\
     &  = e^{-b\delta/2} = \left(2^{-\delta/(2\varepsilon)}\right)^s \leq \varepsilon^s. 
   \end{split}   
\end{equation*}      
In the last inequality we have used $2\varepsilon \log(1/\varepsilon) \leq \delta$.
Summing the above estimates, we get 
\[  \int_{\mathcal{X}} 2^{-ad(x,y)} d\mu(x) \leq \varepsilon^s \left\{2+ (3\log e)^s s^{-s} \Gamma(s+1)\right\}. \]
Thus the constant function $\lambda(x) := \varepsilon^{-s}  \left\{2+ (3\log e)^s s^{-s} \Gamma(s+1)\right\}^{-1}$ satisfies 
\[  \forall y\in \mathcal{X}: \quad \int_{\mathcal{X}} \lambda(x) 2^{-ad(x,y)} d\mu(x) \leq 1. \]
From Lemma \ref{lemma: duality in rate distortion theory} 
\begin{equation*}
   \begin{split}
      I(X;Y) &\geq -a\varepsilon + \int_{\mathcal{X}} \log \lambda d\mu \\
           & = s\log(1/\varepsilon) -s - \log  \left\{2+ (3\log e)^s s^{-s} \Gamma(s+1)\right\}. 
   \end{split}
\end{equation*}           
Recalling Stirling's formula $\Gamma(s+1) \sim  s^s e^{-s} \sqrt{2\pi s}$, we can find a universal constant $C>0$
satisfying 
\[  s + \log \left\{2+ (3\log e)^s s^{-s} \Gamma(s+1)\right\}  \leq C + Cs. \]
This proves the statement.
\end{proof}

\subsection{Rate distortion theory}   \label{subsection: rate distortion theory}

Here we review rate distortion function (\cite{Shannon, Shannon59}, \cite[Chapter 10]{Cover--Thomas}).
The Shannon entropy is the fundamental limit in \textit{lossless} data compression of \textit{discrete} random variables and 
processes.
For a stationary stochastic process $X_1,X_2,\dots$, its entropy 
is equal to the minimum expected number of bits per symbol for describing the process.
But if random variables $X_n$ take \textit{continuously} many values, the entropy is simply infinite
(namely, we cannot describe continuous variables perfectly within finitely many bits).
For continuous random variables and processes (e.g. audio signals, images, etc.) we have to consider 
\textit{lossy} data compression method\footnote{E.g. expanding signals in a wavelet basis,
discarding small terms and quantizing the remaining terms.} achieving some \textit{distortion constraint}.
This is the primary object of rate distortion theory. 
Rate distortion function is the fundamental limit of 
data compression in this context.

Let $(\mathcal{X},T)$ be a dynamical system with a metric $d$ and an invariant probability measure $\mu$.
For $\varepsilon >0$ we define the \textbf{rate distortion function} $R(d,\mu,\varepsilon)$ as the infimum of 
\[  \frac{I(X;Y)}{N}, \]
where $N>0$ is a natural number,
 $X$ and $Y = (Y_0,\dots, Y_{N-1})$ are random variables defined on some probability space $(\Omega, \mathbb{P})$
such that all $X$ and $Y_n$ take values in $\mathcal{X}$ and satisfy 
\[   \mathrm{Law}(X) = \mu, \quad  \mathbb{E}\left( \frac{1}{N}\sum_{n=0}^{N-1} d(T^n X, Y_n)\right) < \varepsilon. \]

We define the lower and upper rate distortion dimensions by (\ref{eq: definition of rate distortion dimension}) in 
\S \ref{subsection: statement of the main result}.

\begin{remark} \label{remark: Y takes only finitely many values in the definition of R}
In the above definition of rate distortion function we can assume that $Y$ takes only finitely many values,
namely its distribution is supported on a finite set:
Take a finite partition $\mathcal{P}$ of $\mathcal{X}$ and pick a point $x_P\in P$
for each $P\in \mathcal{P}$.
Define $f:\mathcal{X}\to \mathcal{X}$ by $f(P)=\{x_P\}$ for $P\in \mathcal{P}$
and set $Z = (Z_0,\dots, Z_{N-1}) = (f(Y_0),\dots,f(Y_{N-1}))$.
If $\mathcal{P}$ is sufficiently fine then 
\[   \mathbb{E}\left( \frac{1}{N}\sum_{n=0}^{N-1} d(T^n X, Z_n)\right) < \varepsilon. \]
From the definition of mutual information 
(or the data-processing inequality; Lemma \ref{lemma: data-processing inequality}),
\[  I(X;Z) \leq I(X;Y). \]
The random variable $Z$ takes only finitely many values.
\end{remark}

The rate distortion function $R(d,\mu,\varepsilon)$ is the minimum rate when we try to quantize the process 
$\{T^n X\}_{n\in \mathbb{Z}}$ within the average distortion bound by $\varepsilon$ 
\cite[Chapter 11]{Gray}:
For simplicity\footnote{Although the ``operational meaning'' of rate distortion function is 
important for the understanding, we do not use it in the paper.
So we do not give a complete explanation. See \cite{LDN, ECG, Gray} for the non-ergodic case.},
suppose $\mu$ is ergodic.
For any $\delta>0$, if $N$ is sufficiently large, there exists a map $f = (f_0,\dots, f_{N-1}): \mathcal{X} \to \mathcal{X}^N$
which has a finite range (i.e. it takes only finitely many values) and satisfies  
\[  \frac{\log |f(\mathcal{X})|}{N} < R(d,\mu,\varepsilon) + \delta, \quad 
      \mathbb{E}\left( \frac{1}{N}\sum_{n=0}^{N-1} d(T^n X, f_n(X))\right) < \varepsilon. \]
Namely we can approximate the process $X, TX, \dots, T^{N-1}X$ by the quantization $f_0(X), f_1(X), \dots, f_{N-1}(X)$
within the average distortion bound by $\varepsilon$.
The bits per iterate required for this description is less than 
$R(d,\mu,\varepsilon) + \delta$.

\begin{example}
 Consider the shift $\sigma: [0,1]^\mathbb{Z}\to [0,1]^\mathbb{Z}$ with a metric 
 $d(x,y) =\sum_{n\in \mathbb{Z}} 2^{-|n|} |x_n-y_n|$ and an invariant probability measure 
 $\mu = \left(\text{Lebesgue measure}\right)^{\otimes \mathbb{Z}}$.
 Then \cite[Example 22]{Lindenstrauss--Tsukamoto rate distortion}
 \[  \rdim\left([0,1]^\mathbb{Z}, \sigma, d, \mu\right) = 1. \]
\end{example}

\section{Mean Hausdorff dimension and the proof of the double variational principle}  \label{section: mean Hausdorff dimension}

In this section we introduce the key concept of the paper -- \textit{mean Hausdorff dimension}.
 We develop various comparison estimates between topological/metric mean dimensions, mean Hausdorff dimension and 
 rate distortion dimension.
 Some of the proofs are postponed to later sections.
 We prove the double variational principle (Theorem \ref{main theorem}) 
 by using these comparison estimates at the end of \S \ref{subsection: comparison between various dynamical dimensions}.

\subsection{Definition of mean Hausdorff dimension}  \label{subsection: definition of mean Hausdorff dimension}

Let $(\mathcal{X},d)$ be a compact metric space.
For $s\geq 0$ and $\varepsilon >0$ we define $\mathcal{H}_\varepsilon^s(\mathcal{X},d)$ as 
\[ \inf\left\{ \sum_{n=1}^\infty (\diam \, E_n)^s \middle| \,
    \mathcal{X} = \bigcup_{n=1}^\infty E_n \text{ with $\diam\, E_n < \varepsilon$ for all $n\geq 1$}\right\}. \]
Here we use the convention that $0^0 = 1$ and $\diam(\emptyset)^s = 0$.    
We also define 
\[ \mathcal{H}_\infty^s(\mathcal{X},d) = 
   \inf  \left \{\sum_{n=1}^\infty (\diam E_n)^s \middle|\,      \mathcal{X} = \bigcup_{n=1}^\infty E_n\right\}. \]
We set 
\[ \dim_{\mathrm{H}}(\mathcal{X},d,\varepsilon) 
    = \sup \{s\geq 0|\, \mathcal{H}^s_\varepsilon (\mathcal{X},d) \geq 1\}. \]    
The \textbf{Hausdorff dimension} $\dim_{\mathrm{H}}(\mathcal{X},d)$ is given by 
\[  \dim_{\mathrm{H}}(\mathcal{X},d) = \lim_{\varepsilon \to 0} \dim_{\mathrm{H}}(\mathcal{X},d,\varepsilon). \]

Let $(\mathcal{X},T)$ be a dynamical system with a metric $d$.
As in \S \ref{subsection: topological and metric mean dimensions}
we set $d_N(x,y) = \max_{0\leq n<N} d(T^n x, T^n y)$.
We define the \textbf{mean Hausdorff dimension} by 
\begin{equation}  \label{eq: definition of mean Hausdorff dimension}
   \mdim_{\mathrm{H}} (\mathcal{X},T,d) = \lim_{\varepsilon \to 0}
    \left(\limsup_{N\to \infty} \frac{1}{N} \dim_{\mathrm{H}}(\mathcal{X},d_N,\varepsilon)\right).
\end{equation}

\begin{remark}  \label{remark: lower mean Hausdorff dimension}
We can also define the \textbf{lower mean Hausdorff dimension}
$\underline{\mdim}_{\mathrm{H}}(\mathcal{X},T,d)$ by replacing $\limsup_N$ in 
(\ref{eq: definition of mean Hausdorff dimension}) with $\liminf_N$.
But we do not seriously use this concept in the paper.
\end{remark}

\subsection{Comparison between various dynamical dimensions}  \label{subsection: comparison between various dynamical dimensions}

The following proposition extends Theorem \ref{theorem: metric mean dimension dominates mean dimension}
to mean Hausdorff dimension and rate distortion dimension.

\begin{proposition}  \label{prop: mean Hausdorff dimension dominates mean dimension}
  Let $(\mathcal{X},T)$ be a dynamical system with a metric $d$ and an invariant probability measure $\mu$.
   \begin{equation}   \label{eq: mean dimension and mean Hausdorff dimension}
       \mdim(\mathcal{X},T) \leq \mdim_{\mathrm{H}}(\mathcal{X},T,d) \leq \underline{\mdim}_{\mathrm{M}}(\mathcal{X},T,d).
   \end{equation}
  \begin{equation}  \label{eq: metric mean dimension dominates rate distortion dimension}
     \begin{split}
       \overline{\rdim}(\mathcal{X},T,d,\mu) &\leq \overline{\mdim}_{\mathrm{M}}(\mathcal{X},T,d), \\
       \underline{\rdim}(\mathcal{X},T,d,\mu) &\leq \underline{\mdim}_{\mathrm{M}}(\mathcal{X},T,d).
     \end{split}
  \end{equation}
\end{proposition}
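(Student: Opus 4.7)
The plan is to reduce each of the three inequalities in (\ref{eq: mean dimension and mean Hausdorff dimension}) and (\ref{eq: metric mean dimension dominates rate distortion dimension}) to a single-resolution comparison for the compact metric space $(\mathcal{X},d_N)$, then divide by $N$ and pass first $N\to\infty$ and finally $\varepsilon\to 0$.

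First I will treat the right-hand inequality $\mdim_{\mathrm{H}}(\mathcal{X},T,d)\leq \underline{\mdim}_{\mathrm{M}}(\mathcal{X},T,d)$. The elementary input is that for any compact metric space $(Y,\rho)$ and $0<\varepsilon<1$,
\[ \dim_{\mathrm{H}}(Y,\rho,\varepsilon)\leq \frac{\log\#(Y,\rho,\varepsilon)}{\log(1/\varepsilon)}, \]
which follows from the tautological estimate $\mathcal{H}^s_\varepsilon(Y,\rho)\leq \#(Y,\rho,\varepsilon)\cdot \varepsilon^s$. Specializing to $(Y,\rho)=(\mathcal{X},d_N)$, dividing by $N$, taking $\limsup_N$ (the right side being a limit by subadditivity), and finally $\liminf_{\varepsilon\to 0}$ yields the claim.

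Next I will prove $\mdim(\mathcal{X},T)\leq \mdim_{\mathrm{H}}(\mathcal{X},T,d)$ via the Szpilrajn-type statement
\[ \widim_\varepsilon(Y,\rho)\leq \dim_{\mathrm{H}}(Y,\rho,\varepsilon) \]
for any compact metric space $(Y,\rho)$. Fix $s>\dim_{\mathrm{H}}(Y,\rho,\varepsilon)$; by definition $\mathcal{H}^s_\varepsilon(Y,\rho)<1$, so $Y$ admits a countable cover $\{E_n\}$ with $\diam E_n<\varepsilon$ and $\sum(\diam E_n)^s<1$. After slightly enlarging each $E_n$ to an open set with its diameter still below $\varepsilon$ and extracting a finite subcover by compactness, I would refine this cover to one whose multiplicity is at most $\lfloor s\rfloor+1$. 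Granting this, the nerve carries a simplicial structure of dimension $\leq\lfloor s\rfloor$, and the canonical map of $Y$ into the nerve via a subordinate partition of unity is an $\varepsilon$-embedding into a complex of dimension $\leq\lfloor s\rfloor\leq s$. Plugging $(Y,\rho)=(\mathcal{X},d_N)$ into this key lemma, dividing by $N$, taking $\lim_N$ on the left and $\limsup_N$ on the right, and then $\varepsilon\to 0$ completes the proof. The multiplicity-reduction step—producing a refinement of bounded multiplicity from a cover whose $s$-cost is less than $1$—is the main technical obstacle and constitutes the geometric content of the Szpilrajn-type inequality.

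For (\ref{eq: metric mean dimension dominates rate distortion dimension}) I would establish the pointwise bound $R(d,\mu,\varepsilon)\leq S(\mathcal{X},T,d,\varepsilon)$. Fix $N\geq 1$ and a cover of $(\mathcal{X},d_N)$ by $K_N:=\#(\mathcal{X},d_N,\varepsilon)$ sets of $d_N$-diameter $<\varepsilon$. Refine to a Borel partition, choose a representative in each piece, and let $\phi:\mathcal{X}\to\mathcal{X}$ be the resulting Borel quantizer taking $K_N$ values. With $X$ distributed as $\mu$ and $Y_n:=T^n\phi(X)$ for $0\leq n<N$, one has
\[ d(T^n X, Y_n)=d(T^n X, T^n\phi(X))\leq d_N(X,\phi(X))<\varepsilon \]
almost surely, so the average distortion is strictly below $\varepsilon$. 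Since $Y$ is a deterministic function of the discrete variable $\phi(X)$, the data-processing inequality (Lemma \ref{lemma: data-processing inequality}) gives $I(X;Y)\leq I(X;\phi(X))=H(\phi(X))\leq \log K_N$. Hence $R(d,\mu,\varepsilon)\leq \log K_N/N$; letting $N\to\infty$ yields $R(d,\mu,\varepsilon)\leq S(\mathcal{X},T,d,\varepsilon)$. Dividing by $\log(1/\varepsilon)$ and taking $\limsup$ and $\liminf$ as $\varepsilon\to 0$ produces both lines of (\ref{eq: metric mean dimension dominates rate distortion dimension}).
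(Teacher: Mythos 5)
Your arguments for $\mdim_{\mathrm{H}}\leq\underline{\mdim}_{\mathrm{M}}$ and for (\ref{eq: metric mean dimension dominates rate distortion dimension}) are correct and match the paper's: the tautological bound $\mathcal{H}^s_\varepsilon(\mathcal{X},d_N)\leq\#(\mathcal{X},d_N,\varepsilon)\,\varepsilon^s$ in the first case, and the quantizer plus data-processing argument giving $R(d,\mu,\varepsilon)\leq S(\mathcal{X},T,d,\varepsilon)$ in the second.

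The first inequality $\mdim\leq\mdim_{\mathrm{H}}$, however, rests on a false lemma. The single-scale ``Szpilrajn-type'' statement $\widim_\varepsilon(Y,\rho)\leq\dim_{\mathrm{H}}(Y,\rho,\varepsilon)$ does not hold. Take $Y=[0,a]^m$ with the $\ell^\infty$ metric and $\varepsilon=a/3$. By (\ref{eq: widim of ball}) we have $\widim_\varepsilon(Y)=m$. But covering $Y$ by $4^m$ boxes of diameter $a/4<\varepsilon$ gives $\mathcal{H}^s_\varepsilon(Y)\leq 4^m(a/4)^s$, which is $<1$ as soon as $s>2m/\log(4/a)$; this threshold tends to $0$ as $a\to 0$, so $\dim_{\mathrm{H}}(Y,\rho,\varepsilon)$ can be made arbitrarily small while $\widim_\varepsilon(Y)=m$. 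The multiplicity-reduction step you flag as the technical crux is therefore not merely unproved but impossible: for $s<1$ it would force $\widim_\varepsilon(Y)=0$, i.e.\ a continuous $\varepsilon$-embedding of the connected set $[0,a]^m$ into a finite set, which cannot exist once $\varepsilon<\diam Y$.

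The paper's proof is structured differently precisely because of this scale mismatch. Claim \ref{claim: widim and  Hausdorff dimension} bounds $\widim_\varepsilon(K,d)$ by $s+\tau$ from the hypothesis $\dim_{\mathrm{H}}(K,d,\delta)<s$ at a possibly much smaller auxiliary scale $\delta$; the admissible $\delta$ is dictated by the dimension of an ambient cube and the Lipschitz constant of a fixed $\varepsilon$-detecting map $\varphi:\mathcal{X}\to[0,1]^M$ constructed in Claim \ref{claim: construction of Lipschitz map}, and the skeleton contraction of Claim \ref{claim: contraction to skeleton} replaces the multiplicity reduction. This closes because $\varphi$, and hence the constraint on $\delta$, depends only on $\varepsilon$ and not on $N$, so a single $\delta$ serves every $(\mathcal{X},d_N)$, and the outer $\lim_{\varepsilon\to 0}$ in the definition of $\mdim_{\mathrm{H}}$ absorbs the shrinking $\delta$. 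Some such decoupling of the $\widim$ scale from the Hausdorff scale appears unavoidable.
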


\begin{proof}
The nontrivial result is only $\mdim(\mathcal{X},T) \leq \mdim_{\mathrm{H}}(\mathcal{X},T,d)$.
The rest of the statement is easy.
We first prove easy estimates.
Let $N\geq 1$ and $\varepsilon >0$.
Consider an open cover $\mathcal{X} = U_1\cup\dots \cup U_n$ with 
$\diam (U_i, d_N) < \varepsilon$ and $n= \#(\mathcal{X},d_N,\varepsilon)$.

We have $\mathcal{H}_\varepsilon^s (\mathcal{X},d_N) \leq n \varepsilon^s$.
If $s > \log n / \log(1/\varepsilon)$ then $\mathcal{H}^s(\mathcal{X},d_N) < 1$.
This shows 
\[  \dim_{\mathrm{H}}(\mathcal{X},d_N,\varepsilon) \leq \frac{\log \#(\mathcal{X},d_N,\varepsilon)}{\log (1/\varepsilon)}. \]
Divide this by $N$ and take limits with respect to $N$ and then $\varepsilon$. 
It follows that $\mdim_{\mathrm{H}}(\mathcal{X},T,d) \leq \underline{\mdim}_{\mathrm{M}}(\mathcal{X},T,d)$.

Next we consider (\ref{eq: metric mean dimension dominates rate distortion dimension}).
Let $X$ be a random variable obeying $\mu$.
Choose a point $x_i$ from each $U_i$.
We define $f:\mathcal{X}\to \{x_1,\dots,x_n\}$ by $f(x)=x_i$ where
$i$ is the smallest number with $x\in U_i$.
Set $Y=(f(X), T f(X), \dots, T^{N-1} f(X))$.
Since $d(T^k x, T^k f(x)) < \varepsilon$ for all $x\in \mathcal{X}$ and $0\leq k <N$,
\[  \frac{1}{N}\sum_{k=0}^{N-1} d(T^k X, T^k f(X)) < \varepsilon. \]
Since $Y$ takes at most $n =  \#(\mathcal{X},d_N,\varepsilon)$ values,
\[  I(X;Y) \leq H(Y) \leq \log n . \]
This shows 
\[  R(d,\mu,\varepsilon) \leq \frac{\log  \#(\mathcal{X},d_N,\varepsilon)}{N}. \]
Letting $N\to \infty$, we get $R(d,\mu,\varepsilon) \leq S(\mathcal{X},T,d,\varepsilon)$.
Divide this by $\log(1/\varepsilon)$ and take the upper/lower limits with respect to $\varepsilon$.
This proves (\ref{eq: metric mean dimension dominates rate distortion dimension}).

Now we come to the main point; the comparison between mean dimension and mean Hausdorff dimension.
We use the idea of the proof of Theorem \ref{theorem: metric mean dimension dominates mean dimension}
(comparison between topological/metric mean dimensions).
We need some preliminary claims.
In the sequel we denote by $\nu_N$ and $\norm{\cdot}_\infty$ the standard Lebesgue measure and $\ell^\infty$-norm
on $\mathbb{R}^N$. 
For $A\subset \{1,2,\dots,N\}$ we denote by $\pi_A:\mathbb{R}^N \to \mathbb{R}^A$ the projection to the $A$-coordinates.
For $0\leq n\leq N$ we define $P_n$ as the $n$-skeleton of the cube $[0,1]^N$, i.e. the set of $x\in [0,1]^N$
satisfying $|\{k | x_k = \text{$0$ or $1$}\}| \geq N-n$.

\begin{claim} \label{claim: contraction to skeleton}
   Let $K\subset [0,1]^N$ be a closed subset and $1 \leq n\leq N$.
    \begin{enumerate}
       \item $\nu_N(K) \leq 2^N \mathcal{H}^N_\infty\left(K, \norm{\cdot}_\infty\right)$.
       \item $\nu_N\left(\bigcup_{|A|\geq n} \pi_A^{-1}(\pi_A K)\right) \leq 4^N \mathcal{H}^n_\infty\left(K,\norm{\cdot}_\infty\right)$.
       \item If $\mathcal{H}^{n+1}_\infty\left(K,\norm{\cdot}_\infty\right) < 4^{-N}$ then 
               there exists a $1$-embedding $f:K\to P_n$, i.e. a continuous map satisfying $f(x)\neq f(y)$ for any $x,y\in K$
               with $\norm{x-y}_\infty =1$.
    \end{enumerate}
\end{claim}

\begin{proof}
(1) Let $K = \bigcup_{k \geq 1} E_k$ and set $l_k = \diam \left(E_k, \norm{\cdot}_\infty\right)$.
    Take $x_k\in E_k$.
    Since $E_k\subset x_k + [-l_k,l_k]^N$,
    \[  \nu_N(K) \leq \sum_{k=1}^\infty (2 l_k)^N = 2^N \sum_{k=1}^\infty l_k^N. \]

(2)  $\nu_N\left(\bigcup_{|A|\geq n} \pi_A^{-1}(\pi_A K)\right)$ is bounded by 
      \[  \sum_{|A|\geq n} \nu_N\left(\pi_A^{-1}(\pi_A K)\right) = \sum_{|A|\geq n} \nu_{|A|} (\pi_A K). \]
     Apply the above (1) to $\pi_A K\subset [0,1]^A$: 
     \[   \nu_{|A|}(\pi_A K) \leq 2^{|A|} \mathcal{H}_\infty^{|A|}\left(\pi_A K,\norm{\cdot}_\infty\right) 
      \leq 2^N  \mathcal{H}_\infty^{|A|} \left(\pi_A K, \norm{\cdot}_\infty \right). \]
      Since $\pi_A$ is one-Lipschitz, 
      $\mathcal{H}_\infty^{|A|}\left(\pi_A K,\norm{\cdot}_\infty\right)  \leq  \mathcal{H}^{|A|}_\infty\left(K,\norm{\cdot}_\infty\right)$.
      Thus 
      \begin{equation*}
          \begin{split}
           \nu_N\left(\bigcup_{|A|\geq n} \pi_A^{-1}(\pi_A K)\right)  & 
           \leq 2^N \sum_{|A|\geq n} \mathcal{H}^{|A|}_\infty\left(K,\norm{\cdot}_\infty \right) \\
           & \leq  2^N \sum_{|A|\geq n} \mathcal{H}^n_\infty\left(K,\norm{\cdot}_\infty \right) \\
           & \leq  4^N \mathcal{H}_\infty^n \left(K,\norm{\cdot}_\infty \right).           
          \end{split}
     \end{equation*}     

(3)   If $n=N$ then the statement is trivial. So we assume $n<N$.
      It follows from the above (2) that 
      \[  \nu_N\left(\bigcup_{|A|\geq n+1} \pi_A^{-1}(\pi_A K)\right) < 1. \]
     In particular we can find $q \in (0,1)^N$ outside of $\bigcup_{|A|\geq n+1} \pi_A^{-1}(\pi_A K)$.
     For $1\leq m\leq N$ we set 
     \[  C_m = P_m \cap \bigcup_{|A|=m} \pi_A^{-1}(\pi_A (q)). \]
     This is a finite set. (Each facet of $P_m$ contains exactly one point of $C_m$.)
     By using the central projection from each point of $C_m$, we define a continuous map 
     $g_m: P_m \setminus C_m\to P_{m-1}$.
     This map has the following properties:
     \begin{itemize}
        \item $\norm{g_m(x)-g_m(y)}_\infty = 1$ for $x,y\in P_m\setminus C_m$ with $\norm{x-y}_\infty = 1$. 
        \item For $1\leq l <m$     
                \[   g_m\left(P_m\setminus \bigcup_{|A|=l} \pi_A^{-1}(\pi_A(q))\right) = 
                      P_{m-1}\setminus \bigcup_{|A|=l} \pi_A^{-1}(\pi_A(q)). \]             
     \end{itemize}
     Since $K\cap \bigcup_{|A|\geq n+1} \pi_A^{-1}(\pi_A(q)) = \emptyset$,
       we can define $f = g_{n+1}\circ g_{n+2}\circ \dots \circ g_N: K\to P_n$.
     If $x,y\in K$ satisfy $\norm{x-y}_\infty =1$ then $\norm{f(x)-f(y)}_\infty = 1$.
     In particular $f$ is a $1$-embedding.
\end{proof}

\begin{claim} \label{claim: widim and  Hausdorff dimension}
  Let $N$ be a positive integer and $\varepsilon,\delta, s, \tau, L$ positive numbers with $4^N L^{s+\tau} \delta^\tau < 1$.
  Let $(K,d)$ be a compact metric space with $\dim_{\mathrm{H}}(K,d,\delta) < s$.
  Suppose there exists an $L$-Lipschitz map $\varphi: (K,d) \to \left([0,1]^N, \norm{\cdot}_\infty\right)$ such that 
  if $x,y\in K$ satisfy $\norm{\varphi(x)-\varphi(y)}_\infty < 1$ then $d(x,y) < \varepsilon$.
  Then $\widim_\varepsilon (K,d) \leq s+\tau$.
\end{claim}

\begin{proof}
It follows from $\dim_{\mathrm{H}}(K,d,\delta) < s$ that there exists a covering $K=\bigcup_{n=1}^\infty E_n$ satisfying 
$\diam\, E_n < \delta$ and $\sum_{n=1}^\infty (\diam\, E_n)^s < 1$.
Then 
\[  \mathcal{H}_\infty^{s+\tau}(K,d) \leq \sum_{n=1}^\infty (\diam \, E_n)^{s+\tau}  < \delta^\tau. \]
Since $\varphi$ is $L$-Lipschitz, 
\[  \mathcal{H}^{s+\tau}_\infty \left(\varphi(K),\norm{\cdot}_\infty \right)  < L^{s+\tau} \delta^\tau < 4^{-N}. \]
Hence $\mathcal{H}^{\lfloor s+\tau \rfloor +1}_\infty\left(\varphi(K),\norm{\cdot}_\infty\right) < 4^{-N}$.
Apply Claim \ref{claim: contraction to skeleton} (3) to $\varphi(K)$:
There exists a $1$-embedding $f:\varphi(K) \to P_{\lfloor s+\tau \rfloor}$.
Then $f\circ \varphi: K\to P_{\lfloor s+\tau \rfloor}$ becomes an $\varepsilon$-embedding.
The skeleton $P_{\lfloor s+\tau \rfloor}$ admits a structure of a $\lfloor s+\tau \rfloor$-dimensional simplicial complex.
So $\widim_\varepsilon(K,d) \leq \lfloor s+\tau \rfloor$.
\end{proof}

We start the proof of $\mdim(\mathcal{X},T) \leq \mdim_{\mathrm{H}}(\mathcal{X},T,d)$.
We can assume $\mdim_{\mathrm{H}}(\mathcal{X},T,d) < \infty$.
We take $\tau>0$ and $s>\mdim_{\mathrm{H}}(\mathcal{X},T,d)$.
Let $\varepsilon >0$.

\begin{claim}  \label{claim: construction of Lipschitz map}
  There exist a positive number $L$, a positive integer $M$ and an $L$-Lipschitz map 
  $\varphi:(\mathcal{X},d) \to \left([0,1]^M, \norm{\cdot}_\infty \right)$
  such that if $x,y\in \mathcal{X}$ satisfy $\norm{\varphi(x)-\varphi(y)}_\infty < 1$ then $d(x,y) < \varepsilon$.
\end{claim}

\begin{proof}
Choose a Lipschitz function $\psi:\mathbb{R}\to [0,1]$ satisfying $\psi(t) = 1$ for $t\leq \varepsilon/4$ and 
$\psi(t)= 0$ for $t\geq \varepsilon/2$.
Take an $\varepsilon/4$-spanning subset $\{x_1,\dots, x_M\}\subset \mathcal{X}$, i.e.~so that for any $x\in \mathcal{X}$ there exists 
$x_i$ with $d(x,x_i) < \varepsilon/4$.
We define $\varphi:\mathcal{X}\to [0,1]^M$ by 
\[ \varphi(x) = \left(\psi(d(x,x_1)),\dots, \psi(d(x,x_M))\right). \] 
\end{proof}

For $N\geq 1$ we define an $L$-Lipschitz map 
\[   \varphi_N:(\mathcal{X},d_N) \to \left(\left([0,1]^M\right)^N, \norm{\cdot}_\infty\right) \]
by $\varphi_N(x) = (\varphi(x), \varphi(Tx), \dots, \varphi(T^{N-1}x))$.
This has the property that if $x,y\in \mathcal{X}$ satisfy 
$\norm{\varphi_N(x)-\varphi_N(y)}_\infty < 1$ then $d_N(x,y) < \varepsilon$.

Choose a sufficiently small $\delta>0$ satisfying $4^M L^{s+\tau} \delta^{\tau} < 1$.
It follows from\footnote{Indeed here we use only $\underline{\mdim}_{\mathrm{H}}(\mathcal{X},T,d) < s$.} 
$\mdim_{\mathrm{H}}(\mathcal{X},T,d) < s$ that there exists $N_1<N_2<N_3<\dots \to \infty$
satisfying $\dim_{\mathrm{H}}(\mathcal{X},d_{N_k},\delta) < sN_k$.
Note 
\[  4^{M N_k} L^{sN_k + \tau N_k} \delta^{\tau N_k} = (4^M L^{s+\tau} \delta^{\tau})^{N_k} < 1. \]
Then we can apply Claim \ref{claim: widim and  Hausdorff dimension} to the space $(\mathcal{X},d_{N_k})$ and the map 
$\varphi_{N_k}$ with the parameters $MN_k, \varepsilon, \delta, s N_k, \tau N_k, L$.
This provides 
\[   \widim_\varepsilon(\mathcal{X},d_{N_k}) \leq s N_k+\tau N_k,   \]
and hence 
\[  \lim_{N\to \infty} \frac{1}{N} \widim_\varepsilon(\mathcal{X},d_N) \leq s+\tau. \]
The right-hand side is independent of $\varepsilon$. Thus $\mdim(\mathcal{X},T) \leq s+\tau$.
Let $\tau\to 0$ and $s\to \mdim_{\mathrm{H}}(\mathcal{X},T,d)$. This proves the statement.
\end{proof}

\begin{remark}
  The above proof actually shows 
  \[  \mdim(\mathcal{X},T) \leq \underline{\mdim}_{\mathrm{H}}(\mathcal{X},T,d), \]
  where the right-hand side is the lower mean Hausdorff dimension (Remark \ref{remark: lower mean Hausdorff dimension}).
\end{remark}

\begin{example}
   Let $\sigma: [0,1]^\mathbb{Z}\to [0,1]^\mathbb{Z}$ be the shift on the alphabet $[0,1]$ with 
  a metric $d(x,y) = \sum_{n\in \mathbb{Z}} 2^{-|n|} |x_n-y_n|$ as in Example \ref{example: mean dimension of Hilbert cube}.
  Then $\mdim_{\mathrm{H}}\left([0,1]^\mathbb{Z},\sigma,d\right) =1$ because
  \begin{equation*}
     \begin{split}
            1 & = \mdim\left([0,1]^\mathbb{Z},\sigma\right) \\
       &   \leq  \mdim_{\mathrm{H}}\left([0,1]^\mathbb{Z},\sigma,d\right) 
         \leq \mdim_{\mathrm{M}}\left([0,1]^\mathbb{Z},\sigma,d\right) = 1.
    \end{split}    
  \end{equation*}    
\end{example}

The next two theorems are the most crucial ingredients of the proof of the double variational principle.
Their proofs are postponed to later sections.
Before stating the results we need to introduce a concept expressing some regularity of metrics:

\begin{definition}  \label{definition: tame growth of covering numbers}
  Let $(\mathcal{X},d)$ be a compact metric space. It is said to have the \textbf{tame growth of covering numbers}
  if for every $\delta>0$ 
  \[  \lim_{\varepsilon\to 0} \varepsilon^\delta \log \#(\mathcal{X},d,\varepsilon) = 0. \] 
  Notice that this is purely a condition on metrics and does not involve dynamics.
\end{definition}

\begin{example}  \label{example: tame growth of covering numbers}
\begin{enumerate}
   \item If $\mathcal{X}$ is a compact subset of a finite dimensional Banach space $(V,\norm{\cdot})$,
           then $(\mathcal{X},\norm{\cdot})$ has the tame growth of covering numbers because 
           $\#(\mathcal{X},\norm{\cdot},\varepsilon) = O(\varepsilon^{-\dim V})$.
  \item  If a compact metric space $(K,\rho)$ has the tame growth of covering numbers, then the following metric $d$ on the shift 
           space $K^\mathbb{Z}$ also has the tame growth of covering numbers:
           \[  d(x,y) = \sum_{n\in \mathbb{Z}} 2^{-|n|} \rho(x_n,y_n). \]
  \item  It follows from (1) and (2) that the metric $d(x,y) = \sum_{n\in \mathbb{Z}} 2^{-|n|} |x_n-y_n|$ on $[0,1]^\mathbb{Z}$
           has the tame growth of covering numbers.          
\end{enumerate}
\end{example}

The next lemma shows that the tame growth of covering numbers is a fairly mild condition.

\begin{lemma} \label{lemma: tame growth condition is a mild condition}
Let $(\mathcal{X}, d)$ be a compact metric space.
There exists a metric $d'$ on $\mathcal{X}$ such that 
$d'(x,y) \leq d(x,y)$ and $(\mathcal{X},d')$ has the tame growth of covering numbers.
In particular every compact metrizable space admits a metric having the tame growth of covering numbers.
\end{lemma}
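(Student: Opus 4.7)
My plan is to embed $(\mathcal{X},d)$ isometrically into the Hilbert cube using $1$-Lipschitz coordinate functions, so that the pullback metric $d'$ automatically satisfies $d'\leq d$. The key point is that the Hilbert cube $([0,1]^{\mathbb{N}},d_{\mathrm{HC}})$ with $d_{\mathrm{HC}}(u,v)=\sum_{n\geq 1}2^{-n}|u_n-v_n|$ already has tame growth of covering numbers (essentially by Example \ref{example: tame growth of covering numbers}), and any compact metric space admits a canonical isometric embedding into it built from distance functions to a countable dense set. One subtle point worth flagging up front is that a straightforward reparametrization $d'=f\circ d$ by a concave $f$ with $f(0)=0$ and $f(s)\leq s$ cannot possibly work, since such an $f$ satisfies $f(s)\geq c\,s$ near $0$ and therefore the covering numbers only change by a constant factor; so one really must collapse the metric structurally, not just rescale it.

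Concretely, after rescaling I would assume $\diam(\mathcal{X},d)\leq 1$, pick a countable dense subset $\{x_n\}_{n\geq 1}\subset \mathcal{X}$, and set $\iota_n(x):=d(x,x_n)\in[0,1]$. Each $\iota_n$ is $1$-Lipschitz, and the collection $\iota=(\iota_n)_{n\geq 1}$ separates points (for $x\neq y$, choose $x_n$ with $d(x,x_n)<d(x,y)/3$). The candidate metric is
\[
d'(x,y):=\sum_{n=1}^{\infty}2^{-n}\,\bigl|\iota_n(x)-\iota_n(y)\bigr|.
\]
Then $d'(x,y)\leq d(x,y)$ is immediate from $1$-Lipschitzness combined with $\sum_{n\geq 1}2^{-n}=1$; point-separation of the $\iota_n$ makes $d'$ a genuine metric; and since $d'\leq d$ the identity map $(\mathcal{X},d)\to(\mathcal{X},d')$ is continuous, which by compactness of $\mathcal{X}$ forces $d'$ to be compatible with the original topology. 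By construction, $\iota:(\mathcal{X},d')\to ([0,1]^{\mathbb{N}},d_{\mathrm{HC}})$ is an isometric embedding.

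To finish, I would observe that covers of the Hilbert cube restrict to covers of $\iota(\mathcal{X})$ of the same $d_{\mathrm{HC}}$-diameter, so $\#(\mathcal{X},d',\varepsilon)\leq \#([0,1]^{\mathbb{N}},d_{\mathrm{HC}},\varepsilon)$. The standard covering estimate (truncating at coordinate $N\sim\log_{2}(4/\varepsilon)$ so that $\sum_{n>N}2^{-n}<\varepsilon/2$, and uniformly subdividing the first $N$ coordinates into intervals of length $\leq\varepsilon/2$) yields $\log\#([0,1]^{\mathbb{N}},d_{\mathrm{HC}},\varepsilon)=O\bigl((\log(1/\varepsilon))^{2}\bigr)$, whence $\varepsilon^{\delta}\log\#(\mathcal{X},d',\varepsilon)\to 0$ for every $\delta>0$, as desired.

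I do not anticipate any serious obstacle here: the argument reduces to the classical isometric embedding of a compact metric space into the Hilbert cube together with a short covering-number estimate. The one mildly fiddly point is keeping the inequality $d'\leq d$ on the nose, rather than only up to a multiplicative constant; this is arranged by first normalizing $\diam(\mathcal{X},d)\leq 1$ so that $\iota_n$ already lands in $[0,1]$, and using coordinate weights summing to one. The ``in particular'' clause then follows since any compact metrizable space carries some compatible metric to which the construction can be applied.
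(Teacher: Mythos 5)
Your construction is the same as the paper's: normalize $\diam(\mathcal{X},d)\leq 1$, embed into the Hilbert cube via the coordinate maps $x\mapsto d(x,x_n)$ for a countable dense set $\{x_n\}$, take $d'$ to be the pullback of the weighted $\ell^1$ metric, and note that the Hilbert cube has tame growth of covering numbers. The proposal is correct and matches the paper's argument, with the added (correct and sensible) remarks about why a monotone reparametrization of $d$ alone could not work and the explicit $O((\log(1/\varepsilon))^2)$ bound, neither of which changes the substance.
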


\begin{proof}
We can assume $\diam (\mathcal{X},d) \leq 1$.
Let $K=[0,1]^\mathbb{N}$ be the one-sided infinite product of the unit interval.
We define a metric $\rho$ on it by $\rho(x,y) = \sum_{i \geq 1} 2^{-i} |x_i-y_i|$.
As in Example \ref{example: tame growth of covering numbers},
$(K,\rho)$ has the tame growth of covering numbers.
Take a countable dense subset $\{x_i\}_{i=1}^\infty \subset \mathcal{X}$.
We define $f:\mathcal{X}\to K$ by $f(x) = (d(x,x_i))_{i=1}^\infty$.
$f$ is a topological embedding and it is one-Lipschitz:
\[ \rho(f(x),f(y)) = \sum_{i=1}^\infty 2^{-i}|d(x,x_i)-d(y,x_i)| \leq \sum_{i=1}^\infty 2^{-i} d(x,y) = d(x,y). \]
The metric $d'(x,y) := \rho(f(x), f(y))$ satisfies the requirements.
\end{proof}

Recall that we have denoted by $\mathscr{M}^T(\mathcal{X})$ the set of all $T$-invariant Borel probability measures on $\mathcal{X}$.

\begin{theorem}[Existence of nice measures]  \label{theorem: existence of nice measures}
  Let $(\mathcal{X},T)$ be a dynamical system with a metric $d$ such that 
  $(\mathcal{X},d)$ has the tame growth of covering numbers.
  Then
  \[ \mdim_{\mathrm{H}}(\mathcal{X},T,d) \leq \sup_{\mu\in \mathscr{M}^T(\mathcal{X})} \underline{\rdim}(\mathcal{X},T,d,\mu). \]
\end{theorem}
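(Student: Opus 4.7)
The plan is to follow the strategy indicated in \S\ref{subsection: outline of the proof of the main theorem}: construct the required invariant measure by averaging non-invariant Frostman-type measures $\nu_n$ living on the scaled spaces $(\mathcal{X},d_n)$, and combine a dynamical Frostman construction with the Misiurewicz-type averaging argument used to prove (\ref{eq: rate distortion variational principle}) in \cite{Lindenstrauss--Tsukamoto rate distortion}. Fix $s < \mdim_{\mathrm{H}}(\mathcal{X},T,d)$; the aim is to produce $\mu \in \mathscr{M}^T(\mathcal{X})$ with $\underline{\rdim}(\mathcal{X},T,d,\mu) \geq s$.

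First, by the definition (\ref{eq: definition of mean Hausdorff dimension}) of mean Hausdorff dimension, there is $\delta_0 > 0$ such that for every $\delta \in (0,\delta_0)$ one can find arbitrarily large $n$ with $\dim_{\mathrm{H}}(\mathcal{X},d_n,\delta) > s n$. For each such $n$ I would use a Frostman-type construction in the compact metric space $(\mathcal{X},d_n)$ to obtain a Borel probability measure $\nu_n$ on $\mathcal{X}$ satisfying a scaling law of the form
\[
\nu_n(E) \leq \bigl(\tau_n + \diam_{d_n} E\bigr)^{s n}
\]
for every Borel $E$ with $\diam_{d_n} E < \delta$, with $\tau_n$ as small as we please. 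Producing $\nu_n$ from the lower bound on Hausdorff dimension is the main geometric-measure-theoretic ingredient. Setting $\mu_n := \frac{1}{n}\sum_{k=0}^{n-1} T^k_* \nu_n$ and extracting a weak-$*$ subsequential limit $\mu$ by compactness of $\mathscr{M}(\mathcal{X})$, the standard Krylov--Bogolyubov argument shows that $\mu$ is $T$-invariant.

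Second, I would transfer the $d_n$-scaling law from $\nu_n$ to a lower bound on $R(d,\mu,\varepsilon)$ through a rate distortion version of the Misiurewicz technique, mirroring the argument from \cite{Lindenstrauss--Tsukamoto rate distortion}. Fix small $\varepsilon > 0$ with $2\varepsilon \log(1/\varepsilon) \leq \delta$, and consider random variables $X, Y=(Y_0,\ldots,Y_{N-1})$ with $\mathrm{Law}(X) = \mu$ and $\mathbb{E}\frac{1}{N}\sum_{k=0}^{N-1} d(T^k X, Y_k) < \varepsilon$. Discretize $Y$ to a finite-valued random variable using Remark~\ref{remark: Y takes only finitely many values in the definition of R}, then replace $\mu$ by $\mu_n$ for large $n$ via the weak-$*$ convergence $\mu_n \to \mu$ together with Lemma~\ref{lemma: law convergence implies the convergence of mutual information}. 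The concavity of mutual information (Lemma~\ref{lemma: concavity/convexity of mutual information}) applied to $\mu_n = \frac{1}{n}\sum T^k_* \nu_n$, the subadditivity in Lemma~\ref{lemma: subadditivity of mutual information}, and the data-processing inequality (Lemma~\ref{lemma: data-processing inequality}) then reduce the estimate, up to manageable error, to a lower bound for mutual information in a setting where $X$ has law $\nu_n$ and the distortion is measured in $d_n$. In that setting Lemma~\ref{lemma: from geometric measure theory to rate distortion theory}, applied to $X$ and to the $n$-block $(Y_0,\ldots,Y_{n-1})$ viewed through the orbit map $x \mapsto (x,Tx,\ldots,T^{n-1}x)$, delivers a bound of order $s n \log(1/\varepsilon) - C(sn+1)$. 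Dividing by $n$ and letting $\varepsilon \to 0$ yields $\underline{\rdim}(\mathcal{X},T,d,\mu) \geq s$, and since $s < \mdim_{\mathrm{H}}(\mathcal{X},T,d)$ was arbitrary the theorem follows.

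The main obstacle is this Misiurewicz-type reduction. One must carefully couple the resolution $\varepsilon$, the block length $n$ at which the Frostman-type scaling law is valid, and the block length $N$ appearing in the definition of rate distortion, so that Lemma~\ref{lemma: from geometric measure theory to rate distortion theory} applies legitimately to $\nu_n$ while the various error terms combine to $o(\log(1/\varepsilon))$. This is precisely where the tame growth of covering numbers hypothesis enters: it guarantees the existence of a discretizing partition whose mesh is small enough to control the average distortion but whose log-cardinality grows more slowly than $\log(1/\varepsilon)$, so that the cost of replacing $Y$ by a finite-valued approximation is negligible after division by $\log(1/\varepsilon)$.
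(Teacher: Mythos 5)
Your overall plan matches the paper's strategy in spirit—Frostman-type measures on the Bowen balls, time-averaging plus weak-$*$ extraction, and a Misiurewicz-type argument feeding into Lemma~\ref{lemma: from geometric measure theory to rate distortion theory}—but there is a genuine gap in the middle, and you have also misidentified where the tame growth hypothesis is actually used.

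The gap is a mismatch of Bowen metrics. Mean Hausdorff dimension is defined via the $L^\infty$ Bowen metric $d_n(x,y)=\max_{0\le k<n}d(T^kx,T^ky)$, so your Frostman measures $\nu_n$ come equipped with a scaling law of the form $\nu_n(E)\le(\tau_n+\diam_{d_n}E)^{sn}$. But the distortion constraint in the rate distortion function is $\mathbb{E}\,\frac{1}{N}\sum_k d(T^kX,Y_k)<\varepsilon$, which is the $L^1$ Bowen metric $\bar d_N$. To invoke Lemma~\ref{lemma: from geometric measure theory to rate distortion theory} you must have the \emph{same} metric both in the scaling law of the measure and in the bound $\mathbb{E}\rho(X,Y)<\varepsilon$. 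Here they differ, and the comparison $\bar d_n\le d_n$ runs in the wrong direction on both sides: a small $\bar d_n$-distortion does not give a small $d_n$-distortion, and a $d_n$-scaling law does not give a $\bar d_n$-scaling law. As written, your application of the lemma is not legitimate.

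The paper closes exactly this gap by introducing the $L^1$-mean Hausdorff dimension $\mdim_{\mathrm{H},L^1}$, defined with $\bar d_N$ in place of $d_N$, and proving (Lemma~\ref{lemma: mean Hausdorff dimension and L^1 mean Hausdorff dimension}) that under the tame growth of covering numbers $\mdim_{\mathrm{H},L^1}=\mdim_{\mathrm{H}}$. The Frostman measures are then constructed from the $\bar d_{n_k}$-Hausdorff dimension, so their scaling law is in the correct metric to match the rate distortion distortion constraint, and Theorem~\ref{theorem: L^1 version of existence of nice measures} goes through without any further regularity assumption. In particular, tame growth of covering numbers is used only to pass from $d_N$ to $\bar d_N$ at the level of Hausdorff dimension; it is not needed to reduce to finite-valued $Y$ (that step, Remark~\ref{remark: Y takes only finitely many values in the definition of R}, is a soft approximation available in any compact metric space). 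You should restructure your argument to build the Frostman measures relative to $\bar d_{n_k}$ and to insert the comparison $\mdim_{\mathrm{H}}\le\mdim_{\mathrm{H},L^1}$ (via tame growth) as a separate preliminary step.
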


\begin{theorem}[Existence of nice metrics]  \label{theorem: existence of nice metrics}
   If a dynamical system $(\mathcal{X},T)$ has the marker property, 
   then there exists a metric $d$ on $\mathcal{X}$ compatible with 
   the topology such that
   \[  \overline{\mdim}_{\mathrm{M}}(\mathcal{X},T,d) = \mdim(\mathcal{X},T). \]
\end{theorem}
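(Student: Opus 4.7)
Write $D := \mdim(\mathcal{X}, T)$. The inequality $D \leq \overline{\mdim}_{\mathrm{M}}(\mathcal{X}, T, d)$ is automatic from Theorem \ref{theorem: metric mean dimension dominates mean dimension} for every $d$, so the task is to construct a $d$ realizing $\overline{\mdim}_{\mathrm{M}}(\mathcal{X}, T, d) \leq D$. The plan parallels Theorem \ref{theorem: lower metric mean dimension coincides with mean dimension}, but must yield \emph{uniform} control of $S(\mathcal{X},T,d,\varepsilon)$ across all small $\varepsilon$, not merely along a subsequence.

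Fix $\delta_k \searrow 0$ and a background metric $d^{(0)}$ on $\mathcal{X}$. By the definition of mean dimension, for each $k$ one can choose a large $N_k$ and a continuous $\eta_k$-embedding $f_k : (\mathcal{X}, d^{(0)}_{N_k}) \to P_k$ into a finite simplicial complex $P_k$ with $\dim P_k \leq (D+\delta_k)N_k$, and $\eta_k$ may be taken arbitrarily small. The first main step is to use the marker property, in the spirit of the $T$-equivariant extension machinery of \cite{Lindenstrauss, Gutman--Lindenstrauss--Tsukamoto, Gutman--Qiao--Tsukamoto}, to upgrade each $f_k$ to a genuine $T$-equivariant continuous map $F_k : \mathcal{X} \to P_k^{\mathbb{Z}}$ whose restriction to a marker-based fundamental block of length $N_k$ essentially reproduces $f_k$, with only a controlled loss of distortion near block boundaries.

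Equip $P_k$ with its natural simplicial metric, form the corresponding Bowen-type block metric on $P_k^{\mathbb{Z}}$, and pull back through $F_k$ to obtain metrics $\rho_k$ on $\mathcal{X}$. Choose a rapidly decreasing sequence of thresholds $\varepsilon_k \searrow 0$ with $\eta_k \ll \varepsilon_k \ll \varepsilon_{k-1}$ tuned so that, within the scale window $[\varepsilon_k, \varepsilon_{k-1})$, the $\rho_k$-geometry dominates. Set
\[
d(x,y) := \sup_{k \geq 1} \min\bigl(\rho_k(x,y),\, \varepsilon_{k-1}\bigr).
\]
Once $F_k$ is injective for large enough $k$, this $d$ is a metric compatible with the topology. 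To estimate $S(\mathcal{X}, T, d, \varepsilon)$ for $\varepsilon \in [\varepsilon_k, \varepsilon_{k-1})$, observe that $d$-balls of radius $\varepsilon$ sit inside comparable $\rho_k$-balls, so $\#(\mathcal{X}, d_N, \varepsilon)$ is bounded by $\varepsilon$-covering numbers of block images in $P_k^{N}$; the dimensional bound on $P_k$ yields $\#(\mathcal{X}, d_N, \varepsilon) \lesssim \varepsilon^{-(D+\delta_k) N}$ up to boundary corrections, hence $S(\mathcal{X}, T, d, \varepsilon) \leq (D+\delta_k + o_k(1))\log(1/\varepsilon)$ uniformly on $[\varepsilon_k, \varepsilon_{k-1})$. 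Letting $k \to \infty$ gives $\overline{\mdim}_{\mathrm{M}}(\mathcal{X}, T, d) \leq D$.

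The principal obstacle is the first step: performing the equivariant extension with enough quantitative control that the parameters $\eta_k, \varepsilon_k, N_k$ can be coordinated \emph{coherently} across $k$. For Theorem \ref{theorem: lower metric mean dimension coincides with mean dimension} one may work scale-by-scale, designing each extension to win a single $\varepsilon$; but here the metric $d$ must simultaneously respect every threshold $\varepsilon_k$, which forces the block-embeddings $F_k$ to be nearly optimal not just geometrically but also in how their errors interact across the whole sequence. This is the delicate point distinguishing the upper from the lower metric mean dimension case, and is presumably the obstacle that kept this statement open for about twenty years.
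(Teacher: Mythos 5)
Your plan reproduces the correct overall shape (marker property, dynamical tilings, block embeddings into low-dimensional complexes, passage to a limit metric), but the central estimate in the last paragraph has the inequality backwards, and the obstruction you name in the final sentence is precisely the idea that the proof requires and that the proposal does not supply.

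From $d := \sup_k \min(\rho_k, \varepsilon_{k-1})$ you only get $d \geq \min(\rho_k, \varepsilon_{k-1})$ for each $k$, so a $d$-ball of radius $\varepsilon < \varepsilon_{k-1}$ sits \emph{inside} the corresponding $\rho_k$-ball. This makes covering $\mathcal{X}$ by $d$-small sets \emph{harder}, not easier: one has $\#(\mathcal{X}, d_N, \varepsilon) \geq \#(\mathcal{X}, (\rho_k)_N, \varepsilon)$, whereas you need the reverse inequality. In fact, for $\varepsilon \in (\varepsilon_k, \varepsilon_{k-1})$ a set has $d$-diameter $< \varepsilon$ if and only if it has $\rho_j$-diameter $< \varepsilon$ simultaneously for all $j \le k$; thus $\#(\mathcal{X},d_N,\varepsilon)$ is governed by the \emph{common refinement} of the covers coming from $\rho_1,\dots,\rho_k$. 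Each $\rho_j$ is pulled back from a fixed finite complex $P_j$ and has polynomial covering numbers at every scale, so nothing forces the coarse metrics $\rho_1,\dots,\rho_{k-1}$ to be subordinate to $\rho_k$ at scale $\varepsilon_k$. Without such subordination the exponent in the covering bound can pile up across $j$, and the desired bound $\#(\mathcal{X},d_N,\varepsilon) \lesssim \varepsilon^{-(D+\delta_k)N}$ does not follow. This is the coherence-across-$k$ problem you flag at the end, but the $\sup$ construction does nothing to address it.

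What the paper's proof adds is exactly the missing nesting. Rather than building independent equivariant embeddings $F_k$ and taking a sup of the induced metrics, it inductively constructs $f_{n+1}$ both $C^0$-close to $f_n$ \emph{and} approximately factoring through the previous step: the new linear data $\tilde g_{n+1,\lambda}$ are obtained by composing simplicial maps $h_\lambda: P_{n+1} \to P_n * Q_n$ (Lemma \ref{lemma: approximately commuting simplicial maps}) with the old $g_{n,\cdot}$ and $g'_n$, so that $\tilde g_{n+1}(P_{n+1})$ actually sits inside the image of the level-$n$ data. This is what makes the covering numbers of the new map at the old scale window $[\varepsilon_{n+1},\varepsilon_n)$ inherit the level-$n$ bound (Claim \ref{claim: covering number of tilde{g}_{n+1}}), while the dimension bound on $P_{n+1}$ takes over at scales below $\varepsilon_{n+1}$ (Condition \ref{condition: dynamical PS theorem} (2)). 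The join $P_n * Q_n$ and the blending function $\alpha(\dist(0,\partial_{\varphi_n}(x)))$ are there to interpolate between block data and pointwise data near tile boundaries so that the resulting map is continuous and equivariant without destroying the dimension count. Telescoping the sequence of nested approximations, rather than aggregating independent metrics by a supremum, is the step that closes the gap you identified.
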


The inequalities $\mdim(\mathcal{X},T) \leq \underline{\mdim}_{\mathrm{M}}(\mathcal{X},T,d)
\leq \overline{\mdim}_{\mathrm{M}}(\mathcal{X},T,d)$ always hold true.
So 
$\mdim(\mathcal{X},T) = \overline{\mdim}_{\mathrm{M}}(\mathcal{X},T,d)$ implies that 
$\mdim_{\mathrm{M}}(\mathcal{X},T,d)$ exists and is equal to $\mdim(\mathcal{X},T)$.

\begin{corollary}   \label{corollary: mean dimension and rate distortion dimension}
  Let $(\mathcal{X},T)$ be a dynamical system with a metric $d$. Then
  \[  \mdim(\mathcal{X},T) \leq  \sup_{\mu\in \mathscr{M}^T(\mathcal{X})}  \underline{\rdim}(\mathcal{X},T,d,\mu).   \]
\end{corollary}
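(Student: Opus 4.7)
The strategy is to reduce to the already-proved Theorem \ref{theorem: existence of nice measures} by replacing the given (possibly pathological) metric $d$ with a smaller, well-behaved metric $d'$ furnished by Lemma \ref{lemma: tame growth condition is a mild condition}, and then transport the inequality back to $d$ using the obvious monotonicity of the rate distortion function in the distortion measure.

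Concretely, the plan is as follows. First, apply Lemma \ref{lemma: tame growth condition is a mild condition} to obtain a metric $d'$ on $\mathcal{X}$, compatible with the topology, satisfying $d'(x,y)\leq d(x,y)$ and such that $(\mathcal{X},d')$ has the tame growth of covering numbers. Since $d'$ is compatible with the topology, Proposition \ref{prop: mean Hausdorff dimension dominates mean dimension} gives
\[
  \mdim(\mathcal{X},T) \leq \mdim_{\mathrm{H}}(\mathcal{X},T,d').
\]
Next, apply Theorem \ref{theorem: existence of nice measures} to $(\mathcal{X},T,d')$ (whose hypothesis holds by construction) to obtain
\[
  \mdim_{\mathrm{H}}(\mathcal{X},T,d') \leq \sup_{\mu\in \mathscr{M}^T(\mathcal{X})} \underline{\rdim}(\mathcal{X},T,d',\mu).
\]

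The remaining step is to compare rate distortion dimensions for $d'$ and $d$. The key observation is that if $d'\leq d$ pointwise, then for any $N$ and any random variables $X,Y=(Y_0,\dots,Y_{N-1})$ with $\mathrm{Law}(X)=\mu$, the constraint $\mathbb{E}\bigl(\frac{1}{N}\sum_{n=0}^{N-1} d(T^n X, Y_n)\bigr) < \varepsilon$ is strictly stronger than the analogous one for $d'$. Taking infima of $I(X;Y)/N$ over the respective feasible sets yields $R(d',\mu,\varepsilon) \leq R(d,\mu,\varepsilon)$ for every $\varepsilon>0$ and every $\mu\in \mathscr{M}^T(\mathcal{X})$. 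Dividing by $\log(1/\varepsilon)$ and taking $\liminf_{\varepsilon\to 0}$ gives $\underline{\rdim}(\mathcal{X},T,d',\mu) \leq \underline{\rdim}(\mathcal{X},T,d,\mu)$, and hence the same inequality for the supremum over $\mu$. Chaining the three inequalities produces the desired bound.

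There is essentially no obstacle in this proof: the whole point is that all the work has already been done in Proposition \ref{prop: mean Hausdorff dimension dominates mean dimension}, Lemma \ref{lemma: tame growth condition is a mild condition}, and Theorem \ref{theorem: existence of nice measures}. The only minor thing to verify carefully is that the $d'$ from Lemma \ref{lemma: tame growth condition is a mild condition} is indeed compatible with the topology of $\mathcal{X}$ (so that mean Hausdorff dimension with respect to $d'$ upper-bounds the topological invariant $\mdim(\mathcal{X},T)$), but this is immediate from the construction of $d'$ as the pullback of a metric on the Hilbert cube via a topological embedding.
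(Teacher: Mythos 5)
Your proof is correct and follows essentially the same route as the paper: replace $d$ by the tame-growth metric $d'\leq d$ from Lemma \ref{lemma: tame growth condition is a mild condition}, invoke Proposition \ref{prop: mean Hausdorff dimension dominates mean dimension} and Theorem \ref{theorem: existence of nice measures} for $d'$, and use monotonicity of the rate distortion function in the distortion measure to transfer back to $d$. The paper phrases the last step via an approximate supremum-attaining measure rather than comparing suprema directly, but this is a cosmetic difference.
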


\begin{proof}
Notice that if $d$ has the tame growth of covering numbers then the statement immediately follows from 
Proposition \ref{prop: mean Hausdorff dimension dominates mean dimension} and 
Theorem \ref{theorem: existence of nice measures}.
Hence the problem is how to reduce the general case to this case.

Let $d'$ be a metric given by Lemma \ref{lemma: tame growth condition is a mild condition}.
It has the tame growth of covering numbers.
So for any $\varepsilon >0$ there exists an invariant probability measure $\mu$ on $\mathcal{X}$
satisfying 
\[  \mdim(\mathcal{X},\sigma) \leq \underline{\rdim}(\mathcal{X},T,d',\mu) + \varepsilon.\]
Since $d'\leq d$,
\begin{equation*}
      \underline{\rdim}(\mathcal{X},T,d',\mu) 
      \leq  \underline{\rdim}(\mathcal{X},T,d,\mu). 
\end{equation*}
Since $\varepsilon>0$ is arbitrary, this proves the claim.
\end{proof}

\begin{corollary}  \label{cor: all the dimensions coincide}
  If a dynamical system $(\mathcal{X},T)$ has the marker property, then there exists a metric $d$ on $\mathcal{X}$
  such that all the following quantities are equal to each other:
  \begin{equation}  \label{eq: various dynamical dimensions}
     \begin{split} 
       &\mdim(\mathcal{X},T),  \quad  \mdim_{\mathrm{H}}(\mathcal{X},T,d),  \\
       & \overline{\mdim}_{\mathrm{M}}(\mathcal{X},T,d) , \quad \underline{\mdim}_{\mathrm{M}}(\mathcal{X},T,d), \\
       & \sup_{\mu\in \mathscr{M}^T(\mathcal{X})} \overline{\rdim}(\mathcal{X},T,d,\mu), \quad 
        \sup_{\mu\in \mathscr{M}^T(\mathcal{X})}   \underline{\rdim}(\mathcal{X},T,d,\mu).      
     \end{split}
  \end{equation}
\end{corollary}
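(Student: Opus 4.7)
The plan is to take the metric $d$ produced by Theorem \ref{theorem: existence of nice metrics} (which is exactly where the marker property is used) and then show that all six quantities listed in (\ref{eq: various dynamical dimensions}) are sandwiched between $\mdim(\mathcal{X},T)$ and $\overline{\mdim}_{\mathrm{M}}(\mathcal{X},T,d)$, which by the choice of $d$ are equal. Nothing further needs to be done beyond assembling the comparison inequalities already available.

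Concretely, fix the metric $d$ given by Theorem \ref{theorem: existence of nice metrics}, so that
\[
\overline{\mdim}_{\mathrm{M}}(\mathcal{X},T,d) = \mdim(\mathcal{X},T).
\]
Combining this with Proposition \ref{prop: mean Hausdorff dimension dominates mean dimension} we get the chain
\[
\mdim(\mathcal{X},T) \le \mdim_{\mathrm{H}}(\mathcal{X},T,d) \le \underline{\mdim}_{\mathrm{M}}(\mathcal{X},T,d) \le \overline{\mdim}_{\mathrm{M}}(\mathcal{X},T,d) = \mdim(\mathcal{X},T),
\]
so that the four topological/metric/Hausdorff quantities in (\ref{eq: various dynamical dimensions}) coincide with $\mdim(\mathcal{X},T)$.

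For the two rate distortion suprema, the upper bound is handed to us directly by the second half of Proposition \ref{prop: mean Hausdorff dimension dominates mean dimension}:
\[
\sup_{\mu} \underline{\rdim}(\mathcal{X},T,d,\mu) \le \sup_{\mu} \overline{\rdim}(\mathcal{X},T,d,\mu) \le \overline{\mdim}_{\mathrm{M}}(\mathcal{X},T,d) = \mdim(\mathcal{X},T).
\]
For the lower bound I would invoke Corollary \ref{corollary: mean dimension and rate distortion dimension}, which applies to any metric (no tame growth hypothesis needed, since that corollary already internally reduces to the tame case via Lemma \ref{lemma: tame growth condition is a mild condition}). This yields
\[
\mdim(\mathcal{X},T) \le \sup_{\mu \in \mathscr{M}^T(\mathcal{X})} \underline{\rdim}(\mathcal{X},T,d,\mu),
\]
closing the loop.

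Thus the proof is essentially a one-line combination of Theorem \ref{theorem: existence of nice metrics}, Proposition \ref{prop: mean Hausdorff dimension dominates mean dimension}, and Corollary \ref{corollary: mean dimension and rate distortion dimension}. There is no real obstacle at this stage; all of the difficulty has been absorbed into Theorem \ref{theorem: existence of nice metrics} (which is the deep technical input, and the only place the marker property enters) and into the rate distortion Frostman-type estimate of Theorem \ref{theorem: existence of nice measures} that underlies Corollary \ref{corollary: mean dimension and rate distortion dimension}. The one mild point worth flagging is that the metric $d$ from Theorem \ref{theorem: existence of nice metrics} need not a priori have the tame growth of covering numbers, but this is harmless precisely because Corollary \ref{corollary: mean dimension and rate distortion dimension} is stated without that hypothesis.
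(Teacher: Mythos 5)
Your proposal is correct and matches the paper's own proof essentially verbatim: both take the metric $d$ from Theorem \ref{theorem: existence of nice metrics}, sandwich all six quantities between $\mdim(\mathcal{X},T)$ and $\overline{\mdim}_{\mathrm{M}}(\mathcal{X},T,d)$ using Proposition \ref{prop: mean Hausdorff dimension dominates mean dimension} and Corollary \ref{corollary: mean dimension and rate distortion dimension}, and conclude. Your remark about not needing tame growth because Corollary \ref{corollary: mean dimension and rate distortion dimension} is already stated for arbitrary metrics is a correct and worthwhile clarification.
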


\begin{proof}
  All the quantities in (\ref{eq: various dynamical dimensions})
  are bounded between $\mdim(\mathcal{X}, T)$ and 
  $\overline{\mdim}_{\mathrm{M}}(\mathcal{X},T,d)$ by 
  Proposition \ref{prop: mean Hausdorff dimension dominates mean dimension} and Corollary 
  \ref{corollary: mean dimension and rate distortion dimension}.
  Take a metric $d$ given in Theorem \ref{theorem: existence of nice metrics}.
  Then $\mdim(\mathcal{X},T) = \overline{\mdim}_{\mathrm{M}}(\mathcal{X},T,d)$ and hence all the quantities in 
  (\ref{eq: various dynamical dimensions}) coincide with each other.
\end{proof}

Now we can prove the double variational principle (Theorem \ref{main theorem}).

\begin{proof}[Proof of Theorem \ref{main theorem}] 

Let $(\mathcal{X},T)$ be a dynamical system having the marker property.
From Corollary \ref{corollary: mean dimension and rate distortion dimension}
 \begin{equation*} 
        \begin{split}
         \mdim(\mathcal{X},T)   \leq  \inf_{d\in \mathscr{D}(\mathcal{X})} \sup_{\mu\in \mathscr{M}^T(\mathcal{X})} 
         \underline{\rdim}(\mathcal{X},T,d,\mu), \\
        \leq  \inf_{d\in \mathscr{D}(\mathcal{X})} \sup_{\mu\in \mathscr{M}^T(\mathcal{X})} 
       \overline{\rdim}(\mathcal{X},T,d,\mu).   
       \end{split}
     \end{equation*}
On the other hand we already know (Corollary \ref{cor: all the dimensions coincide}) that there exists $d\in \mathscr{D}(\mathcal{X})$
satisfying 
\[ \mdim(\mathcal{X},T) =  
    \sup_{\mu\in \mathscr{M}^T(\mathcal{X})}  \overline{\rdim}(\mathcal{X},T,d,\mu)
    = \sup_{\mu\in \mathscr{M}^T(\mathcal{X})}  \underline{\rdim}(\mathcal{X},T,d,\mu).   \]
\end{proof}

\begin{remark}
It follows from Proposition \ref{prop: mean Hausdorff dimension dominates mean dimension} and 
Theorem \ref{theorem: existence of nice measures} that for a metric $d$ having the tame growth of covering numbers
\begin{equation*}
   \begin{split}
     \mdim_{\mathrm{H}}(\mathcal{X}, T,d) \leq 
     \sup_{\mu\in \mathscr{M}^T(\mathcal{X})} \overline{\rdim}(\mathcal{X},T,d,\mu) 
     \leq \overline{\mdim}_{\mathrm{M}}(\mathcal{X},T,d), \\
      \mdim_{\mathrm{H}}(\mathcal{X}, T,d) \leq 
     \sup_{\mu\in \mathscr{M}^T(\mathcal{X})} \underline{\rdim}(\mathcal{X},T,d,\mu) 
     \leq \underline{\mdim}_{\mathrm{M}}(\mathcal{X},T,d).
  \end{split}
\end{equation*}     
Hence we have a sufficient criterion (under the assumption of the tame growth of covering numbers) for the equality (\ref{eq: metric mean dimension is equal to rate distortion dimension})
in Problem \ref{problem: metric mean dimension is equal to rate distortion dimension}:
If mean Hausdorff dimension is equal to metric mean dimension, then they also coincide with 
the supremum of rate distortion dimensions.
\end{remark}

\section{Proof of Theorem \ref{theorem: existence of nice measures}: Geometric measure theory and Misiurewicz's technique}
\label{section: proof of existence of nice measures}

We prove Theorem \ref{theorem: existence of nice measures} in this section.
The proof is a combination of geometric measure theory and the rate distortion theory version of 
Misiurewicz's technique \cite{Misiurewicz, Lindenstrauss--Tsukamoto rate distortion}.

\subsection{Geometric measure theory around Frostman's lemma}
\label{subsection: geometric measure theory around Frostman's lemma}

The purpose of this subsection is to prepare some basics of geometric measure theory 
around Frostman's lemma.
Frostman's lemma is a fundamental result in geometric measure theory.
It states that a Borel subset $A\subset \mathbb{R}^n$ has positive (possibly infinite) $s$-dimensional
Hausdorff measure if and only if there exists a nonzero Radon measure $\mu$ on $\mathbb{R}^n$ 
supported on $A$ and satisfying $\mu(B_r(x)) \leq r^s$ for all $x\in \mathbb{R}^n$ and $r>0$ (see \cite[8.8 Theorem]{Mattila}). 
We need a generalization of this result to compact metric spaces, which is due to 
Howroyd \cite{Howroyd}.
Our presentation follows the book of Mattila \cite[Sections 8.14-8.17]{Mattila}.

Let $(\mathcal{X},d)$ be a compact metric space.
For $\delta>0$ and $s\geq 0$ we define $\lambda_\delta^s(\mathcal{X},d)$ as 
\[  \inf \> \sum_{n=1}^\infty c_n (\diam E_n)^s \]
where the infimum is taken over all countable families $\{(E_n,c_n)\}$ such that 
$0<c_n<\infty$, $E_n\subset \mathcal{X}$ with $\diam E_n < \delta$ and 
\[  \forall  x\in \mathcal{X}: \>  \sum_{n=1}^\infty c_n 1_{E_n}(x) \geq 1. \]
Obviously $\lambda_\delta^s(\mathcal{X},d) \leq \mathcal{H}^s_\delta(\mathcal{X},d)$.

\begin{lemma} \label{lemma: weighted Hausdorff measure}
\[  \mathcal{H}_{6\delta}^s(\mathcal{X},d) \leq 6^s \lambda^s_\delta(\mathcal{X},d). \]
\end{lemma}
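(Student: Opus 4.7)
The plan is a Vitali-style covering extraction that promotes a near-optimal weighted subcover for $\lambda_\delta^s$ into an honest cover admissible for $\mathcal{H}_{6\delta}^s$, at the cost of a threefold enlargement in diameter and a factor-of-two scale spread coming from dyadic stratification. This is essentially the content of Howroyd's lemma as presented in Mattila \cite{Mattila}, Sections 8.14--8.17.

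Fix $\eta > 0$ and pick a weighted family $\{(E_n,c_n)\}_{n\geq 1}$ with $0 < c_n < \infty$, $\diam E_n < \delta$, $\sum_n c_n 1_{E_n}(x) \geq 1$ for every $x \in \mathcal{X}$, and
\[
\sum_n c_n (\diam E_n)^s \leq \lambda_\delta^s(\mathcal{X},d) + \eta.
\]
I would stratify the index set by dyadic scale, $I_k := \{n : 2^{-k-1}\delta < \diam E_n \leq 2^{-k}\delta\}$ for $k \geq 0$. A geometric-series argument based on $\sum_n c_n 1_{E_n}(x) \geq 1$ assigns to each $x \in \mathcal{X}$ at least one scale $k(x)$ at which the partial weight $\sum_{n \in I_{k(x)},\, x \in E_n} c_n$ is bounded below by a positive multiple of $2^{-k(x)}$. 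At each scale $k$, I extract by a greedy/Zorn selection a maximal pairwise-disjoint subfamily $\mathcal{D}_k \subseteq \{E_n : n \in I_k\}$. The threefold enlargements $\hat E := \{y : d(y,E) \leq \diam E\}$ satisfy $\diam \hat E \leq 3 \diam E$ by the triangle inequality, and by maximality every $E_n$ with $n \in I_k$ is contained in some $\hat E$ with $E \in \mathcal{D}_k$. Consequently $\bigcup_k \{\hat E : E \in \mathcal{D}_k\}$ is a countable cover of $\mathcal{X}$ by sets of diameter at most $3 \cdot 2^{-k}\delta \leq 3\delta < 6\delta$, hence admissible for $\mathcal{H}_{6\delta}^s$.

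For the $s$-mass bound, I would exploit the disjointness of $\mathcal{D}_k$ together with the scale-selection lower bound. Each point $x$ with $k(x) = k$ lies in exactly one $E \in \mathcal{D}_k$ and contributes the concentrated weight $\sum_{n \in I_k,\, x \in E_n} c_n \gtrsim 2^{-k}$, which one charges against $(\diam E)^s$; combining the enlargement factor $3^s$ with the factor $2^s$ arising from comparing $2^{-k}\delta$ with $\diam E \in (2^{-k-1}\delta, 2^{-k}\delta]$ produces an estimate of the form
\[
\sum_k \sum_{E \in \mathcal{D}_k} (\diam \hat E)^s \leq 6^s \sum_n c_n (\diam E_n)^s \leq 6^s (\lambda_\delta^s(\mathcal{X},d) + \eta).
\]
Letting $\eta \to 0$ then yields the lemma. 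The only real obstacle is this constant bookkeeping: the dyadic scale, enlargement, and weight-concentration constants must interlock so as to produce exactly $6^s$ and no more. The geometric content is otherwise entirely routine.
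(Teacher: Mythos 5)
Your route --- stratify by dyadic scale, run a single Vitali extraction at each scale, and pay for the extracted family with the pigeonhole weight bound --- is genuinely different from the paper's, and I don't see how to close the gap in it. The paper reduces to an auxiliary Claim about covers $\mathcal{X} = \{\sum_{n=1}^N a_n 1_{B^\circ_n} \geq m\}$ by open balls of radius $r_n<\delta$ with \emph{integer} weights $a_n$ and \emph{integer} multiplicity $m$, proved by induction on $m$: each inductive step applies finite Vitali once, decrements the weights on the extracted disjoint balls so that multiplicity drops to $m-1$, and the bound $m^{-1}\,6^s\sum a_n r_n^s$ owes its $m^{-1}$ to having peeled off $m$ Vitali layers. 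The general weighted family $\{(E_n,c_n)\}$ is then brought into this shape by inflating the $E_n$ to open balls, passing to a finite subfamily by compactness, and rationalizing the $c_n$ to integers. That $m$-fold iteration is exactly what produces $6^s$ and no more; your argument has no analogue of it, and its absence is what breaks the accounting.

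Concretely, the pigeonhole step can only deliver $\sum_{n\in I_{k(x)},\,x\in E_n}c_n \geq 2^{-k(x)-1}$ --- any $k$-uniform lower bound would need the thresholds to sum to more than $1$. Since $\diam E_n \asymp 2^{-k}\delta$ on $I_k$, charging $(\diam E)^s$ against this weight (or against $\sum_{n\in I_k,\,x\in E_n}c_n(\diam E_n)^s$) produces a factor of order $2^{k}$ that your final display simply omits. Moreover, a maximal pairwise-disjoint $\mathcal{D}_k$ is selected with no reference to the $c_n$ and may consist entirely of sets with tiny weight, so $\sum_{E\in\mathcal{D}_k}(\diam\hat E)^s$ need not compare to $\sum_n c_n(\diam E_n)^s$ at all. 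Here is a concrete failure of your asserted display: let $\mathcal{X}$ be a metric star with $M$ arms of length $\ell$ and center $c$, and set $\delta=4\ell$; take $E_0=\mathcal{X}$ with $c_0=1$ and, cyclically for $i=1,\dots,M$, $E_i=(\mathrm{arm}_i\cup\mathrm{arm}_{i+1})\setminus\{c\}$ with $c_i=\epsilon$, all of diameter $2\ell=\delta/2$ and hence in a single dyadic class, with $\sum_n c_n 1_{E_n}\geq 1$ guaranteed by $E_0$. A legal maximal disjoint family is $\mathcal{D}_k=\{E_2,E_4,\dots\}$ of size about $M/2$, giving $\sum_{E\in\mathcal{D}_k}(\diam\hat E)^s$ of order $M(2\ell)^s$, whereas $6^s\sum_n c_n(\diam E_n)^s=6^s(1+M\epsilon)(2\ell)^s$; the inequality fails once $M>2\cdot 6^s$ and $\epsilon$ is small. (There is also a smaller slip in the covering step: with $\hat E=\{y: d(y,E)\leq\diam E\}$, a same-scale $E_n$ meeting $E$ satisfies only $E_n\subset\{y: d(y,E)\leq 2\diam E\}$ since $\diam E_n$ may be up to $2\diam E$, and ``$x$ lies in exactly one $E\in\mathcal{D}_k$'' is false for the same reason --- $x$ need only lie in some $\hat E$ --- but these are secondary to the weight-charging gap.)
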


\begin{proof}
The proof is essentially the same as \cite[8.16 Lemma]{Mattila},
but the above statement is a bit different\footnote{An important point for us is that the statement is 
valid for each fixed $\delta$ (not only the limits of $\delta\to 0$).} 
(at least formally) from \cite[8.16 Lemma]{Mattila}.
So we include a proof.

\begin{claim}  \label{claim: covering lemma}
Let $a_1,\dots, a_N$ and $m$ be positive integers.
Let $B^\circ_n=B^\circ_{r_n}(x_n)$ $(1\leq n\leq N)$ be open balls in $\mathcal{X}$ of radius $r_n<\delta$.
If 
\[   \mathcal{X} = \left\{\sum_{n=1}^N a_n 1_{B^\circ_n} \geq m\right\} \]
then
\[   \mathcal{H}_{6\delta}^s(\mathcal{X},d) \leq m^{-1} 6^s \sum_{n=1}^N a_n r_n^s. \]
\end{claim}

\begin{proof}
The induction on $m$: If $m=1$ then $\mathcal{X} = \bigcup_{n=1}^N B^\circ_n$ and $\diam B^\circ_n < 2\delta$.
Hence 
\[ \mathcal{H}_{2\delta}^s(\mathcal{X},d) \leq \sum_{n=1}^N (\diam B^\circ_n)^s \leq 2^s \sum_{n=1}^N  r_n^s. \]
Suppose $m\geq 2$. By Finite Vitali's covering lemma (see \cite[Lemma 2.27]{Einsiedler--Ward}) there exists 
a disjoint family $\mathcal{B}\subset \{B^\circ_1,\dots, B^\circ_N\}$ satisfying 
$\mathcal{X} = \bigcup_{B^\circ_n\in \mathcal{B}} 3B^\circ_n$ where $3B^\circ_n := B_{3r_n}^\circ(x_n)$.
We have $\diam (3B^\circ_n) \leq 6r_n < 6\delta$ and hence 
\[  \mathcal{H}^s_{6\delta}(\mathcal{X},d) \leq 6^s \sum_{B^\circ_n\in \mathcal{B}} r_n^s. \]
We set 
\[  a'_n = \begin{cases}
              a_n & \text{if $B^\circ_n\not\in \mathcal{B}$} \\
              a_n-1 & \text{if $B^\circ_n\in \mathcal{B}$}.
             \end{cases} \] 
Since $\mathcal{B}$ is a disjoint family, we have $\sum a'_n 1_{B^\circ_n}(x) \geq m-1$ for all $x\in \mathcal{X}$.
By the induction hypothesis, 
\[  (m-1) \mathcal{H}_{6\delta}^s(\mathcal{X},d) \leq 6^s \sum_{n=1}^N a'_n r_n^s. \]
Thus 
\[ m \mathcal{H}^s_{6\delta}(\mathcal{X},d) \leq 6^s \sum_{n=1}^N a'_n r_n^s 
    + 6^s \sum_{B^\circ_n\in \mathcal{B}} r_n^s  = 6^s \sum_{n=1}^N a_n r_n^s. \]
\end{proof}

Let $0<c_n< \infty$, $E_n\subset \mathcal{X}$ such that $\diam E_n <\delta$ and 
$\sum_{n=1}^\infty c_n 1_{E_n}(x) \geq 1$ for all $x\in \mathcal{X}$.
Let $\varepsilon >0$ and $0<t<1$.
We choose $\diam E_n < r_n < \delta$ satisfying 
\begin{equation}  \label{eq: choice of r_n in GMT lemma}
     \sum_{n=1}^\infty c_n r_n^s < \varepsilon + \sum_{n=1}^\infty c_n (\diam E_n)^s. 
\end{equation}     
We pick $x_n\in E_n$. The open balls $B^\circ_n := B^\circ_{r_n}(x_n)$ contain $E_n$ and hence 
\[  \mathcal{X} = \bigcup_{N=1}^\infty \left\{\sum_{n=1}^N c_n 1_{B^\circ_n} > t\right\}. \]
Each set $\{\cdots\}$ here is open. 
Since $\mathcal{X}$ is compact, we can find $N$ such that
$\mathcal{X} = \left\{ \sum_{n=1}^N c_n 1_{B_n^\circ} > t \right\}$.
We choose rational numbers $0<b_n \leq c_n$ so that 
$\mathcal{X} =  \left\{ \sum_{n=1}^N b_n 1_{B_n^\circ} > t \right\}$.
Take a positive integer $p$ such that all $a_n := p b_n$ become integers.
Set $m = \lceil pt \rceil$. Then 
$\mathcal{X} = \left\{ \sum_{n=1}^N a_n 1_{B_n^\circ} \geq m \right\}$.
By Claim \ref{claim: covering lemma} 
\[  \mathcal{H}^s_{6\delta}(\mathcal{X},d) \leq m^{-1} 6^s \sum_{n=1}^N a_n r_n^s  
     \leq p m^{-1}  6^s \sum_{n=1}^N b_n r_n^s  \leq p m^{-1} 6^s \sum_{n=1}^N c_n r_n^s. 
      \]
It follows from $m\geq pt$ and (\ref{eq: choice of r_n in GMT lemma}) that 
\[    \mathcal{H}^s_{6\delta}(\mathcal{X},d) \leq t^{-1} 6^s \left(\varepsilon +  \sum_{n=1}^\infty c_n (\diam E_n)^s\right). \]
Let $\varepsilon \to 0$ and $t\to 1$. This proves the statement.
\end{proof}

\begin{lemma} \label{lemma: Howroyd--Frostman}
There exists a Borel measure $\mu$ on $\mathcal{X}$ satisfying $\mu(X)= \lambda^s_\delta(\mathcal{X},d)$ and 
\[  \mu(E) \leq (\diam E)^s \quad \text{for all $E\subset \mathcal{X}$ with $\diam E < \delta$}. \]
\end{lemma}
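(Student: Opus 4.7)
The plan is to prove the lemma by an LP-duality / Hahn--Banach argument: the infimum defining $\lambda^s_\delta(\mathcal{X},d)$ is the value of a linear program whose dual asks for the maximum total mass of a Borel measure on $\mathcal{X}$ satisfying $\mu(E) \leq (\diam E)^s$ for every $E$ with $\diam E < \delta$, and the lemma asserts that this dual value is attained. Concretely, for each $f \in C(\mathcal{X})$ I would introduce the functional
\[ p(f) = \inf\Bigl\{\sum_n c_n (\diam E_n)^s : c_n > 0,\ \diam E_n < \delta,\ f(x) \leq \sum_n c_n 1_{E_n}(x)\ \forall x \in \mathcal{X}\Bigr\}. \]
Compactness of $\mathcal{X}$ yields a finite cover by sets of diameter less than $\delta$, which scaled by $\sup f$ bounds $p(f) \leq C\cdot\sup f < \infty$; positive homogeneity is immediate and subadditivity follows from concatenating near-optimal covers of $f$ and $g$; and $p(1) = \lambda^s_\delta(\mathcal{X},d)$ directly from the definition.

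Next, I would extend the linear map $t \mapsto t\,\lambda^s_\delta(\mathcal{X},d)$ defined on $\mathbb{R}\cdot 1$ to a linear functional $L : C(\mathcal{X}) \to \mathbb{R}$ dominated by $p$ via the Hahn--Banach theorem, preserving $L(1) = \lambda^s_\delta(\mathcal{X},d)$. For any $f \geq 0$, the function $-f$ is non-positive and is dominated pointwise by a single-singleton cover $\{(\{x_0\}, c)\}$ of cost $c\cdot 0^s$ (which vanishes for $s>0$ by $0^s=0$, and tends to $0$ with $c$ when $s=0$), so $p(-f) = 0$; this forces $L(f) = -L(-f) \geq -p(-f) = 0$, making $L$ positive. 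The Riesz representation theorem then produces a positive Borel measure $\mu$ on $\mathcal{X}$ with $\mu(\mathcal{X}) = L(1) = \lambda^s_\delta(\mathcal{X},d)$.

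For the scaling bound I handle open sets first. For open $U \subset \mathcal{X}$ with $\diam U < \delta$, select $f_n \in C(\mathcal{X})$ with $0 \leq f_n \leq 1_U$ and $f_n \uparrow 1_U$; the single-set cover $\{(\overline{U}, 1)\}$ is admissible since $\diam \overline{U} = \diam U < \delta$, so $p(f_n) \leq (\diam U)^s$ and hence $L(f_n) \leq (\diam U)^s$; monotone convergence then gives $\mu(U) \leq (\diam U)^s$. For a general $E \subset \mathcal{X}$ with $\diam E < \delta$, pick $\eta > 0$ with $\diam E + 2\eta < \delta$ and thicken to $U_\eta = \{x : d(x,E) < \eta\}$, whose diameter is at most $\diam E + 2\eta$; applying the open-set bound to $U_\eta$ and letting $\eta \to 0$ delivers $\mu(E) \leq (\diam E)^s$.

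The main obstacle is the careful verification that $p$ is a real-valued sublinear functional together with the sign computation $p(-f) = 0$ for $f \geq 0$ — this latter identity is precisely what forces positivity of $L$ and depends on the technicality that singleton sets are admissible and that the infimum allows arbitrarily small weights. Once that bookkeeping is in place, the rest is a routine application of Hahn--Banach and Riesz representation; an equivalent approach, closer to the Howroyd and Mattila presentation cited in the paper, is to extract weak-$*$ limits of near-optimal empirical measures in the space of Radon measures, which encodes exactly the same duality.
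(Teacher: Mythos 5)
Your proof is correct and follows essentially the same route as the paper's sketch: define the sublinear functional $p$ on $C(\mathcal{X})$, extend the scalar map on $\mathbb{R}\cdot 1$ to a linear functional $L$ dominated by $p$ via Hahn--Banach, deduce positivity of $L$ from $p(-f)=0$ for $f\geq 0$, and invoke the Riesz representation theorem to obtain $\mu$ with $\mu(\mathcal{X})=L(1)=p(1)=\lambda^s_\delta(\mathcal{X},d)$. You have additionally spelled out the final verification that $\mu(E)\leq(\diam E)^s$ (via the admissible single-set cover $\{(\overline{U},1)\}$ for open $U$ and outer thickening for general $E$), a step the paper's sketch leaves as ``we can easily check.''
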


\begin{proof}[Sketch of the proof]
See \cite[8.17 Theorem]{Mattila} for the details.
We define a sublinear functional $p(f)$ for continuous functions $f:\mathcal{X}\to \mathbb{R}$ by 
\[  p(f) = \inf\, \sum_{n=1}^\infty c_n \left(\diam(E_n)\right)^s, \]
where the infimum is taken over all countable families $\{(E_n,c_n)\}$ such that 
$0<c_n<\infty$, $E_n\subset \mathcal{X}$ with $\diam E_n < \delta$ and 
\[  \forall  x\in \mathcal{X}: \>  \sum_{n=1}^\infty c_n 1_{E_n}(x) \geq f(x). \]
We have $p(1) = \lambda_\delta(\mathcal{X},d)$.
By using the Hahn--Banach theorem, we can find a linear functional $L$ defined on the space of continuous functions in $\mathcal{X}$
such that $L(1) = p(1)$ and for any continuous function $f$ on $\mathcal{X}$
\[ -p(-f) \leq L(f) \leq p(f). \]
If $f\geq 0$ then $L(f)\geq -p(-f) =0$. So $L$ is a positive functional.
It follows from the Riesz representation theorem that there exists a Borel measure $\mu$ on $\mathcal{X}$
satisfying $L(f) = \int_{\mathcal{X}} f d\mu$.
We can easily check that $\mu$ satisfies the statement.
\end{proof}

\begin{corollary}  \label{cor: quantitative Frostman's lemma}
Let $0<c<1$. 
We can choose $\delta_0 = \delta_0(c)\in (0,1)$ independent of $(\mathcal{X}, d)$ so that 
for any $0<\delta \leq \delta_0$ there exists a Borel probability measure $\mu$ on $\mathcal{X}$ satisfying 
\[  \mu(E) \leq (\diam E)^{c \dim_{\mathrm{H}}(\mathcal{X},d,\delta)}  \quad 
     \text{for all $E\subset \mathcal{X}$ with $\diam E < \delta/6$}. \]
\end{corollary}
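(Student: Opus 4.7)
The plan is to combine Lemma \ref{lemma: weighted Hausdorff measure} with Lemma \ref{lemma: Howroyd--Frostman} to produce a Borel measure on $\mathcal{X}$ scaling with an exponent slightly larger than $c\dim_{\mathrm{H}}(\mathcal{X},d,\delta)$, normalize it to a probability measure, and then absorb the resulting normalization constant into the small gap between that exponent and the target $c\dim_{\mathrm{H}}(\mathcal{X},d,\delta)$. The threshold $\delta_0(c)$ in the statement is exactly the largest $\delta$ for which this absorption goes through uniformly.

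Concretely, write $D := \dim_{\mathrm{H}}(\mathcal{X},d,\delta)$, fix an auxiliary exponent $c' \in (c,1)$ (for concreteness $c' = \tfrac{1+c}{2}$), and set $s := c'D$. I will build $\delta < 1$ into $\delta_0$; then $t \mapsto \mathcal{H}^t_\delta(\mathcal{X},d)$ is non-increasing in $t$, and the definition of $D$ gives $\mathcal{H}^s_\delta(\mathcal{X},d) \geq 1$ since $s < D$. Lemma \ref{lemma: weighted Hausdorff measure} applied at scale $\delta/6$ then yields
\[
\lambda^s_{\delta/6}(\mathcal{X},d) \;\geq\; 6^{-s}\,\mathcal{H}^s_\delta(\mathcal{X},d) \;\geq\; 6^{-s}.
\]
Invoking Lemma \ref{lemma: Howroyd--Frostman} at the same scale $\delta/6$ produces a Borel measure $\nu$ with $\nu(\mathcal{X}) = \lambda^s_{\delta/6}(\mathcal{X},d) \geq 6^{-s}$ and $\nu(E) \leq (\diam E)^s$ whenever $\diam E < \delta/6$. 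I normalize $\mu := \nu/\nu(\mathcal{X})$, obtaining a Borel probability measure with
\[
\mu(E) \;\leq\; 6^s (\diam E)^s \qquad \text{for all } E \text{ with } \diam E < \delta/6.
\]

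The final step is to convert $6^s(\diam E)^{c'D}$ into $(\diam E)^{cD}$. The required inequality is equivalent to $6^{c'D} \leq (\diam E)^{-(c'-c)D}$, and since $\diam E < \delta/6$ it suffices to have $6^{c'} \leq (6/\delta)^{c'-c}$, i.e.\ $\delta \leq 6^{-c/(c'-c)} = 6^{-2c/(1-c)}$. Setting $\delta_0(c) := 6^{-2c/(1-c)} \in (0,1)$ gives a threshold depending only on $c$, as required by the statement. The degenerate case $D=0$ makes the conclusion trivial and is handled by any Borel probability measure on $\mathcal{X}$. I do not expect a serious obstacle: once one commits to applying Howroyd--Frostman with the inflated exponent $c'D$ rather than $cD$, the rest is essentially bookkeeping around the factor $6$ coming from Lemma \ref{lemma: weighted Hausdorff measure}; the only delicate point is that the condition on $\delta$ must come out independent of $\mathcal{X}$ and of $D$, which is exactly why this choice of $c'$ and $\delta_0$ works.
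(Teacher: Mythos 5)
Your proof is correct and follows essentially the same strategy as the paper's: both take the auxiliary exponent $c'=\tfrac{1+c}{2}$ and use the smallness of $\delta_0$ (with the same threshold $6^{-2c/(1-c)}$) to absorb the factor of $6^s$ coming from Lemma \ref{lemma: weighted Hausdorff measure}. The only difference is in the order of operations — the paper absorbs the factor at the level of Hausdorff content (showing $\mathcal{H}^{cD}_\delta\geq 6^{cD}$, hence $\lambda^{cD}_{\delta/6}\geq 1$, and then applies Lemma \ref{lemma: Howroyd--Frostman} with exponent $cD$), while you apply Lemma \ref{lemma: Howroyd--Frostman} with exponent $c'D$ and absorb the normalization constant at the final step; this is a harmless reordering of the same bookkeeping.
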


\begin{proof}
Take $0<\delta_0<1$ satisfying
\[  \left(\frac{1}{\delta_0}\right)^{\frac{1-c}{2c}} \geq 6. \] 
Let $0< \delta\leq \delta_0$.
If $\dim_{\mathrm{H}}(\mathcal{X},d,\delta) = 0$ then the statement is trivial (the delta measure satisfies the claim; recall that we 
promised $0^0=1$).
So we assume $\dim_{\mathrm{H}}(\mathcal{X},d,\delta) >0$.
From Lemma \ref{lemma: Howroyd--Frostman} it is enough to prove $\lambda_{\delta/6}^s( \mathcal{X},d) \geq 1$
for $s := c \dim_{\mathrm{H}}(\mathcal{X},d,\delta)$.

Set $t= \frac{1+c}{2} \dim_{\mathrm{H}}(\mathcal{X},d,\delta)$.
We have $t-s = \frac{1-c}{2c}s$ and hence
\begin{equation*}
    \begin{split}
     \mathcal{H}_\delta^s(\mathcal{X},d) & \geq \left(\frac{1}{\delta}\right)^{t-s} \mathcal{H}^t_\delta(\mathcal{X},d) \\
     & \geq \left(\frac{1}{\delta}\right)^{t-s}   \quad (\text{by $t < \dim_{\mathrm{H}}(\mathcal{X},d,\delta)$})          \\
      & =  \left(\frac{1}{\delta}\right)^{\frac{1-c}{2c}s} \geq 6^s.  
    \end{split}
\end{equation*}       
Then $\lambda_{\delta/6}^s(\mathcal{X},d) \geq 1$
by Lemma \ref{lemma: weighted Hausdorff measure}.
\end{proof}

\subsection{$L^1$-mean Hausdorff dimension and the condition of tame growth of covering numbers}
\label{subsection: L^1 mean Hausdorff dimension}

We need a modification of mean Hausdorff dimension.
Let $(\mathcal{X},T)$ be a dynamical system with a metric $d$.
For $N\geq 1$ we define a new metric $\bar{d}_N$ on $\mathcal{X}$ by 
\[  \bar{d}_N(x,y) = \frac{1}{N} \sum_{n=0}^{N-1} d(T^n x, T^n y). \]
This metric is more closely connected to the distortion condition 
\[  \mathbb{E}\left( \frac{1}{N}\sum_{n=0}^{N-1} d(T^n X, Y_n)\right) < \varepsilon \]
in the definition of rate distortion function (see \S \ref{subsection: rate distortion theory}) 
than $d_N(x,y) = \max_{0\leq n<N} d(T^n x, T^n y)$.
The next lemma is a manifestation of this connection.
(This will be used only in \S \ref{section: example: algebraic actions}.
But it is conceptually a toy model of the proof of Theorem \ref{theorem: existence of nice measures}.)

\begin{lemma}\label{lemma: L^1 metric and rate distortion dimension}
Let $\mu$ be a $T$-invariant probability measure on $\mathcal{X}$.
Suppose that there exist $s\geq 0$ and $\delta>0$ such that for any $N\geq 1$
\[  \mu(E) \leq \left(\diam (E,\bar{d}_N)\right)^{sN} \quad \text{for all $E\subset \mathcal{X}$ with $\diam (E,\bar{d}_N) < \delta$}. \]
Then $\underline{\rdim}(\mathcal{X},T,d,\mu) \geq s$.
\end{lemma}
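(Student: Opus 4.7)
The goal is to prove $R(d,\mu,\varepsilon)\geq s\log(1/\varepsilon)-O(1)$ as $\varepsilon\to 0$, which immediately yields $\underline{\rdim}(\mathcal X,T,d,\mu)\geq s$. For $N\geq 1$ and any random variables $X$, $Y=(Y_0,\dots,Y_{N-1})$ with $\mathrm{Law}(X)=\mu$ and $\mathbb E\,\rho(X,Y)<\varepsilon$, where
\[ \rho(x,y):=\tfrac{1}{N}\sum_{n=0}^{N-1}d(T^n x,y_n)\qquad (x\in\mathcal X,\;y\in\mathcal X^N), \]
I would lower-bound $I(X;Y)$ by invoking the convex-duality Lemma~\ref{lemma: duality in rate distortion theory} with distortion function $\rho$. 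The whole task then reduces to producing a function $\lambda:\mathcal X\to[0,\infty)$ that satisfies the normalization inequality~\eqref{eq: assumption in duality in rate distortion theory} uniformly in $y\in\mathcal X^N$.

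The geometric bridge between the hypothesis (stated in terms of $\bar d_N$) and the distortion $\rho$ is that, for every $y\in\mathcal X^N$ and every $v>0$, the sublevel set $E_y(v):=\{x\in\mathcal X:\rho(x,y)<v\}$ satisfies
\[ \diam\bigl(E_y(v),\bar d_N\bigr)\leq 2v, \]
since the triangle inequality for $d$ applied coordinate-by-coordinate yields $\bar d_N(x,x')\leq\rho(x,y)+\rho(x',y)<2v$ whenever $x,x'\in E_y(v)$. Consequently, as long as $2v<\delta$, the hypothesis of the lemma produces the uniform tail bound
\[ \mu(E_y(v))\leq(2v)^{sN}\qquad\text{for every }y\in\mathcal X^N. \]

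With this in hand, the remainder is a direct adaptation of the proof of Lemma~\ref{lemma: from geometric measure theory to rate distortion theory}. Setting $a=sN/\varepsilon$ and $b=a\ln 2$, and applying the layer-cake formula to $\int_\mathcal X 2^{-a\rho(x,y)}d\mu(x)$ split at $v=\delta/2$, the low-$v$ contribution is bounded by $2^{sN}b^{-sN}\Gamma(sN+1)$ and the high-$v$ tail by $e^{-b\delta/2}=2^{-sN\delta/(2\varepsilon)}$. Stirling's formula causes the $(sN)^{sN}$ factors to cancel exactly, while the standing assumption $2\varepsilon\log(1/\varepsilon)\leq\delta$ (valid for small $\varepsilon$) controls the tail by $\varepsilon^{sN}$; together these yield
\[ \int_\mathcal X 2^{-a\rho(x,y)}\,d\mu(x)\leq C^N\varepsilon^{sN} \]
with $C=C(s)$ independent of $N$, $y$, and $\varepsilon$. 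Feeding the constant function $\lambda(x):=C^{-N}\varepsilon^{-sN}$ into Lemma~\ref{lemma: duality in rate distortion theory} gives $I(X;Y)\geq sN\log(1/\varepsilon)-sN-N\log C$; dividing by $N$, taking the infimum over admissible $(X,Y,N)$, and then $\liminf_{\varepsilon\to 0}$ of $R(d,\mu,\varepsilon)/\log(1/\varepsilon)$ concludes the proof. The one delicate point is the book-keeping of $N$-dependence of the constants in the integral bound: the choice $a=sN/\varepsilon$ is dictated precisely by the need to keep the bound of the form $C^{N}\varepsilon^{sN}$ rather than something growing super-exponentially in $N$, so that after dividing by $N$ the error term $-\log C$ is $O(1)$ and harmless in the $\liminf$. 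Beyond this I foresee no genuine obstacle.
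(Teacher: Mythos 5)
Your proof is correct and uses essentially the same approach as the paper: the lower bound on $I(X;Y)$ comes from the convex-duality Lemma~\ref{lemma: duality in rate distortion theory} with a constant $\lambda$, whose admissibility is verified by a layer-cake estimate fed by the Frostman-type hypothesis, and the assumption $2\varepsilon\log(1/\varepsilon)\leq\delta$ controls the tail exactly as in the paper. The only difference is presentational: the paper pushes $\mu$ forward to $(\mathcal{X}^N,\bar d_N)$ under $\Delta(x)=(x,Tx,\dots,T^{N-1}x)$ so as to invoke Lemma~\ref{lemma: from geometric measure theory to rate distortion theory} as a black box, whereas you re-derive that integral estimate in place via the observation that sublevel sets of $\rho(\cdot,y)$ have $\bar d_N$-diameter at most $2v$.
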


\begin{proof}
Define $\Delta: \mathcal{X}\to \mathcal{X}^N$ by $\Delta(x)= (x,Tx,\dots,T^{N-1}x)$.
We define a metric $\bar{d}_N$ on $\mathcal{X}^N$ by
\[ \bar{d}_N\left((x_0,\dots,x_{N-1}),(y_0,\dots,y_{N-1})\right) = \frac{1}{N} \sum_{n=0}^{N-1} d(x_n,y_n). \]
The push-forward measure $\Delta_*\mu$ on $\mathcal{X}^N$ satisfies 
\[   \Delta_*\mu(E) \leq \left(\diam (E,\bar{d}_N)\right)^{sN} \quad
       \text{for all $E\subset \mathcal{X}^N$ with $\diam (E,\bar{d}_N) <\delta$}. \]
 
 Let  $\varepsilon >0$ with $2\varepsilon \log (1/\varepsilon) \leq \delta$.
 Let $X$ and $Y = (Y_0,\dots, Y_{N-1})$ be random variables such that all $X$ and $Y_n$ take values in $\mathcal{X}$ and satisfy 
 \[    \mathrm{Law} X = \mu, \quad
      \mathbb{E}\left(\frac{1}{N} \sum_{n=0}^{N-1} d(T^n X, Y_n)\right) < \varepsilon. \]             
This condition is equivalent to $\mathrm{Law} \Delta(X) = \Delta_*\mu$ and 
 $\mathbb{E} \bar{d}_N(\Delta(X), Y) < \varepsilon$.
So we apply Lemma \ref{lemma: from geometric measure theory to rate distortion theory} to $(\Delta(X), Y)$ and get 
\[  \frac{I(X;Y)}{N} = \frac{I\left(\Delta(X);Y\right)}{N} \geq s \log(1/\varepsilon) - C(s+1),  \]
where $C>0$ is a universal constant.
Therefore
\[ R(d,\mu,\varepsilon) \geq s \log (1/\varepsilon) - C(s+1), \]
\[  \underline{\rdim}(\mathcal{X},T,d,\mu) = \liminf_{\varepsilon \to 0} \frac{R(d,\mu,\varepsilon)}{\log(1/\varepsilon)} \geq s.\]
\end{proof}

For a dynamical system $(\mathcal{X},T)$ with a metric $d$,
we define the \textbf{$L^1$-mean Hausdorff dimension} by 
\[ \mdim_{\mathrm{H},L^1} (\mathcal{X},T,d) = 
    \lim_{\varepsilon \to 0} \left(\limsup_{N\to \infty} \frac{\dim_{\mathrm{H}}(\mathcal{X},\bar{d}_N,\varepsilon)}{N}\right). \]
Since $\bar{d}_N \leq d_N$, this always satisfies 
\[  \mdim_{\mathrm{H},L^1}(\mathcal{X},T,d) \leq \mdim_{\mathrm{H}}(\mathcal{X},T,d). \]
These two quantities actually coincide under the tame growth of covering numbers condition
(this is the only place where we use the tame growth of covering numbers 
in the proof of Theorem \ref{theorem: existence of nice measures}):

\begin{lemma}  \label{lemma: mean Hausdorff dimension and L^1 mean Hausdorff dimension}
If $(\mathcal{X},d)$ has the tame growth of covering numbers, then 
$\mdim_{\mathrm{H},L^1}(\mathcal{X},T,d) = \mdim_{\mathrm{H}}(\mathcal{X},T,d)$.
\end{lemma}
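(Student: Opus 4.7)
The easy direction $\mdim_{\mathrm{H},L^1}(\mathcal{X},T,d) \leq \mdim_{\mathrm{H}}(\mathcal{X},T,d)$ is immediate: since $\bar{d}_N \leq d_N$, every cover admissible for $\mathcal{H}^s_\varepsilon(\mathcal{X}, d_N)$ is also admissible for $\mathcal{H}^s_\varepsilon(\mathcal{X}, \bar{d}_N)$, so $\dim_{\mathrm{H}}(\mathcal{X}, \bar{d}_N, \varepsilon) \leq \dim_{\mathrm{H}}(\mathcal{X}, d_N, \varepsilon)$ for every $N$ and $\varepsilon$. For the reverse inequality, my strategy is to take an efficient $\bar{d}_N$-cover and refine it into an efficient $d_N$-cover at a slightly coarser resolution, showing that the refinement cost is subexponential in $N$ thanks to tame growth of covering numbers.

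Fix $s > \mdim_{\mathrm{H},L^1}(\mathcal{X},T,d)$ and $\tau > 0$. Pick $\delta' \in (0, \tau/s)$ and $\alpha \in (s/(s+\tau), 1/(1+\delta'))$, so that both $\beta := \alpha(s+\tau) - s > 0$ and $1 - \alpha(1+\delta') > 0$. Given a target $\varepsilon > 0$, take $\delta \in (0, (\varepsilon/2)^{1/\alpha})$ whose further smallness will be specified. By the hypothesis on $s$, for all sufficiently large $N$ there is a cover $\mathcal{X} = \bigcup_k E_k$ with $r_k := \diam(E_k, \bar{d}_N) < \delta$ and $\sum_k r_k^{sN} < 1$. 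For each $k$ fix a reference point $x_k \in E_k$ and set $\eta_k := r_k^\alpha$. The bound $\frac{1}{N}\sum_{n=0}^{N-1} d(T^n x_k, T^n y) \leq r_k$ forces, for every $y \in E_k$, the bad set $B(y) := \{n : d(T^n x_k, T^n y) \geq \eta_k\}$ to satisfy $|B(y)| \leq Nr_k/\eta_k = Nr_k^{1-\alpha}$. Fixing an $\eta_k$-cover of $\mathcal{X}$ by $M_k := \#(\mathcal{X}, d, \eta_k)$ sets and summing over possible bad sets $S \subset \{0,\ldots,N-1\}$ of size $\leq Nr_k^{1-\alpha}$ together with choices of covering element at each coordinate in $S$, I refine $E_k$ into at most
\begin{equation*}
P_k \;\leq\; (Nr_k^{1-\alpha}+1)\, 2^{N H(r_k^{1-\alpha})} M_k^{N r_k^{1-\alpha}}
\end{equation*}
pieces, each of $d_N$-diameter $< 2\eta_k = 2r_k^\alpha < \varepsilon$, where $H(p) := -p\log p - (1-p)\log(1-p)$ is the binary entropy.

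Setting $t := (s+\tau)N$, so that $\alpha t = sN + \beta N$, I estimate
\begin{equation*}
\mathcal{H}^{t}_{\varepsilon}(\mathcal{X}, d_N) \;\leq\; 2^t \sum_k P_k r_k^{\alpha t} \;=\; 2^t \sum_k (P_k r_k^{\beta N})\, r_k^{sN} \;\leq\; 2^t \sup_k (P_k r_k^{\beta N}),
\end{equation*}
using $\sum_k r_k^{sN} < 1$; it therefore suffices to verify $2^t P_k r_k^{\beta N} < 1$ uniformly in $k$ with $r_k \in (0, \delta]$. Taking $\log/N$, this reduces to
\begin{equation*}
\beta \log(1/r_k) \;>\; (s+\tau) + H(r_k^{1-\alpha}) + r_k^{1-\alpha}\log \#(\mathcal{X}, d, r_k^\alpha) + \tfrac{\log(Nr_k^{1-\alpha}+1)}{N}.
\end{equation*}
The left-hand side is at least $\beta \log(1/\delta)$ and grows to $\infty$ as $\delta \to 0$. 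On the right, $H(r_k^{1-\alpha}) \leq H(\delta^{1-\alpha}) \to 0$ as $\delta \to 0$; the tame growth hypothesis, applied with exponent $\delta'$, gives $\log \#(\mathcal{X}, d, r_k^\alpha) = o(r_k^{-\alpha\delta'})$, hence $r_k^{1-\alpha}\log \#(\mathcal{X}, d, r_k^\alpha) = o(r_k^{1 - \alpha(1+\delta')})$ vanishes uniformly on $(0,\delta]$ as $\delta \to 0$; the final error term is $O(\log N / N)$. Consequently, for $\delta$ sufficiently small and $N$ sufficiently large the inequality holds, so $\mathcal{H}^{(s+\tau)N}_\varepsilon(\mathcal{X}, d_N) < 1$ and $\dim_{\mathrm{H}}(\mathcal{X}, d_N, \varepsilon) \leq (s+\tau)N$. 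Taking $\limsup_N$, then $\varepsilon \to 0$, $\tau \to 0$, and $s \searrow \mdim_{\mathrm{H},L^1}(\mathcal{X},T,d)$ completes the proof.

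The main obstacle is the delicate parameter balancing: the window $s/(s+\tau) < \alpha < 1/(1+\delta')$ must be nonempty (forcing $\delta' < \tau/s$), and one must verify that the tame-growth improvement $r^{1-\alpha(1+\delta')} \to 0$ uniformly survives after multiplication by the combinatorial factors. Without the tame growth of covering numbers, the generic $\eta_k$-cover of $\mathcal{X}$ used on bad coordinates would contribute an unbounded factor $M_k^{Nr_k^{1-\alpha}}$ to $P_k$ and the refinement argument would collapse; this is exactly why the hypothesis appears in the lemma.
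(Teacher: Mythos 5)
Your proof is correct and follows essentially the same strategy as the paper's: take an efficient $\bar d_N$-cover, refine each piece according to the ``bad'' set of coordinates where the distance to a reference point exceeds a threshold of the form $r_k^\alpha$ (using a fixed $d$-cover of $\mathcal{X}$ on those coordinates), and invoke tame growth of covering numbers to show the resulting blow-up in the number of pieces costs only a vanishing amount per coordinate. The only cosmetic differences are your sharper binomial-entropy count where the paper uses the cruder $2^N$, and your parameterization with independent $\alpha$ and $\tau$ where the paper folds both roles into a single $\delta$ appearing in $L_n = (1/\tau_n)^\delta$ and in the target exponent $sN/(1-2\delta)$.
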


\begin{proof}
We assume $\mdim_{\mathrm{H},L^1}(\mathcal{X},T,d) < \infty$ and prove 
$\mdim_{\mathrm{H}}(\mathcal{X},T,d) \leq \mdim_{\mathrm{H},L^1}(\mathcal{X},T,d)$.
We set $[N] = \{0,1,2,\dots,N-1\}$ and define a metric $d_A$ on $\mathcal{X}$ for $A\subset [N]$
by $d_A(x,y) = \max_{a\in A} d(T^a x, T^a y)$. In particular $d_N = d_{[N]}$.

For $\tau>0$ we set $M(\tau) = \#(\mathcal{X},d,\tau)$.
We can find a covering $\mathcal{X} = W_1^\tau\cup\dots\cup W_{M(\tau)}^\tau$ with $\diam (W_m^\tau, d) < \tau$ 
for all $1\leq m\leq M(\tau)$.
Take any $0<\delta<1/2$ and $s>\mdim_{\mathrm{H},L^1}(\mathcal{X},T,d)$.
By the tame growth of covering numbers, we can choose $0<\varepsilon_0<1$ satisfying 
\begin{itemize}
   \item $\tau^\delta \log M(\tau) < 1$ for $0<\tau < \varepsilon_0$.   
   \item  $4\cdot 2^{s/(1-2\delta)} \cdot \varepsilon_0^{\delta s/(1-2\delta)} < 1$.
\end{itemize}

Let $0<\varepsilon <\varepsilon_0$.
Let $N$ be a sufficiently large number.
From $\mdim_{\mathrm{H},L^1}(\mathcal{X},T,d) < s$ we can find a covering 
$\mathcal{X} = \bigcup_{n=1}^\infty E_n$ satisfying $\tau_n :=\diam(E_n, \bar{d}_N) < \varepsilon$ for all $n$ and 
\[  \sum_{n=1}^\infty \tau_n^{sN} < 1. \]
Set $L_n = (1/\tau_n)^\delta$.
Pick a point $x_n$ from each $E_n$.
Every point $x\in E_n$ satisfies $\bar{d}_N(x,x_n) \leq \tau_n$ and hence 
\[  |\{k\in [N]|\, d(T^k x, T^k x_n) \geq L_n \tau_n\}| \leq \frac{N}{L_n}. \]
Thus there exists $A\subset [N]$ (depending on $x\in E_n$) satisfying 
$|A|\leq N/L_n$ and $d_{[N]\setminus A}(x,x_n) < L_n\tau_n$.
This implies 
\[ E_n \subset \bigcup_{A\subset [N], |A|\leq N/L_n}  B^{\circ}_{L_n \tau_n}(x_n, d_{[N]\setminus A}). \]
Here $B^{\circ}_{L_n \tau_n}(x_n, d_{[N]\setminus A})$ is the open ball of radius $L_n\tau_n$ with respect to $d_{[N]\setminus A}$
around $x_n$, which for $A=\{a_1,\dots,a_r\}$ we can write as 
\[ \bigcup_{1\leq i_1, \dots, i_r\leq M(\tau_n)} 
    B^{\circ}_{L_n \tau_n}(x_n, d_{[N]\setminus A}) \cap T^{-a_1}W_{i_1}^{\tau_n} \cap \dots \cap T^{-a_r} W^{\tau_n}_{i_r}. \]
Therefore $\mathcal{X}$ can be written as a union of 
\begin{equation}  \label{eq: decomposition by tame growth}
  B^{\circ}_{L_n \tau_n}(x_n, d_{[N]\setminus A}) \cap T^{-a_1}W_{i_1}^{\tau_n} \cap \dots \cap T^{-a_r} W^{\tau_n}_{i_r},
\end{equation}
where $n\geq 1$, $A=\{a_1,\dots,a_r\} \subset [N]$ with $r \leq N/L_n$ and 
$1\leq i_1,\dots, i_r\leq M(\tau_n)$.
The diameter of (\ref{eq: decomposition by tame growth}) with respect to $d_N$ is bounded by 
$2L_n\tau_n   = 2 \tau_n^{1-\delta} < 2 \varepsilon^{1-\delta}$.
Thus 
\begin{equation*}
   \mathcal{H}^{sN/(1-2\delta)}_{2\varepsilon^{1-\delta}}(\mathcal{X}, d_N)  \leq 
    \sum_{n=1}^\infty 2^N M(\tau_n)^{N/L_n} (2\tau_n^{1-\delta})^{sN/(1-2\delta)}.
\end{equation*}  
 Here $2^N$ comes from the choice of $A\subset [N]$. 
\[  2^N M(\tau_n)^{N/L_n} (2\tau_n^{1-\delta})^{sN/(1-2\delta)} = 
    \left\{2^{1+\frac{s}{1-2\delta}} M(\tau_n)^{\tau_n^\delta} \tau_n^{\frac{s\delta}{1-2\delta}}\right\}^N \tau_n^{sN}. \]
Recall $\tau_n < \varepsilon < \varepsilon_0$ and the choice of $\varepsilon_0$ above. We have 
\[ 2^{1+\frac{s}{1-2\delta}} M(\tau_n)^{\tau_n^\delta} \tau_n^{\frac{s\delta}{1-2\delta}}  
    < 4 \cdot 2^{\frac{s}{1-2\delta}} \varepsilon_0^{\frac{s\delta}{1-2\delta}} < 1. \]
Therefore
\[   \mathcal{H}^{sN/(1-2\delta)}_{2\varepsilon^{1-\delta}}(\mathcal{X}, d_N)  < \sum_{n=1}^\infty \tau_n^{sN} < 1. \]
So we get 
\[ \dim_{\mathrm{H}}(\mathcal{X},d_N,2\varepsilon^{1-\delta}) \leq \frac{sN}{1-2\delta}. \]
Since this holds for any sufficiently large $N$,
\[  \limsup_{N\to \infty} \left(\frac{1}{N}  \dim_{\mathrm{H}}(\mathcal{X},d_N,2\varepsilon^{1-\delta}) \right) \leq \frac{s}{1-2\delta}. \]
Here $0<\varepsilon <\varepsilon_0$ is arbitrary. Thus
\[  \mdim_{\mathrm{H}} (\mathcal{X},T,d) \leq \frac{s}{1-2\delta}. \]
Letting $\delta\to 0$ and $s\to \mdim_{\mathrm{H},L^1}(\mathcal{X},T,d)$, we get the statement.
\end{proof}

\begin{remark}
The same argument also proves that the lower mean Hausdorff dimension 
$\underline{\mdim}_{\mathrm{H}}(\mathcal{X},T,d)$ (see Remark \ref{remark: lower mean Hausdorff dimension}) coincides with
\[   \lim_{\varepsilon \to 0} \left(\liminf_{N\to \infty} \frac{\dim_{\mathrm{H}}(\mathcal{X},\bar{d}_N,\varepsilon)}{N}\right)  \]
if $(\mathcal{X},d)$ has the tame growth of covering numbers.
\end{remark}

\begin{remark}  \label{remark: metric mean dimension and tame growth of covering numbers}
In \cite[Lemmas 25 and 26]{Lindenstrauss--Tsukamoto rate distortion}
we showed a similar result for metric mean dimension.
In \cite[Lemma 25]{Lindenstrauss--Tsukamoto rate distortion} we proved the following statement:
Let $(\mathcal{X},T)$ be a dynamical system with a metric $d$.
For any integer $N\geq 1$ and real numbers $\varepsilon >0$ and $L>1$, we have
\[  \log \#(\mathcal{X}, \bar{d}_N,\varepsilon) \geq \log \#(\mathcal{X},d_N,2L\varepsilon)-N-\frac{N}{L}\log \#(\mathcal{X},d,\varepsilon).\]
This will be used in \S \ref{section: example: algebraic actions}.
\end{remark}

\subsection{Proof of Theorem \ref{theorem: existence of nice measures}}
\label{subsection: proof of existence of nice measures}

Theorem \ref{theorem: existence of nice measures} follows from 
Lemma \ref{lemma: mean Hausdorff dimension and L^1 mean Hausdorff dimension}
and the next theorem.

\begin{theorem} \label{theorem: L^1 version of existence of nice measures} 
For any dynamical system $(\mathcal{X},T)$ with a metric $d$
\[ \mdim_{\mathrm{H}, L^1} (\mathcal{X},T,d) \leq \sup_{\mu\in \mathscr{M}^T(\mathcal{X})} 
    \underline{\rdim}(\mathcal{X},T,d,\mu). \]
\end{theorem}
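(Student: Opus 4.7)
The plan is to produce, for every $s<\mdim_{\mathrm{H},L^1}(\mathcal{X},T,d)$ and every $c\in(0,1)$, an invariant probability measure $\mu$ with $\underline{\rdim}(\mathcal{X},T,d,\mu)\geq cs$; letting $c\uparrow 1$ and $s\uparrow\mdim_{\mathrm{H},L^1}$ then yields the theorem. Since $\limsup_{N\to\infty}\dim_{\mathrm{H}}(\mathcal{X},\bar{d}_N,\varepsilon)/N$ is non-decreasing as $\varepsilon\to 0$, I can fix a scale $\varepsilon^{*}\in(0,\delta_0(c))$ with $\limsup_{N\to\infty}\dim_{\mathrm{H}}(\mathcal{X},\bar{d}_N,\varepsilon^{*})/N > s$, and extract $n_k\to\infty$ with $\dim_{\mathrm{H}}(\mathcal{X},\bar{d}_{n_k},\varepsilon^{*}) > sn_k$. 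Applying Corollary \ref{cor: quantitative Frostman's lemma} to the compact space $(\mathcal{X},\bar{d}_{n_k})$ at scale $\delta=\varepsilon^{*}$ produces a Borel probability measure $\nu_{n_k}$ on $\mathcal{X}$ satisfying
\[ \nu_{n_k}(E) \leq (\diam(E,\bar{d}_{n_k}))^{csn_k} \]
for every $E\subset\mathcal{X}$ with $\diam(E,\bar{d}_{n_k})<\varepsilon^{*}/6$.

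Next I form the averages $\mu_{n_k}=\frac{1}{n_k}\sum_{j=0}^{n_k-1}T^j_{*}\nu_{n_k}$, pass to a weak-$*$ subsequential limit $\mu\in\mathscr{M}^T(\mathcal{X})$ (Krylov--Bogolyubov), and aim to show $R(d,\mu,\varepsilon)\geq cs\log(1/\varepsilon)-O_{cs}(1)$ for every sufficiently small $\varepsilon$; this forces $\underline{\rdim}(\mathcal{X},T,d,\mu)\geq cs$ and completes the argument. Fix $\varepsilon\in(0,\varepsilon^{*}/12]$ with $2\varepsilon\log(1/\varepsilon)\leq\varepsilon^{*}/6$ and any integer $N\geq 1$, and let $X,Y_0,\dots,Y_{N-1}$ be a near-optimal pair for $R(d,\mu,\varepsilon)$ with $\mathrm{Law}(X)=\mu$, each $Y_i$ finite-valued (Remark \ref{remark: Y takes only finitely many values in the definition of R}), and $\mathbb{E}\,\bar{d}_N(X,Y)<\varepsilon$.

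The heart of the proof is a rate-distortion analogue of Misiurewicz's averaging, following the line of argument of \cite{Lindenstrauss--Tsukamoto rate distortion}. Using Lemmas \ref{lemma: data-processing inequality}, \ref{lemma: law convergence implies the convergence of mutual information}, and \ref{lemma: concavity/convexity of mutual information}, the weak-$*$ convergence $\mu_{n_k}\to\mu$ combined with the finite range of the $Y_i$ permits the conditional law of $Y$ given $X$ to be transported, with arbitrarily small loss in both $\mathbb{E}\,\bar{d}_N(X,Y)$ and $I(X;Y)/N$, into a coupling against $X\sim\mu_{n_k}$ for all large $k$. Writing such an $X$ as $T^J\xi$ with $\xi\sim\nu_{n_k}$ and $J$ uniform on $\{0,\dots,n_k-1\}$ and independent of $\xi$, then applying the $N$-block quantiser to each of the $n_k-N+1$ consecutive orbit windows of $\xi$ and invoking the conditional-independence subadditivity of mutual information (Lemma \ref{lemma: subadditivity of mutual information}), one obtains finite-valued $W_0,\dots,W_{n_k-1}$ with
\[ \mathbb{E}\,\bar{d}_{n_k}(\Delta_{n_k}(\xi),W) < \varepsilon + o_N(1) \quad\text{and}\quad \frac{I(\Delta_{n_k}(\xi);W)}{n_k} \leq \frac{I(X;Y)}{N} + o_{N,k}(1). \]
On the other side, Lemma \ref{lemma: from geometric measure theory to rate distortion theory}, applied to the pushforward $(\Delta_{n_k})_{*}\nu_{n_k}$ on $(\mathcal{X}^{n_k},\bar{d}_{n_k})$ with exponent $csn_k$, $\tau=0$, and $\delta=\varepsilon^{*}/6$, forces $I(\Delta_{n_k}(\xi);W)/n_k\geq cs\log(1/\varepsilon)-C(cs+1/n_k)$. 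Taking $k\to\infty$, then $N\to\infty$, and finally optimising over the near-optimal pair delivers the required lower bound on $R(d,\mu,\varepsilon)$.

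The main obstacle is precisely this middle step: lifting a near-optimal $N$-block quantiser against the invariant $\mu$ to an $n_k$-block quantiser against the non-invariant $\nu_{n_k}$ with negligible per-symbol loss in both distortion and mutual information, while preserving the conditional-independence structure required for Lemma \ref{lemma: subadditivity of mutual information}. The boundary contributions must be made to vanish in the correct order (first $k\to\infty$ for the weak-$*$ and $J$-shift alignment, then $N\to\infty$ to absorb the $O(N/n_k)$ edge losses from concatenation), and some extra care is needed because $\nu_{n_k}$ is not translation-invariant. Once this lift is in place, the rest is routine: the Frostman-to-mutual-information bridge furnished by Lemma \ref{lemma: from geometric measure theory to rate distortion theory} supplies the matching lower bound.
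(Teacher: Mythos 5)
Your architecture is precisely the paper's: construct Frostman measures $\nu_{n_k}$ on $(\mathcal{X},\bar{d}_{n_k})$ via Corollary \ref{cor: quantitative Frostman's lemma}, time-average to $\mu_{n_k}$, pass to a weak$^{*}$ limit $\mu$, lift a near-optimal $N$-block quantizer for $\mu$ to a block quantizer against $\nu_{n_k}$ by a Misiurewicz-type argument, and close with Lemma \ref{lemma: from geometric measure theory to rate distortion theory}. The step you single out as the obstacle is exactly the one that carries all the weight in the paper, but as you have described it there are two things that would not go through literally.

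First, ``applying the $N$-block quantiser to each of the $n_k-N+1$ consecutive orbit windows'' is a sliding-window scheme: the windows overlap, so the resulting outputs are \emph{not} conditionally independent given $\xi$, and Lemma \ref{lemma: subadditivity of mutual information} does not apply. What actually works is, for each offset $j\in\{0,\dots,N-1\}$, to tile $\{j,j+1,\dots,j+Nq-1\}$ by $q$ \emph{disjoint} blocks of length $N$, apply the one-block kernel on each disjoint block (giving conditional independence for free and hence subadditivity), and then average the kernels over the offset $j$ using the convexity half of Lemma \ref{lemma: concavity/convexity of mutual information}; the concavity half is used again to compare against $\mu_{n_k}=\frac{1}{n_k}\sum T^n_{*}\nu_{n_k}$. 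Your $X=T^J\xi$ with $J$ uniform encodes the same offset-averaging, but only once the tiling is disjoint. Second, taking $\tau=0$ in Lemma \ref{lemma: from geometric measure theory to rate distortion theory} is not compatible with the weak$^{*}$ transport: to invoke Lemma \ref{lemma: law convergence implies the convergence of mutual information} one must first discretize the $X$-side through a finite partition $\mathcal{P}$ with $\mu$-null boundaries, and the quantizer inherited from $\mu$ then acts on $\mathcal{P}^{n_k}(\xi)$ rather than on $\xi$ itself. The push-forward $\mathcal{P}^{n_k}_{*}\nu_{n_k}$ only satisfies the scaling law with an additive shift $\tau$ of order the mesh of $\mathcal{P}$ (this is Claim \ref{claim: property of pushforward of nu_k}), and if instead you insist on the undiscretized $(\Delta_{n_k})_{*}\nu_{n_k}$ then the distortion $\mathbb{E}\,\bar{d}_{n_k}(\Delta_{n_k}(\xi),W)$ picks up the same $O(\tau)$ error. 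So $\tau>0$ must be carried through one way or the other; the paper keeps it in the scaling law and discharges it in Lemma \ref{lemma: from geometric measure theory to rate distortion theory}, which is designed to absorb exactly such a shift.
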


\noindent
Notice that here we do not assume the condition of the tame growth of covering numbers.    

\medskip

We use the following elementary lemma.

\begin{lemma}  \label{lemma: optimal transport}
Let $A$ be a finite set and $\{\mu_n\}$ a sequence of probability measures on $A$.
Suppose $\mu_n$ converges to some $\mu$ in the weak$^*$ topology\footnote{Since we assume that $A$ is a finite set,
this just means that $\mu_n(x)\to \mu(x)$ at each $x\in A$}.
Then there exists a sequence of probability measures $\pi_n$ on $A\times A$ such that 
\begin{itemize}
   \item $\pi_n$ is a coupling between $\mu_n$ and $\mu$, namely its first and second marginals are equal to $\mu_n$ and $\mu$ 
           respectively.
   \item $\pi_n$ converges to $(\mathrm{id}\times \mathrm{id})_*\mu$ in the weak$^*$ topology, namely 
   \[  \pi_n(a,b) \to \begin{cases}  \mu(a)  \quad & (\text{if } a=b) \\
                                            0          \quad & (\text{if } a\neq b)
                         \end{cases}. \]                    
\end{itemize}
\end{lemma}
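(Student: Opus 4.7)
The plan is to construct $\pi_n$ explicitly as the maximal coupling of $\mu_n$ and $\mu$, i.e., the coupling that puts as much mass as possible on the diagonal. Since $A$ is finite and all measures converge pointwise, the total variation between $\mu_n$ and $\mu$ goes to $0$, and the leftover off-diagonal mass will vanish.

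First I would define the common mass $m_n(a) = \min(\mu_n(a), \mu(a))$, together with the excesses
\[ p_n(a) = \mu_n(a) - m_n(a), \qquad q_n(a) = \mu(a) - m_n(a), \]
both nonnegative. A key observation is that for each $a$ one of $p_n(a), q_n(a)$ equals $0$, so $p_n(a)q_n(a) = 0$. Setting $r_n = 1 - \sum_a m_n(a)$, we have $\sum_a p_n(a) = \sum_a q_n(a) = r_n$. Since $\mu_n(a) \to \mu(a)$ on the finite set $A$, we get $m_n(a) \to \mu(a)$, hence $r_n \to 0$ and $p_n(a), q_n(a) \to 0$ for every $a$.

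Next I would define
\[ \pi_n(a,b) = \begin{cases} m_n(a) & \text{if } a = b, \\ p_n(a)\, q_n(b)/r_n & \text{if } a \neq b \text{ and } r_n > 0, \\ 0 & \text{if } a \neq b \text{ and } r_n = 0. \end{cases} \]
To verify that the first marginal is $\mu_n$, I would compute $\sum_b \pi_n(a,b) = m_n(a) + p_n(a) \sum_{b \neq a} q_n(b)/r_n$, and use the dichotomy $p_n(a)q_n(a) = 0$: if $p_n(a) > 0$ then $q_n(a) = 0$, so $\sum_{b \neq a} q_n(b) = r_n$ and the sum reduces to $m_n(a) + p_n(a) = \mu_n(a)$; if $p_n(a) = 0$, the sum is just $m_n(a) = \mu_n(a)$. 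The second marginal is checked symmetrically.

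For weak-$*$ convergence, the diagonal entries satisfy $\pi_n(a,a) = m_n(a) \to \mu(a)$, while the off-diagonal entries are bounded by $\pi_n(a,b) \leq p_n(a) \to 0$, giving the desired limit $(\mathrm{id}\times\mathrm{id})_*\mu$. There is no real obstacle here; the only mildly delicate point is handling the case $r_n = 0$ separately (then $\mu_n = \mu$ and $\pi_n = (\mathrm{id}\times\mathrm{id})_*\mu$ works), and recognising that the dichotomy $p_n(a)q_n(a) = 0$ is exactly what makes the product formula $p_n(a)q_n(b)/r_n$ give the correct marginals.
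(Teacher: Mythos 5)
Your proof is correct, and it takes a genuinely different route from the paper. The paper does not construct the coupling at all: it invokes the general fact that the Wasserstein distance metrizes weak-$*$ convergence (citing Villani's book) and points to the appendix of the authors' earlier paper for an elementary self-contained argument. You instead give a fully explicit construction of the \emph{maximal coupling} of $\mu_n$ and $\mu$: put mass $m_n(a)=\min(\mu_n(a),\mu(a))$ on the diagonal, and distribute the residual mass $r_n$ off the diagonal as the product measure $p_n\otimes q_n/r_n$. Your marginal check is exactly right, and the single non-obvious point -- that the product formula $p_n(a)q_n(b)/r_n$ returns the correct marginals -- is handled correctly via the dichotomy $p_n(a)q_n(a)=0$, which forces $\sum_{b\neq a}q_n(b)=r_n$ whenever $p_n(a)>0$. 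The estimate $\pi_n(a,b)\le p_n(a)$ for $a\neq b$ and the separate treatment of $r_n=0$ are also both fine. What your approach buys is a short, self-contained, finitary argument with no appeal to optimal-transport theory; what the paper's citation buys is brevity and the ability to apply the same statement verbatim in more general (non-finite) settings. For the purpose of this lemma, where $A$ is finite, your construction is if anything preferable.
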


\begin{proof}
This follows from a much more general fact on optimal transport 
that the Wasserstein distance metrizes the weak$^*$ topology
\cite[Theorem 6.9]{Villani}.
See \cite[Appendix]{Lindenstrauss--Tsukamoto rate distortion}
for an elementary self-contained proof.
\end{proof}

As in the proof of Lemma \ref{lemma: L^1 metric and rate distortion dimension},
we extend the definition of $\bar{d}_n$.
For $x= (x_0, \dots, x_{n-1})$ and $y=(y_0,\dots, y_{n-1})$ in $\mathcal{X}^n$ we set 
\[ \bar{d}_n(x,y) = \frac{1}{n} \sum_{k=0}^{n-1} d(x_k, y_k). \]

\begin{proof}[Proof of Theorem \ref{theorem: L^1 version of existence of nice measures}]
We can assume $\mdim_{\mathrm{H},L^1}(\mathcal{X},T,d) >0$.
Let $0<s<\mdim_{\mathrm{H},L^1}(\mathcal{X},T,d)$.
We will prove that there exists $\mu\in \mathscr{M}^T(\mathcal{X})$ satisfying 
$\underline{\rdim}(\mathcal{X},T,d,\mu) \geq s$.

Take $0<c<1$ with $c\cdot \mdim_{\mathrm{H},L^1}(\mathcal{X},T,d) > s$
and let $\delta_0 = \delta_0(c)\in (0,1)$ be the constant given by 
Corollary \ref{cor: quantitative Frostman's lemma}.
There exist $0<\delta<\delta_0$ and $n_1<n_2<n_3<\dots$ satisfying 
$c\cdot \dim_{\mathrm{H}}(\mathcal{X},\bar{d}_{n_k},\delta) > s n_k$.
By Corollary \ref{cor: quantitative Frostman's lemma} we can find Borel probability measures $\nu_k$ on $\mathcal{X}$
satisfying 
\begin{equation}  \label{eq: starting point: measure given by geometric measure theory}
  \nu_k(E) \leq \left(\diam(E,\bar{d}_{n_k})\right)^{s n_k},  \quad 
    \forall E \subset \mathcal{X} \text{ with } \diam (E,\bar{d}_{n_k}) < \frac{\delta}{6}.
\end{equation}
Set 
\[  \mu_k = \frac{1}{n_k} \sum_{n=0}^{n_k-1} T^n_*\nu_k. \]
By choosing a subsequence (also denoted by $\mu_k$), we can assume that 
$\mu_k$ converges to some $T$-invariant probability measure $\mu$ on $\mathcal{X}$
in the weak$^*$ topology.
We will show $\underline{\rdim}(\mathcal{X},T,d,\mu) \geq s$.

Let $\varepsilon$ be any positive number with $2\varepsilon \log(1/\varepsilon) \leq \delta/10$.
We would like to prove an estimate such as
\[  R(d,\mu,\varepsilon) \geq s \log(1/\varepsilon) + \text{small error terms}. \]
For this purpose, let $X$ and $Y=(Y_0,\dots, Y_{m-1})$ be coupled random variables such that 
$X, Y_0, \dots, Y_{m-1}$ take values in $\mathcal{X}$ and satisfy 
\[   \mathrm{Law} X = \mu, \quad  \mathbb{E} \left(\frac{1}{m} \sum_{j=0}^{m-1} d(T^j X, Y_j)\right) < \varepsilon. \]
We need to show $I(X;Y)\geq sm \log(1/\varepsilon) + \text{small error terms}$.
As we remarked in Remark \ref{remark: Y takes only finitely many values in the definition of R}, 
we can assume that $Y$ takes only finitely many values.
Let $\mathcal{Y}\subset \mathcal{X}^m$ be the set of possible values of $Y$.

\vspace{0.3cm}
\noindent
\textbf{Idea of the proof.}
Here we roughly explain the idea of the proof, assuming $m=1$. That is to say, we give a lower bound on $I(X;Y)$ if $\mathrm{Law} X = \mu$ and $Y$ is any $\mathcal{X}$-valued random variable coupled to it with $ \mathbb{E} d(X, Y)<\varepsilon$.

Since $\mu_k\to \mu$, we can find random variables $X(k)$ coupled to $X$ such that
$X(k)$ take values in $\mathcal{X}$ with $\mathrm{Law} X(k) = \mu_k$ and 
\begin{equation} \label{eq: condition on X(k) in toy model}
  \mathbb{E} d(X, X(k)) \to 0, \quad I(X(k); Y) \to I(X;Y).
\end{equation}
(Here we have ignored a technical problem. The above convergence $I(X(k);Y) \to I(X;Y)$ essentially follows from Lemma 
\ref{lemma: law convergence implies the convergence of mutual information}.
But Lemma \ref{lemma: law convergence implies the convergence of mutual information} is valid 
only if the underlying space is a finite set.
We will address this issue by introducing an appropriate finite partition $\mathcal{P}$ below.
At the moment we pretend that $X$ and $X(k)$ take values in a finite set.)

Using this coupling between $X(k)$ and $Y$, we will construct a new coupling between $Z(k) = (X'(k), TX'(k), \dots, T^{n_k-1}X'(k))$ with $X'(k)$ a random variable taking values in $\mathcal{X}$ and obeying $\nu_k$, and a new random variable $W(k)= (W(k)_0, \dots, W(k)_{n_k-1})$ so that 
\[ \mathbb{E} \bar{d}_{n_k}(Z(k), W(k))<\varepsilon\] 
and $I(Z(k); W(k))$ is bounded from above using $I(X;Y)$. As we can bound $I(Z(k); W(k))$ from bellow using Lemma~\ref{lemma: from geometric measure theory to rate distortion theory} this will give us the desired lower bound on $I(X;Y)$.

We define the random variable $W(k)= (W(k)_0, \dots, W(k)_{n_k-1})$, coupled to $Z(k)$ and taking values in $\mathcal{X}^{n_k}$, as follows. Let $\rho_k(y|x) = \mathbb{P}(Y=y|X(k)=x)$. Then the law of $W(k)= (W(k)_0, \dots, W(k)_{n_k-1})$ is determined by requiring that 
$W(k)_0, \dots, W(k)_{n_k-1}$ are conditionally independent given $Z(k)$ and 
\[  \mathbb{P}(W(k)_n = y| T^n X'(k) = x) = \rho_k(y|x). \]

By (\ref{eq: condition on X(k) in toy model}) we have that
\[  \mathbb{E} \bar{d}_{n_k}(Z(k), W(k)) = \mathbb{E} d(X(k), Y) \to \mathbb{E} d(X,Y) < \varepsilon, \]
so that indeed $\mathbb{E} \bar{d}_{n_k}(Z(k), W(k)) < \varepsilon$ for large $k$.

Since $W(k)_0, \dots, W(k)_{n_k-1}$ are conditionally independent given $Z(k)$, we use
the subadditivity of mutual information (Lemma \ref{lemma: subadditivity of mutual information})
and get
\[   I(Z(k);W(k)) \leq \sum_{n=0}^{n_k-1} I(Z(k); W(k)_n) = \sum_{n=0}^{n_k-1} I(T^n X'(k); W(k)_n). \]
By definition of $W(k)_n$, we have $I(T^n X'(k); W(k)_n) = I(T^n_*\nu_k, \rho_k)$ in the notation of Lemma~\ref{lemma: concavity/convexity of mutual information}.
Recall $\mu_k = (1/n_k) \sum_{n=0}^{n_k-1} T^n_*\nu_k$.
By using the concavity property of mutual information given in Lemma \ref{lemma: concavity/convexity of mutual information} it follows that
\[  \frac{1}{n_k}\sum_{n=0}^{n_k-1} I(T^n_* \nu_k, \rho_k) \leq I(\mu_k, \rho_k) = I(X(k); Y). \]
 Thus we get 
\begin{equation}\label{equation: mutual information of Z,W}
  I(Z(k);W(k)) \leq n_k I(X(k); Y)  
\end{equation} 
 Since $\mathrm{Law} X'(k) = \nu_k$, it follows from (\ref{eq: starting point: measure given by geometric measure theory}) and 
Lemma \ref{lemma: from geometric measure theory to rate distortion theory} that 
\[  I(Z(k);W(k)) \geq s n_k \log(1/\varepsilon) + \text{small error terms}, \]
hence by \eqref{equation: mutual information of Z,W} we see that
\[ I(X(k);Y) \geq s  \log(1/\varepsilon) + \text{small error terms}. \]
As $I(X(k);Y) \to I(X;Y)$ by (\ref{eq: condition on X(k) in toy model}) 
\[  I(X;Y) \geq s \log (1/\varepsilon) + \text{small error terms}. \]
This is what we want to prove.
$\blacksquare$
\vspace{0.3cm}

Now we return to the proof.
Recall our situation: The random variables $X$ and $Y=(Y_0,\dots, Y_{m-1})$ 
take values in $\mathcal{X}$ and a finite set $\mathcal{Y} \subset \mathcal{X}^m$ respectively. 
They satisfy $\mathrm{Law} X= \mu$ and 
\[  \frac{1}{m}\mathbb{E} \left(\sum_{j=0}^{m-1} d(T^j X, Y_j)\right) < \varepsilon.\]
We want to estimate $I(X;Y)$ from below.

\medskip

We choose a positive number $\tau$ satisfying 
\begin{equation} \label{eq: choice of tau in the proof of the existence of nice measures}
   \tau \leq \min\left(\frac{\varepsilon}{3}, \frac{\delta}{20}\right), \quad 
   \frac{\tau}{2} + \mathbb{E}\left(\frac{1}{m}\sum_{j=0}^{m-1} d(T^j X, Y_j)\right) < \varepsilon.
\end{equation}
We take a finite partition $\mathcal{P}  =\{P_1,\dots,P_L\}$ of $\mathcal{X}$ such that for all $1\leq l\leq L$
\begin{equation*}
     \diam(P_l,d) < \frac{\tau}{2}, \quad \mu(\partial P_l) = 0. 
\end{equation*}     
Pick a point $p_l$ from each $P_l$ and set $A=\{p_1,\dots,p_L\}$.
We define a map $\mathcal{P}:\mathcal{X}\to A$ by 
$\mathcal{P}(P_l) = \{p_l\}$.
For $n\geq 1$ we define a map $\mathcal{P}^n: \mathcal{X} \to A^n$ by 
$\mathcal{P}^n(x) = \left(\mathcal{P}(x), \mathcal{P}(Tx), \dots, \mathcal{P}(T^{n-1}x)\right)$.

\begin{claim}  \label{claim: property of pushforward of nu_k}
$\mathcal{P}^{n_k}_* \nu_k(E) \leq \left(\tau + \diam \left(E, \bar{d}_{n_k}\right)\right)^{s n_k}$
for all $E\subset A^{n_k}$ with $\diam \left(E, \bar{d}_{n_k}\right) < \delta/10$.
\end{claim}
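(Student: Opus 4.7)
The plan is to unfold the pushforward and reduce the bound to the Frostman-type property \eqref{eq: starting point: measure given by geometric measure theory} already established for $\nu_k$. Write $F := (\mathcal{P}^{n_k})^{-1}(E) \subset \mathcal{X}$, so that $\mathcal{P}^{n_k}_* \nu_k(E) = \nu_k(F)$. The only real work is to control $\diam(F, \bar{d}_{n_k})$ in terms of $\diam(E, \bar{d}_{n_k})$ and the partition parameter $\tau$.

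For this, I would pick any $x,y \in F$. By definition of $F$ we have $\mathcal{P}^{n_k}(x), \mathcal{P}^{n_k}(y) \in E$, and for each $0 \leq n < n_k$ the points $T^n x$ and $\mathcal{P}(T^n x)$ lie in the same atom $P_{l(n)}$, which has $d$-diameter less than $\tau/2$; similarly for $y$. The triangle inequality then yields
\[
 d(T^n x, T^n y) \leq d(T^n x, \mathcal{P}(T^n x)) + d(\mathcal{P}(T^n x), \mathcal{P}(T^n y)) + d(\mathcal{P}(T^n y), T^n y) < \tau + d(\mathcal{P}(T^n x), \mathcal{P}(T^n y)).
\]
Averaging over $0 \leq n < n_k$ gives $\bar{d}_{n_k}(x,y) \leq \tau + \bar{d}_{n_k}(\mathcal{P}^{n_k}(x), \mathcal{P}^{n_k}(y)) \leq \tau + \diam(E, \bar{d}_{n_k})$, so $\diam(F, \bar{d}_{n_k}) \leq \tau + \diam(E, \bar{d}_{n_k})$.

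To apply \eqref{eq: starting point: measure given by geometric measure theory} we must check $\diam(F,\bar{d}_{n_k}) < \delta/6$. Using the hypothesis $\diam(E,\bar{d}_{n_k}) < \delta/10$ together with the choice $\tau \leq \delta/20$ from \eqref{eq: choice of tau in the proof of the existence of nice measures}, we get $\diam(F,\bar{d}_{n_k}) < \delta/20 + \delta/10 = 3\delta/20 < \delta/6$, so the Frostman-type estimate applies and
\[
 \mathcal{P}^{n_k}_* \nu_k(E) = \nu_k(F) \leq \bigl(\diam(F,\bar{d}_{n_k})\bigr)^{s n_k} \leq \bigl(\tau + \diam(E,\bar{d}_{n_k})\bigr)^{s n_k},
\]
which is the claim. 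There is no serious obstacle here; the step to watch is simply keeping the numerical constants ($\tau/2$ for atoms of $\mathcal{P}$, $\delta/10$ for the hypothesis, $\delta/6$ as allowed in \eqref{eq: starting point: measure given by geometric measure theory}) compatible, and this is already arranged by the earlier choice \eqref{eq: choice of tau in the proof of the existence of nice measures} of $\tau$.
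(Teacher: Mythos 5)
Your proof is correct and takes essentially the same route as the paper: unfold the pushforward to $\nu_k\left(\left(\mathcal{P}^{n_k}\right)^{-1}E\right)$, bound the $\bar{d}_{n_k}$-diameter of the preimage by $\tau + \diam\left(E,\bar{d}_{n_k}\right)$ using that the atoms of $\mathcal{P}$ have $d$-diameter less than $\tau/2$, verify this is below the $\delta/6$ threshold, and invoke \eqref{eq: starting point: measure given by geometric measure theory}. The paper just compresses the triangle-inequality step; the bookkeeping of constants ($\tau \leq \delta/20$, hypothesis $\delta/10$, threshold $\delta/6$) matches.
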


\begin{proof}
We have $\mathcal{P}^{n_k}_* \nu_k(E) = \nu_k\left(\left(\mathcal{P}^{n_k}\right)^{-1}E\right)$.
Since $\tau \leq \delta/20$
\[  \diam \left(\left(\mathcal{P}^{n_k}\right)^{-1}E, \bar{d}_{n_k}\right) < \tau + \diam\left(E,\bar{d}_{n_k}\right)
     < \frac{\delta}{6}. \]
Then by (\ref{eq: starting point: measure given by geometric measure theory})
 \begin{equation*}
   \begin{split}
       \nu_k\left(\left(\mathcal{P}^{n_k}\right)^{-1}E\right) &\leq 
       \left( \diam \left(\left(\mathcal{P}^{n_k}\right)^{-1}E, \bar{d}_{n_k}\right)\right)^{s n_k} \\
      & <  \left(\tau + \diam\left(E,\bar{d}_{n_k}\right)\right)^{sn_k}. 
   \end{split} 
 \end{equation*} 
\end{proof}

It follows from $\mu_k\to \mu$ and $\mu(\partial P_l)=0$ that 
$\mathcal{P}^{m}_* \mu_k \to \mathcal{P}^m_*\mu$ as $k\to \infty$.
Then by Lemma \ref{lemma: optimal transport} there exists a sequence of couplings $\pi_k$ between $\mathcal{P}^{m}_* \mu_k$
and $\mathcal{P}^m_*\mu$ converging to $(\mathrm{id}\times \mathrm{id})_* \mathcal{P}^m_*\mu$.
We take a random variable $X(k)$ coupled to $\mathcal{P}^m(X)$ such that 
$X(k)$ takes values in $A^m$ with $\mathrm{Law}\left(X(k),\mathcal{P}^m(X)\right) = \pi_k$. 
(In particular $X(k)$ obeys $\mathcal{P}^m_*\mu_k$.)
This satisfies 
\begin{equation}  \label{eq: choice of X(k)}
   \mathbb{E} \,\bar{d}_m\left(X(k), \mathcal{P}^m(X)\right) \to 0, \quad 
   I\left(X(k); Y\right) \to I\left(\mathcal{P}^m(X); Y\right).
\end{equation}
The latter condition\footnote{The coupling between $X(k)$ and $Y$ is given by the probability mass function 
\[  \sum_{x'\in A^m} \pi_k(x,x') \mathbb{P}(Y=y|\mathcal{P}^m(X)=x'), \]
which converges to $\mathbb{P}(\mathcal{P}^m(X)=x, Y=y)$.}
 follows from Lemma \ref{lemma: law convergence implies the convergence of mutual information}.
Since ${\diam (\mathcal{P}_l, d) < \tau/2}$,
\begin{equation}  \label{eq: distortion between X(k) and Y}
  \begin{split}
    \mathbb{E} \bar{d}_m\left(X(k), Y\right) & <  \mathbb{E} \bar{d}_m\left(X(k), \mathcal{P}^m(X)\right) + 
       \frac{\tau}{2} + \mathbb{E}\left(\frac{1}{m}\sum_{j=0}^{m-1} d(T^j X, Y_j)\right) \\
     & \to  \frac{\tau}{2} + \mathbb{E}\left(\frac{1}{m}\sum_{j=0}^{m-1} d(T^j X, Y_j)\right)  < \varepsilon \quad
     \text{by (\ref{eq: choice of tau in the proof of the existence of nice measures})}.
   \end{split}
\end{equation}

For $x = (x_0, \dots, x_{n-1})\in \mathcal{X}^n$ and $0\leq a\leq b<n$ we denote 
$x_a^b = (x_a, x_{a+1}, \dots, x_b)$.
For $x,y\in \mathcal{X}^m$ with $\mathcal{P}^m_*\mu_k(x)> 0$ we consider a conditional probability mass function 
\[ \rho_k(y|x) = \mathbb{P}(Y=y|X(k) = x). \]
Fix a point $a\in \mathcal{X}$.
Let $n_k = mq + r$ with $m\leq r < 2m$.
For $x, y\in \mathcal{X}^{n_k}$ and $0\leq j < m$ we define a conditional probability mass function 
\begin{equation*}
   \sigma_{k, j}(y|x) = \prod_{i=0}^{q-1} \rho_k\left( y_{j+im}^{j+im+m-1}| x_{j+im}^{j+im+m-1}\right)  \cdot 
                            \prod_{n\in [0,j)\cup [mq+j, n_k)} \delta_a(y_n).
\end{equation*}
Here $\delta_a(\cdot)$ is the delta probability measure at $a$ on $\mathcal{X}$.
We set 
\[  \sigma_k(y|x) = \frac{\sigma_{k,0}(y|x) + \sigma_{k,1}(y|x) + \dots+ \sigma_{k,m-1}(y|x)}{m}. \]
This is defined for $x\in \mathcal{X}^{n_k}$ with $\mathcal{P}^{n_k}_*\nu_k(x) > 0$.

Let $X'(k)$ be a random variable taking values in $\mathcal{X}$ and obeying $\nu_k$.
We set $Z(k) = \mathcal{P}^{n_k}\left(X'(k)\right)$.
We take a random variable $W(k)$ coupled to $Z(k)$ and taking values in $\mathcal{X}^{n_k}$ with 
\[  \mathbb{P}\left(W(k) = y| Z(k) = x\right) = \sigma_k (y|x). \]
For $0\leq j < m$ we also take a random variable $W(k,j)$ coupled to $Z(k)$ and taking values in $\mathcal{X}^{n_k}$ with 
\[  \mathbb{P}\left(W(k,j) = y| Z(k) = x\right) = \sigma_{k,j} (y|x). \]

\begin{claim}  \label{claim: distortion between Z(k) and W(k)}
 $\mathbb{E}\, \bar{d}_{n_k}\left(Z(k), W(k)\right) < \varepsilon$ for large $k$.
\end{claim}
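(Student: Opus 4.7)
The plan is to expand $\mathbb{E}\bar{d}_{n_k}(Z(k),W(k))$ directly from the definition $\sigma_k = \frac{1}{m}\sum_{j=0}^{m-1}\sigma_{k,j}$, and to show that the ``boundary'' contribution from the positions padded by $\delta_a$ is negligible while the ``interior'' contribution from the $\rho_k$-coupled blocks averages, via the $j$-sum, to exactly $\mathbb{E}\bar{d}_m(X(k),Y)$. The latter is already known to be strictly less than $\varepsilon$ in the limit by \eqref{eq: distortion between X(k) and Y}, so the claim will follow for large $k$.

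Concretely, I would first write
\[
\mathbb{E}\bar{d}_{n_k}(Z(k),W(k)) = \frac{1}{m}\sum_{j=0}^{m-1}\mathbb{E}\bar{d}_{n_k}(Z(k),W(k,j))
\]
and, for each fixed $j$, split $\frac{1}{n_k}\sum_{n=0}^{n_k-1}\mathbb{E}\, d(Z(k)_n, W(k,j)_n)$ according to whether $n\in[0,j)\cup[mq+j, n_k)$ (boundary, with $W(k,j)_n = a$ deterministically) or $n\in[j+im, j+im+m)$ for some $i\in\{0,\dots,q-1\}$ (interior). The boundary sum has $r = n_k - mq < 2m$ terms, each bounded by $\diam(\mathcal{X},d)$, so contributes at most $r\diam(\mathcal{X},d)/n_k$, which vanishes as $k\to\infty$ since $r$ is bounded while $n_k\to\infty$.

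For the interior, within each block the conditional law of $W(k,j)_{j+im}^{j+im+m-1}$ given $Z(k)_{j+im}^{j+im+m-1}$ is exactly $\rho_k$, so summed over the $m$ positions of the block the expected distortion equals $m\cdot D\bigl(\mathcal{P}^m_*(T^{j+im}_*\nu_k)\bigr)$, where $D(\pi) := \sum_{x,y}\pi(x)\rho_k(y|x)\bar{d}_m(x,y)$ is linear in $\pi$. The key identity, using $\mu_k = \frac{1}{n_k}\sum_{n=0}^{n_k-1}T^n_*\nu_k$, is
\[
\sum_{j=0}^{m-1}\sum_{i=0}^{q-1} D\bigl(\mathcal{P}^m_*(T^{j+im}_*\nu_k)\bigr) = \sum_{n=0}^{mq-1} D\bigl(\mathcal{P}^m_*(T^n_*\nu_k)\bigr) \le n_k\, D(\mathcal{P}^m_*\mu_k) = n_k\,\mathbb{E}\bar{d}_m(X(k), Y).
\]
Combining the two estimates yields $\mathbb{E}\bar{d}_{n_k}(Z(k),W(k)) \le r\diam(\mathcal{X},d)/n_k + \mathbb{E}\bar{d}_m(X(k), Y)$, and by \eqref{eq: choice of X(k)} together with \eqref{eq: distortion between X(k) and Y} the right-hand side converges as $k\to\infty$ to a value strictly smaller than $\varepsilon$; the strict inequality in \eqref{eq: choice of tau in the proof of the existence of nice measures} provides precisely the slack absorbing the vanishing boundary term.

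The main conceptual step is the averaging over $j$. A single shift $\sigma_{k,j}$ alone would only couple $Z(k)$ to the empirical mean $\frac{1}{q}\sum_{i=0}^{q-1}\mathcal{P}^m_*(T^{j+im}_*\nu_k)$ of $q$ shifts spaced by $m$, which need not be anywhere near $\mathcal{P}^m_*\mu_k$. Uniform averaging over $j\in[0,m)$ is exactly the device that restores the full time-average $\mu_k$ of all shifts $T^n_*\nu_k$ for $0\le n<n_k$ (up to the vanishing boundary), which is why the definition of $\sigma_k$ had to introduce the $1/m$ factor and the $m$ partial shifts. Beyond this observation, the proof is essentially bookkeeping and I do not anticipate any serious technical difficulty.
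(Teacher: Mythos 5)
Your proof is correct and follows essentially the same approach as the paper's: the same decomposition into the $m$ shifted couplings $W(k,j)$, the same boundary/interior split with the boundary bounded by $r\cdot\diam(\mathcal{X},d)/n_k$, and the same averaging over $j$ and $i$ to recover the time-average $\mu_k$ via linearity, followed by passing to the limit via \eqref{eq: distortion between X(k) and Y}. Your closing observation about why the uniform $j$-average is needed is also a fair account of the design of $\sigma_k$.
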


\begin{proof}
\[  \mathbb{E} \bar{d}_{n_k}\left(Z(k),W(k)\right) 
    = \frac{1}{m}\sum_{j=0}^{m-1} \mathbb{E} \bar{d}_{n_k}\left(Z(k), W(k,j)\right). \]
$\bar{d}_{n_k}\left(Z(k), W(k,j)\right)$ is bounded by 
\[ \frac{r\cdot \diam(\mathcal{X},d)}{n_k} + \frac{m}{n_k} \sum_{i=0}^{q-1}
    \bar{d}_m\left(\mathcal{P}^m(T^{j+im}X'(k)), W(k,j)_{j+im}^{j+im+m-1}\right). \]
$\mathbb{E} \bar{d}_m\left(\mathcal{P}^m(T^{j+im}X'(k)), W(k,j)_{j+im}^{j+im+m-1}\right)$
is equal to 
\begin{equation*}
 \sum_{x,y\in \mathcal{X}^m} \bar{d}_m(x,y) \rho_k(y|x) \mathcal{P}^m_*T^{j+im}_*\nu_k(x). 
\end{equation*}   
Therefore $\mathbb{E} \bar{d}_{n_k}\left(Z(k),W(k)\right)$ is bounded by 
\begin{equation*}
   \begin{split}
    & \frac{r\cdot \diam(\mathcal{X},d)}{n_k} 
     + \sum_{x,y\in \mathcal{X}^m} \bar{d}_m(x,y) \rho_k(y|x)
        \left(\frac{1}{n_k}\sum_{\substack{0\leq i <q \\ 0\leq j < m}} \mathcal{P}^m_*T^{j+im}_*\nu_k(x) \right) \\
        \leq & 
         \frac{r\cdot \diam(\mathcal{X},d)}{n_k} 
     + \sum_{x,y\in \mathcal{X}^m} \bar{d}_m(x,y) \rho_k(y|x)
        \left(\frac{1}{n_k}\sum_{n=0}^{n_k-1} \mathcal{P}^m_*T^{n}_*\nu_k(x) \right) \\
    = & 
         \frac{r\cdot \diam(\mathcal{X},d)}{n_k} 
     + \sum_{x,y\in \mathcal{X}^m} \bar{d}_m(x,y) \rho_k(y|x) \mathcal{P}^m_*\mu_k(x) \\
    = & \frac{r\cdot \diam(\mathcal{X},d)}{n_k} + \mathbb{E} \bar{d}_m\left(X(k), Y\right).
   \end{split}
\end{equation*}   
From $r\leq 2m$ and (\ref{eq: distortion between X(k) and Y}), this is less than $\varepsilon$ for large $k$.
\end{proof}

\begin{claim}  \label{claim: mutual information between Z(k) and W(k)}
\[   \frac{1}{n_k} I\left(Z(k); W(k)\right) \leq \frac{1}{m} I\left(X(k); Y\right). \]
\end{claim}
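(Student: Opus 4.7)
The plan is to telescope $I(Z(k); W(k))$ using the three general properties of mutual information recorded in Section~\ref{subsection: mutual information} --- convexity in the conditional, subadditivity under conditional independence, and concavity in the source marginal --- and then to invoke the fact that $\mu_k$ was built as the ergodic average $\frac{1}{n_k}\sum_{n=0}^{n_k-1} T^n_*\nu_k$.

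First I would apply convexity of $I(\mu,\cdot)$ in the conditional (Lemma~\ref{lemma: concavity/convexity of mutual information}), using that the marginal of $Z(k)$ is $\mathcal{P}^{n_k}_*\nu_k$ independently of $j$ while $\sigma_k = \frac{1}{m}\sum_{j=0}^{m-1}\sigma_{k,j}$, to obtain
\[  I(Z(k); W(k)) \leq \frac{1}{m}\sum_{j=0}^{m-1} I(Z(k); W(k,j)). \]
For each fixed $j$, the product form of $\sigma_{k,j}$ says that, conditional on $Z(k)$, the $q$ blocks $W(k,j)_{j+im}^{j+im+m-1}$ ($0\leq i<q$) are independent of each other and of the deterministic $\delta_a$-padding on $[0,j)\cup[mq+j,n_k)$. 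Subadditivity (Lemma~\ref{lemma: subadditivity of mutual information}) applied inductively therefore gives
\[   I(Z(k); W(k,j)) \leq \sum_{i=0}^{q-1} I\bigl(Z(k);\, W(k,j)_{j+im}^{j+im+m-1}\bigr), \]
the padding terms contributing zero.

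Next I would replace the source $Z(k)$ by its $m$-block $\mathcal{P}^m(T^{j+im}X'(k))$ via data-processing (Lemma~\ref{lemma: data-processing inequality}): by construction, given $Z(k)$, the block $W(k,j)_{j+im}^{j+im+m-1}$ depends only on this slice through the kernel $\rho_k$, so
\[   I\bigl(Z(k);\, W(k,j)_{j+im}^{j+im+m-1}\bigr) \leq I\bigl(\mathcal{P}^m_* T^{j+im}_*\nu_k,\, \rho_k\bigr) \]
in the notation of Lemma~\ref{lemma: concavity/convexity of mutual information}. Reindexing $n=j+im\in[0,mq)$ and adjoining the nonnegative terms for $n\in[mq,n_k)$, the previous two displays combine to
\[  I(Z(k); W(k)) \leq \frac{1}{m}\sum_{n=0}^{n_k-1} I\bigl(\mathcal{P}^m_* T^n_*\nu_k,\, \rho_k\bigr). \]
Finally, concavity of $I(\cdot,\rho_k)$ in the first argument (the other half of Lemma~\ref{lemma: concavity/convexity of mutual information}) together with $\mathcal{P}^m_*\mu_k = \frac{1}{n_k}\sum_{n=0}^{n_k-1}\mathcal{P}^m_*T^n_*\nu_k$ yields
\[  \frac{1}{n_k} I(Z(k); W(k)) \leq \frac{1}{m}\, I\bigl(\mathcal{P}^m_*\mu_k,\, \rho_k\bigr) = \frac{1}{m} I(X(k); Y), \]
which is the claim.

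There is no real obstacle here: each inequality is one of the standard information-theoretic tools of Section~\ref{subsection: mutual information}, and the only point where care is required is the index bookkeeping. The deterministic boundary blocks of $W(k,j)$ cost nothing because of their $\delta_a$ structure, and the $r = n_k-mq$ ``missing'' terms can be inserted for free by nonnegativity of mutual information; this last maneuver is precisely what lets the final concavity step land on $\mathcal{P}^m_*\mu_k$ (and hence on $I(X(k);Y)$) rather than on some $mq$-average that does not quite match $X(k)$.
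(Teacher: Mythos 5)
Your proof follows the paper's argument essentially step for step: convexity over $j$, subadditivity under conditional independence over $i$, reduction to the $m$-slice, reindexing $n=j+im$ and extending the sum over $[0,n_k)$ for free, then concavity in the source to land on $\mathcal{P}^m_*\mu_k = \mathrm{Law}(X(k))$. The one slip is a misattribution in the slice step. Passing from $I(Z(k); W(k,j)_{j+im}^{j+im+m-1})$ to $I(\mathcal{P}^m_*T^{j+im}_*\nu_k, \rho_k)$ does \emph{not} follow from Lemma \ref{lemma: data-processing inequality} as stated: since $\mathcal{P}^m(T^{j+im}X'(k))$ is a coordinate projection of $Z(k)$, that lemma gives the \emph{reverse} inequality $I(\mathcal{P}^m(T^{j+im}X'(k)); W(k,j)_{j+im}^{j+im+m-1}) \leq I(Z(k); W(k,j)_{j+im}^{j+im+m-1})$. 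What you actually need (and what the paper uses) is the \emph{equality} of the two mutual informations, which comes from the Markov structure you describe in words: given $\mathcal{P}^m(T^{j+im}X'(k))$, the block $W(k,j)_{j+im}^{j+im+m-1}$ is conditionally independent of the rest of $Z(k)$, so $H(W(k,j)_{j+im}^{j+im+m-1}\mid Z(k)) = H(W(k,j)_{j+im}^{j+im+m-1}\mid \mathcal{P}^m(T^{j+im}X'(k)))$ and the two mutual informations coincide. Your heuristic (``depends only on this slice'') is exactly right; just attribute the step to this conditional-independence observation rather than to Lemma \ref{lemma: data-processing inequality}.
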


\begin{proof}
By the convexity of mutual information (Lemma \ref{lemma: concavity/convexity of mutual information})
\[  I\left(Z(k);W(k)\right) \leq \frac{1}{m}\sum_{j=0}^{m-1} I\left(Z(k); W(k,j)\right). \]
By the subadditivity of mutual information under conditional independence (Lemma \ref{lemma: subadditivity of mutual information})
\[ I\left(Z(k);W(k,j)\right) \leq \sum_{i=0}^{q-1} I\left(Z(k); W(k,j)_{j+im}^{j+im+m-1}\right). \]
The term $I\left(Z(k); W(k,j)_{j+im}^{j+im+m-1}\right)$ is equal to 
\[  I\left(\mathcal{P}^m(T^{j+im} X'(k)); W(k,j)_{j+im}^{j+im+m-1}\right) = I\left(\mathcal{P}^m_*T^{j+im}_*\nu_k, \rho_k\right). \]
Hence 
\begin{equation*}
   \begin{split}
   \frac{m}{n_k} I\left(Z(k);W(k)\right)  \leq &
     \frac{1}{n_k} \sum_{\substack{0\leq j < m\\ 0\leq i<q}} I\left(\mathcal{P}^m_*(T^{j+im}_*\nu_k), \rho_k\right) \\
     \leq &  \frac{1}{n_k} \sum_{n=0}^{n_k-1} I\left(\mathcal{P}^m_*T^n_*\nu_k, \rho_k\right) \\
     \leq &  I\left(\frac{1}{n_k}\sum_{n=0}^{n_k-1}\mathcal{P}^m_*T^n_*\nu_k, \rho_k\right) \\
     &  \text{ by the concavity in Lemma \ref{lemma: concavity/convexity of mutual information}} \\
      =  & I\left(\mathcal{P}^m_*\mu_k, \rho_k\right) \\
      = &  I\left(X(k); Y\right).
   \end{split}
\end{equation*}   
\end{proof}

Recall $2\varepsilon \log (1/\varepsilon) \leq \delta/10$, $\tau \leq \min(\varepsilon/3,\delta/20)$ and 
$\mathrm{Law} Z(k) = \mathcal{P}^{n_k}_*\nu_k$.
We apply Lemma \ref{lemma: from geometric measure theory to rate distortion theory} with Claims 
\ref{claim: property of pushforward of nu_k} and
\ref{claim: distortion between Z(k) and W(k)} 
to $\left(Z(k), W(k)\right)$:
\[  I\left(Z(k); W(k)\right) \geq s n_k \log(1/\varepsilon) - C(sn_k+1) \quad 
     \text{for large $k$}. \]
Here $C$ is a universal positive constant.
From Claim \ref{claim: mutual information between Z(k) and W(k)}
\[  \frac{1}{m} I\left(X(k); Y\right) \geq s \log(1/\varepsilon) - C\left(s + \frac{1}{n_k}\right). \]
We know $I\left(X(k); Y\right) \to I\left(\mathcal{P}^m(X); Y\right)$ as $k\to \infty$ in (\ref{eq: choice of X(k)}).
Hence 
\[  \frac{1}{m} I\left(\mathcal{P}^m(X);Y\right) \geq s \log(1/\varepsilon) - Cs. \]
By the data-processing inequality (Lemma \ref{lemma: data-processing inequality}) 
\[ \frac{1}{m} I(X;Y) \geq  \frac{1}{m} I\left(\mathcal{P}^m(X);Y\right) \geq s \log(1/\varepsilon) - Cs. \]
Therefore for any positive number $\varepsilon$ with $2\varepsilon \log(1/\varepsilon) \leq \delta/10$
\[  R(d,\mu,\varepsilon) \geq s \log(1/\varepsilon) - C s. \] 
Thus
\[  \underline{\rdim}(\mathcal{X},T,d,\mu) = \liminf_{\varepsilon \to 0} \frac{R(d,\mu,\varepsilon)}{\log(1/\varepsilon)}
     \geq s. \]
\end{proof}

\section{Proof of Theorem \ref{theorem: existence of nice metrics}}
\label{section: proof of existence of nice metrics}

The purpose of this section is to prove Theorem \ref{theorem: existence of nice metrics}.
The proof does not involve rate distortion theory, and in particular 
it is independent of \S \ref{section: proof of existence of nice measures}.

\subsection{Background: Pontrjagin--Schnirelmann's theorem}  \label{subsection: background: PS theorem}

Theorems \ref{theorem: lower metric mean dimension coincides with mean dimension} and \ref{theorem: existence of nice metrics} 
look quite similar.
Theorem \ref{theorem: lower metric mean dimension coincides with mean dimension} claims the existence of $d$
satisfying $\underline{\mdim}_{\mathrm{M}}(\mathcal{X},T,d) = \mdim(\mathcal{X},T)$ wheres 
Theorem \ref{theorem: existence of nice metrics} claims the existence of $d$ satisfying 
$\overline{\mdim}_{\mathrm{M}}(\mathcal{X},T,d) = \mdim(\mathcal{X},T)$.
But indeed the upper case (Theorem \ref{theorem: existence of nice metrics}) is substantially subtler.
The difficulty is already visible in a classical, non-dynamical setting.
Let $\mathcal{X}$ be a compact metrizable space.
Pontrjagin--Schnirelmann \cite{Pontrjagin--Schnirelmann} proved\footnote{Indeed we \textit{cannot} find this statement in 
\cite{Pontrjagin--Schnirelmann}. The main theorem of their paper states that
$\dim \mathcal{X}$ is equal to the infimum of $\underline{\dim}_{\mathrm{M}}(\mathcal{X},d)$ over $d\in \mathscr{D}(\mathcal{X})$.
But their argument actually proves Theorem \ref{theorem: PS theorem}.}

\begin{theorem} \label{theorem: PS theorem} 
There exists a metric $d$ on $\mathcal{X}$ satisfying 
$\overline{\dim}_{\mathrm{M}}(\mathcal{X},d) = \dim \mathcal{X}$.
\end{theorem}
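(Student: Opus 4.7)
Write $n := \dim \mathcal{X}$. The inequality $\dim \mathcal{X} \leq \overline{\dim}_{\mathrm{M}}(\mathcal{X},d)$ holds for every $d \in \mathscr{D}(\mathcal{X})$, since topological dimension is bounded by Hausdorff dimension, which in turn is bounded by the lower Minkowski dimension. It therefore suffices to exhibit one compatible metric $d$ with $\overline{\dim}_{\mathrm{M}}(\mathcal{X},d) \leq n$, i.e.\ $\log \#(\mathcal{X},d,\varepsilon) \leq (n+o(1))\log(1/\varepsilon)$ as $\varepsilon \to 0$. The plan is to build $d$ from a sequence of partition-of-unity maps into the nerves of finite open covers of $\mathcal{X}$ whose multiplicities are bounded by $n+1$, using the characterization of $\dim \mathcal{X} \leq n$ furnished by Lebesgue covering dimension.

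\textbf{Construction of $d$.} Fix any initial compatible metric $\rho$ on $\mathcal{X}$, and construct inductively finite open covers $\mathcal{U}_k$ of $\mathcal{X}$ with $\rho$-mesh $\varepsilon_k \searrow 0$ very rapidly, each of multiplicity $\leq n+1$, and arranged so that $\mathcal{U}_{k+1}$ is a star-refinement of $\mathcal{U}_k$ (achievable by taking a star-refinement and then thinning it, via the dimension hypothesis, back down to multiplicity $\leq n+1$). Let $N_k := |\mathcal{U}_k|$ and let $\{\psi_i^{(k)}\}_{i=1}^{N_k}$ be a partition of unity subordinate to $\mathcal{U}_k$. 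The barycentric map $f_k : \mathcal{X} \to P_k \subset \mathbb{R}^{N_k}$, given by $f_k(x) = (\psi_i^{(k)}(x))_{i}$, lands in the $n$-skeleton of the standard simplex (at most $n+1$ coordinates being nonzero at any point), so $P_k$ is an at-most-$n$-dimensional simplicial polyhedron; equipping it with $\rho_k(\cdot,\cdot) := \min(1, \|\cdot-\cdot\|_\infty)$ one has $\#(P_k, \rho_k, \eta) \leq C\, N_k^{n+1}\eta^{-n}$ for every $\eta \in (0,1]$. Choose positive weights $r_k$ with $\sum r_k < \infty$ and put
\[
d(x,y) \;:=\; \sum_{k=1}^\infty r_k\, \rho_k\bigl(f_k(x), f_k(y)\bigr).
\]
This is symmetric and subadditive; it separates points because the $f_k$ jointly do (as $\varepsilon_k \to 0$); and continuity in both directions, hence compatibility with the topology of $\mathcal{X}$, follows from the uniform bound $\rho_k \leq 1$ by a standard dominated-convergence argument.

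\textbf{The covering estimate and the main obstacle.} What remains is to prove $\#(\mathcal{X}, d, \varepsilon) \leq \varepsilon^{-n-o(1)}$. A direct product-type estimate, which covers each $f_k(\mathcal{X})$ separately in $(P_k,\rho_k)$ and then intersects across the roughly $\log(1/\varepsilon)$ indices contributing at scale $\varepsilon$, yields a bound with super-polynomial growth in $1/\varepsilon$ and therefore misses the sharp exponent $n$ badly. The crux of the proof, and its main difficulty, is to convert the star-refinement structure of the $\mathcal{U}_k$ into a telescoping phenomenon: with careful calibration of the weights $r_k$, the meshes $\varepsilon_k$, and the cardinalities $N_k$, the map $f_k$ should essentially determine $f_j$ for $j < k$ up to error controlled by $r_j$, so that at any scale $\varepsilon \in [r_{k+1}, r_k)$ the covering problem reduces to one on the single polyhedron $P_k$. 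This should yield an estimate of the form $\#(\mathcal{X}, d, \varepsilon) \leq C\, N_k^{n+1}(r_k/\varepsilon)^n$, in which the combinatorial factor $N_k^{n+1}$ contributes only $o(\log(1/\varepsilon))$ after taking logarithms, provided the schedules are chosen so that $\log N_k = o(\log(1/r_k))$. Realizing this telescoping — concretely, exploiting simplicial projections $P_{k+1}\to P_k$ induced by the star-refinement $\mathcal{U}_{k+1}\succ^{*}\mathcal{U}_k$ to collapse the product bound — is the combinatorial heart of the proof, and is precisely what distinguishes the upper Minkowski case treated here from the (already-known) lower Minkowski analogue.
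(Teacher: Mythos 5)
Your setup gets the shape of the problem right but stops exactly at the point where the real work starts, and the particular scaffolding you erect (a metric $d = \sum_k r_k\, \rho_k\bigl(f_k(\cdot), f_k(\cdot)\bigr)$ built from truncated $\ell^\infty$ metrics $\rho_k$ on nerves) does not obviously support the telescoping you need. You assert that "the map $f_k$ should essentially determine $f_j$ for $j<k$ up to error controlled by $r_j$." This is the step that would have to be proved, and as stated it is false. What the star-refinement structure actually gives you, via the nerve, is a simplicial map $h: P_k \to P_j$ with the property that $h(f_k(x))$ and $f_j(x)$ lie in the \emph{same closed simplex} of $P_j$ for every $x$. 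Under your metric $\rho_j = \min(1, \norm{\cdot - \cdot}_\infty)$ every simplex of $P_j$ has diameter of order $1$, so "lie in the same simplex" yields an error of order $1$, not of order $r_j$. Consequently, if you pull back to $\mathcal{X}$ a cover of $P_k$ by sets of small $\rho_k$-diameter, the resulting sets can still have $d$-diameter as large as $\sum_{j<k} r_j\cdot O(1) \gg \varepsilon$; the lower-index layers are not controlled, and the target estimate $\#(\mathcal{X},d,\varepsilon) \leq C\,N_k^{n+1}(r_k/\varepsilon)^n$ does not follow. You flag the telescoping as "the combinatorial heart of the proof," which is accurate — but that heart is missing, and your framework does not obviously make room for it.

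The paper's proof (\S \ref{subsection: warmup: the proof of PS theorem}, Theorem \ref{theorem: variant of PS theorem}) avoids this by not building the metric as a weighted sum of per-level nerve metrics. Instead it embeds $\mathcal{X}$ into an infinite-dimensional Banach space $V$ as a \emph{uniform limit} $f' = \lim_m f_m$, where $f_m = g_m \circ \pi_m$ with $\pi_m$ a $(1/m)$-embedding of $(\mathcal{X},d)$ into a simplicial complex $P_m$ of dimension $\leq D := \dim\mathcal{X}$, $g_m: P_m \to B_1^\circ(V)$ a linear embedding, and $\norm{f' - f_m} < \delta_m$ kept small by construction. At scale $\varepsilon\in[\varepsilon_m,\varepsilon_{m-1})$ the covering problem for $f'(\mathcal{X})$ reduces directly to that for $g_m(P_m)$ at scale comparable to $\varepsilon$, and the needed bound on $\#\left(g_m(P_m),\norm{\cdot},\varepsilon\right)$ is recorded and propagated inductively (Condition \ref{condition: PS theorem} (1)). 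The device that does what your "telescoping" gestures at is that the next map is first built as $\tilde{g}_{m+1} = g_m \circ h$ for a simplicial map $h: P_{m+1} \to P_m$ furnished by Corollary \ref{corollary: approximately commuting simplicial maps baby version}; hence $\tilde{g}_{m+1}(P_{m+1}) \subset g_m(P_m)$ and the covering bound is \emph{literally inherited} by image-nesting, after which $g_{m+1}$ is a tiny perturbation of $\tilde g_{m+1}$ calibrated (via Lemma \ref{lemma: preparations on linear maps} (1)) to preserve the inequality on the new scale range. This inheritance mechanism has no analogue in the weighted-sum-of-nerve-metrics setup, which is why your plan stalls at the very step you identify as central.
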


Compare this statement with 

\begin{theorem} \label{theorem: lower PS theorem}
There exists a metric $d$ on $\mathcal{X}$ satisfying 
$\underline{\dim}_{\mathrm{M}}(\mathcal{X},d) = \dim \mathcal{X}$. 
\end{theorem}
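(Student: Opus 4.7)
The bound $\dim \mathcal{X} \leq \underline{\dim}_{\mathrm{M}}(\mathcal{X},d)$ holds automatically for every compatible metric (the topological dimension is dominated by lower Minkowski dimension via $\widim_\varepsilon$ and a standard Hausdorff-dimension comparison). So the task is to exhibit \emph{one} compatible metric $d$ with $\underline{\dim}_{\mathrm{M}}(\mathcal{X},d) \leq n := \dim \mathcal{X}$, which we may assume finite.

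The plan is a Pontrjagin--Schnirelmann style construction: stitch together pullbacks from a sequence of $\eta_k$-embeddings into $n$-dimensional simplicial complexes. Fix any compatible metric $d_0$ of diameter $\leq 1$. Inductively pick parameters $\eta_k \to 0$, finite $n$-dimensional simplicial polyhedra $P_k$, an $\eta_k$-embedding $f_k: \mathcal{X} \to P_k$ (which exists since $\dim\mathcal{X} \leq n$), and a PL metric $\rho_k$ on $P_k$ of diameter $\leq 1$ satisfying $\#(P_k,\rho_k,r) \leq M_k\, r^{-n}$ for all $0 < r \leq 1$ with some constant $M_k$. Choose weights $\beta_k > 0$ with $\sum_k \beta_k < \infty$ and a small $\delta_0>0$, and set
\[ d(x,y) = \delta_0\, d_0(x,y) + \sum_{k=1}^\infty \beta_k\, \rho_k(f_k(x), f_k(y)). \]
The sum converges uniformly, and $d$ is a compatible metric: the $\delta_0 d_0$ term ensures the topology on $\mathcal{X}$ is not coarsened, while the $f_k$ terms contribute non-trivially at smaller and smaller scales as $\eta_k \to 0$.

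The heart of the argument is a covering estimate at a carefully chosen sequence of scales. At a resolution $\varepsilon_k$, one checks that: (i) the tail $\sum_{j>k}\beta_j$ contributes less than $\varepsilon_k/3$ provided $\beta_j$ was taken to decay sufficiently fast; (ii) the earlier terms $j < k$ are essentially trivialized, because each $f_j$ is an $\eta_j$-embedding with $\eta_j \gg \varepsilon_k$, so $\mathcal{X}$ partitions into a number $N_k$ of pieces on which $d_0$ and every $f_j\circ\rho_j$ for $j < k$ vary by less than $\varepsilon_k/(3k)$; (iii) on each such piece, $d$ agrees with $\beta_k \rho_k\circ f_k$ up to additive error $\varepsilon_k/3$, and covering the image in $(P_k,\beta_k\rho_k)$ at scale $\varepsilon_k$ costs at most $M_k (\varepsilon_k/\beta_k)^{-n}$ balls. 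Combining,
\[ \log \#(\mathcal{X},d,\varepsilon_k) \leq \log(N_k M_k) + n\log(1/\beta_k) + n\log(1/\varepsilon_k). \]
The main obstacle is bookkeeping: we are free to choose $\varepsilon_k$ \emph{after} $N_k, M_k, \beta_k$ have been fixed at stage $k$, and we take it so small (for instance $\log(1/\varepsilon_k) \geq k\bigl[\log(N_k M_k) + n\log(1/\beta_k)\bigr]$) that the first two terms are negligible compared with $\log(1/\varepsilon_k)$. This forces $\liminf_{\varepsilon\to 0} \log\#(\mathcal{X},d,\varepsilon)/\log(1/\varepsilon) \leq n$ along the subsequence $\varepsilon_k$, giving $\underline{\dim}_{\mathrm{M}}(\mathcal{X},d)\leq n$. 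Crucially, the construction works for $\underline{\dim}_{\mathrm{M}}$ precisely because control along a single subsequence of scales suffices; Theorem~\ref{theorem: PS theorem} would require a substantially finer uniform estimate at all scales.
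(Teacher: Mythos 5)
The very first step of your construction is fatal. You insert the term $\delta_0\, d_0(x,y)$, so that $d \geq \delta_0 d_0$, which gives $\#(\mathcal{X},d,\varepsilon)\geq\#(\mathcal{X},d_0,\varepsilon/\delta_0)$ and hence $\underline{\dim}_{\mathrm M}(\mathcal{X},d)\geq\underline{\dim}_{\mathrm M}(\mathcal{X},d_0)$. Since $d_0$ was an arbitrary compatible metric, this lower bound may already strictly exceed $\dim\mathcal{X}$: for example, a fat Cantor set in $[0,1]$ (positive Lebesgue measure) has topological dimension $0$ but lower Minkowski dimension $1$. Shrinking $\delta_0$ does not help because Minkowski dimension is scale invariant. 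The $\delta_0 d_0$ term is in fact unnecessary for compatibility: if $\eta_k\to 0$, the sum $\sum_k\beta_k\,\rho_k(f_k(x),f_k(y))$ alone already separates points (for $x\neq y$, once $\eta_k<d_0(x,y)$ the $\eta_k$-embedding $f_k$ satisfies $f_k(x)\neq f_k(y)$), and it generates the topology since the identity map from $(\mathcal{X},d_0)$ to $\mathcal{X}$ with this metric is a continuous bijection of compact Hausdorff spaces.

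Even after deleting the $\delta_0 d_0$ term, the bookkeeping in step (ii) is circular. You want to fix $N_k$ before choosing $\varepsilon_k$, but $N_k$, defined as the number of pieces on which $d_0$ and the earlier $\rho_j\circ f_j$ oscillate by less than $\varepsilon_k/(3k)$, is essentially a covering number at scale $\varepsilon_k$ and therefore cannot be fixed first. Hiding behind this circularity is the real missing idea: since you choose the $f_j$ independently of one another, there is no reason the earlier terms $\rho_j\circ f_j$ should have small oscillation on pieces adapted to $f_k$, and the naive common refinement of covers over $j\leq k$ produces an exponent on $1/\varepsilon_k$ growing like $nk$ rather than $n$. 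For the iterative approach to work one must force each $\pi_j$ ($j<k$) to approximately factor through $\pi_k$ by a simplicial map, which is precisely the content of Corollary~\ref{corollary: approximately commuting simplicial maps baby version} and what the construction of \S\ref{subsection: warmup: the proof of PS theorem} enforces at every stage. The paper, however, saves that machinery for the harder upper statement (Theorem~\ref{theorem: PS theorem}); for Theorem~\ref{theorem: lower PS theorem} it uses a much lighter route, a Baire category argument on $C(\mathcal{X},V)$ (sketched in \S\ref{subsection: background: PS theorem}), in which one never needs to make successive polyhedral approximations compatible with each other — one only needs, for each $n$, a single dense open set of maps that are $(1/n)$-embeddings with good covering numbers at some scale below $1/n$.
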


They look similar. But their natures are different.
A (now) standard approach to Theorem \ref{theorem: lower PS theorem} is to use the Baire category theorem as follows.
Fix an arbitrary metric $d$ on $\mathcal{X}$ and consider an infinite dimensional Banach space $(V,\norm{\cdot})$.
We denote by $C(\mathcal{X},V)$ the space of continuous maps from $\mathcal{X}$ to $V$ endowed with the norm topology.
For each $n\geq 1$ we consider $A_n\subset C(\mathcal{X},V)$ consisting of $f:\mathcal{X}\to V$ such that $f$ is a $(1/n)$-embedding with 
respect to $d$ and satisfies 
\[  \exists \varepsilon < 1/n:  \frac{\log \#(f(\mathcal{X}),\norm{\cdot},\varepsilon)}{\log (1/\varepsilon)} < \dim \mathcal{X} + \frac{1}{n}. \]
It is not hard to show that $A_n$ are open and dense\footnote{``Open'' is easy.
To show ``dense'', take arbitrary $f\in C(\mathcal{X},V)$ and $\delta>0$.
Choose $0<\varepsilon<1/n$ such that $d(x,y)<\varepsilon$ implies $\norm{f(x)-f(y)}<\delta$.
There exists an $\varepsilon$-embedding $\pi:\mathcal{X}\to P$
in a simplicial complex $P$ of dimension $\leq \dim \mathcal{X}$.  
From Lemma \ref{lemma: preparations on linear maps} (2) and (3) in \S \ref{subsection: preparation on combinatorial topology}
we can find a linear embedding $g:P\to V$ with $\norm{g(\pi(x))-f(x)} < \delta$.
From Lemma \ref{lemma: preparations on linear maps} (1), $\log \#(g(P), \norm{\cdot},\varepsilon')/\log (1/\varepsilon')$ is less
than $\dim \mathcal{X} + 1/n$ for sufficiently small $\varepsilon'$.
This shows $g\circ \pi\in A_n$. So $A_n$ is dense.
Therefore the main point of the proof of Theorem \ref{theorem: lower PS theorem} is a ``polyhedral approximation''.
The basic idea of the proof of Theorem \ref{theorem: PS theorem} is also a polyhedral approximation, but in a \textit{much more accurate way}.
See \S \ref{subsection: warmup: the proof of PS theorem}.}.
Then $\bigcap_{n=1}^\infty A_n$ is a residual (i.e. dense and $G_\delta$) subset of $C(\mathcal{X},V)$ (in particular, non-empty).
On the other hand this is equal to 
\[ \left\{f\in C(\mathcal{X},V)\middle|\, \text{$f$ is an embedding and }
           \underline{\dim}_{\mathrm{M}}(f(\mathcal{X}), \norm{\cdot}) = \dim \mathcal{X} \right\}. \]
Pick $f$ in this set. Then the metric $\norm{f(x)-f(y)}$ $(x,y\in \mathcal{X})$ has the lower Minkowski dimension 
equal to $\dim \mathcal{X}$.
This proves
Theorem \ref{theorem: lower PS theorem}.

Let's try a similar approach to Theorem \ref{theorem: PS theorem}. 
It is natural to consider
\begin{equation} \label{eq: nice set for PS theorem}
    \left\{f\in C(\mathcal{X},V)\middle|\, \text{$f$ is an embedding and }
           \overline{\dim}_{\mathrm{M}}(f(\mathcal{X}), \norm{\cdot}) = \dim \mathcal{X} \right\}. 
\end{equation}           
One might hope that this is also a residual subset of  $C(\mathcal{X},V)$.
But this does not hold true.
We can prove that if $\mathcal{X}$ is an infinite set then 
\[ \left\{f\in C(\mathcal{X},V)\middle|\, \overline{\dim}_{\mathrm{M}}(f(\mathcal{X}), \norm{\cdot}) = \infty \right\} \]
is a residual subset of $C(\mathcal{X},V)$.
This implies that the set (\ref{eq: nice set for PS theorem}) is never residual
in a nontrivial situation (namely if $\mathcal{X}$ is infinite and $\dim\mathcal{X} < \infty$).
It is a very thin set.
So it is more delicate to find an element in (\ref{eq: nice set for PS theorem}).
We note that the set (\ref{eq: nice set for PS theorem}) is however dense; see \S \ref{subsection: warmup: the proof of PS theorem}.

\subsection{Preparations on combinatorial topology}  \label{subsection: preparation on combinatorial topology}

As we promised in \S \ref{subsection: topological and metric mean dimensions},
all simplicial complexes are assumed to have only finitely many vertices.

Let $P$ be a simplicial complex. We denote the set of vertices of $P$ by $\ver(P)$.
For $v\in \ver(P)$ we define the \textbf{open star} $O_P(v)$ as the union of
the open simplices of $P$ one of whose vertex is $v$.
(We declare that $\{v\}$ itself is an open simplex.)
The open star $O_P(v)$ is an open neighborhood of $v$ and 
$\{O_P(v)\}_{v\in \ver(P)}$ is an open cover of $P$.
When vertices $v_0,\dots, v_n$ span a simplex $\Delta$ in $P$, we set 
$O_P(\Delta) = \bigcup_{i=0}^n O_P(v_i)$.

Let $P$ and $Q$ be simplicial complexes.
A map $f:P\to Q$ is said to be \textbf{simplicial} if it satisfies the following two conditions:
\begin{itemize}
   \item For every simplex $\Delta\subset P$, $f(\Delta)$ is a simplex in $Q$.
           (In particular $f(v)\in \ver(Q)$ for every $v\in \ver(P)$.)
   \item If $v_0,\dots, v_n \in \ver(P)$ span a simplex in $P$ then 
           \[  f\left(\sum_{i=0}^n \lambda_i v_i\right) = \sum_{i=0}^n \lambda_i f(v_i), \]
           where $0\leq \lambda_i\leq 1$ and $\sum_{i=0}^n \lambda_i = 1$.
\end{itemize} 

Let $V$ be a vector space over real numbers.
A map $f:P\to V$ is said to be \textbf{linear} if for any $v_0,\dots,v_n \in \ver(P)$ spanning a simplex in $P$ 
\[  f\left(\sum_{i=0}^n \lambda_i v_i\right) = \sum_{i=0}^n \lambda_i f(v_i), \]
  where $0\leq \lambda_i\leq 1$ and $\sum_{i=0}^n \lambda_i = 1$.
We denote the space of linear maps $f:P\to V$ by $\mathrm{Hom}(P,V)$.

\begin{lemma}  \label{lemma: preparations on linear maps}
Let $(V,\norm{\cdot})$ be a Banach space and $P$ a simplicial complex.
  \begin{enumerate}
     \item If $f:P\to V$ is a linear map with $\diam f(P) \leq 2$, then for any $0<\varepsilon \leq 1$
     \[  \#\left(f(P),\norm{\cdot},\varepsilon\right) \leq C(P) \cdot (1/\varepsilon)^{\dim P}, \]
      where $C(P)$ is a positive constant depending only on $\dim P$ and the number of simplices of $P$.
     \item Suppose $V$ is infinite dimensional. 
             Then the set
             \begin{equation} \label{eq: the set of linear embeddings}
                  \{f\in \mathrm{Hom}(P,V)|\, \text{$f$ is injective} \} 
             \end{equation}     
             is dense in $\mathrm{Hom}(P,V)$. (Indeed it is also open; but we do not need this.)
             Here $\mathrm{Hom}(P,V) \cong V^{\ver(P)}$ is endowed with the product topology.
     \item Let $(\mathcal{X},d)$ be a compact metric space and $\varepsilon, \delta >0$.
             Let $\pi:\mathcal{X}\to P$ be a continuous map satisfying $\diam\, \pi^{-1}(O_P(v)) < \varepsilon$ for all $v\in \ver(P)$. 
             Let $f:\mathcal{X}\to V$ be a continuous map 
             such that
             \[  d(x,y) < \varepsilon \Longrightarrow  \norm{f(x)-f(y)} < \delta. \]
             Then there exists a linear map $g:P\to V$ satisfying 
             \[  \norm{f(x)-g(\pi(x))} < \delta  \quad (x\in \mathcal{X}). \]   
             Moreover if $f(\mathcal{X})\subset B_1^\circ (V)$ (the open unit ball) then it can be chosen so that 
             $g(P) \subset B_1^\circ (V)$.
   \end{enumerate}
\end{lemma}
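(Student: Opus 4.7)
The three parts are independent; the main conceptual content is in (2), while (1) and (3) are direct calculations. I will sketch each in turn.

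For part (1), I would work simplex by simplex. For any closed simplex $\Delta \subset P$ of dimension $k$, $f(\Delta)$ lies in the affine span of $f(\ver(\Delta))$, which is an affine subspace of $V$ of dimension at most $k$. Since $\diam f(P) \leq 2$, $f(\Delta)$ is contained in a closed ball of radius $2$ in this at most $k$-dimensional normed space, and by the standard volume argument such a ball can be covered by at most $C_k(1/\varepsilon)^k$ subsets of diameter less than $\varepsilon$, where $C_k$ depends only on $k$. Summing over the simplices of $P$ gives the required bound, with $C(P)$ depending only on $\dim P$ and the number of simplices.

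For part (2), identify $\mathrm{Hom}(P,V)$ with $V^{\ver(P)}$ via $f \mapsto (f(v))_{v \in \ver(P)}$. A standard observation in PL topology is that a linear map $f:P \to V$ is injective as soon as for every subset $S \subset \ver(P)$ of cardinality at most $2\dim P + 2$, the images $\{f(v): v \in S\}$ are affinely independent in $V$: if $f$ were not injective, there would exist distinct open simplices $\sigma, \tau$ of $P$ and a non-trivial affine relation among the images of the vertices of $\sigma \cup \tau$, and this union has at most $2(\dim P + 1)$ elements. For each fixed $S$, the set of tuples failing the affine independence condition is closed, and since $V$ is infinite-dimensional, any affine dependency among finitely many vectors can be destroyed by an arbitrarily small perturbation of one of them (choose a perturbation lying outside the affine span of the remaining ones, which is a proper affine subspace). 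Hence the failure set for each $S$ is nowhere dense, and the intersection of complements over the finitely many relevant $S$ is a dense subset of $\mathrm{Hom}(P,V)$ consisting of injective linear maps. This is the step where the infinite-dimensionality of $V$ is genuinely used.

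For part (3), I would prescribe $g$ on vertices. For each $v \in \ver(P)$ with $\pi^{-1}(O_P(v)) \neq \emptyset$, pick an arbitrary $x_v \in \pi^{-1}(O_P(v))$ and set $g(v) := f(x_v)$; for the remaining vertices, set $g(v)$ arbitrarily (for the second assertion, choose $g(v) \in B_1^\circ(V)$, e.g.\ $g(v) = 0$). Extend $g$ linearly to all of $P$. Given $x \in \mathcal{X}$, write $\pi(x) = \sum_{i=0}^n \lambda_i v_i$ as the barycentric representation in the unique open simplex containing $\pi(x)$; each $v_i$ satisfies $\pi(x) \in O_P(v_i)$, so $x \in \pi^{-1}(O_P(v_i))$, yielding $d(x,x_{v_i}) < \varepsilon$ and hence $\norm{f(x)-g(v_i)} < \delta$. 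Linearity then gives
\[
  \norm{f(x) - g(\pi(x))} \leq \sum_{i=0}^n \lambda_i \norm{f(x) - g(v_i)} < \delta.
\]
If $f(\mathcal{X}) \subset B_1^\circ(V)$, then all values $g(v_i)$ that actually appear in barycentric representations of points $\pi(x)$ lie in $B_1^\circ(V)$ (together with the fall-back choices), so convexity of $B_1^\circ(V)$ ensures $g(P) \subset B_1^\circ(V)$.
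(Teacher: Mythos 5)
Your proofs of (1) and (3) match the paper's. For (1), both reduce to a single simplex and estimate covering numbers in an at-most-$(\dim P)$-dimensional affine span; you invoke a volume/packing argument there, while the paper discretizes the barycentric-coordinate simplex $\Delta$ with a grid of mesh $\varepsilon/(6n)$ — both give $C(P)\,(1/\varepsilon)^{\dim P}$. Your proof of (3) is the paper's proof verbatim: set $g(v)=f(x_v)$ for a chosen $x_v\in\pi^{-1}(O_P(v))$, extend linearly, and estimate via convexity of barycentric coordinates.

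For (2) you take a more elaborate route than the paper. The paper just notes that the set of $f$ for which \emph{all} vertex images $f(v_0),\dots,f(v_n)$ are affinely independent is dense in $\mathrm{Hom}(P,V)\cong V^{\ver(P)}$ when $V$ is infinite dimensional, and that any such $f$ is injective on $P$; no bookkeeping with subsets or Baire-type intersections is required. You instead isolate the sharper obstruction (affine dependence among the images of the $\leq 2\dim P+2$ vertices of $\sigma\cup\tau$ for colliding open simplices $\sigma,\tau$) and intersect finitely many dense open sets. Your criterion is genuinely weaker and would also apply to finite-dimensional $V$ of dimension $\geq 2\dim P+1$, whereas the paper's shortcut is the quickest path available given infinite-dimensionality. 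One small slip in your density argument: perturbing a \emph{single} vector out of the affine span of the others destroys the given dependency but need not make the whole tuple affinely independent, since the remaining vectors could still be dependent among themselves. The fix is routine — perturb inductively, moving $u_i$ out of the (proper, hence nowhere dense) affine span of $u_1,\dots,u_{i-1}$ for each $i$, with each perturbation as small as desired. With that repair your argument for (2) is correct, and the reduction of non-injectivity to a non-trivial affine relation on $\sigma\cup\tau$ is sound (the relation cannot be trivial because the interior barycentric coordinates of $p$ and $q$ are strictly positive and the open simplices are distinct).
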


\begin{proof}
(1) We can assume that $P$ is a simplex (we denote its vertices by $v_0, \dots, v_n$)
and that $f(v_0)=0$ and $\norm{f(v_i)}\leq 2$.
Set 
\[  \Delta = \{(\lambda_1,\dots,\lambda_n)\in [0,1]^n|\, \lambda_1+\dots+\lambda_n\leq 1\}. \]
Then $f(P)$ is covered by the $(\varepsilon/3)$-open balls 
\[  B^\circ_{\varepsilon/3} \left(\lambda_1 f(v_1)+\dots+ \lambda_n f(v_n)\right) \]
where 
\[   (\lambda_1,\dots,\lambda_n)\in \Delta \cap \left(\frac{\varepsilon}{6n}\mathbb{Z}\right)^n.  \]

(2)   Let $\ver(P) = \{v_0,\dots,v_n\}$.
The set (\ref{eq: the set of linear embeddings}) contains 
\[   \left\{f\in \mathrm{Hom}(P,V)\middle|\, \text{$f(v_0), \dots, f(v_n)$ are affinely independent}\right\}, \]
which is dense because $V$ is infinite dimensional.

(3) Let $v\in \ver(P)$.
Pick $x_v\in \pi^{-1}(O_P(v))$ and set $g(v) = f(x_v)$.
If $\pi^{-1}(O_P(v))=\emptyset$ then $g(v)$ may be an arbitrary point in $B_1^\circ(V)$.
We extend $g$ to a linear map $g:P\to V$.
Let $x\in \mathcal{X}$ and $\Delta^\circ$ be the open simplex of $P$ containing $\pi(x)$.      
Let $v_0,\dots,v_n$ be the vertices of $\Delta^\circ$.
Then $\pi(x) = \sum_{i=0}^n \lambda_i v_i$ with $0<\lambda_i\leq 1$ and $\sum_{i=0}^n \lambda_i =1$.
$g(\pi(x)) = \sum_{i=0}^n \lambda_i f(x_{v_i})$.
   
Since $\pi(x) \in O_P(v_i)$, $x\in \pi^{-1}(O_P(v_i))$ and hence $d(x,x_{v_i}) < \varepsilon$.
Then $\norm{f(x)-f(x_{v_i})} < \delta$. It follows that 
\[  \norm{f(x)-g(\pi(x))} \leq \sum_{i=0}^n \lambda_i \norm{f(x)- f(x_{v_i})} < \delta. \]      
If $f(\mathcal{X})\subset B_1^\circ(V)$ then $g(v)\in B_1^\circ (V)$ for all $v\in \ver(P)$ and hence 
$g(P)\subset B_1^\circ(V)$.     
\end{proof}

Let $f:\mathcal{X}\to P$ be a continuous map from a topological space $\mathcal{X}$ to a simplicial complex $P$.
It is said to be \textbf{essential} if for any $v_0,\dots, v_n\in \ver(P)$ spanning a simplex in $P$ 
\[  f^{-1}\left(O_P(v_0)\cap \dots \cap O_P(v_n)\right) \neq \emptyset. \]

\begin{lemma} \label{lemma: essential map}
Let $f:\mathcal{X}\to P$ be a continuous map from a topological space $\mathcal{X}$ to a simplicial complex $P$.
There exists a subcomplex $P'\subset P$ such that $f(\mathcal{X})\subset P'$ and $f:\mathcal{X}\to P'$ is 
essential.
\end{lemma}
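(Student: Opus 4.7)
The plan is to take $P'$ to be a \emph{minimal} subcomplex of $P$ containing $f(\mathcal{X})$. Such a minimal element exists because $P$ has only finitely many simplices, hence only finitely many subcomplexes, and at least one of them (namely $P$ itself) contains $f(\mathcal{X})$.

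I would then verify essentiality of $f:\mathcal{X}\to P'$ by contradiction. Suppose $v_0,\dots,v_n \in \ver(P')$ span a simplex $\Delta$ in $P'$ but $f^{-1}\!\left(O_{P'}(v_0)\cap\cdots\cap O_{P'}(v_n)\right) = \emptyset$. The key structural observation is that an open simplex $\sigma^\circ$ of $P'$ is contained in $O_{P'}(v_0)\cap\cdots\cap O_{P'}(v_n)$ if and only if each $v_i$ is a vertex of $\sigma$, i.e.\ if and only if $\Delta$ is a face of $\sigma$. Consequently the complement in $P'$ of this open set, namely
\[
  P'' \;:=\; \bigcup\bigl\{\,\overline{\sigma}\,:\,\sigma\text{ a simplex of }P'\text{ not having }\Delta\text{ as a face}\bigr\},
\]
is itself a subcomplex of $P'$: if a simplex $\sigma$ does not contain $\Delta$ as a face, then no face of $\sigma$ does either. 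Moreover $P''\subsetneq P'$, because $\Delta$ itself lies in $P'$ but not in $P''$.

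The assumed emptiness of the preimage forces $f(\mathcal{X}) \subset P''$, contradicting the minimality of $P'$. Hence $f:\mathcal{X}\to P'$ is essential.

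This argument is essentially combinatorial and I do not anticipate a serious obstacle; the only point requiring mild care is the identification of the complement of $O_{P'}(v_0)\cap\cdots\cap O_{P'}(v_n)$ inside $P'$ with the subcomplex $P''$ described above, which rests on the fact that every point of $|P'|$ lies in a \emph{unique} open simplex. Once this is in hand, the proof is a one-line contradiction to minimality.
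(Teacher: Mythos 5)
Your proposal is correct and takes essentially the same approach as the paper: take the minimal subcomplex $P'$ containing $f(\mathcal{X})$ and argue that non-essentiality would contradict minimality. The paper merely asserts that essentiality is equivalent to the image not being contained in any proper subcomplex and calls this ``easy to check''; your proof of the relevant direction (via the subcomplex $P''$ of simplices not having $\Delta$ as a face) is the correct way to fill in that gap.
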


\begin{proof}
It is easy to check that $f:\mathcal{X}\to P$ is essential if and only if $f(\mathcal{X})\not \subset P'$ for any proper subcomplex 
$P'\subset P$. 
Then the statement is trivial; just take the minimal subcomplex $P'\subset P$ containing $f(\mathcal{X})$.
\end{proof}

The next lemma is one of the central ingredients of the proof Theorem \ref{theorem: existence of nice metrics}.
It has a spirit similar to Lemma \ref{lemma: preparations on linear maps} (3), though its
statement is rather technical (see Corollary \ref{corollary: approximately commuting simplicial maps baby version}
for a simplified version).
Its rough idea is as follows: Let $\mathcal{X}$ be a topological space and $P, Q$ simplicial complexes.
Let $\pi:\mathcal{X} \to P$ and $q:\mathcal{X}\to Q$ be continuous maps.
We would like to formulate a condition which guarantees the existence of a simplicial map $h:P\to Q$
such that 
$h\circ \pi$ is approximately equal to $q$.

For two open covers $\mathcal{U}$ and $\mathcal{V}$ of $\mathcal{X}$, we denote by $\mathcal{U}\prec \mathcal{V}$
if $\mathcal{V}$ is a refinement of $\mathcal{U}$, namely for every $V\in \mathcal{V}$ there exists $U\in \mathcal{U}$
satisfying $V\subset U$.

\begin{lemma} \label{lemma: approximately commuting simplicial maps}
Let $\mathcal{X}$ be a topological space and $P, Q$ simplicial complexes.
Let $\pi:\mathcal{X}\to P$ and $q_i: \mathcal{X}\to Q$, $1\leq i\leq N$, be continuous maps.
We suppose that $\pi:\mathcal{X}\to P$ is essential and satisfies
   \[   \left\{q_i^{-1}\left(O_Q(w)\right)\right\}_{w\in \ver(Q)} \prec \left\{\pi^{-1}\left(O_P(v)\right)\right\}_{v\in \ver(P)} \]
 for every $1\leq i\leq N$. (Here both sides are open covers of $\mathcal{X}$.)
Then there exist simplicial maps $h_i: P\to Q$, $1\leq i\leq N$, such that 
\begin{enumerate}
   \item For every $1\leq i\leq N$ and $x\in \mathcal{X}$ the two points $q_i(x)$ and $h_i\circ \pi(x)$ belong to the same simplex of $Q$.
   \item Let $1\leq i\leq N$. Let $\Delta\subset P$ be a simplex and $Q'\subset Q$ a subcomplex.
   If $\pi^{-1}(O_P(\Delta))\subset q_i^{-1}\left(Q'\right)$ 
   then $h_i(\Delta)\subset Q'$.
   \item For $1\leq i <j\leq N$ and a simplex $\Delta\subset P$, if $q_i=q_j$ on $\pi^{-1}(O_P(\Delta))$ then 
           $h_i=h_j$ on $\Delta$.
\end{enumerate}
\end{lemma}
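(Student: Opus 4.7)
The plan is to construct each $h_i$ vertex-by-vertex from the refinement data and then extend simplicially, with a uniform tie-breaking rule to secure property (3). Fix once and for all a total order on $\ver(Q)$. For each $1\le i\le N$ and each $v\in \ver(P)$, the hypothesis $\{q_i^{-1}(O_Q(w))\}_{w}\succ\{\pi^{-1}(O_P(v))\}_{v}$ produces some $w\in\ver(Q)$ with $\pi^{-1}(O_P(v))\subset q_i^{-1}(O_Q(w))$; define $h_i(v)$ to be the least such $w$ in the fixed order. This rule depends only on the restriction of $q_i$ to $\pi^{-1}(O_P(v))$, which will make property (3) automatic.

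The first substantive step is to check that $h_i$ extends to a simplicial map. Let $v_0,\dots,v_n$ span a simplex $\Delta$ of $P$. Since $\pi$ is essential, $\pi^{-1}\bigl(O_P(v_0)\cap\cdots\cap O_P(v_n)\bigr)$ is nonempty; pick $x$ in it. By the defining inclusions, $q_i(x)\in O_Q(h_i(v_j))$ for every $j$, so every $h_i(v_j)$ is a vertex of the unique open simplex of $Q$ containing $q_i(x)$. Hence $h_i(v_0),\dots,h_i(v_n)$ (with repetitions allowed) span a face of that simplex, giving a well-defined simplicial extension $h_i:P\to Q$. The same observation immediately yields (1): given any $x\in\mathcal{X}$, take the open simplex of $P$ containing $\pi(x)$, spanned by some $v_0,\dots,v_n$ with barycentric coordinates $\lambda_j>0$; then $q_i(x)$ lies in an open simplex having all $h_i(v_j)$ as vertices, and the convex combination $h_i\circ\pi(x)=\sum\lambda_j h_i(v_j)$ lies in the corresponding closed simplex of $Q$.

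For property (2), suppose $\pi^{-1}(O_P(\Delta))\subset q_i^{-1}(Q')$ and let $v$ be a vertex of $\Delta$. Then $\pi^{-1}(O_P(v))\subset \pi^{-1}(O_P(\Delta))$, so by essentiality there is a nonempty intersection $O_Q(h_i(v))\cap Q'$; any point in it lies in some open simplex of $Q$ whose closure is contained in $Q'$ (subcomplexes are closed under faces), and that simplex has $h_i(v)$ as a vertex, so $h_i(v)\in Q'$. Since this holds for every vertex of $\Delta$, $h_i(\Delta)\subset Q'$. For property (3), if $q_i=q_j$ on $\pi^{-1}(O_P(\Delta))$ and $v$ is a vertex of $\Delta$, then $q_i=q_j$ on $\pi^{-1}(O_P(v))$, so the set of admissible $w\in\ver(Q)$ is the same for $h_i(v)$ and $h_j(v)$, and the least-element rule forces $h_i(v)=h_j(v)$; linearity of simplicial extension transports this equality to all of $\Delta$.

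The only nontrivial topological input is the essentiality of $\pi$, used in the second step to simultaneously realize every face relation in $P$ by an actual point of $\mathcal{X}$; the rest is combinatorial bookkeeping with open stars. The most delicate point to get right is (2)—one must argue that a single point in $O_Q(h_i(v))\cap Q'$ already forces the vertex $h_i(v)$ itself into $Q'$, which rests on the fact that $Q'$ is a subcomplex rather than merely a closed subset.
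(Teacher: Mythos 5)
Your overall approach is the same as the paper's: pick a vertex image $h_i(v)$ so that $\pi^{-1}(O_P(v))\subset q_i^{-1}(O_Q(h_i(v)))$, implement the consistency needed for (3) by a fixed tie-breaking rule, and extend simplicially using essentiality. That is a clean way of encoding the paper's "we can choose" step, and your arguments for the simplicial extension, for (1), and for (3) are correct.

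However, your proof of (2) has a genuine gap at the final sentence, "Since this holds for every vertex of $\Delta$, $h_i(\Delta)\subset Q'$." You have only shown that each vertex $h_i(v_j)$ lies in $Q'$; this does \emph{not} imply that the simplex $h_i(\Delta)$ spanned by those vertices lies in $Q'$. A subcomplex can contain all the vertices of a simplex without containing the simplex itself (think of $Q$ a filled triangle and $Q'$ its boundary $1$-skeleton: all three vertices lie in $Q'$, but the $2$-face does not). To close the gap you must apply essentiality to the whole simplex $\Delta$, not vertex by vertex: pick $x\in\pi^{-1}\bigl(O_P(v_0)\cap\cdots\cap O_P(v_n)\bigr)\neq\emptyset$. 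Since $\pi^{-1}(O_P(\Delta))\subset q_i^{-1}(Q')$, the point $q_i(x)$ lies in $Q'\cap O_Q(h_i(v_0))\cap\cdots\cap O_Q(h_i(v_n))$; the open simplex of $Q$ containing $q_i(x)$ has all the $h_i(v_j)$ as vertices and, because $q_i(x)\in Q'$ and $Q'$ is a subcomplex, is a simplex of $Q'$. Hence the face spanned by $h_i(v_0),\dots,h_i(v_n)$ lies in $Q'$, i.e.\ $h_i(\Delta)\subset Q'$. The paper reaches the same conclusion by first proving an intermediate claim identifying $Q'\cap O_Q(h_i(v))$ with $O_{Q'}(h_i(v))$ and then using essentiality once for the full intersection; either route works, but the vertex-by-vertex argument alone does not.
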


\begin{proof}
Let $v\in \ver(P)$.
We can choose $h_i(v)\in \ver(Q)$, $1\leq i\leq N$, satisfying 
\begin{itemize}
   \item $\pi^{-1}(O_P(v)) \subset q_i^{-1}\left(O_Q(h_i(v))\right)$.
   \item If $q_i=q_j$ on $\pi^{-1}(O_P(v))$ then $h_i(v)=h_j(v)$.
\end{itemize}

Suppose $v_0,\dots,v_n\in \ver(P)$ span a simplex in $P$.
Since $\pi$ is essential, 
\begin{equation*}
   \begin{split}
       \emptyset\neq &\pi^{-1}\left(O_P(v_0)\cap \dots \cap O_P(v_n)\right)  \\
       \subset &
       q_i^{-1}\left(O_Q(h_i(v_0)) \cap \dots \cap O_Q(h_i(v_n))\right). 
   \end{split}
\end{equation*}       
In particular $O_Q(h_i(v_0)) \cap \dots \cap O_Q(h_i(v_n)) \neq \emptyset$. 
So $h_i(v_0),\dots, h_i(v_n)$ span a simplex in $Q$.
This implies that we can extend $h_i$ to a simplicial map from $P$ to $Q$.
The condition (3) immediately follows from the choices of $h_i(v)$.

For the proof of (1),
take $x\in \mathcal{X}$ and let $v_0,\dots,v_n \in \ver(P)$ be the vertices of the open simplex of $P$
containing $\pi(x)$.
Then $h_i(\pi(x))$ belongs to the simplex spanned by $h_i(v_0),\dots, h_i(v_n)$.
On the other hand 
\begin{equation*}
   \begin{split}
     x\in & \pi^{-1}\left(O_P(v_0)\cap \dots \cap O_P(v_n)\right) \\
     \subset & q_i^{-1}\left(O_Q(h_i(v_0))\cap \dots\cap O_Q(h_i(v_n))\right). 
   \end{split}
\end{equation*}     
Hence $q_i(x)\in O_Q(h_i(v_0))\cap \dots\cap O_Q(h_i(v_n))$ and there exists a simplex $\Delta\subset Q$
containing $q_i(x)$ and $h_i(v_0),\dots, h_i(v_n)$. 
Then $\Delta$ contains both $q_i(x)$ and 
$h_i(\pi(x))$.

For the proof of (2), take a subcomplex $Q'\subset Q$. Then 
\begin{claim}
  \item Let $v\in \ver(P)$ and $1\leq i\leq N$. If $\pi^{-1}\left(O_P(v)\right)\subset q_i^{-1}(Q')$ then 
          $h_i(v)\in Q'$ and $\pi^{-1}\left(O_P(v)\right) \subset q_i^{-1}\left(O_{Q'}(h_i(v))\right)$.
\end{claim}
\begin{proof}
\[  q_i\left(\pi^{-1}\left(O_P(v)\right)\right) \subset Q' \cap O_Q(h_i(v)). \]
If $h_i(v)\not \in Q'$ then the right-hand side is empty. 
So $h_i(v) \in Q'$ and hence $Q' \cap O_Q(h_i(v)) = O_{Q'}(h_i(v))$.
\end{proof}

Let $\Delta\subset P$ be a simplex with vertices $v_0,\dots,v_n$.
If $\pi^{-1}\left(O_P(\Delta)\right)\subset q_i^{-1}(Q')$ then $h_i(v_0), \dots, h_i(v_n)\in Q'$ and 
(since $\pi$ is essential)
\begin{equation*}
   \begin{split}
   \emptyset \neq &\pi^{-1}\left(O_P(v_0)\cap \dots \cap O_P(v_n)\right)  \\
   \subset & 
    q_i^{-1}\left(O_{Q'}(h_i(v_0)) \cap \dots \cap O_{Q'}(h_i(v_n))\right). 
   \end{split}
\end{equation*}   
In particular $O_{Q'}(h_i(v_0)) \cap \dots \cap O_{Q'}(h_i(v_n)) \neq \emptyset$.
So $h_i(v_0),\dots, h_i(v_n)$ span a simplex in $Q'$.
Then $h_i(\Delta)\subset Q'$.
\end{proof}

Letting $N=1$ in Lemma \ref{lemma: approximately commuting simplicial maps}, 
we get the following corollary. This is used in \S \ref{subsection: warmup: the proof of PS theorem}.

\begin{corollary}  \label{corollary: approximately commuting simplicial maps baby version}
Let $\mathcal{X}$ be a topological space and $P, Q$ simplicial complexes.
Let $\pi:\mathcal{X}\to P$ and $q:\mathcal{X}\to Q$ be continuous maps.
If $\pi$ is essential and 
\[  \left\{q^{-1}\left(O_Q(w)\right)\right\}_{w\in \ver(Q)} \prec \left\{\pi^{-1}\left(O_P(v)\right)\right\}_{v\in \ver(P)}, \]
then there exists a simplicial map $h:P\to Q$ such that for every $x\in \mathcal{X}$
the two points $q(x)$ and $h(\pi(x))$ belong to the same simplex in $Q$.
\end{corollary}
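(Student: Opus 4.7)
The plan is essentially to observe that this corollary is the $N=1$ case of Lemma \ref{lemma: approximately commuting simplicial maps}, so the proof reduces to a direct citation. Taking $N=1$ with $q_1 := q$, the refinement hypothesis $\{q^{-1}(O_Q(w))\} \prec \{\pi^{-1}(O_P(v))\}$ together with essentiality of $\pi$ is exactly the hypothesis of the lemma, and conclusion (1) of the lemma yields a simplicial map $h := h_1 : P \to Q$ such that $q(x)$ and $h(\pi(x))$ lie in a common simplex of $Q$ for every $x \in \mathcal{X}$. Conclusions (2) and (3) of the lemma are not needed here; in particular (3) is vacuous since there is only one map.

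If one instead prefers a self-contained argument, one imitates the proof of the lemma specialized to $N=1$, as follows. First, I would use the refinement hypothesis to pick, for every vertex $v \in \ver(P)$, a vertex $h(v) \in \ver(Q)$ satisfying $\pi^{-1}(O_P(v)) \subset q^{-1}(O_Q(h(v)))$. Then I would check that this vertex map extends to a simplicial map $h : P \to Q$: if $v_0,\dots,v_n$ span a simplex of $P$, essentiality of $\pi$ ensures $\pi^{-1}(O_P(v_0)\cap\dots\cap O_P(v_n)) \neq \emptyset$, and this set is contained in $q^{-1}(O_Q(h(v_0))\cap\dots\cap O_Q(h(v_n)))$, so the star intersection is non-empty, forcing $h(v_0),\dots,h(v_n)$ to span a simplex of $Q$.

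Finally, to verify the conclusion, take any $x\in\mathcal{X}$ and let $v_0,\dots,v_n$ be the vertices of the open simplex of $P$ containing $\pi(x)$. Then $h(\pi(x))$ lies in the closed simplex spanned by $h(v_0),\dots,h(v_n)$. On the other hand, $x\in \pi^{-1}(O_P(v_i))\subset q^{-1}(O_Q(h(v_i)))$ for each $i$, so $q(x)$ lies in every $O_Q(h(v_i))$; recalling that $y\in O_Q(u)$ precisely when $u$ is a vertex of the open simplex containing $y$, this means the open simplex of $Q$ containing $q(x)$ has $h(v_0),\dots,h(v_n)$ among its vertices. Hence that closed simplex contains both $q(x)$ and $h(\pi(x))$, as required.

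There is really no obstacle to speak of, since the lemma does all the work; the only ``content'' is recognizing that essentiality of $\pi$ plus the star refinement condition is precisely what is needed to turn a pointwise choice of target vertices into a genuine simplicial map, and this is already encapsulated in the lemma.
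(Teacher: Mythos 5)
Your proposal is correct and matches the paper exactly: the paper obtains this corollary by setting $N=1$ in Lemma \ref{lemma: approximately commuting simplicial maps}, which is precisely your primary argument. Your self-contained alternative simply reproduces the lemma's proof specialized to $N=1$, so it is the same route as well.
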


We need to introduce a notation for Lebesgue number.
Let $(\mathcal{X},d)$ be a compact metric space and $\mathcal{U}$ its open cover.
We denote the \textbf{Lebesgue number} of $\mathcal{U}$
 by $LN(\mathcal{X},d,\mathcal{U})$, namely it is 
the supremum of $\varepsilon>0$ such that if a subset $A\subset \mathcal{X}$ satisfies $\diam A < \varepsilon$ then
there exists $U\in \mathcal{U}$ containing $A$.

\subsection{Warmup: the proof of Pontrjagin--Schnirelmann's theorem} \label{subsection: warmup: the proof of PS theorem}

Here we prove Pontrjagin--Schnirelmann's theorem (Theorem \ref{theorem: PS theorem}) 
by using the preparations of \S \ref{subsection: preparation on combinatorial topology}.
This is a toy model of the proof of Theorem \ref{theorem: existence of nice metrics}.
(This subsection is logically independent of the proof of Theorem \ref{theorem: existence of nice metrics}.)
Our proof of Theorem \ref{theorem: PS theorem} roughly follows the line of ideas of \cite{Pontrjagin--Schnirelmann}.
Our purpose here is to help readers to get acquainted with how to use lemmas in the previous subsection.
Theorem \ref{theorem: PS theorem} follows from

\begin{theorem} \label{theorem: variant of PS theorem}
Let $(V,\norm{\cdot})$ be an infinite dimensional Banach space and $\mathcal{X}$ a compact metrizable space.
For a dense subset of $f$ in $C(\mathcal{X},V)$ (the space of continuous maps from $\mathcal{X}$ to $V$ endowed with the norm topology),
$f$ is a topological embedding and satisfies 
\[  \overline{\dim}_{\mathrm{M}}(f(\mathcal{X}),\norm{\cdot}) = \dim \mathcal{X}. \]
\end{theorem}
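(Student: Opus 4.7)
The plan is to construct $f$ as a uniform limit of maps $f_k := g_k \circ \pi_k$, where $\pi_k : \mathcal{X} \to P_k$ is an essential continuous map into a simplicial complex of dimension $n := \dim \mathcal{X}$ (assumed finite; the case $n = \infty$ is trivial since Minkowski dimension dominates topological dimension) and $g_k : P_k \to V$ is an injective linear map. The basic iterative step is supplied by Lemmas \ref{lemma: essential map} and \ref{lemma: preparations on linear maps}: given any continuous $\phi : \mathcal{X} \to V$ and any scale $\varepsilon > 0$ with $d(x,y) < \varepsilon \Rightarrow \norm{\phi(x) - \phi(y)} < \eta$, one builds an essential $\pi : \mathcal{X} \to P$ with $\dim P = n$ and $\diam \pi^{-1}(O_P(v)) < \varepsilon$ (via the nerve of a mesh-$\varepsilon$, order-$(n+1)$ open cover, trimmed by Lemma \ref{lemma: essential map}), then from part (3) gets a linear $g : P \to V$ with $\norm{g \circ \pi - \phi}_\infty < \eta$, which part (2) lets us perturb to be injective at negligible cost.

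To control combinatorial size, I fix at the outset an inverse system of nerves provided by classical dimension theory: nerves $P_k$ with simplicial refinement maps $s_{k,k+1} : P_{k+1} \to P_k$ compatible with $\pi_{k+1}$ and $\pi_k$, arranged by iterated subdivision of the initial nerve so that $|\ver(P_k)| \leq M(n)^{k} |\ver(P_0)|$ for a constant $M(n)$ depending only on $n$. Given $f_\ast \in C(\mathcal{X},V)$ and $\delta > 0$, I define $g_k$ inductively with $\norm{f_0 - f_\ast}_\infty < \delta/2$ and $\norm{f_k - f_{k-1}}_\infty < \eta_k$ for $k \ge 1$. After step $k$ I record the injectivity modulus
\begin{equation*}
\alpha_k := \min\{\norm{g_k(p) - g_k(q)} : p, q \in P_k \text{ have disjoint open stars in } P_k\} > 0,
\end{equation*}
together with $C(P_k)$ (from Lemma \ref{lemma: preparations on linear maps}(1)) and $D_k := \diam g_k(P_k)$, and then choose all subsequent $\eta_j$ so small that, writing $\rho_k := \sum_{j > k} \eta_j$, one has $\sum_{j \geq 1} \eta_j < \delta$, $\rho_k < \alpha_k/4$, and $\log(1/\rho_k) \ge k^2 \bigl(\log C(P_k) + n \log(6 D_k)\bigr)$. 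This reactive choice is legitimate because only finitely many constraints are live at step $k$.

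Uniform convergence gives $f \in C(\mathcal{X}, V)$ with $\norm{f - f_\ast}_\infty < \delta$. For injectivity, given distinct $x, y$, pick $k$ with $\varepsilon_k < d(x,y)$; then $\pi_k(x), \pi_k(y)$ have disjoint open stars in $P_k$ (since $\diam \pi_k^{-1}(O_{P_k}(v)) < \varepsilon_k$), hence $\norm{f_k(x) - f_k(y)} \ge \alpha_k$, and therefore $\norm{f(x) - f(y)} \ge \alpha_k - 2 \rho_k \ge \alpha_k/2 > 0$, so $f$ is a topological embedding by compactness of $\mathcal{X}$.

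For the Minkowski bound, $f(\mathcal{X})$ lies within $\rho_k$ of $g_k(P_k)$, so for $\varepsilon \in (3 \rho_k, 3 \rho_{k-1}]$ an $(\varepsilon/3)$-cover of $g_k(P_k)$ fattens to an $\varepsilon$-cover of $f(\mathcal{X})$, and Lemma \ref{lemma: preparations on linear maps}(1) rescaled by $D_k$ gives $\log \#(f(\mathcal{X}), \norm{\cdot}, \varepsilon) \leq n \log(1/\varepsilon) + \log C(P_k) + n \log(6 D_k)$. The geometric growth $\log C(P_k) = O(k)$ together with the lower bound $\log(1/\varepsilon) \ge \log(1/(3 \rho_{k-1})) \geq (k-1)^2 \bigl(\log C(P_{k-1}) + n \log(6 D_{k-1})\bigr)$ forces the correction term to be $O(1/k^2) \cdot \log(1/\varepsilon)$, so the ratio tends to $n$ as $\varepsilon \to 0$ and $\overline{\dim}_{\mathrm{M}}(f(\mathcal{X}), \norm{\cdot}) \leq n$; the reverse inequality is automatic since $f$ is a homeomorphism onto its image. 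The main obstacle is that both $\alpha_k$ and $C(P_k)$ become known only after $g_k$ has been built, so the tail parameters $\eta_j$ for $j > k$ must be tuned reactively after each step; this is exactly why the set of good $f$ is dense but not residual, and a naive Baire-category approach is unavailable (cf.\ \S \ref{subsection: background: PS theorem}).
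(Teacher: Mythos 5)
Your scaffolding (build $f$ as a uniform limit of $f_k = g_k\circ\pi_k$, track the tail sum $\rho_k$, use an injectivity modulus $\alpha_k$, and switch covering sets on dyadic scale ranges) matches the paper's strategy, and the injectivity part of your argument is fine. But there is a genuine gap in the Minkowski-dimension estimate, and it is precisely the gap that the paper's Corollary~\ref{corollary: approximately commuting simplicial maps baby version} exists to close.

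You cover $f(\mathcal{X})$ at scale $\varepsilon\in(3\rho_k,3\rho_{k-1}]$ by $g_k(P_k)$ directly, getting the correction term $\log C(P_k)+n\log(6D_k)$, and you need this to be $o\bigl(\log(1/\rho_{k-1})\bigr)$. That requires control on $\log C(P_k)$, which grows like $\log\bigl(\text{number of simplices of }P_k\bigr)\gtrsim\log\#(\mathcal{X},d,\varepsilon_k)$, where $\varepsilon_k$ is the mesh of $\pi_k$. But Lemma~\ref{lemma: preparations on linear maps}(3) forces $\varepsilon_k$ to be small enough that the modulus of continuity of $f_{k-1}$ at scale $\varepsilon_k$ is below $\eta_k\le\rho_{k-1}$; in any situation where the covering numbers of $(\mathcal{X},d)$ grow polynomially — a Cantor set of positive Minkowski dimension already suffices — this forces $\log C(P_k)\gtrsim\log(1/\rho_{k-1})$, so the correction is of the same order as $\log(1/\varepsilon)$, not smaller. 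The claimed escape route, "$|\ver(P_k)|\le M(n)^k|\ver(P_0)|$ by iterated subdivision of the initial nerve," is not available: subdividing $P_0$ does not shrink $\diam\pi^{-1}(O_P(v))$ in $\mathcal{X}$ (you are re-triangulating the same target, not refining the cover of $\mathcal{X}$), and a genuinely finer essential map needs a genuinely finer open cover, whose cardinality is governed by $\#(\mathcal{X},d,\cdot)$, which is not geometric in general. Your own reactive constraint $\log(1/\rho_k)\ge k^2\log C(P_k)$ is consistent at the lower end $\varepsilon\approx 3\rho_k$, but at the upper end $\varepsilon\approx 3\rho_{k-1}$ it compares the yet-to-be-determined $\log C(P_k)$ against the already-frozen $\log(1/\rho_{k-1})$, and the loop does not close.

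The missing idea is the paper's factoring step: one first builds, via Corollary~\ref{corollary: approximately commuting simplicial maps baby version} (resting on Lemma~\ref{lemma: approximately commuting simplicial maps}), a simplicial map $h:P_{n+1}\to P_n$ such that $\pi_n$ and $h\circ\pi_{n+1}$ land in the same simplex, sets $\tilde g_{n+1}=g_n\circ h$, and observes that $\tilde g_{n+1}(P_{n+1})\subset g_n(P_n)$, so the covering-number bound at all scales below $\varepsilon_n$ is \emph{inherited} from $g_n$, irrespective of how big $P_{n+1}$ has turned out to be. Only then is $\varepsilon_{n+1}$ chosen (small relative to the now-known $C(P_{n+1})$), and only then is $\tilde g_{n+1}$ perturbed by less than $\varepsilon_{n+1}/4$ to an injective $g_{n+1}$. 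This two-tier bound — inherited for $\varepsilon\in[\varepsilon_{n+1},\varepsilon_n)$, fresh via $C(P_{n+1})$ for $\varepsilon<\varepsilon_{n+1}$ — is exactly what removes the circular dependence between $\rho_{k-1}$ and $C(P_k)$; without it, the correction term cannot be made uniformly $o\bigl(\log(1/\varepsilon)\bigr)$.
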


\begin{proof}
We can assume that $D:=\dim \mathcal{X}<\infty$.
Fix a metric $d$ on $\mathcal{X}$ and take arbitrary $f\in C(\mathcal{X},V)$ and $\eta>0$.
We want to construct a topological embedding $f':\mathcal{X}\to V$ satisfying 
$\norm{f(x)-f'(x)} < \eta$ and $\overline{\dim}_{\mathrm{M}}(f'(\mathcal{X}), \norm{\cdot}) = D$.
(It is enough to prove $\overline{\dim}_{\mathrm{M}}(f(\mathcal{X}),\norm{\cdot}) \leq D$ because 
Minkowski dimension always dominates topological dimension.)
We may assume that $f(\mathcal{X})$ is contained in the open unit ball $B^\circ_1(V)$.
We will inductively construct the following data $(n\geq 1)$:

\begin{data}   \label{data: PS theorem}
  \begin{enumerate}
      \item Positive numbers $\varepsilon_n$ and $\delta_n$ with $\varepsilon_{n+1}<\varepsilon_n/2$,
              $\delta_{n+1}<\delta_n/2$ and $\delta_1<\eta/2$.
      \item A $(1/n)$-embedding $\pi_n: (\mathcal{X},d)\to P_n$ such that $P_n$ is a simplicial complex of dimension $\leq D$.
      \item A linear embedding $g_n: P_n\to B_1^\circ(V)$.
  \end{enumerate}
\end{data}

We assume the following conditions:

\begin{condition} \label{condition: PS theorem}
  \begin{enumerate}
     \item 
     \[   \#\left(g_n(P_n),\norm{\cdot},\varepsilon\right) < \begin{cases}
             \left(\frac{2}{\varepsilon}\right)^{D+\frac{1}{n-1}}  
              \quad (\varepsilon <\varepsilon_{n-1})  \\
             \left(\frac{1}{\varepsilon}\right)^{D+\frac{1}{n}}  \quad (\varepsilon < \varepsilon_n).
             \end{cases}    \]
      Here the former condition is empty for $n=1$.
      \item Set $f_n = g_n\circ \pi_n : \mathcal{X}\to V$. 
             If a continuous map $f':\mathcal{X}\to V$ satisfies $\norm{f'(x)-f_n(x)} < \delta_n$ then $f'$ is a $(1/n)$-embedding 
             with respect to $d$.
      \item 
      \[   \norm{f_1(x)-f(x)} < \frac{\eta}{2}, \]
      \[   \norm{f_n(x)-f_{n+1}(x)} < \min\left(\frac{\varepsilon_n}{8}, \frac{\delta_n}{2}\right). \]       
  \end{enumerate}
\end{condition}

Suppose we have constructed the above data.
Then we can define $f'\in C(\mathcal{X},V)$ by $f'(x) = \lim_{n\to \infty} f_n(x)$.
It follows from Condition \ref{condition: PS theorem} (3) that $\norm{f'(x)-f_n(x)} < \delta_n$ for all $n\geq 1$. 
So by Condition \ref{condition: PS theorem} (2) $f'$ is a $(1/n)$-embedding 
for all $n\geq 1$, which implies that $f'$ is a topological embedding.
It also satisfies $\norm{f'(x)-f(x)} < \eta$.

We want to prove $\overline{\dim}_{\mathrm{M}} \left(f'(\mathcal{X}), \norm{\cdot}\right)\leq D$.
Let $0<\varepsilon <\varepsilon_1$. 
Take $n> 1$ with $\varepsilon_n \leq \varepsilon < \varepsilon_{n-1}$.
It follows from Condition \ref{condition: PS theorem} (3) that 
$\norm{f'(x)-f_n(x)} < \varepsilon_n/4$. Hence 
\begin{equation*}
    \begin{split}
      \#\left(f'(\mathcal{X}), \norm{\cdot},\varepsilon\right) 
      & \leq \#\left(f_n(\mathcal{X}), \norm{\cdot},\varepsilon -\frac{\varepsilon_n}{2}\right)   \\
      & \leq \#\left(g_n(P_n),\norm{\cdot},\varepsilon-\frac{\varepsilon_n}{2}\right)  \quad 
         (\text{by $f_n(\mathcal{X}) \subset g_n(P_n)$}) \\
      & < \left(\frac{2}{\varepsilon-\frac{\varepsilon_n}{2}}\right)^{D+\frac{1}{n-1}} \quad 
         (\text{by Condition \ref{condition: PS theorem} (1)}) \\
      & \leq \left(\frac{4}{\varepsilon}\right)^{D+ \frac{1}{n-1}} \quad 
         (\text{by $\varepsilon \geq \varepsilon_n$}).   
    \end{split}
\end{equation*}
Since $n\to \infty$ as $\varepsilon \to 0$, this shows
$\overline{\dim}_{\mathrm{M}}\left(f'(\mathcal{X}),\norm{\cdot}\right) \leq D$.

Now we start to construct the data.
We choose $0<\tau_1<1$ so that 
\[  d(x,y) < \tau_1 \Longrightarrow \norm{f(x)-f(y)} < \eta/2.  \]
Let $\pi_1:(\mathcal{X},d) \to P_1$ be a $\tau_1$-embedding in a simplicial complex $P_1$ with $\dim P_1\leq D$.
(Since $D = \dim \mathcal{X} = \lim_{\varepsilon \to 0} \widim_\varepsilon (\mathcal{X},d)$, we can find such a map.)
By subdividing $P_1$ sufficiently fine, we can assume $\diam \, \pi_1^{-1}(O_{P_1}(v)) < \tau_1$ for all 
$v\in \ver(P_1)$.
Then by Lemma \ref{lemma: preparations on linear maps} (3)
we can find a linear map $\tilde{g}_1:P_1\to B_1^\circ(V)$ satisfying
$\norm{\tilde{g}_1(\pi_1(x))-f(x)} < \eta/2$.
Since linear embeddings are dense in $\mathrm{Hom}(P_1,V)$ (Lemma \ref{lemma: preparations on linear maps} (2)),
we can also find a linear embedding $g_1:P_1\to B_1^\circ(V)$ satisfying 
$\norm{g_1(\pi_1(x))-f(x)} < \eta/2$.
By Lemma \ref{lemma: preparations on linear maps} (1), we can find $\varepsilon_1>0$ satisfying 
Condition \ref{condition: PS theorem} (1).
The map $f_1 = g_1\circ \pi_1$ is a $1$-embedding and ``$1$-embedding'' is an open condition.
So there exists $\delta_1>0$ such that Condition \ref{condition: PS theorem} (2) holds true.
This finishes the construction for $n=1$.

Suppose that we have already done the construction for the $n$-th step.
We try to construct the data for the $(n+1)$-th step.
We subdivide $P_n$ sufficiently fine so that every simplex $\Delta\subset P_n$ satisfies 
$\diam \left(g_n(\Delta), \norm{\cdot}\right) < \min(\varepsilon_n/8,\delta_n/2)$.
We take $0<\tau_{n+1}<1/(n+1)$ with 
\[  \tau_{n+1} < LN\left(\mathcal{X}, d, \{\pi_n^{-1}(O_{P_n}(v))\}_{v\in \ver(P_n)}\right). \]
Take a $\tau_{n+1}$-embedding $\pi_{n+1}:(\mathcal{X},d)\to P_{n+1}$ with a simplicial complex $P_{n+1}$
of dimension $\leq D$.
By subdividing $P_{n+1}$ sufficiently fine, we can assume 
$\diam \left(\pi_{n+1}^{-1}(O_{P_{n+1}}(v))\right) < \tau_{n+1}$ for all $v\in \ver(P_{n+1})$.
Moreover, by replacing $P_{n+1}$ with a subcomplex (if necessarily), we can assume that 
$\pi_{n+1}:\mathcal{X}\to P_{n+1}$ is essential (Lemma \ref{lemma: essential map}).
The open cover $\left\{\pi_{n+1}^{-1}\left(O_{P_{n+1}}(v)\right)\right\}_{v\in \ver(P_{n+1})}$ of $\mathcal{X}$ becomes a 
refinement of $\left\{\pi_n^{-1}\left(O_{P_n}(v)\right)\right\}_{v\in \ver(P_n)}$ because of the 
Lebesgue number condition above.
Then by applying Corollary \ref{corollary: approximately commuting simplicial maps baby version}
to $\pi_{n+1}: \mathcal{X}\to P_{n+1}$ and $\pi_n:\mathcal{X}\to P_n$ 
(with $P=P_{n+1}$ and $Q=P_n$), 
we can find a simplicial map 
$h:P_{n+1}\to P_n$ such that for every $x\in \mathcal{X}$ the two points 
$\pi_n(x)$ and $h(\pi_{n+1}(x))$ belong to the same simplex of $P_n$.
Set $\tilde{g}_{n+1} = g_n\circ h: P_{n+1} \to B^\circ_1(V)$.
This satisfies (recall $f_n = g_n\circ \pi_n$)
\[  \norm{\tilde{g}_{n+1}(\pi_{n+1}(x))-f_n(x)} < \min\left(\frac{\varepsilon_n}{8}, \frac{\delta_n}{2}\right). \]
Since $\tilde{g}_{n+1}(P_{n+1}) \subset g_n(P_n)$, the induction hypothesis implies
\begin{equation}  \label{eq: covering number of tilde{g}_{n+1} in PS theorem}
   \#\left(\tilde{g}_{n+1}(P_{n+1}), \norm{\cdot},\varepsilon\right) < \left(\frac{1}{\varepsilon}\right)^{D+\frac{1}{n}}  \quad 
   (\varepsilon < \varepsilon_n).  
\end{equation}
By Lemma \ref{lemma: preparations on linear maps} (1), there exists 
$0<\varepsilon_{n+1}<\varepsilon_n/2$ such that for all linear maps $g:P_{n+1}\to V$ with $g(P_{n+1}) \subset B^\circ_1(V)$
\[  \#\left(g(P_{n+1}),\norm{\cdot},\varepsilon\right) < \left(\frac{1}{\varepsilon}\right)^{D+\frac{1}{n+1}} \quad 
     (\varepsilon < \varepsilon_{n+1}). \]
We slightly perturb $\tilde{g}_{n+1}$ by Lemma \ref{lemma: preparations on linear maps} (2):
There exists a linear embedding $g_{n+1}:P_{n+1}\to B_1^\circ(V)$ such that 
\[   \norm{g_{n+1}(\pi_{n+1}(x))-f_n(x)} < \min\left(\frac{\varepsilon_n}{8}, \frac{\delta_n}{2}\right), \]
\begin{equation}  \label{eq: difference between g_n and g_{n+1} in PS theorem}
   \norm{g_{n+1}(u)- \tilde{g}_{n+1}(u)} < \frac{\varepsilon_{n+1}}{4} \quad (u\in P_{n+1}).
\end{equation}
By the choice of $\varepsilon_{n+1}$ we have 
\[  \#\left(g_{n+1}(P_{n+1}),\norm{\cdot},\varepsilon\right) < \left(\frac{1}{\varepsilon}\right)^{D+\frac{1}{n+1}} \quad 
     (\varepsilon < \varepsilon_{n+1}). \]
For $\varepsilon_{n+1}\leq \varepsilon < \varepsilon_n$
\begin{equation*}
   \begin{split}
      \#\left(g_{n+1}(P_{n+1}),\norm{\cdot},\varepsilon\right) & \leq 
      \#\left(\tilde{g}_{n+1}(P_{n+1}), \norm{\cdot}, \varepsilon- \frac{\varepsilon_{n+1}}{2}\right) \quad 
      (\text{by (\ref{eq: difference between g_n and g_{n+1} in PS theorem}}))     \\
      & \leq      \#\left(\tilde{g}_{n+1}(P_{n+1}), \norm{\cdot}, \frac{\varepsilon}{2}\right) \quad 
      \left(\text{by }\varepsilon-\frac{\varepsilon_{n+1}}{2} \geq \frac{\varepsilon}{2}\right) \\
      & < \left(\frac{2}{\varepsilon}\right)^{D+\frac{1}{n}} \quad 
      (\text{by (\ref{eq: covering number of tilde{g}_{n+1} in PS theorem})}).
   \end{split}
\end{equation*}   
$f_{n+1} = g_{n+1}\circ \pi_{n+1}$ is a $1/(n+1)$-embedding.
So we can find $0<\delta_{n+1}<\delta_n/2$ satisfying Condition \ref{condition: PS theorem} (2).
This has completed the construction for the $(n+1)$-th step.
\end{proof}

\subsection{Dynamical tiling construction}  \label{subsection: dynamical tiling construction}

Here we review a construction introduced in \cite{Gutman--Lindenstrauss--Tsukamoto}.
Let $(\mathcal{X},T)$ be a dynamical system and $\varphi:\mathcal{X}\to [0,1]$ a continuous function.
For $x\in \mathcal{X}$ we consider 
\begin{equation} \label{eq: marker points from varphi}
    \left\{\left(a, \frac{1}{\varphi(T^a x)}\right)\middle|\, a\in \mathbb{Z} \text{ with $\varphi(T^a x) > 0$} \right\}
     \subset \mathbb{R}^2. 
\end{equation}     
 We assume that this is nonempty for every $x\in \mathcal{X}$.
 (Namely, for every $x\in \mathcal{X}$, there exists $a\in \mathbb{Z}$ with $\varphi(T^a x)>0$.)
Let $\mathbb{R}^2 = \bigcup_{a \in \mathbb{Z}} V_\varphi(x,a)$ be the associated \textbf{Voronoi diagram}, namely 
$V_\varphi(x,a)$ is the set of $u\in \mathbb{R}^2$ satisfying 
\[  \left|u-\left(a, \frac{1}{\varphi(T^a x)}\right)\right| \leq  
      \left|u-\left(b, \frac{1}{\varphi(T^b x)}\right)\right|    \]
for any $b \in \mathbb{Z}$ with $\varphi(T^b x) > 0$.      
This is a convex subset of the plane.
We set 
\[  I_\varphi(x,a) = V_\varphi(x,a) \cap (\mathbb{R}\times \{0\}). \]
See Figure \ref{fig: tiling}.
If $\varphi(T^a x)=0$ then $V_\varphi(x,a) = I_\varphi(x,a)= \emptyset$.

\begin{figure}[h]
    \centering
    \includegraphics[width=5.0in]{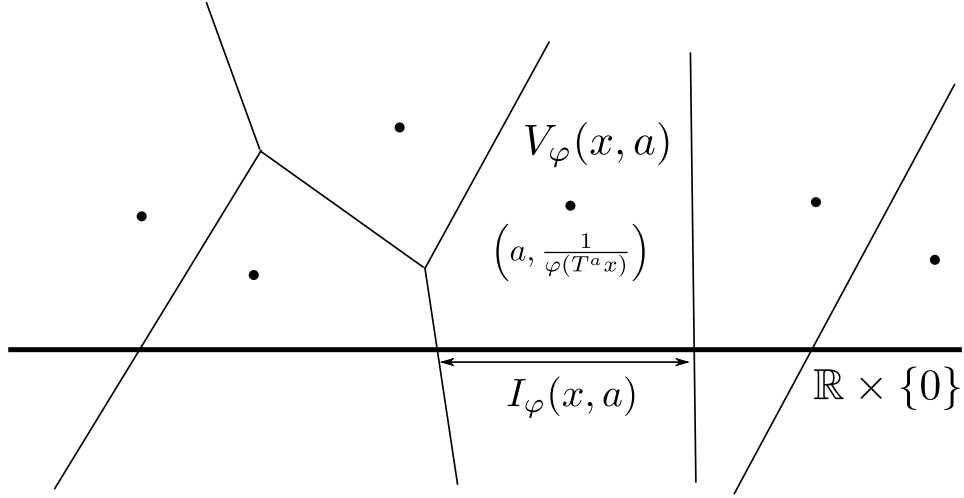}
    \caption{Dynamical tiling construction} 
    \label{fig: tiling}
\end{figure}

We naturally identify $\mathbb{R}\times \{0\}$ with $\mathbb{R}$.
Then this construction gives a decomposition of $\mathbb{R}$:
\[  \mathbb{R} = \bigcup_{a \in \mathbb{Z}} I_\varphi(x,a). \]
$I_\varphi(x,a)$ are closed intervals.
We set 
\[  \partial_\varphi(x) = \bigcup_{a \in \mathbb{Z}} \partial I_\varphi(x,a) \subset \mathbb{R}, \]
where $\partial I_\varphi(x,a)$ is the boundary of the interval $I_\varphi(x,a)$
(e.g. $\partial [0,1] = \{0,1\}$).
This construction is equivariant, namely 
\[  I_\varphi(T^n x, a) = -n + I_\varphi(x, a+n), \quad \partial_\varphi(T^n x) = -n + \partial_\varphi(x). \]

\begin{lemma} \label{lemma: dynamical tiling}
Suppose $(\mathcal{X},T)$ has the marker property.
Then for any $\varepsilon>0$ we can find a continuous function $\varphi:\mathcal{X}\to [0,1]$ such that 
(\ref{eq: marker points from varphi}) is nonempty for every $x\in \mathcal{X}$ and that it satisfies
the following conditions.
\begin{enumerate}
   \item  There exists $M>0$ such that $I_\varphi(x,a) \subset (a-M,a+M)$ for all $x\in \mathcal{X}$ and $a\in \mathbb{Z}$
            (in particular, all $I_\varphi(x,a)$ are finite length intervals).  
   \item  
          \[ \lim_{R\to \infty} \frac{\sup_{x\in \mathcal{X}} |\partial_\varphi(x)\cap [0,R]|}{R} < \varepsilon. \]
          Here $|\partial_{\varphi}(x)\cap [0,R]|$ is the cardinality of 
          $\partial_{\varphi}(x)\cap [0,R]$.
         Notice that the above (1) implies that $\partial_\varphi (x)$ is a discrete set in the real line.
    \item The intervals $I_\varphi(x,a)$ continuously depend on $x\in \mathcal{X}$:
             i.e.~if $x_k\to x$ in $\mathcal{X}$ and $I_\varphi(x,a)$ has positive length then 
            $I_\varphi(x_k,a)$ converges to $I_\varphi(x,a)$ in the Hausdorff topology, 
            and if $I_\varphi(x,a)=\emptyset$ then for all $k$ large enough $I_\varphi(x_k,a)$ is also empty.
\end{enumerate}
\end{lemma}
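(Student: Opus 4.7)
The plan is to define $\varphi$ as a continuous bump function supported on a suitable marker set. First I would apply the marker property with $N$ chosen so large that $1/(N+1) < \varepsilon$, obtaining an open set $U \subset \mathcal{X}$ with $\mathcal{X} = \bigcup_{n \in \mathbb{Z}} T^{-n} U$ and $U \cap T^{-n}U = \emptyset$ for $1 \leq n \leq N$. Compactness of $\mathcal{X}$ then provides a finite integer $M_0$ with $\mathcal{X} = \bigcup_{|n| \leq M_0} T^{-n} U$. By a standard shrinking argument (cover $\mathcal{X}$ by finitely many open $T^{-n_i}(W_{x_i})$ with $\overline{W_{x_i}} \subset U$ and set $V = \bigcup_i W_{x_i}$) I can choose an open $V$ with $\bar V \subset U$ so that the covering $\mathcal{X} = \bigcup_{|n| \leq M_0} T^{-n} V$ still holds, and Urysohn's lemma then produces a continuous $\varphi : \mathcal{X} \to [0,1]$ equal to $1$ on $\bar V$ and vanishing outside $U$. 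The set of $a \in \mathbb{Z}$ with $\varphi(T^a x) > 0$ is exactly the set of visit times of $x$ to $U$, which the marker property separates by at least $N+1$; the subset where $\varphi(T^a x) = 1$ consists of visits to $\bar V$ and by the covering property meets every window of $2M_0 + 1$ consecutive integers.

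Property (2) follows immediately: each boundary point in $\partial_\varphi(x) \cap [0,R]$ lies between two consecutive valid markers, so its count is at most the number of valid markers in a slight enlargement of $[0,R]$, which is at most $R/(N+1) + O(1)$ uniformly in $x$. Dividing by $R$ and letting $R \to \infty$ yields a limit $\leq 1/(N+1) < \varepsilon$.

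Property (1) requires more care because intermediate markers, where $0 < \varphi(T^a x) < 1$, could in principle shift Voronoi boundaries far from the naive midpoints. I would first prove that any generator $(b, h)$ with $h := 1/\varphi(T^b x)$ whose Voronoi cell meets the $x$-axis satisfies $h \leq \sqrt{1 + (2M_0+1)^2/4}$: letting $a_L < b < a_R$ be the flanking high markers, the condition that some $(t,0)$ satisfies $(t-b)^2 + h^2 \leq (t-a_{L,R})^2 + 1$ unpacks into $a_R - a_L \geq (h^2 - 1)\bigl[\frac{1}{b - a_L} + \frac{1}{a_R - b}\bigr]$, and the upper bound $a_R - a_L \leq 2M_0 + 1$ then yields the claim. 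Consequently every effective generator lies in a compact strip of heights $[1, H]$ with $H = H(M_0)$, and the Voronoi boundary between two consecutive effective generators at positions $a < a'$ lies within $(H^2-1)/(2(a'-a)) = O(1)$ of the midpoint $(a+a')/2$. Together with $a' - a \leq 2M_0 + 1$ this confines $I_\varphi(x, a)$ to an interval $(a - M, a + M)$ with $M$ depending only on $M_0$ and $N$.

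For property (3) I would exploit continuity of $\varphi$ together with the local finiteness established above. Only the finitely many generators indexed by $b$ within an $M$-window of $a$ influence $I_\varphi(x, a)$, and each of them depends continuously on $x$; a generator with $\varphi(T^b x) = 0$ corresponds to height $+\infty$ and simply drops out. When $I_\varphi(x, a)$ has positive length, the defining Voronoi inequalities at interior points are strict, so small perturbations of finitely many generators yield Hausdorff convergence $I_\varphi(x_k, a) \to I_\varphi(x, a)$. When $I_\varphi(x, a) = \emptyset$, either $\varphi(T^a x) = 0$, in which case $\varphi(T^a x_k)$ will be smaller than the threshold $1/H$ of property~(1) for large $k$ (so the $a$-th generator is dominated and $I_\varphi(x_k, a) = \emptyset$), or $\varphi(T^a x) > 0$ but the $a$-th generator is strictly dominated by its competitors on a neighborhood, which is an open condition persisting under perturbation. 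The main technical obstacle I anticipate is the careful bookkeeping of effective versus dominated intermediate markers in property~(1), but the quantitative height bound derived above renders this manageable.
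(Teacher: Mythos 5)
Your overall construction agrees with the paper's: apply the marker property with $N$ large, shrink the marker set to obtain a closed subset $\bar V\subset U$ on which $\varphi\equiv 1$ while $\supp\varphi\subset U$, and use the two layers $\Lambda_x = \{a:\varphi(T^a x)>0\}$ (gaps $>N$) and $\Lambda'_x=\{a:\varphi(T^a x)=1\}$ (gaps bounded) to drive the estimates. Your treatment of (2) is the same as the paper's. But your treatment of (1) is genuinely different and noticeably heavier. You derive an a priori height cap $H$ for any ``effective'' generator by combining the condition that its Voronoi cell meets the $x$-axis with the inequality $a_R-a_L\geq (h^2-1)\bigl[\frac{1}{b-a_L}+\frac{1}{a_R-b}\bigr]$ and AM--HM, and then you control the location of Voronoi bisectors via the explicit formula for the bisector of two generators of heights in $[1,H]$. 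The paper avoids all of this: for any $a\in\Lambda_x$ it picks $s,t\in\Lambda'_x$ with $s<a<t$ and $a-s,\,t-a\leq M$; since the generators at $s$ and $t$ sit at the minimal height $1$ while the one at $a$ sits at height $\geq 1$, any $(u,0)$ with $u\leq s$ (or $u\geq t$) is strictly closer to $(s,1)$ (resp.\ $(t,1)$) than to $(a,1/\varphi(T^a x))$ — one inequality, no bisector formula — so $I_\varphi(x,a)\subset (s,t)\subset(a-M,a+M)$. Your height bound does buy you one thing: it gives a clean quantitative threshold $1/H$ below which a marker is automatically dominated, which you then exploit in the continuity argument for (3) in the degenerate case $\varphi(T^a x)=0$. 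The paper dismisses (3) as immediate; your more explicit treatment (only finitely many nearby generators can be relevant, and the cases ``positive length'' and ``empty'' are handled via strictness and openness) is a correct filling-in of that claim. So: correct, same skeleton, but property (1) is proved by a more roundabout quantitative route than the one-line domination argument the paper uses.
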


\begin{proof}
Take $N> 1/\varepsilon$.
From the marker property, there exists an open set $U\subset \mathcal{X}$ such that 
$U\cap T^{-n} U = \emptyset$ for $1\leq n\leq N$ and 
$\mathcal{X} = \bigcup_{n\in \mathbb{Z}} T^{-n} U$.
We can find $M> N$ and a compact subset $K\subset U$ with 
$\mathcal{X} = \bigcup_{n=0}^{M-1} T^{-n} K$.
Let $\varphi:\mathcal{X}\to [0,1]$ be a continuous function such that $\varphi = 1$ on $K$ and 
$\supp\, \varphi \subset U$.

Let $x\in \mathcal{X}$ and consider
\[  \Lambda_x = \{a\in \mathbb{Z}|\, \varphi(T^a x) > 0\}, \quad
    \Lambda'_x = \{a\in \mathbb{Z}|\, \varphi(T^a x) =1\}. \]
Any gap of $\Lambda_x$ (i.e. the difference between two successive numbers in $\Lambda_x$) is larger than $N$, and 
any gap of $\Lambda'_x$ is smaller than or equal to $M$. 
For $a\in \Lambda_x'$ the interval $I_\varphi(x,a)$ contains $a$ as an interior point.

Let $a\in \Lambda_x$. There exist $s,t\in \Lambda_x'$ such that $s<a<t$ and $a-s, t-a \leq M$.
Then $I_\varphi(x,a) \subset (s,t) \subset (a-M,a+M)$.
The continuity of $I_\varphi(x,a)$ is an immediate consequence of the definition.
The condition (2) follows from
\[ \lim_{R\to \infty} \frac{\sup_{x\in \mathcal{X}} |\partial_\varphi(x)\cap [0,R]|}{R} 
    \leq \lim_{R\to \infty} \frac{\sup_{x\in \mathcal{X}} |\Lambda_x\cap [0,R]|}{R} 
     \leq \frac{1}{N}< \varepsilon. \]
\end{proof}

\subsection{Proof of Theorem \ref{theorem: existence of nice metrics}}  \label{subsection: proof of existence of nice metrics}

Let $(V,\norm{\cdot})$ be an infinite dimensional Banach space and $(\mathcal{X},T)$ a dynamical system.
As in \S \ref{subsection: warmup: the proof of PS theorem} we denote by $C(\mathcal{X},V)$ 
the space of continuous maps from $\mathcal{X}$ to $V$ endowed with the norm topology.
Theorem \ref{theorem: existence of nice metrics} follows from

\begin{theorem} \label{theorem: dynamical PS theorem}
Suppose $(\mathcal{X},T)$ has the marker property.
For a dense subset of $f\in C(\mathcal{X},V)$, $f$ is a topological embedding and satisfies
\[  \overline{\mdim}_{\mathrm{M}}(\mathcal{X},T, f^*\norm{\cdot}) = \mdim(\mathcal{X},T). \]
Here $f^*\norm{\cdot}$ is the metric $\norm{f(x)-f(y)}$ $(x,y\in \mathcal{X})$.
\end{theorem}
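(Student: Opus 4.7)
The plan is to mirror the proof of Theorem \ref{theorem: variant of PS theorem} (the Pontrjagin--Schnirelmann approximation scheme) in the $T$-equivariant setting, using the dynamical tiling of Lemma \ref{lemma: dynamical tiling} as the mechanism that replaces a naive polyhedral approximation of $\mathcal{X}$ by one that respects time windows of bounded length. Set $D = \mdim(\mathcal{X},T)$. Starting from an arbitrary $f\in C(\mathcal{X},V)$ and $\eta>0$, I would inductively produce $f_n = g_n\circ \pi_n$, where $\pi_n:\mathcal{X}\to P_n$ is a ``dynamical $(1/n)$-embedding'' into a simplicial complex whose per-unit-time complexity tends to $D$, and $g_n:P_n\to B_1^\circ(V)$ is a linear embedding. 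The limit $f' = \lim f_n$ will be a topological embedding (each $f_n$ is a $(1/n)$-embedding and the perturbations are summable) with $\norm{f'-f}_\infty < \eta$, giving density.

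The dynamics enters through Lemma \ref{lemma: dynamical tiling}: given a small parameter $\eta_n>0$, one obtains a continuous, $T$-equivariant tiling of every orbit by intervals $I_\varphi(x,a)$ of length at most $2M$, with $\sup_x |\partial_\varphi(x)\cap [0,R]|/R < \eta_n$. On each tile window I would build a local $(1/n)$-embedding of the corresponding orbit piece into a simplicial complex of dimension $\lesssim D\,|I_\varphi(x,a)|$, using the definition of $\mdim$ together with a $\widim_{1/n}$-approximation of the metric $d_{|I_\varphi(x,a)|}$, and then glue the local pieces along the tile boundaries (whose sparsity $\eta_n$ controls the added combinatorial entropy). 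To transfer covering-number control from step $n$ to step $n+1$, I would apply the multi-map version of Lemma \ref{lemma: approximately commuting simplicial maps} to the family $\{\pi_n\circ T^i\}$ with $i$ running over a single tile, producing simplicial maps $h_{n+1,i}:P_{n+1}\to P_n$ such that $h_{n+1,i}\circ \pi_{n+1} \approx \pi_n\circ T^i$ simplex-by-simplex; this is the equivariant analogue of the single map $h:P_{n+1}\to P_n$ in \S \ref{subsection: warmup: the proof of PS theorem}.

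The output of the construction is an estimate of the form
\begin{equation*}
\#\bigl(\mathcal{X},(f_n^*\norm{\cdot})_N,\varepsilon\bigr) \leq \left(\frac{C_n}{\varepsilon}\right)^{(D+\eta_n)N}
\end{equation*}
valid for every $N\ge 1$ and every $\varepsilon < \varepsilon_n$, with $\eta_n \to 0$. Choosing $\varepsilon_n, \delta_n \to 0$ as in Data \ref{data: PS theorem} and passing to the limit (with the telescoping perturbation bound in Condition \ref{condition: PS theorem}(3) adapted to the dynamical metric) yields $\overline{\mdim}_\mathrm{M}(\mathcal{X},T,{f'}^*\norm{\cdot}) \le D$; the opposite inequality is Theorem \ref{theorem: metric mean dimension dominates mean dimension}, and together they give the required equality.

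The hard part, and the reason this remained open for two decades after the lower analogue (\ref{eq: case of lower metric mean dimension: outline of the proof}) was proved, is that $\overline{\mdim}_\mathrm{M}$ demands uniform control of covering numbers at \emph{all} sufficiently small $\varepsilon$, not merely along a subsequence; a Baire category argument in $C(\mathcal{X},V)$ of the type used for $\underline{\mdim}_\mathrm{M}$ simply fails, since the set of good $f$ is only meagre (as discussed in \S\ref{subsection: background: PS theorem}). The inductive construction must therefore be arranged so that the step from $n$ to $n+1$ does not destroy the bounds already established at scales $\varepsilon\in [\varepsilon_n, \varepsilon_{n-1})$, which forces the perturbation $f_{n+1}-f_n$ to be sub-$\varepsilon_n$ and, more importantly, the simplicial factorization $\pi_{n+1}$ to refine $\pi_n$ in a strong dynamics-compatible sense. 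Delivering this refinement is where the multi-map clauses (2) and (3) of Lemma \ref{lemma: approximately commuting simplicial maps}, combined with the continuous dependence of tiles on $x$ in Lemma \ref{lemma: dynamical tiling}(3), should do the essential work.
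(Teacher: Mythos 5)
Your plan matches the paper's architecture at the level of the big ideas: a Pontrjagin--Schnirelmann-style inductive construction in which each step preserves the covering-number bounds already won on $[\varepsilon_n, \varepsilon_{n-1})$ while establishing new ones below a fresh threshold $\varepsilon_{n+1}$, with the dynamical tiling of Lemma \ref{lemma: dynamical tiling} controlling complexity per unit time and the multi-map Lemma \ref{lemma: approximately commuting simplicial maps} transferring the simplicial structure from step $n$ to step $n+1$. The diagnosis of why a Baire-category argument cannot work for $\overline{\mdim}_{\mathrm{M}}$ is also correct. However, there is a genuine gap exactly where you write ``glue the local pieces along the tile boundaries'' and then apply Lemma \ref{lemma: approximately commuting simplicial maps} to the family $\{\pi_n\circ T^i\}$. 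If you try to set $f_n = g_n\circ\pi_n$ using the tiling to decide which shifted copy of $g_n$ to use, the result is discontinuous: as $x$ varies, the tile boundaries move, the offset $b\equiv a \pmod{N_n}$ jumps, and the value of $f_n$ jumps with it. The paper resolves this by introducing a second, ``static'' simplicial datum --- a $(1/n)$-embedding $\pi'_n:(\mathcal{X},d)\to Q_n$ and a linear embedding $g'_n:Q_n\to B_1^\circ(V)$ --- forming the join $P_n*Q_n$, and defining $f_n$ by an explicit bump-function interpolation $\{1-\alpha(\dist(0,\partial_{\varphi_n}(x)))\}\,g_{n,-b}(\pi_n(T^b x)) + \alpha(\dist(0,\partial_{\varphi_n}(x)))\,g'_n(\pi'_n(x))$ so that near a tile boundary the map collapses continuously onto $g'_n(Q_n)$, which is common to all the joined pieces (this is Condition \ref{condition: dynamical PS theorem} (1) and (4)). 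Without $Q_n$ and the join, your $f_n$ is not a well-defined continuous map and the scheme cannot start.

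The same omission also propagates into the induction step. The correct target of Lemma \ref{lemma: approximately commuting simplicial maps} is not $\{\pi_n\circ T^i\}$ into $P_n$, but the family $\{q_n\circ T^\lambda\}_{\lambda\in[N_{n+1}]}$ into the join $P_n*Q_n$, where $q_n:\mathcal{X}\to P_n*Q_n$ encodes both $\pi_n$ and $\pi'_n$ together with the tiling (using $\alpha$ again). Clause (2) of the lemma is then used precisely to show that the resulting simplicial maps $h_\lambda:P_{n+1}\to P_n*Q_n$ land entirely inside $Q_n$ when $\lambda$ is near a tile boundary and entirely inside $P_n$ when $\lambda$ is deep in a good subwindow of length $N_n$; that is what makes the composition $\tilde g_{n+1,\lambda} = g_{n,\lambda-b}*g'_n\circ h_\lambda$ well-defined (Claim \ref{claim: tilde{g}_{n+1} is linear}) and gives the covering-number bookkeeping of Claim \ref{claim: covering number of tilde{g}_{n+1}}. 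A second, more minor inaccuracy: the paper does not build variable-tile-length complexes of dimension $\lesssim D|I_\varphi(x,a)|$; it fixes a single window length $N_n$, a single $(1/n)$-embedding $\pi_n:(\mathcal{X},d_{N_n})\to P_n$ with $\dim P_n < (D+\tfrac1n)N_n$, and slides this fixed window through each tile, reserving the $Q_n$-interpolation for the tile boundaries. This fixed-window structure is what makes the estimate of Claim \ref{claim: estimate of f_n(X)} clean, since the contribution of the ``good'' windows can then be counted as powers of a single quantity $\#(g_n(P_n),\norm{\cdot}_{N_n},\varepsilon)$.
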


\begin{proof}
We can assume $D:=\mdim(\mathcal{X},T) < \infty$.
Fix a metric $d$ on $\mathcal{X}$ and take an arbitrary $f\in C(\mathcal{X},V)$ and $\eta>0$.
We want to construct a topological embedding $f':\mathcal{X}\to V$ satisfying 
$\norm{f(x)-f'(x)} < \eta$ for all $x\in \mathcal{X}$ and $\overline{\mdim}_{\mathrm{M}}\left(\mathcal{X},T,(f')^*\norm{\cdot}\right) = D$.
(It is enough to show $\overline{\mdim}_{\mathrm{M}}\left(\mathcal{X},T,(f')^*\norm{\cdot}\right) \leq D$ 
because the reverse inequality is always true by Theorem \ref{theorem: metric mean dimension dominates mean dimension}). 
We may assume that $f(\mathcal{X})$ is contained in the open unit ball $B_1^\circ(V)$.

We prepare some notations. For a natural number $N$ we set 
$[N] = \{0,1,2,\dots,N-1\}$.
We define a norm $\norm{\cdot}_N$ on $V^N$ (the $N$-th power of $V$) by 
\[ \norm{(x_0,x_1,\dots,x_{N-1})}_N = \max\left(\norm{x_0},\norm{x_1},\dots,\norm{x_{N-1}}\right). \]
For a simplicial complex $P$ we define the number $A(P)$ as the minimum of $A\geq 1$
such that for any linear map $g:P\to B_1(V)$
we have $\#\left(g(P),\norm{\cdot},\varepsilon\right) \leq (1/\varepsilon)^A$ for all $0<\varepsilon \leq 1/2$.
(Such a number always exists by Lemma \ref{lemma: preparations on linear maps} (1).)
For simplicial complexes $P$ and $Q$, we denote their join by $P*Q$, namely it is the quotient of $[0,1]\times P\times Q$ by 
the equivalence relation 
\[  (0, p,q ) \sim (0,p,q'), \quad (1, p,q) \sim (1, p', q), \quad (p,p'\in P, q,q'\in Q). \]
We denote the equivalence class of $(t, p, q)$ by $(1-t)p\oplus tq$.
We identify $P$ and $Q$ with $\{(0,p,*)|\, p\in P\}$ and $\{(1,*,q)|\, q\in Q\}$ in $P*Q$ respectively.
If $g:P\to B_1(V)$ and $g':Q\to B_1(V)$ we define the map 
$g * g' : P*Q \to B_1(V)$ by sending $(1-t)p\oplus tq$ to $(1-t)g(p)+tg'(q)$.

\medskip
For a continuous map $f':\mathcal{X}\to V$ and $I\subset \mathbb{R}$ we define 
$\Phi_{f',I}:\mathcal{X}\to V^{I\cap \mathbb{Z}}$ by 
\[ \Phi_{f',I}(x) = \left(f'(T^a x)\right)_{a\in I\cap \mathbb{Z}}. \]
For a natural number $R$ we set $\Phi_{f', R} := \Phi_{f', [0,R)}: \mathcal{X} \to V^R$.
We fix a continuous function $\alpha:\mathbb{R}\to [0,1]$ such that $\alpha(t)=1$ for $t\leq 1/2$ and $\alpha(t)=0$ 
for $t\geq 3/4$.

We will inductively construct the following data for $n\geq 1$.

\begin{data} \label{data: dynamical PS theorem}
   \begin{enumerate}
     \item $1/2>\varepsilon_1>\varepsilon_2>\dots>0$ with $\varepsilon_{n+1}<\varepsilon_n/2$ and 
             $\eta/2>\delta_1>\delta_2>\dots>0$ with $\delta_{n+1}<\delta_n/2$.
     \item A natural number $N_n$.     
     \item A continuous function $\varphi_n: \mathcal{X}\to [0,1]$ such that for every $x\in \mathcal{X}$ there exists $a\in \mathbb{Z}$
             with $\varphi_n(T^a x)>0$. We apply the dynamical tiling construction of \S \ref{subsection: dynamical tiling construction}
             to this function and get the decomposition 
             $\mathbb{R}=\bigcup_{a\in \mathbb{Z}} I_{\varphi_n}(x,a)$ for each $x\in \mathcal{X}$.
     \item A $(1/n)$-embedding $\pi_n: (\mathcal{X},d_{N_n}) \to P_n$ with a simplicial complex $P_n$ of dimension 
             less than $(D+\frac{1}{n}) N_n$.
     \item A $(1/n)$-embedding $\pi'_n: (\mathcal{X},d) \to Q_n$ with a simplicial complex $Q_n$.        
     \item For each $\lambda\in [N_n]$, a linear embedding $g_{n,\lambda}:P_n\to B_1^\circ (V)$.
     \item A linear embedding $g'_n: Q_n\to B_1^\circ(V)$.
   \end{enumerate}
\end{data}

We assume the following conditions.

\begin{condition} \label{condition: dynamical PS theorem}
  \begin{enumerate}
    \item For each $\lambda\in [N_n]$, the join $g_{n,\lambda}* g'_n: P_n*Q_n\to B_1^\circ (V)$ is a linear embedding.
            For $\lambda_1\neq \lambda_2$
            \[  g_{n,\lambda_1}*g'_n(P_n*Q_n) \cap g_{n,\lambda_2}*g'_n(P_n*Q_n) = g'_n(Q_n). \]
   \item Set $g_n = (g_{n,0}, g_{n,1},\dots, g_{n,N_n-1}): P_n\to V^{N_n}$. Then 
         \[  \#\left(g_n(P_n), \norm{\cdot}_{N_n}, \varepsilon\right) <  
              \begin{cases}
                    4^{N_n} \left(\frac{2}{\varepsilon}\right)^{\left(D+\frac{2}{n-1}\right)N_n}   \quad
                    & (0< \varepsilon < \varepsilon_{n-1}) \\
                   \left(\frac{1}{\varepsilon}\right)^{\left(D+\frac{1}{n}\right)N_n} \quad & (0< \varepsilon < \varepsilon_n)    
              \end{cases}.  \]
           Here the former condition is empty for $n=1$.   
   \item   There exists $M_n>0$ such that $I_{\varphi_n}(x,a)\subset (a-M_n,a+M_n)$ for all $x\in \mathcal{X}$ and 
             $a\in \mathbb{Z}$.
           The sets $\partial_{\varphi_n}(x)$ are discrete in $\mathbb{R}$ and satisfy
           \[  \lim_{R\to \infty} \frac{\sup_{x\in \mathcal{X}} |\partial_{\varphi_n}(x)\cap [0,R]|}{R} <
                \frac{1}{2n N_n^2 \cdot A(P_n*Q_n)}. \]
   \item We define a continuous map $f_n:\mathcal{X}\to B_1^\circ(V)$ as follows: Let $x\in \mathcal{X}$ and take $a\in \mathbb{Z}$ with 
           $0\in I_{\varphi_n}(x,a)$.
           Take $b\in \mathbb{Z}$ such that $b\equiv a \, (\mathrm{mod} N_n)$ and $0\in b+[N_n]$. Set 
          \begin{equation*}
               \begin{split}
                      f_n(x) = &\left\{1-\alpha\left(\dist(0, \partial_{\varphi_n}(x))\right)\right\} g_{n,-b}\left(\pi_n(T^b x)\right)    \\
                                 & + \alpha\left(\dist(0,\partial_{\varphi_n}(x))\right) g'_n\left(\pi'_n(x)\right), 
               \end{split}
          \end{equation*}     
          where $\dist(0,\partial_{\varphi_n}(x)) := \min_{t\in \partial_{\varphi_n}(x)} |t|$.
          Then we assume that if a continuous map $f':\mathcal{X}\to V$ satisfies $\norm{f'(x)-f_n(x)}< \delta_n$ for all $x\in \mathcal{X}$
          then it is a $(1/n)$-embedding with respect to $d$.
  \item  For all $x\in \mathcal{X}$
       \begin{equation*}
           \begin{split}
             \norm{f(x)-f_1(x)} &< \frac{\eta}{2}, \\
             \norm{f_n(x)-f_{n+1}(x)} &< \min\left(\frac{\varepsilon_n}{8}, \frac{\delta_n}{2}\right). 
            \end{split} 
       \end{equation*}            
  \end{enumerate}
\end{condition}

Suppose we have constructed the above data.
Then we can define $f'\in C(\mathcal{X},V)$ by $f'(x) = \lim_{n\to \infty} f_n(x)$.
It satisfies $\norm{f'(x)-f(x)} < \eta$ and 
 $\norm{f'(x)-f_n(x)} < \min(\varepsilon_n/4,\delta_n)$ for all $n\geq 1$.
From Condition \ref{condition: dynamical PS theorem} (4), $f'$ is a $(1/n)$-embedding for all $n\geq 1$.
So it is a topological embedding.
Set $d'(x,y) = \norm{f'(x)-f'(y)}$.
We want to show $\overline{\mdim}_{\mathrm{M}}\left(\mathcal{X},T,d'\right) \leq D$.
Notice that $\overline{\mdim}_{\mathrm{M}}\left(\mathcal{X},T,d'\right)$ is equal to
\begin{equation} \label{eq: metric mean dimension for d' in dynamical PS theorem}
  \limsup_{\varepsilon \to 0}
  \left\{ \lim_{R\to \infty} \left(\frac{\log \#\left(\Phi_{f',R}(\mathcal{X}), \norm{\cdot}_R,\varepsilon\right)}{R}\right) 
  \middle/ \log(1/\varepsilon)\right\}.
\end{equation}

\begin{claim} \label{claim: estimate of f_n(X)}
Let $0< \varepsilon < \varepsilon_{n-1}$ $(n\geq 2)$.
For sufficiently large natural numbers $R$
\begin{equation*}
   \#\left(\Phi_{f_n,R}(\mathcal{X}),\norm{\cdot}_R,\varepsilon\right) 
   < 2^{4R} \left(\frac{2}{\varepsilon}\right)^{\left(D+\frac{2}{n-1}\right)R + \frac{R}{n}}. 
\end{equation*}         
\end{claim}

\begin{proof}
In this proof $n\geq 2$ is fixed and $R$ is a sufficiently large natural number.
Let $x\in \mathcal{X}$.
We call $J = \{b, b+1, \dots, b+N_n-1\}$ $(b\in \mathbb{Z})$ \textbf{good for $x$} if 
there is $a\in \mathbb{Z}$ such that $b\equiv a \, (\mathrm{mod} N_n)$ and $(b-1, b+N_n) \subset I_{\varphi_n}(x,a)$.
If $J$ is good for $x$ then $\Phi_{f_n,J}(x)$ is contained in $g_n(P_n)$ in $V^{N_n}$.
We denote by $\mathcal{J}_x$ the union of $J \subset [R]$ which are good for $x$.
The number of possibilities of $\mathcal{J}_x$ (when $x\in \mathcal{X}$ varies) is bounded by 
$2^{R}$. 
Then $\#\left(\Phi_{f_n,R}(\mathcal{X}), \norm{\cdot}_R,\varepsilon\right)$ is bounded by 
\begin{equation*}
    2^{R} \cdot
    \underbrace{\#\left(g_n(P_n), \norm{\cdot}_{N_n}, \varepsilon\right)^{\frac{R}{N_n}}}_{\text{contribution over $\mathcal{J}_x$}} 
    \cdot
  \underbrace{\#\left(f_n(\mathcal{X}), 
  \norm{\cdot},\varepsilon\right)^{2N_n\sup_{x\in \mathcal{X}} 
  |\partial_{\varphi_n}(x)\cap [0,R]|+2N_n}}_{\text{contribution over $[0,R)\setminus \mathcal{J}_x$}}.
\end{equation*}   
Here ``$+2N_n$'' is the edge effect.
$f_n(\mathcal{X})$ is contained in the union of $g_{n,\lambda}*g'_n(P_n*Q_n)$ over $\lambda\in [N_n]$.
So 
\[  \#\left(f_n(\mathcal{X}), \norm{\cdot},\varepsilon \right) \leq N_n \left(\frac{1}{\varepsilon}\right)^{A(P_n*Q_n)}. \]
Using Condition \ref{condition: dynamical PS theorem} (2) and (3), we get the statement of the claim.
\end{proof}

We now return to the proof of Theorem~\ref{theorem: dynamical PS theorem}.

Let $0<\varepsilon < \varepsilon_1$.
Take $n> 1$ with $\varepsilon_n \leq \varepsilon < \varepsilon_{n-1}$.
Recall that $\norm{f'(x)-f_n(x)} < \varepsilon_n/4$. Hence
\begin{equation*}
   \begin{split}
       \#\left(\Phi_{f',R}(\mathcal{X}), \norm{\cdot}_R,\varepsilon\right)
    &  \leq \#\left(\Phi_{f_n,R}(\mathcal{X}), \norm{\cdot}_R,\varepsilon -\frac{\varepsilon_n}{2}\right)  \\
    &  \leq  \#\left(\Phi_{f_n,R}(\mathcal{X}), \norm{\cdot}_R,\frac{\varepsilon}{2}\right).  
   \end{split}
\end{equation*}      
From Claim \ref{claim: estimate of f_n(X)},
\begin{equation*}
  \lim_{R \to \infty}  \frac{\log \# \left(\Phi_{f',R}(\mathcal{X}), \norm{\cdot}_R, \varepsilon\right)}{R} 
          \leq 4  + \left(D+\frac{2}{n-1} + \frac{1}{n}\right) \log (4/\varepsilon).   
\end{equation*}    
Notice that $n\to \infty$ as $\varepsilon \to 0$.
Using (\ref{eq: metric mean dimension for d' in dynamical PS theorem}) we get 
$\overline{\mdim}_{\mathrm{M}}(\mathcal{X},T,d') \leq D$.

\vspace{0.2cm}

\textbf{Induction: Step 1.}
Now we start to construct the data.
Take $0<\tau_1<1$ such that 
\[  d(x,y) < \tau_1 \Longrightarrow \norm{f(x)-f(y)} < \frac{\eta}{2}. \]
From the definition of mean dimension, we can find $N_1>0$ and $\tau_1$-embeddings 
$\pi_1:(\mathcal{X},d_{N_1})\to P_1$ and $\pi'_1:(\mathcal{X},d)\to Q_1$ such that 
$P_1$ and $Q_1$ are simplicial complexes with $\dim P_1 < N_1 (D+1)$.
By subdividing $P_1$ and $Q_1$, we can assume that 
\[  \diam \left(\pi_1^{-1}\left(O_{P_1}(v)\right), d_{N_1}\right) < \tau_1, \quad
     \diam \left( (\pi'_1)^{-1}\left(O_{Q_1}(w)\right), d\right) < \tau_1 \]
     for all $v\in \ver(P_1)$ and $w\in \ver(Q_1)$.
By Lemma \ref{lemma: preparations on linear maps} (3), we can find 
linear maps $\tilde{g}_{1,\lambda}:P_1\to B_1^\circ(V)$ for $\lambda\in [N_1]$
and $\tilde{g}'_1:Q_1\to B_1^\circ(V)$ satisfying 
\[  \norm{f(T^\lambda x)-\tilde{g}_{1,\lambda}(\pi_1(x))} < \frac{\eta}{2}, \quad 
    \norm{f(x)-\tilde{g}'_1(\pi'_1(x))} < \frac{\eta}{2} \]
for all $x\in \mathcal{X}$.    
By Lemma \ref{lemma: preparations on linear maps} (2), we can replace 
$\tilde{g}_{1,\lambda}$ and $\tilde{g}'_1$ with 
linear embeddings $g_{1,\lambda}:P_1\to B_1^\circ(V)$ and $g'_1:Q_1\to B_1^\circ(V)$
satisfying Condition \ref{condition: dynamical PS theorem} (1) and 
\begin{equation} \label{eq: g_1 approximate f}
     \norm{f(T^\lambda x)-g_{1,\lambda}(\pi_1(x))} < \frac{\eta}{2}, \quad 
    \norm{f(x)-g'_1(\pi'_1(x))} < \frac{\eta}{2}.   
\end{equation}    
By Lemma \ref{lemma: preparations on linear maps} (1), we can find $0< \varepsilon_1 <1/2$ satisfying 
Condition \ref{condition: dynamical PS theorem} (2).

By Lemma \ref{lemma: dynamical tiling}, we can take a continuous function 
$\varphi_1:\mathcal{X}\to [0,1]$ satisfying Condition \ref{condition: dynamical PS theorem} (3).
By Condition \ref{condition: dynamical PS theorem} (1) (which has been already established for $n=1$),
the map $f_1:\mathcal{X}\to V$ becomes a $1$-embedding.
It also satisfies Condition \ref{condition: dynamical PS theorem} (5)
by (\ref{eq: g_1 approximate f}).
Since ``$1$-embedding'' is an open condition, we can find $0< \delta_1 <\eta/2$ satisfying
Condition \ref{condition: dynamical PS theorem} (4). 
The first step of the induction has been completed.

\vspace{0.2cm}

\textbf{Induction: Step $n$ $\Rightarrow$ Step $n+1$.}
Next we suppose that we have constructed the date for $n$.
We will construct the date for $n+1$.

 We subdivide $P_n * Q_n$ sufficiently fine so that 
            \begin{equation} \label{eq: modulus of continuity of g_n,lambda*g'_n}
                  \diam \left(g_{n,\lambda}*g'_n(\Delta), \norm{\cdot}\right) < \min\left(\frac{\varepsilon_n}{8}, \frac{\delta_n}{2}\right)
            \end{equation}   
            for any simplex $\Delta\subset P_n*Q_n$ and $\lambda\in [N_n]$.

We define a continuous map $q_n:\mathcal{X}\to P_n*Q_n$ as follows.
Let $x\in \mathcal{X}$ and take $a\in \mathbb{Z}$ with $0\in I_{\varphi_n}(x,a)$.
We take $b\in \mathbb{Z}$ such that $b\equiv a \, (\mathrm{mod} N_n)$ and $0\in b+[N_n]$.
Set 
\[  q_n(x) = \left\{1-\alpha\left(\dist(0,\partial_{\varphi_n}(x))\right)\right\} \pi_n(T^b x) \oplus
                \alpha\left(\dist(0,\partial_{\varphi_n}(x))\right) \pi'_n(x). \]
We take $0<\tau_{n+1}<1/(n+1)$ such that 
\begin{itemize}
    \item If $d(x,y)<\tau_{n+1}$ then $\norm{f_n(x)-f_n(y)}< \min(\varepsilon_n/8,\delta_n/2)$ and 
             \begin{equation} \label{eq: continuity of dist(0, partial) in the choice of tau}
                 \left|\dist\left(0, \partial_{\varphi_n}(x)\right) - \dist\left(0, \partial_{\varphi_n}(y)\right)\right| < \frac{1}{4}.                            
             \end{equation}
    \item If $d(x,y) < \tau_{n+1}$ and $(-1/4,1/4)\subset I_{\varphi_n}(x,a)$ 
            then 
            $I_{\varphi_n}(y,a)$ contains $0$ as an interior point.    
    \item $\tau_{n+1}$ is smaller than the Lebesgue number of the open cover 
            $\left\{q_n^{-1}\left(O_{P_n*Q_n}(v)\right)\right\}_{v\in \ver(P_n*Q_n)}$:
            \begin{equation*}
                 \tau_{n+1} < LN\left(\mathcal{X}, d, \{q_n^{-1}(O_{P_n*Q_n}(v))\}_{v\in \ver(P_n*Q_n)}\right).
            \end{equation*}    
\end{itemize}

Take a $\tau_{n+1}$-embedding $\pi'_{n+1}:(\mathcal{X},d) \to Q_{n+1}$ with a simplicial complex $Q_{n+1}$.
We can assume $\diam \left((\pi'_{n+1})^{-1}\left(O_{Q_{n+1}}(w)\right),d\right) < \tau_{n+1}$ for every $w\in \ver(Q_{n+1})$.
By Lemma \ref{lemma: preparations on linear maps} (3), we can take a linear map 
$\tilde{g}'_{n+1}:Q_{n+1} \to B_1^\circ(V)$ satisfying 
\begin{equation}  \label{eq: tilde{g}' approximate f_n}
   \norm{\tilde{g}'_{n+1}(\pi'_{n+1}(x))-f_n(x)} < \min\left(\frac{\varepsilon_n}{8}, \frac{\delta_n}{2}\right). 
\end{equation}

We can find $N_{n+1}> N_n$ such that 
\begin{itemize}
    \item There exists a $\tau_{n+1}$-embedding $\pi_{n+1}:(\mathcal{X},d_{N_{n+1}})\to P_{n+1}$ with a simplicial 
             complex $P_{n+1}$ of dimension less than $N_{n+1}\left(D+\frac{1}{n+1}\right)$.
    \item 
    \[   1 + \sup_{x\in \mathcal{X}} |\partial_{\varphi_n}(x)\cap [0,N_{n+1}]| 
         < \frac{N_{n+1}}{2n N_n^2 \cdot A(P_n*Q_n)}. \]
     Here we have used Condition \ref{condition: dynamical PS theorem} (3)\footnote{Here is a technical point.
     The number $A(P_n*Q_n)$ is defined by using the simplicial complex structure of $P_n*Q_n$.
     We use the natural simplicial complex structure of the join $P_n*Q_n$ here, not its subdivision introduced in 
     (\ref{eq: modulus of continuity of g_n,lambda*g'_n}).} for $\varphi_n$.
\end{itemize}
By subdividing $P_{n+1}$ sufficiently fine, we assume that 
for any two simplexes $\Delta, \Delta'\subset P_{n+1}$ with $\Delta\cap \Delta' \neq \emptyset$ 
\begin{equation} \label{eq: P_{n+1} is sufficiently fine}
   \diam \left(\pi_{n+1}^{-1}\left(O_{P_{n+1}}(\Delta)\right)\cup \pi_{n+1}^{-1}\left(O_{P_{n+1}}(\Delta')\right), d_{N_{n+1}}\right) < \tau_{n+1}.
\end{equation}
Moreover by Lemma \ref{lemma: essential map} we can assume that 
$\pi_{n+1}:\mathcal{X}\to P_{n+1}$ is essential.

We apply Lemma \ref{lemma: approximately commuting simplicial maps} (with $P=P_{n+1}$, $Q=P_n*Q_n$, $N=N_{n+1}$, 
and $Q' = P_n \text{ or } Q_n$) 
to continuous maps
$\pi_{n+1}:\mathcal{X}\to P_{n+1}$ and $q_n\circ T^\lambda: \mathcal{X}\to P_n*Q_n$, $\lambda\in [N_{n+1}]$.
(The assumption of Lemma \ref{lemma: approximately commuting simplicial maps} is satisfied because of the above 
Lebesgue number condition.)
Then we get simplicial maps $h_\lambda:P_{n+1}\to P_n*Q_n$, $\lambda\in [N_{n+1}]$, so that
\begin{itemize}
   \item For every $\lambda\in [N_{n+1}]$ and $x\in \mathcal{X}$, 
           the two points $h_\lambda(\pi_{n+1}(x))$ and $q_n(T^\lambda x)$ belong to the 
           same simplex of $P_n*Q_n$.
   \item  Let $\Delta\subset P_{n+1}$ be a simplex and $\lambda\in [N_{n+1}]$.
            If $\pi_{n+1}^{-1}(O_{P_{n+1}}(\Delta)) \subset T^{-\lambda} q_n^{-1}(P_n)$ then $h_\lambda(\Delta) \subset P_n$.
            Similarly, if $\pi_{n+1}^{-1}(O_{P_{n+1}}(\Delta)) \subset T^{-\lambda} q_n^{-1}(Q_n)$,
            then $h_\lambda(\Delta) \subset Q_n$.
   \item For $\lambda, \lambda'\in [N_{n+1}]$ and a simplex $\Delta\subset P_{n+1}$, if 
           $q_n\circ T^\lambda = q_n\circ T^{\lambda'}$ on $\pi_{n+1}^{-1}\left(O_{P_{n+1}}(\Delta)\right)$ then 
           $h_\lambda = h_{\lambda'}$ on $\Delta$.         
\end{itemize}

Let $u\in P_{n+1}$, and let $\Delta\subset P_{n+1}$ be a simplex containing $u$.
Since $\pi_{n+1} : \mathcal{X}\to P_{n+1}$ is essential, there exists $x\in \pi_{n+1}^{-1}\left(O_{P_{n+1}}(\Delta)\right)$.
Let $\lambda\in [N_{n+1}]$.
We take $a, b\in \mathbb{Z}$ such that $\lambda\in I_{\varphi_n}(x,a)$, $b\equiv a \, (\mathrm{mod}\, N_n)$ and 
$\lambda\in b+[N_n]$.
We set 
\[  \tilde{g}_{n+1,\lambda}(u) = g_{n,\lambda-b}*g'_n \left(h_\lambda(u)\right) \in B^\circ_1(V). \]
We will check that this is independent of the choices of $x$ and $a$; see Claim \ref{claim: tilde{g}_{n+1} is linear} below.
In this setting we have $f_n(T^\lambda x) = g_{n,\lambda-b}*g'_n(q_n(T^\lambda x))$.
It follows from (\ref{eq: modulus of continuity of g_n,lambda*g'_n}) and the first condition of $h_\lambda$ that
\begin{equation}  \label{eq: tilde{g}_{n+1,lambda} approximate f_n}
    \norm{\tilde{g}_{n+1,\lambda}(\pi_{n+1}(x)) - f_{n}(T^\lambda x)} < \min\left(\frac{\varepsilon_n}{8}, \frac{\delta_n}{2}\right). 
\end{equation}

\begin{claim} \label{claim: tilde{g}_{n+1} is linear}
   The map $\tilde{g}_{n+1,\lambda}: P_{n+1}\to V$ is a linear map.
\end{claim}

\begin{proof}
The point is that the above definition of $\tilde{g}_{n+1,\lambda}(u)$ is independent of the choices of $x$ and $a$.
Let $\Delta'\subset P_{n+1}$ be another simplex containing $u$ and pick $x'\in \pi_{n+1}^{-1}\left(O_{P_{n+1}}(\Delta')\right)$.
We take $a',b'\in \mathbb{Z}$ such that $\lambda\in I_{\varphi_n}(x',a')$, $b'\equiv a' \, (\mathrm{mod} N_n)$ and 
$\lambda\in b'+[N_n]$.

\textbf{Case 1:} Suppose $\dist\left(\lambda, \partial_{\partial_n}(x)\right) > 1/4$.
We have $d(T^\lambda x, T^\lambda x') < \tau_{n+1}$ by (\ref{eq: P_{n+1} is sufficiently fine}).
From the second condition of the choice of $\tau_{n+1}$, we have $a=a'$ and $b=b'$. 
Hence 
\[  g_{n,\lambda-b}* g'_n (h_\lambda(u)) = g_{n, \lambda-b'}* g'_n(h_\lambda(u)). \]

\textbf{Case 2:} Suppose $\dist\left(\lambda, \partial_{\varphi_n}(x)\right) \leq 1/4$.
Take an arbitrary $y\in \pi_{n+1}^{-1} \left(O_{P_{n+1}}(\Delta)\right) \cup \pi_{n+1}^{-1}\left(O_{P_{n+1}}(\Delta')\right)$.
We have $d(T^\lambda x, T^\lambda y) < \tau_{n+1}$ by (\ref{eq: P_{n+1} is sufficiently fine}) and hence
$\dist\left(\lambda, \partial_{\varphi_n}(y)\right) < 1/2$ by the condition 
(\ref{eq: continuity of dist(0, partial) in the choice of tau}) of the choice of $\tau_{n+1}$.
Then $q_n(T^\lambda y) = \pi'_n (T^\lambda y) \in Q_n$.
Hence $\pi_{n+1}^{-1} \left(O_{P_{n+1}}(\Delta)\right) \cup \pi_{n+1}^{-1}\left(O_{P_{n+1}}(\Delta')\right) \subset T^{-\lambda} q_n^{-1}(Q_n)$.
So $h_\lambda(\Delta)\cup h_\lambda(\Delta') \subset Q_n$ by the second condition of $h_\lambda$.
This implies 
\[  g_{n,\lambda-b}*g'_n(h_\lambda(u)) = g'_n(h_\lambda(u)) = g_{n,\lambda-b'}*g'_n(h_\lambda(u)). \]
\end{proof}

\begin{claim}  \label{claim: covering number of tilde{g}_{n+1}}
Set $\tilde{g}_{n+1}(u) = \left(\tilde{g}_{n+1,0}(u), \tilde{g}_{n+1,1}(u), \dots, \tilde{g}_{n+1,N_{n+1}-1}(u)\right)$.
Then for $0<\varepsilon < \varepsilon_n$
\[  \#\left(\tilde{g}_{n+1}(P_{n+1}), \norm{\cdot}_{N_{n+1}}, \varepsilon\right) < 
      4^{N_{n+1}} \left(\frac{1}{\varepsilon}\right)^{\left(D+\frac{2}{n}\right) N_{n+1}}. \]
\end{claim}

\begin{proof}
This is similar to the proof of Claim \ref{claim: estimate of f_n(X)}.
Let $\Delta\subset P_{n+1}$ be a simplex.
For $b\in \mathbb{Z}\cap [0,N_{n+1}-N_n]$, a discrete interval $J = \{b, b+1, \dots, b+N_n-1\}$ is said to be 
\textbf{good for} $\Delta$ if there exist $x\in \pi_{n+1}^{-1}\left(O_{P_{n+1}}(\Delta)\right)$ and $a\in \mathbb{Z}$
such that $b\equiv a \, (\mathrm{mod} N_n)$ and $(b-1,b+N_n)\subset I_{\varphi_n}(x,a)$.
This condition implies that every $y\in \pi_{n+1}^{-1}\left(O_{P_{n+1}}(\Delta)\right)$ satisfies 
$(b-3/4,b+N_n-1/4)\subset I_{\varphi_n}(y,a)$ by (\ref{eq: P_{n+1} is sufficiently fine})
and the first and second conditions of $\tau_{n+1}$.
In particular for every $\lambda\in J$ and $y\in \pi_{n+1}^{-1}\left(O_{P_{n+1}}(\Delta)\right)$
\[   q_n(T^\lambda y) = q_n(T^b y)  = \pi_n(T^b y) \in P_n. \]
Then the second and third conditions of $h_\lambda$ imply $h_\lambda(u) = h_b(u) \in P_n$ on $u\in \Delta$ and $\lambda\in J$.
Hence for $u\in \Delta$
\[  \left(\tilde{g}_{n+1,\lambda}(u) \right)_{\lambda\in J} = g_n\left(h_b(u)\right) \in g_n(P_n) \subset V^{N_n}. \]
As a conclusion, if $J$ is good for $\Delta$ then $\left(\tilde{g}_{n+1,\lambda}(u) \right)_{\lambda\in J} \in g_n(P_n)$ for all 
$u\in \Delta$.

Let $\mathcal{J}_\Delta$ be the union of $J = \{b,b+1,\dots, b+N_n-1\} \subset [N_{n+1}]$ which are good for 
$\Delta$.
The number of possibilities of $\mathcal{J}_\Delta$ (when $\Delta\subset P_{n+1}$ varies) is bounded by $2^{N_{n+1}}$.
Then $\#\left(\tilde{g}_{n+1}(P_{n+1}), \norm{\cdot}_{N_{n+1}}, \varepsilon\right)$ is bounded by 
\begin{equation*}
   \begin{split}
     &2^{N_{n+1}} \times \underbrace{\#\left(g_n(P_n),\norm{\cdot}_{N_n},\varepsilon\right)^{\frac{N_{n+1}}{N_n}}}_{\text{contribution 
     over $\mathcal{J}_\Delta$}}  \\
    & \times \underbrace{\#\left(\bigcup_{\lambda\in [N_n]} g_{n,\lambda}*g'_n(P_n*Q_n), \norm{\cdot},\varepsilon\right)^{2N_n
     \sup_{x\in \mathcal{X}} |\partial_{\varphi_n}(x)\cap [0,N_{n+1}]|+2N_n}}_{\text{contribution over $[N_{n+1}]\setminus \mathcal{J}_\Delta$}}.
   \end{split}
\end{equation*}
We have 
\[  \#\left(\bigcup_{\lambda\in [N_n]} g_{n,\lambda}*g'_n(P_n*Q_n), \norm{\cdot},\varepsilon\right)  \leq 
     N_n \left(\frac{1}{\varepsilon}\right)^{A(P_n*Q_n)}. \]
Then we get the claim by Condition \ref{condition: dynamical PS theorem} (2) and (3) for 
$g_n$ and $\varphi_n$.
\end{proof}

By Lemma \ref{lemma: preparations on linear maps} (1) and $\dim P_{n+1} < \left(D+\frac{1}{n+1}\right) N_{n+1}$, 
we can take $0<\varepsilon_{n+1}<\varepsilon_n/2$ such that 
for any linear map $g:P_{n+1}\to B_1^\circ(V)^{N_{n+1}}$ 
\[  \#\left(g(P_{n+1}),\norm{\cdot}_{N_{n+1}},\varepsilon\right) < \left(\frac{1}{\varepsilon}\right)^{\left(D+\frac{1}{n+1}\right) N_{n+1}}
     \quad (0<\varepsilon < \varepsilon_{n+1}).  \]

By lemma \ref{lemma: preparations on linear maps} (2) and the above (\ref{eq: tilde{g}' approximate f_n}) and
(\ref{eq: tilde{g}_{n+1,lambda} approximate f_n}), 
we can find linear embeddings $g'_{n+1}:Q_{n+1}\to B_1^\circ (V)$ and 
$g_{n+1,\lambda}:P_{n+1}\to B_1^\circ(V)$, $\lambda\in [N_{n+1}]$, such that 
they satisfy Condition \ref{condition: dynamical PS theorem} (1) and for any $x\in \mathcal{X}$ and $u\in P_{n+1}$
\begin{equation}  \label{eq: g and g' approximate f_n in dynamical PS theorem}
  \begin{split}
    \norm{g'_{n+1}(\pi'_{n+1}(x)) - f_n(x)} < \min\left(\frac{\varepsilon_n}{8}, \frac{\delta_n}{2}\right), \\
    \norm{g_{n+1, \lambda}(\pi_{n+1}(x)) - f_n(T^\lambda x)} <   \min\left(\frac{\varepsilon_n}{8}, \frac{\delta_n}{2}\right), 
  \end{split}
\end{equation}
\begin{equation} \label{eq: g is close to g' in dynamical PS theorem}
  \norm{g_{n+1,\lambda}(u) - \tilde{g}_{n+1,\lambda}(u)} < \frac{\varepsilon_{n+1}}{4}. 
\end{equation}
It follows from (\ref{eq: g is close to g' in dynamical PS theorem}) and Claim \ref{claim: covering number of tilde{g}_{n+1}}
that for $\varepsilon_{n+1} \leq \varepsilon < \varepsilon_n$
\begin{equation*}
   \begin{split}
     \#\left(g_{n+1}(P_{n+1}), \norm{\cdot}_{N_{n+1}}, \varepsilon \right) & \leq 
     \#\left(\tilde{g}_{n+1}(P_{n+1}), \norm{\cdot}_{N_{n+1}}, \varepsilon-\frac{\varepsilon_{n+1}}{2}\right) \\
     & \leq  \#\left(\tilde{g}_{n+1}(P_{n+1}), \norm{\cdot}_{N_{n+1}}, \frac{\varepsilon}{2}\right) \\
     & <   4^{N_{n+1}} \left(\frac{2}{\varepsilon}\right)^{\left(D+\frac{2}{n}\right) N_{n+1}}. 
   \end{split}  
\end{equation*}
From the choice of $\varepsilon_{n+1}$, for $0<\varepsilon<\varepsilon_{n+1}$
\[  \#\left(g_{n+1}(P_{n+1}),\norm{\cdot}_{N_{n+1}},\varepsilon\right) < \left(\frac{1}{\varepsilon}\right)^{(D+\frac{1}{n+1})N_{n+1}}. \]
Hence $g_{n+1,\lambda}$ satisfy Condition \ref{condition: dynamical PS theorem} (2).

From Lemma \ref{lemma: dynamical tiling} we can choose 
a continuous function $\varphi_{n+1}:\mathcal{X}\to [0,1]$ satisfying 
Condition \ref{condition: dynamical PS theorem} (3).
From (\ref{eq: g and g' approximate f_n in dynamical PS theorem}), $f_{n+1}$ satisfies Condition \ref{condition: dynamical PS theorem} (5).
Since $g_{n+1,\lambda}$ and $g'_{n+1}$ satisfy Condition \ref{condition: dynamical PS theorem} (1),
$f_{n+1}$ is a $1/(n+1)$-embedding with respect to $d$.
Since ``$1/(n+1)$-embedding'' is an open condition, we can choose $\delta_{n+1}>0$ satisfying 
Condition \ref{condition: dynamical PS theorem} (4).

We have established all the data for the $(n+1)$-th step.
\end{proof}

\section{Example: algebraic actions}  \label{section: example: algebraic actions}

We study an example in this section.
Probably the example below can be more generalized (e.g. more general group actions), 
but we restrict ourselves to a simple case because 
our purpose here is just to illustrate the concepts studied in the paper.
We plan to study more examples in future works.

Set $\mathbb{T} = \mathbb{R}/\mathbb{Z}$.
Let $r>0$ be an integer and consider the shift $\sigma :(\mathbb{T}^r)^{\mathbb{Z}}\to (\mathbb{T}^r)^{\mathbb{Z}}$
on the alphabet $\mathbb{T}^r = \mathbb{R}^r/\mathbb{Z}^r$.
This becomes a compact Abelian group under the component-wise addition.
A subset $\mathcal{X}\subset \left(\mathbb{T}^r\right)^{\mathbb{Z}}$ is called an \textbf{algebraic action} if 
it is a $\sigma$-invariant closed subgroup\footnote{Since $r$ is finite, this is more restricted than in the literatures 
\cite{Schmidt, Li--Liang}. They consider automorphisms of general compact Abelian groups.  
But we study only the restricted class here for simplicity.}.
Equivalently \cite[Definitions 3.7 and 4.1, Theorems 3.8 and 4.2]{Schmidt}, 
a subset $\mathcal{X}\subset \left(\mathbb{T}^r\right)^{\mathbb{Z}}$ is an algebraic action if and only 
if there exist a positive integer $a$ and a closed subgroup $H\subset \left(\mathbb{T}^r\right)^a$
such that 
\[  \mathcal{X} = \left\{ (x_n)_{n\in \mathbb{Z}}\in (\mathbb{T}^r)^\mathbb{Z}\middle|\, 
     (x_n,x_{n+1},\dots, x_{n+a-1}) \in H \> (\forall n\in \mathbb{Z})\right\}. \]

We define metrics $\rho$ and $\rho_r$ on $\mathbb{T}$ and $\mathbb{T}^r$ respectively by 
\[  \rho(t,t') = \min_{n\in \mathbb{Z}} |t-t'-n|, \]
\[ \rho_r\left((t_1,\dots, t_r), (t'_1,\dots, t'_r)\right) = \max_{1\leq i\leq r} \rho(t_i,t'_i). \]
We define a metric $d$ on $(\mathbb{T}^r)^\mathbb{Z}$ by 
\[ d(x,y) = \sum_{n\in \mathbb{Z}} 2^{-|n|} \rho_r(x_n, y_n), \quad (x_n,y_n\in \mathbb{T}^r). \]
Later we will use the fact that $d$ is \textit{homogeneous}, namely it is invariant under the addition
\[  d(x+z, y+z) = d(x,y), \quad \left(x,y,z\in \left(\mathbb{T}^r\right)^{\mathbb{Z}} \right). \]

For $N>0$ we denote by $\pi_N:(\mathbb{T}^r)^\mathbb{Z}\to (\mathbb{T}^r)^N$ the projection to the 
$\{0,1,2,\dots, N-1\}$-coordinates:
\[  \pi_N(x)= (x_0,\dots,x_{N-1}). \]
Let $\mathcal{X}\subset (\mathbb{T}^r)^{\mathbb{Z}}$ be an algebraic action.
Following Gromov \cite[\S 1.9]{Gromov} we define the \textbf{projective dimension} of $\mathcal{X}$ by 
\[  \mathrm{prodim}(\mathcal{X}) = \lim_{N\to \infty} \frac{\dim \pi_N(\mathcal{X})}{N}. \]
Here $\dim \pi_N(\mathcal{X})$ is the topological dimension of $\pi_N(\mathcal{X})$.
This limit always exists because $\dim \pi_N(\mathcal{X})$ is subadditive in $N$.
(Note that, a priori, the projective dimension may depend on the way of the embedding $\mathcal{X}\subset (\mathbb{T}^r)^\mathbb{Z}$.
So the notation $\mathrm{prodim}(\mathcal{X})$ might be misleading.
But we use it for simplicity.)

Li--Liang \cite[Theorem 4.1, Theorem 7.2]{Li--Liang} proved:
\begin{equation}  \label{eq: mean dimension of algebraic actions}
  \mdim(\mathcal{X},\sigma) = \mdim_{\mathrm{M}}(\mathcal{X},\sigma,d) = \mathrm{prodim}(\mathcal{X}).
\end{equation}
Indeed they proved more general results.
But we stick to this simple case.
Since mean Hausdorff dimension is bounded between mean dimension and metric mean dimension 
(Proposition \ref{prop: mean Hausdorff dimension dominates mean dimension}), we also have 
\[  \mdim_{\mathrm{H}}(\mathcal{X}, \sigma, d) = \mathrm{prodim}(\mathcal{X}). \]

The purpose of this section is to show:

\begin{proposition}  \label{prop: rate distortion dimension of algebraic actions}
Let $\mathcal{X}\subset (\mathbb{T}^r)^{\mathbb{Z}}$ be an algebraic action and $\mu$ the normalized Haar measure on it
($\mu(\mathcal{X}) = 1$). Then 
\[  \rdim(\mathcal{X},\sigma,d,\mu) = \mathrm{prodim}(\mathcal{X}). \]
Hence the rate distortion dimension with respect to the Haar measure coincides with the mean dimension, mean Hausdorff dimension and 
metric mean dimension.
\end{proposition}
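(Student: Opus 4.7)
The upper bound $\overline{\rdim}(\mathcal{X},\sigma,d,\mu)\leq \mathrm{prodim}(\mathcal{X})$ is immediate from Proposition~\ref{prop: mean Hausdorff dimension dominates mean dimension} and the Li--Liang identity~\eqref{eq: mean dimension of algebraic actions}. The heart of the proof is the matching lower bound $\underline{\rdim}(\mathcal{X},\sigma,d,\mu) \geq p := \mathrm{prodim}(\mathcal{X})$, which I plan to obtain from Lemma~\ref{lemma: L^1 metric and rate distortion dimension}. Concretely, I will verify that for every $s<p$ there is $\delta_s>0$ with $\mu(E)\leq\diam(E,\bar d_N)^{sN}$ whenever $\diam(E,\bar d_N)<\delta_s$, uniformly in $N\geq 1$; letting $s\uparrow p$ finishes the proof.

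Write $G_N=\pi_N(\mathcal{X})$, $k_N=\dim G_N$, and let $V_N\subset\mathbb{R}^{rN}$ be the Lie algebra of the identity component of $G_N$, a rational $k_N$-dimensional subspace, so that $L_N:=V_N\cap\mathbb{Z}^{rN}$ is a full-rank lattice in $V_N$. Since $L_N\subset\mathbb{Z}^{rN}$, any integral basis of $L_N$ has an integer Gram determinant, whence $\mathrm{covol}(L_N)\geq 1$. Translation invariance of $\mu$ gives $(\pi_N)_*\mu=\mathrm{Haar}_{G_N}$, and keeping only the $k=0$ term in the definition of $d$ yields the pointwise inequality $\bar d_N(x,y)\geq \tfrac{1}{N}\tilde\rho_N(\pi_N x,\pi_N y)$, where $\tilde\rho_N(u,v):=\sum_{n=0}^{N-1}\rho_r(u_n,v_n)$ is the $\ell^1$-metric on $(\mathbb{T}^r)^N$. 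In particular, for any $E\subset\mathcal{X}$ of $\bar d_N$-diameter $\varepsilon$, the set $\pi_N(E)\subset G_N$ has $\tilde\rho_N$-diameter at most $N\varepsilon$, and $\mu(E)\leq\mathrm{Haar}_{G_N}(\pi_N(E))$.

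The key geometric ingredient is the cross-polytope slice inequality
\[
\mathrm{vol}_k(V\cap B^{\ell^1}(0,1))\leq \frac{2^k}{k!},\qquad V\subset\mathbb{R}^n \text{ a $k$-dimensional subspace,}
\]
with equality attained on coordinate subspaces. Combined with $\mathrm{covol}(L_N)\geq 1$ and Stirling's formula, this produces the uniform-in-$N$ scaling
\[
\mathrm{Haar}_{G_N}\bigl(B^{\tilde\rho_N}(0,t)\bigr)\leq\frac{(2t)^{k_N}}{k_N!}\leq\Bigl(\frac{2et}{k_N}\Bigr)^{k_N}\qquad(t<1/2).
\]
Applied with $t=N\varepsilon$, this reads $\mu(E)\leq(2e\varepsilon/(k_N/N))^{k_N}$. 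The sequence $k_N$ is subadditive in $N$ (as $\pi_{N_1+N_2}(\mathcal{X})\subset \pi_{N_1}(\mathcal{X})\times\pi_{N_2}(\mathcal{X})$ by shift-invariance), so Fekete's lemma forces $k_N/N\geq p$ for every $N$. For $\varepsilon<p/(2e)$ the base $2e\varepsilon/(k_N/N)\leq 2e\varepsilon/p$ is strictly less than $1$, and using $k_N\geq pN$ one gets
\[
\mu(E)\leq \bigl(2e\varepsilon/p\bigr)^{pN}.
\]
For any $s<p$ the choice $\delta_s:=(p/(2e))^{p/(p-s)}$ then guarantees $(2e\varepsilon/p)^{pN}\leq\varepsilon^{sN}$ whenever $\varepsilon<\delta_s$, uniformly in $N$. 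Lemma~\ref{lemma: L^1 metric and rate distortion dimension} yields $\underline{\rdim}(\mathcal{X},\sigma,d,\mu)\geq s$, and letting $s\uparrow p$ completes the argument.

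The main technical obstacle is the cross-polytope section bound with factorial denominator $k_N!$: the weaker $\ell^2$-ball slice estimate $\mathrm{vol}_k(V\cap B^{\ell^2}(0,1))\leq\omega_k\leq(2\pi e/k)^{k/2}$ is not sharp enough, as the radius $\delta$ it produces shrinks like a negative power of $N$ and prevents a uniform application of Lemma~\ref{lemma: L^1 metric and rate distortion dimension}. The factorial denominator is precisely what Stirling converts into the per-dimension geometric rate $e/k_N$, which in turn absorbs the combinatorial blow-up $N^{k_N}$ arising in the passage from $\bar d_N$ to $\tilde\rho_N$.
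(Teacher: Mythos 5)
Your upper bound and your overall strategy for the lower bound (establish a uniform‐in‐$N$ scaling estimate $\mu(E)\le\diam(E,\bar d_N)^{sN}$ and feed it into Lemma~\ref{lemma: L^1 metric and rate distortion dimension}) coincide with the paper's. The way you produce the scaling estimate, however, is genuinely different and has a gap.

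The paper's route is indirect: homogeneity of $d$ implies all $\bar d_N$-balls of a given radius have the same $\mu$-measure, a single ball's measure is bounded by the reciprocal of an $\varepsilon$-separated set count, and the separating number is bounded below in Lemma~\ref{lemma: separating number of algebraic action} by combining the covering-number estimate of Lemma~\ref{lemma: widim and covering number of compact Abelian group} (built on the Pontryagin-duality right inverse of Claim~\ref{claim:  existence of inverse}) with the $\bar d_N$-versus-$d_N$ comparison of Remark~\ref{remark: metric mean dimension and tame growth of covering numbers}; the latter is where tame growth of covering numbers enters. You instead try to estimate $\mathrm{Haar}_{G_N}$ of a $\tilde\rho_N$-ball directly by lifting to $V_N$, using $\mathrm{covol}(L_N)\ge1$ (correct and a nice observation) and the cross-polytope slice inequality.

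The direct computation does not close. You assert $\mathrm{Haar}_{G_N}(B^{\tilde\rho_N}(0,t))\le (2t)^{k_N}/k_N!$ under the restriction $t<1/2$, and then substitute $t=N\varepsilon$. Lemma~\ref{lemma: L^1 metric and rate distortion dimension} requires a threshold $\delta$ that is uniform in $N$, but the constraint $N\varepsilon<1/2$ fails as soon as $N>1/(2\varepsilon)$ for any fixed $\varepsilon$; your choice $\delta_s=(p/(2e))^{p/(p-s)}$ at the end does not track this constraint, so the estimate is established only for boundedly many $N$ and the lemma cannot be applied. Beyond the range issue, the Haar bound itself accounts only for the lift of the ball through the origin of $V_N$: the preimage of the torus ball in a fundamental domain of $L_N$ is a union of slices $V_N\cap B^{\tilde\rho_N^{\mathbb{R}}}(m,t)$ over the many $m\in\mathbb{Z}^{rN}\setminus L_N$ that $V_N$ passes close to, and these extra contributions are not controlled by a single cross-polytope section (you also silently replace the $\ell^1(\ell^\infty)$ metric $\tilde\rho_N$ by the pure $\ell^1$ norm, losing a factor $r^{k_N}$). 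To salvage this approach you would need a Haar estimate that holds for $t$ comparable to $N$ and sums all nearby integer translates, which is not supplied; the paper's argument sidesteps both issues because the covering-number lower bound in Lemma~\ref{lemma: widim and covering number of compact Abelian group} only needs a single ball of radius $1/4$ in $V_N$, and the $L^1/L^\infty$ passage is handled separately via Remark~\ref{remark: metric mean dimension and tame growth of covering numbers}.
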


Therefore algebraic actions provide natural examples where all the dynamical dimensions studied in this paper 
coincide with each other.

In the sequel, we also include the proof of (\ref{eq: mean dimension of algebraic actions})
for the completeness. The idea of the proof is the same as \cite{Li--Liang}.

The following lemma is a key estimate \cite[Lemma 4.2]{Li--Liang}.

\begin{lemma}  \label{lemma: widim and covering number of compact Abelian group}
Let $A$ be a compact Abelian group with a metric $\boldsymbol{d}$ and 
$f:A\to \mathbb{T}^r$ a continuous homomorphism satisfying $\rho_r(f(x),f(y)) \leq \boldsymbol{d}(x,y)$.
Then for any $0<\varepsilon<1/4$
  \begin{equation}  \label{eq: widim of compact Abelian group}
    \widim_\varepsilon(A,\boldsymbol{d}) \geq \dim f(A),
  \end{equation}
  \begin{equation}  \label{eq: covering number of compact Abelian group}
    \#(A,\boldsymbol{d},\varepsilon) \geq \left(\frac{1}{4\varepsilon}\right)^{\dim f(A)}.
  \end{equation}
  Here $\dim f(A)$ is the topological dimension of $f(A)$.
\end{lemma}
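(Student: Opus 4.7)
The plan is to factor $f$ through a $1$-Lipschitz continuous surjective homomorphism $h : A \to \mathbb{T}^k$ with $k = \dim f(A)$, and then to establish (\ref{eq: covering number of compact Abelian group}) by pushing a small-diameter cover of $A$ forward to $\mathbb{T}^k$ and counting with Haar measure, and to establish (\ref{eq: widim of compact Abelian group}) by pulling an $\varepsilon$-embedding of $A$ back along a continuous local section of $h$ and invoking Gromov's widim lower bound for balls (Example \ref{example: widim and covering number of ball}).

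For the reduction, set $B := f(A) \subset \mathbb{T}^r$, a closed subgroup of dimension $k$. Its Pontryagin dual $B^\vee$ is finitely generated of torsion-free rank $k$, and is generated by the restrictions $e_i|_B$ of the $r$ coordinate characters of $\mathbb{T}^r$ (since $\mathbb{Z}^r \to B^\vee$ dual to $B \hookrightarrow \mathbb{T}^r$ is surjective). Hence I can pick indices $i_1,\ldots,i_k$ so that $e_{i_1}|_B,\ldots,e_{i_k}|_B$ are $\mathbb{Q}$-linearly independent in $B^\vee$; the corresponding coordinate projection $p : \mathbb{T}^r \to \mathbb{T}^k$ restricts to a surjective homomorphism $B \twoheadrightarrow \mathbb{T}^k$ (its dual $\mathbb{Z}^k \to B^\vee$ being injective), and is $1$-Lipschitz for the max-metrics, so $h := p \circ f$ has all the required properties.

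For (\ref{eq: covering number of compact Abelian group}), if $A = U_1 \cup \cdots \cup U_n$ with $\diam(U_i,\boldsymbol{d}) < \varepsilon$, pick $x_i \in U_i$; since $h$ is $1$-Lipschitz and surjective, $\mathbb{T}^k = h(A) \subset \bigcup_i B_{\rho_k}(h(x_i),\varepsilon)$, and taking the normalized Haar measure on $\mathbb{T}^k$ gives $1 \le n(2\varepsilon)^k$ for $\varepsilon<1/2$, whence $n \ge (2\varepsilon)^{-k} \ge (4\varepsilon)^{-k}$. For (\ref{eq: widim of compact Abelian group}), I would use the classical fact that a surjective continuous homomorphism from a compact group onto a Lie group is a locally trivial principal bundle (with fibers the cosets of $\ker h$), producing a continuous section $s : U \to A$ over the contractible open ball $U := B_{\rho_k}(0,1/4)$. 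Given any $\varepsilon$-embedding $\phi : A \to P$ with $\dim P = n$, set $\psi := \phi \circ s : U \to P$. If $\psi(y_1) = \psi(y_2)$, then $\phi(s(y_1)) = \phi(s(y_2))$ forces $\boldsymbol{d}(s(y_1),s(y_2)) < \varepsilon$, and the $1$-Lipschitzness of $h$ gives $\rho_k(y_1,y_2) = \rho_k(h(s(y_1)),h(s(y_2))) \le \boldsymbol{d}(s(y_1),s(y_2)) < \varepsilon$; so $\psi$ is an $\varepsilon$-embedding of $(U,\rho_k)$. Since $U$ is isometric to the open $\ell^\infty$-ball of radius $1/4$ in $\mathbb{R}^k$ and contains closed $\ell^\infty$-balls of every radius $r<1/4$, Example \ref{example: widim and covering number of ball} gives $\widim_\varepsilon(U,\rho_k) \ge k$ for $0<\varepsilon<1/4$, and therefore $n \ge k$.

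The main obstacle is the local-section existence used in the widim step. When $A$ is itself a Lie group this is classical, but in general $A$ can be an arbitrary compact abelian group (e.g.\ a solenoid), and one must appeal to local triviality of $G \to G/K$ for $G$ compact Hausdorff and $K$ closed. A route that avoids this general fact is to pull $h$ back along the universal cover $\mathbb{R}^k \to \mathbb{T}^k$: the resulting extension $\tilde A \to \mathbb{R}^k$ with compact abelian kernel $\ker h$ splits continuously (since $\mathbb{R}^k$ is contractible and the extension is abelian), and composing a section over a fundamental-domain-sized ball in $\mathbb{R}^k$ with $\tilde A \to A$ yields the desired local section of $h$.
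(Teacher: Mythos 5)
Your approach is genuinely different from the paper's. You push $A$ \emph{down} onto $\mathbb{T}^k$ via a $1$-Lipschitz surjective homomorphism $h=p\circ f$ (with $p$ a well-chosen coordinate projection), whereas the paper lifts the universal cover $\pi|_V:V\to f(A)$, $V\cong\mathbb{R}^k$, \emph{up} to $A$ via a continuous homomorphism $g:V\to A$ with $f\circ g=\pi|_V$. Your construction of $p$, by choosing $\mathbb{Q}$-linearly independent restricted characters and dualizing, is correct, and your covering-number argument is clean and in fact a small simplification over the paper's: it needs only the $1$-Lipschitz surjection $h$ and Haar measure on $\mathbb{T}^k$, with no section at all, whereas the paper routes both estimates through the homomorphism $g$.

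The gap is in the widim estimate, where you need a continuous local section of $h$, and neither of your two justifications really lands. Route (a) --- local triviality of $G\to G/K$ when $G$ is compact Hausdorff and $G/K$ is a Lie group --- is a genuine theorem (it goes back to Gleason and Mostert, using the approximation of compact groups by Lie quotients), but it is a substantial structural result, not something to merely ``appeal to'' in a self-contained proof. Route (b) --- pulling back to an extension $0\to\ker h\to\tilde A\to\mathbb{R}^k\to 0$ and declaring it splits ``since $\mathbb{R}^k$ is contractible and the extension is abelian'' --- does not work as stated: contractibility of the base yields a section only if the map is already known to be a fiber bundle, which is exactly what you are trying to avoid proving, and abelian extensions need not split (e.g.\ $0\to\mathbb{Z}\to\mathbb{Z}\to\mathbb{Z}/2\to0$). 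The correct reason the desired lift $g:\mathbb{R}^k\to A$ of the covering map $\mathbb{R}^k\to\mathbb{T}^k$ exists is essentially the paper's Pontryagin-duality computation: dualizing turns the lifting problem into extending the inclusion $\mathbb{Z}^k\hookrightarrow\mathbb{R}^k$ along the injection $\hat{h}:\mathbb{Z}^k\hookrightarrow\hat{A}$; since $\hat{A}$ is \emph{discrete} ($A$ is compact), continuity is automatic, and the algebraic extension exists because $\mathbb{R}^k$ is divisible, hence an injective $\mathbb{Z}$-module (the paper phrases this via tensoring with $\mathbb{R}$ and flatness). Replacing your local-section discussion with this dual extension argument would complete the widim half --- and, for that half, brings your route back into line with the paper's.
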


\begin{proof}
We can assume that $f(A)$ is connected. (If it is not, we replace $A$ with the inverse by $f$ of the connected component of 
$f(A)$ through the origin.)
Let $\pi:\mathbb{R}^r\to \mathbb{T}^r$ be the natural covering map and set $V = \pi^{-1}(f(A))$.
$V$ is a subvector space of $\mathbb{R}^r$ of dimension $\dim f(A)$.
We consider the $\ell^\infty$-norm $\norm{\cdot}_\infty$ on $\mathbb{R}^r$.

\begin{claim} \label{claim:  existence of inverse}
  There exists a continuous homomorphism $g:V\to A$ satisfying $f\circ g = \pi|_V$.
\end{claim}

\begin{proof}
Let $M$ be the Pontyagin dual of $f(A)$.
The dual group $\hat{V}$ of $V$ (we denote Pontragin duality by hat) is identified with $M\otimes \mathbb{R}$
and $(\pi|_V)\hat{}(m) = m\otimes 1$ for $m\in M$.
It is enough to construct a homomorphism $h:\hat{A}\to M\otimes \mathbb{R}$ satisfying 
$h\circ \hat{f} (m) = m\otimes 1$ for $m\in M$. 
(Note that every homomorphism defined on $\hat{A}$ automatically becomes continuous because its topology is discrete.)

Since the map $\hat{f}:M\to \hat{A}$ is injective,
\[  \hat{f}\otimes \mathrm{id}: M\otimes \mathbb{R} \to \hat{A}\otimes \mathbb{R}  \]
is also injective.
Take an $\mathbb{R}$-linear map $\varphi:\hat{A}\otimes \mathbb{R} \to M\otimes \mathbb{R}$
satisfying $\varphi \circ (\hat{f}\otimes \mathrm{id}) = \mathrm{id}_{M\otimes \mathbb{R}}$. 
Then the map $h:\hat{A}\to M\otimes \mathbb{R}$ defined by $h(a) = \varphi(a\otimes 1)$ satisfies 
$h\circ \hat{f}(m) = m\otimes 1$ for $m\in M$.
\end{proof}

Let $B_{1/4}(V)$ be the closed $1/4$-ball of $V$ around the origin.
Note that the map $\pi: (B_{1/4}(\mathbb{R}^r),\norm{\cdot}_\infty) \to (\mathbb{T}^r,\rho_r)$ is
an isometry.
So for $x,y\in B_{1/4}(V)$
\[  \norm{x-y}_\infty = \rho_r\left(f(g(x)), f(g(y)\right) \leq \mathbf{d}\left(g(x), g(y)\right). \]
This implies 
\[  \widim_\varepsilon\left(B_{1/4}(V), \norm{\cdot}_\infty\right) \leq \widim_\varepsilon(A,\mathbf{d}). \]
We have $\widim_\varepsilon \left(B_{1/4}(V),\norm{\cdot}_\infty \right) = \dim V = \dim f(A)$ for $0<\varepsilon <1/4$
by (\ref{eq: widim of ball}) in Example 
\ref{example: widim and covering number of ball} in \S \ref{subsection: topological and metric mean dimensions}.
This shows (\ref{eq: widim of compact Abelian group}).
We can prove (\ref{eq: covering number of compact Abelian group}) in the same way
by using (\ref{eq: covering number of ball}) in Example \ref{example: widim and covering number of ball}.
\end{proof}

\begin{proof}[Proof of (\ref{eq: mean dimension of algebraic actions})]
Let $\mathcal{X}\subset \left(\mathbb{T}^r\right)^{\mathbb{Z}}$ be an algebraic action.
First we prove $\mdim(\mathcal{X}, \sigma) \geq \mathrm{prodim}(\mathcal{X})$.
Consider the projection $\pi_N: \mathcal{X}\to \left(\mathbb{T}^r\right)^N = \mathbb{T}^{rN}$.
This satisfies $\rho_{rN}\left(\pi_N(x),\pi_N(y)\right) \leq d_N(x,y)$.
So we can use Lemma \ref{lemma: widim and covering number of compact Abelian group} and get 
\[ \widim_\varepsilon(\mathcal{X},d_N) \geq \dim \pi_N(\mathcal{X}), \quad (0<\varepsilon <1/4). \]
Divide this by $N$ and take the limits with respect to $N$ and then $\varepsilon$.
We get $\mdim(\mathcal{X},\sigma) \geq \mathrm{prodim}(\mathcal{X})$.

Next we prove $\overline{\mdim}_{\mathrm{M}}(\mathcal{X},\sigma,d) \leq \mathrm{prodim}(\mathcal{X})$.
This completes the proof of (\ref{eq: mean dimension of algebraic actions}) because metric mean dimension always
dominates mean dimension (Theorem \ref{theorem: metric mean dimension dominates mean dimension}).
Notice that the following map is an isometric embedding
\begin{equation*}
   \begin{split}
    \left(\pi_{M+N}(\mathcal{X}), \rho_{r(M+M)}\right) 
    &\to \left(\pi_M(\mathcal{X}) \times \pi_N(\mathcal{X}), \rho_{rM}\times \rho_{rN}\right), \\
     \pi_{M+N}(x) &\mapsto \left(\pi_M(x), \pi_N(\sigma^M x)\right),
   \end{split}
\end{equation*}     
where the metric of the right-hand side is given by 
\[   \rho_{rM}\times \rho_{rN}\left((x,y), (z,w)\right) = \max\left(\rho_{rM}(x,z), \rho_{rN}(y,w)\right). \]
It follows that $\#\left(\pi_N(\mathcal{X}), \rho_{rN},\varepsilon\right)$ is subadditive in $N$ and hence 
for any $\varepsilon >0$
\begin{equation}  \label{eq: subadditivity of covering numbers of projections}
   \lim_{N\to \infty} \frac{\log \#(\pi_N(\mathcal{X}),\rho_{rN},\varepsilon)}{N}
   =  \inf_{N>0} \frac{\log \#(\pi_N(\mathcal{X}),\rho_{rN},\varepsilon)}{N}.
\end{equation}

For $A\subset \mathbb{R}$, let $\pi_A:\mathcal{X} \to \left(\mathbb{T}^r\right)^{A\cap \mathbb{Z}}$ be
the projection to $A\cap \mathbb{Z}$-coordinates.
Let $\varepsilon >0$ and take $L=L(\varepsilon)>0$ satisfying $\sum_{|n|>L} 2^{-|n|} < \varepsilon/4$.
Then 
\begin{equation*}
    \begin{split}
     \#(\mathcal{X}, d_N,\varepsilon) &\leq \#(\pi_{[-L,N+L]}(\mathcal{X}), \rho_{r(N+2L+1)}, \varepsilon/4) \\
     &= \#(\pi_{N+2L+1}(\mathcal{X}), \rho_{r(N+2L+1)}, \varepsilon/4).
    \end{split}
\end{equation*}    
Noting the above (\ref{eq: subadditivity of covering numbers of projections}), we get 
\begin{equation*}
   \begin{split}
     S(\mathcal{X},\sigma,d,\varepsilon) & = \lim_{N\to \infty} \frac{\log  \#(\mathcal{X}, d_N,\varepsilon)}{N}  \\
     & \leq  \lim_{N\to \infty} \frac{\log \#(\pi_{N+2L+1}(\mathcal{X}), \rho_{r(N+2L+1)}, \varepsilon/4)}{N} \\
     & =  \inf_{N>0} \frac{\log \#(\pi_N(\mathcal{X}),\rho_{rN},\varepsilon/4)}{N}.
   \end{split}
\end{equation*}
Thus for any $N>0$
\begin{equation*}
   \begin{split}
     \overline{\mdim}_{\mathrm{M}}(\mathcal{X},\sigma,d) 
    & = \limsup_{\varepsilon \to 0} \frac{S(\mathcal{X},\sigma,d,\varepsilon)}{\log(1/\varepsilon)}  \\
    &  \leq \frac{1}{N} \limsup_{\varepsilon\to 0} \frac{\log \#(\pi_N(\mathcal{X}),\rho_{rN},\varepsilon/4)}{\log(1/\varepsilon)}.
   \end{split}
\end{equation*}      
$\pi_N(\mathcal{X})$ is a closed subgroup of $\mathbb{T}^{rN}$ and hence a smooth submanifold.
Then the upper Minkowski dimension 
\[   \overline{\dim}_{\mathrm{M}}\left(\pi_N(\mathcal{X}), \rho_{rN}\right) 
    = \limsup_{\varepsilon \to 0} \frac{\log\#(\pi_N(\mathcal{X}), \rho_{rN}, \varepsilon)}{\log(1/\varepsilon)} \]
is equal to the topological dimension $\dim \pi_N(\mathcal{X})$.    
Thus for any $N$
\[  \overline{\mdim}_{\mathrm{M}}(\mathcal{X},\sigma,d) \leq \frac{\dim \pi_N(\mathcal{X})}{N}. \]
Let $N\to \infty$. This proves $\overline{\mdim}_{\mathrm{M}}(\mathcal{X},\sigma,d) \leq \mathrm{prodim}(\mathcal{X})$.
\end{proof}

For any $N>0$ we define a distance $\bar{d}_N$ on $(\mathbb{T}^r)^{\mathbb{Z}}$ by (see \S \ref{subsection: L^1 mean Hausdorff dimension})
\[  \bar{d}_N(x,y) = \frac{1}{N} \sum_{n=0}^{N-1} d(\sigma^n x, \sigma^n y). \]

\begin{lemma}   \label{lemma: separating number of algebraic action}
Let $\mathcal{X} \subset (\mathbb{T}^r)^{\mathbb{Z}}$ be an algebraic action.
For any $0<\delta<1$ there exists $\varepsilon_0 = \varepsilon_0(\delta)>0$ such that 
for any $0<\varepsilon <\varepsilon_0$ and $N>0$ 
\[  \#_{\mathrm{sep}} (\mathcal{X},\bar{d}_N,\varepsilon) \geq 4^{-N} (1/\varepsilon)^{(1-\delta) \dim \pi_N(\mathcal{X})}. \]
Recall that $\#_{\mathrm{sep}} (\mathcal{X},\bar{d}_N,\varepsilon)$ is the maximum cardinality of 
$\{x_1,\dots, x_n\}\subset \mathcal{X}$ satisfying $\bar{d}_N(x_i,x_j) \geq \varepsilon$ for $i\neq j$.
\end{lemma}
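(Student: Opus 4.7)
The plan is to bound the covering number $\#(\mathcal{X},\bar{d}_N,3\varepsilon)$ from below and then pass to the separating number $\#_{\mathrm{sep}}(\mathcal{X},\bar{d}_N,\varepsilon)$ through \eqref{eq: covering and separating numbers}. The strategy combines two inputs: estimate \eqref{eq: covering number of compact Abelian group} of Lemma \ref{lemma: widim and covering number of compact Abelian group} applied to the projection $\pi_N$, and the inequality recalled in Remark \ref{remark: metric mean dimension and tame growth of covering numbers} from \cite[Lemma 25]{Lindenstrauss--Tsukamoto rate distortion}, which compares $\#(\mathcal{X},\bar{d}_N,\cdot)$ with $\#(\mathcal{X},d_N,\cdot)$ up to a correction expressed in terms of $\#(\mathcal{X},d,\cdot)$. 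That correction will be controlled using the tame growth of covering numbers for $(\mathcal{X},d)$ (Example \ref{example: tame growth of covering numbers}), which is precisely what allows us to absorb it into a loss of the form $\varepsilon^{-\delta D_N}$ for arbitrarily small $\delta>0$.

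Concretely, since $d(\sigma^n x,\sigma^n y)\geq \rho_r(x_n,y_n)$, the map $\pi_N:(\mathcal{X},d_N)\to(\pi_N(\mathcal{X}),\rho_{rN})$ is a $1$-Lipschitz continuous homomorphism of compact Abelian groups. Setting $D_N:=\dim\pi_N(\mathcal{X})$, estimate \eqref{eq: covering number of compact Abelian group} yields
\[
  \#(\mathcal{X},d_N,6L\varepsilon)\geq (24L\varepsilon)^{-D_N}
\]
whenever $L>1$ and $6L\varepsilon<1/4$. Substituting into the inequality of Remark \ref{remark: metric mean dimension and tame growth of covering numbers}, applied with $3\varepsilon$ in place of $\varepsilon$, gives
\[
  \log\#(\mathcal{X},\bar{d}_N,3\varepsilon)\geq D_N\log(1/\varepsilon)-D_N\log(24L)-N-\frac{N}{L}\log\#(\mathcal{X},d,3\varepsilon).
\]

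I would then take $L=L(\varepsilon):=\varepsilon^{-\delta/2}/24$. With this choice $D_N\log(24L)=\frac{\delta}{2}D_N\log(1/\varepsilon)$, and by the tame growth of $(\mathcal{X},d)$ one has $\varepsilon^{\delta/2}\log\#(\mathcal{X},d,3\varepsilon)\to 0$ as $\varepsilon\to 0$, so $L^{-1}\log\#(\mathcal{X},d,3\varepsilon)\leq 1$ for all $\varepsilon<\varepsilon_0(\delta)$ with $\varepsilon_0$ small enough (one also needs $\varepsilon_0<1$ and $\varepsilon_0<24^{-2/\delta}$ to ensure $L>1$ and $6L\varepsilon<1/4$). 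Substituting these estimates,
\[
  \log\#(\mathcal{X},\bar{d}_N,3\varepsilon)\geq \Bigl(1-\frac{\delta}{2}\Bigr)D_N\log(1/\varepsilon)-2N\geq (1-\delta)D_N\log(1/\varepsilon)-2N,
\]
and since $\varepsilon<3\varepsilon/2$, \eqref{eq: covering and separating numbers} gives $\#_{\mathrm{sep}}(\mathcal{X},\bar{d}_N,\varepsilon)\geq\#(\mathcal{X},\bar{d}_N,3\varepsilon)$. Exponentiating produces the claimed inequality $\#_{\mathrm{sep}}(\mathcal{X},\bar{d}_N,\varepsilon)\geq 4^{-N}(1/\varepsilon)^{(1-\delta)D_N}$.

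The only delicate step is this simultaneous balancing in the choice of $L$: it has to be large enough that the error $NL^{-1}\log\#(\mathcal{X},d,3\varepsilon)$ is absorbed into the $2N$ term, yet small enough that the multiplicative loss $(24L)^{D_N}$ on the principal term costs only a factor of $\varepsilon^{o(1)\cdot D_N}$. The tame growth of covering numbers for $d$ is exactly what makes both constraints compatible for every $\delta>0$.
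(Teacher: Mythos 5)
Your proposal is correct and follows essentially the same route as the paper: pass from separating to covering numbers via \eqref{eq: covering and separating numbers}, invoke the $L^1$-versus-$L^\infty$ covering number inequality of Remark \ref{remark: metric mean dimension and tame growth of covering numbers}, apply the lower bound \eqref{eq: covering number of compact Abelian group} through $\pi_N$, and choose $L$ to be a small negative power of $\varepsilon$ so that tame growth absorbs the correction term. The only difference is cosmetic: the paper takes $L=(1/24)\varepsilon^{-\delta}$, which produces the exponent $(1-\delta)D_N$ directly, whereas you take $L=(1/24)\varepsilon^{-\delta/2}$, splitting the $\delta$ budget between the $\log(24L)$ loss and the tame-growth error and then relaxing $(1-\delta/2)$ to $(1-\delta)$; both choices work, and your bookkeeping of the constraints $L>1$, $6L\varepsilon<1/4$ is in fact slightly more explicit than the paper's.
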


\begin{proof}
We use (\ref{eq: covering and separating numbers}) in \S \ref{subsection: topological and metric mean dimensions}
and the estimate in Remark \ref{remark: metric mean dimension and tame growth of covering numbers}:
For any $L>0$
\begin{equation*}
   \begin{split}
     \log\#_{\mathrm{sep}}(\mathcal{X},\bar{d}_N,\varepsilon)  &\geq \log \#(\mathcal{X},\bar{d}_N,3\varepsilon) \\
     & \geq \log \#(\mathcal{X},d_N, 6L\varepsilon) -N -\frac{N}{L} \log\#(\mathcal{X},d,3\varepsilon). 
   \end{split}
\end{equation*}   
Let $L = (1/24) (1/\varepsilon)^\delta$. Then 
\[   \log\#_{\mathrm{sep}}(\mathcal{X},\bar{d}_N,\varepsilon)  \geq 
     \log \#\left(\mathcal{X},d_N,\frac{\varepsilon^{1-\delta}}{4}\right) - N 
     -24 N \varepsilon^\delta \log\#(\mathcal{X},d,3\varepsilon). \]
$(\mathcal{X},d)$ has the tame growth of covering numbers (see Example \ref{example: tame growth of covering numbers}).
So there exists $\varepsilon_0>0$ so that for any $0<\varepsilon<\varepsilon_0$
\[  \frac{\varepsilon^{1-\delta}}{4} < \frac{1}{4}, \quad 
     \varepsilon^\delta \log \#(\mathcal{X},d,3\varepsilon) < \frac{1}{24}. \]
By applying Lemma \ref{lemma: widim and covering number of compact Abelian group}
to $\pi_N: \mathcal{X}\to (\mathbb{T}^r)^{N}$, 
\[  \#\left(\mathcal{X},d_N,\frac{\varepsilon^{1-\delta}}{4}\right)  \geq \left(\frac{1}{\varepsilon^{1-\delta}}\right)^{\dim \pi_N(\mathcal{X})}
    \quad (0<\varepsilon <\varepsilon_0). \]
Combining these estimates we get 
\[   \log\#_{\mathrm{sep}}(\mathcal{X},\bar{d}_N,\varepsilon)  \geq (1-\delta) \dim\pi_N(\mathcal{X}) \log(1/\varepsilon) - 2N. \]
This is equivalent to the statement. (Recall that the base of the logarithm is two.)
\end{proof}

\begin{proof}[Proof of Proposition \ref{prop: rate distortion dimension of algebraic actions}]
We know from Proposition \ref{prop: mean Hausdorff dimension dominates mean dimension} and
 (\ref{eq: mean dimension of algebraic actions}) that 
 \[  \overline{\rdim}(\mathcal{X},\sigma, d,\mu) \leq \mdim_{\mathrm{M}}(\mathcal{X},\sigma,d) = \mathrm{prodim}(\mathcal{X}). \]
So it is enough to prove $\underline{\rdim}(\mathcal{X},\sigma,d,\mu) \geq \mathrm{prodim}(\mathcal{X})$.
We can assume $\mathrm{prodim}(\mathcal{X}) >0$.

Recall that the distance $d$ is homogeneous.
In particular the measure $\mu\left(B^\circ_r(x,\bar{d}_N)\right)$ of the open ball 
around $x\in \mathcal{X}$
is independent of $x$.
So we denote it by $\mu\left(B^\circ_r(\bar{d}_N)\right)$.

Let $\{x_1,\dots,x_K\}$ be a separated set of $(\mathcal{X}, \bar{d}_N)$ with 
$K = \#_{\mathrm{sep}}\left(\mathcal{X},\bar{d}_N,\varepsilon\right)$.
Since the balls $B_{\varepsilon/2}^\circ(x_i,\bar{d}_N)$ are disjoint with each other,
$K \mu\left(B_{\varepsilon/2}^\circ(\bar{d}_N)\right) \leq 1$.
Let $0<\delta<1/2$.
It follows from Lemma \ref{lemma: separating number of algebraic action} that for $0<\varepsilon < \varepsilon_0(\delta)$ 
\[  \mu\left(B_{\varepsilon/2}^\circ(\bar{d}_N)\right) \leq K^{-1} \leq  4^{N} 
    \varepsilon^{(1-\delta)\dim \pi_N(\mathcal{X})}. \]
Since $\dim \pi_N(\mathcal{X})$ is subadditive in $N$,
\[ \mathrm{prodim}(\mathcal{X}) = \lim_{N\to \infty} \frac{\dim \pi_N(\mathcal{X})}{N} 
   = \inf_{N>0}  \frac{\dim \pi_N(\mathcal{X})}{N}  \]
   and we assumed that this is positive.
Therefore there exists $\varepsilon_1 = \varepsilon_1(\delta)>0$ such that for any $0<\varepsilon <\varepsilon_1$
\[  \mu\left(B^\circ_{\varepsilon/2}(\bar{d}_N)\right) \leq (\varepsilon/2)^{N(1-2\delta)\mathrm{prodim}(\mathcal{X})}. \]
This implies that for all $E\subset \mathcal{X}$ with $\diam (E, \bar{d}_N) < \varepsilon_1/2$
\[  \mu(E) \leq \left(\diam  (E, \bar{d}_N)\right)^{N(1-2\delta)\mathrm{prodim}(\mathcal{X})}. \]
We use Lemma \ref{lemma: L^1 metric and rate distortion dimension} in \S \ref{subsection: L^1 mean Hausdorff dimension}
and get 
\[  \underline{\rdim}(\mathcal{X},\sigma, d,\mu) \geq (1-2\delta) \mathrm{prodim}(\mathcal{X}). \]
Let $\delta\to 0$.
This proves $\underline{\rdim}(\mathcal{X},\sigma, d,\mu) \geq  \mathrm{prodim}(\mathcal{X})$.
\end{proof}

\vspace{0.5cm}

\address{ Elon Lindenstrauss \endgraf
Einstein Institute of Mathematics, Hebrew University, Jerusalem 91904, Israel}

\textit{E-mail address}: \texttt{elon@math.huji.ac.il}

\vspace{0.5cm}

\address{ Masaki Tsukamoto \endgraf
Department of Mathematics, Kyoto University, Kyoto 606-8502, Japan}

\textit{E-mail address}: \texttt{masaki.tsukamoto@gmail.com}

\end{document}